\newcolumntype{M}[1]{>{\centering\arraybackslash}m{#1}} %define dimension for long stable
\numberwithin{equation}{section}
\newcommand\LG{\rm LG}
\newcommand{\Bs}{{\rm Bs}}
\newcommand{\sJ}{{\mathcal J}}
\newcommand{\sK}{{\mathcal K}}
\newtheorem{lemma1}{}[section]
\newenvironment{lemma}{\begin{lemma1}{\bf Lemma.}}{\end{lemma1}}
\newenvironment{example}{\begin{lemma1}{\bf Example.}\rm}{\end{lemma1}}
\newenvironment{theorem}{\begin{lemma1}{\bf Theorem.}}{\end{lemma1}}
\newenvironment{proposition}{\begin{lemma1}{\bf Proposition.}}{\end{lemma1}}
\newenvironment{corollary}{\begin{lemma1}{\bf Corollary.}}{\end{lemma1}}
\newenvironment{remark}{\begin{lemma1}{\bf Remark.}\rm}{\end{lemma1}}
\newenvironment{definition}{\begin{lemma1}{\bf Definition.}}{\end{lemma1}}
\newenvironment{conjecture}{\begin{lemma1}{\bf Conjecture.}}{\end{lemma1}}
\newenvironment{problem}{\begin{lemma1}{\bf Problem.}}{\end{lemma1}}
\newenvironment{remark*}{{\bf Remark.}}{}
\newenvironment{example*}{{\bf Example.}}{}
\newenvironment{assumption*}{{\bf Assumption.}}{}
\newenvironment{claim*}{{\bf Claim.}}{}
\newcommand{\R}{\ensuremath{\mathbb{R}}}
\newcommand{\Q}{\ensuremath{\mathbb{Q}}}
\newcommand{\Z}{\ensuremath{\mathbb{Z}}}
\newcommand{\N}{\ensuremath{\mathbb{N}}}
\newcommand{\PP}{\ensuremath{\mathbb{P}}}
\newcommand\sH{{\mathcal H}}
\newcommand\sN{{\mathcal N}}
\newcommand\sO{{\mathcal O}}
\newcommand\sV{{\mathcal V}}
\newcommand\sX{{\mathcal X}}
\newcommand\sY{{\mathcal Y}}
\newcommand\sC{{\mathcal C}}
\newcommand\bQ{{\mathbb Q}}
\newcommand\bB{{\mathbb B}}
\newcommand\Proj{{\rm Proj}}
\newcommand\Gr{{\rm Gr}}
\newcommand\defect{{\rm def}}
\newcommand\Eff{{\rm Eff}}
\newcommand\Bigcone{{\rm Big}}
\newcommand\Mov{{\rm Mov}}
\newcommand\Nef{{\rm Nef}}
\newcommand\Amp{{\rm Amp}}
\newcommand\Aut{{\rm Aut}}
\newcommand\PGL{\rm PGL}
\DeclareMathOperator*{\Sym}{Sym}
\DeclareMathOperator*{\pic}{Pic}
\DeclareMathOperator*{\codeg}{codeg}
\newcommand{\Chow}[1]{\ensuremath{\mbox{\rm Chow}(#1)}}
\DeclareMathOperator*{\supp}{Supp}
\DeclareMathOperator*{\Rat}{RatCurves^n}
\setlist[enumerate]{leftmargin=*}
\title{normalized tangent bundle, varieties with small codegree and pseudoeffective threshold} 
\date{\today}
\author{Baohua Fu}
\author{Jie Liu}
\address{Baohua Fu, HLM and MCM, Academy of Mathematics and System Science, Chinese Academy of Sciences, Beijing 100190, China and School of Mathematical Sciences, University of Chinese Academy of Sciences, Beijing 100049, China}
\email{bhfu@math.ac.cn}
\address{Jie Liu, Institute of Mathematics, Academy of Mathematics and Systems Science, Chinese Academy of Sciences, Beijing, 100190, China}
\email{jliu@amss.ac.cn}
\subjclass[2010]{14J45,14M17,14N05,51N35}
\keywords{Fano vareities, rational homogeneous spaces, dual variety, minimal rational curves, stratified Mukai flops}
\begin{document}

\begin{abstract}  
We propose a conjectural list of Fano manifolds of Picard number $1$ with pseudoeffective normalized tangent bundles, which we prove in various situations by  relating it to the complete divisibility conjecture of Russo and Zak on varieties with small codegree. Furthermore, the pseudoeffective thresholds and hence the pseudoeffective cones of the projectivized tangent bundles of rational homogeneous spaces of Picard number $1$ are explicitly determined by studying the total dual VMRT and the geometry of stratified Mukai flops. As a by-product, we obtain sharp vanishing theorems on the global twisted symmetric holomorphic vector fields on rational homogeneous spaces of Picard number $1$.
\end{abstract}

\newpage

\maketitle

\setcounter{tocdepth}{2}
\tableofcontents

\vspace{-0.2cm}

\section{Introduction}

\subsection{Positivity of normalized tangent bundles}

Let $X$ be an $n$-dimensional projective manifold, and let $C\subset X$ be an irreducible projective curve. The (semi-)stability of the restriction $T_X|_C$ is very closely related to the global geometry of $X$.  For example, a famous result of Mehta and Ramanathan \cite{MehtaRamanathan1982} says that the restriction $T_X|_C$ of $T_X$ to a general complete intersection curve  $C$ of \emph{sufficiently ample} divisors is again (semi-)stable provided that $T_X$ itself is (semi-)stable with the respective polarization. However, apart from very special situations, the variety $X$ usually contains many dominating families of irreducible curves to which the restrictions of $T_X$ are not (semi-)stable. Using the language of positivity of $\Q$-twisted vector bundles, the semi-stability of $T_X|_C$ is equivalent to the nefness of the restriction of the \emph{normalized tangent bundle}  $T_X\hspace{-0.8ex}<\hspace{-0.8ex}-\frac{1}{n}c_1(X)\hspace{-0.8ex}>$ of $X$ to $C$ (see \cite[Proposition 6.4.11]{Lazarsfeld2004a}). 
Thus, our expectation above can be rephrased by saying that $X$ should be very special if its normalized tangent bundle is positive in some algebraic sense. 

Let $\pi:\mathbb{P}(T_X)\rightarrow X$ be the  projectivized tangent bundle (in the Grothendieck sense) with tautological divisor $\Lambda$.  The normalized tangent bundle $T_X\hspace{-0.8ex}<\hspace{-0.8ex}-\frac{1}{n}c_1(X)\hspace{-0.8ex}>$  is said {\em pseudoeffective} (resp. {\em ample, big, nef}) if so is the class $\Lambda  - \frac{1}{n} \pi^* (c_1(X))$.  The normalized tangent bundle is said {\em almost nef} if all irreducible curves $C \subset X$, to which  the restriction of $T_X\hspace{-0.8ex}<\hspace{-0.8ex}-\frac{1}{n}c_1(X)\hspace{-0.8ex}>$ is not nef,  are contained in a countable union of proper subvarieties of $X$.  Since the normalized tangent bundle of a curve is numerically trivial, we will only consider varieties of dimension at least 2.

The positivity of normalized tangent bundles has already been studied  in various contexts. In particular, we have the following theorem, which can be easily derived from the works of Jahnke-Radloff \cite[Theorem 0.1]{JahnkeRadloff2013}, H\"oring-Peternell \cite[Theorem 1.9]{HoeringPeternell2019} and Liu-Ou-Yang \cite[Theorem 1.6]{LiuOuYang2020}. It can be viewed as a strong evidence to our expected picture above.

\begin{theorem}
	\label{Thm:Almost-Nef-normalization}
	Let $X$ be a projective manifold of dimension at least 2. Then the normalized tangent bundle of $X$ is almost nef if and only if $X$ is isomorphic to a finite \'etale quotient of an Abelian variety.
\end{theorem}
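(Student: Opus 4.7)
The forward direction is immediate: if $X$ is a finite étale quotient of an Abelian variety, then $T_X$ pulls back to a trivial bundle, so it is numerically flat; in particular $c_1(X)=0$ and the normalized tangent bundle coincides with $T_X$, which is nef, hence almost nef.

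For the converse, the plan is to split according to the position of $K_X$. First I would rule out uniruledness. Suppose $X$ is uniruled and fix a minimal dominating family of rational curves; for a general member $C$, Mori's splitting gives $T_X|_C \cong \mathcal{O}(2)\oplus\mathcal{O}(1)^{\oplus d}\oplus\mathcal{O}^{\oplus(n-1-d)}$ with $0\leq d\leq n-1$, so the slope equals $(2+d)/n \leq (n+1)/n < 2$ for $n\geq 2$. The $\mathcal{O}(2)$-summand then destabilizes $T_X|_C$, meaning the restriction $T_X\langle -c_1(X)/n\rangle|_C$ fails to be nef. Since such curves sweep out all of $X$, this contradicts almost nefness. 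Hence $X$ is not uniruled, and by the theorem of Boucksom--Demailly--Păun--Peternell the canonical class $K_X$ is pseudoeffective.

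Next I would promote this to $K_X\equiv 0$. By the generic semipositivity theorem of Miyaoka, in the form given by Campana--Păun, for a general complete intersection curve $C$ of sufficiently ample divisors the bundle $\Omega_X^1|_C$ is nef, equivalently $\deg T_X|_C = -K_X\cdot C \leq 0$. On the other hand, almost nefness of the normalized tangent bundle forces $T_X|_C$ to be semistable of slope $-K_X\cdot C/n$ for such general $C$; combining this with \cite[Theorem 0.1]{JahnkeRadloff2013} and \cite[Theorem 1.9]{HoeringPeternell2019} shows that a strictly negative slope would produce a destabilizing quotient on a covering family, once again contradicting almost nefness. Therefore $K_X\cdot C = 0$ for a sufficiently ample family of curves, and $K_X\equiv 0$.

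Finally, with $c_1(X)=0$ the normalized tangent bundle equals $T_X$ itself, so almost nefness of $T_X$, together with the structural statement of \cite[Theorem 1.6]{LiuOuYang2020}, upgrades to honest nefness; since $c_1(X)=0$, a nef tangent bundle is numerically flat by Demailly--Peternell--Schneider, and the classical uniformization theorem for projective manifolds with numerically flat tangent bundle then yields a finite étale cover of $X$ by an Abelian variety. The main obstacle I foresee is the middle step, the bootstrap from $K_X$ pseudoeffective to $K_X\equiv 0$: generic semipositivity supplies only one of the required inequalities, and turning \emph{almost} nefness of a twisted bundle into genuine semistability on a sufficiently rich family of curves is precisely where the combined strength of the results of Jahnke--Radloff, Höring--Peternell, and Liu--Ou--Yang must be invoked.
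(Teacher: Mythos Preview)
The paper does not supply its own proof of this theorem; it simply records that the statement ``can be easily derived'' from \cite[Theorem~0.1]{JahnkeRadloff2013}, \cite[Theorem~1.9]{HoeringPeternell2019}, and \cite[Theorem~1.6]{LiuOuYang2020}. So there is nothing to compare against beyond those citations, and your task was essentially to reconstruct how those results fit together.

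Your Step~1 (ruling out uniruledness via the standard splitting on a minimal rational curve) is correct and is a clean argument. Your Step~3 is also in the right direction once $c_1(X)=0$ is known. The genuine gap is in Step~2, and you have diagnosed its location correctly but not its nature. You write that ``a strictly negative slope would produce a destabilizing quotient on a covering family, once again contradicting almost nefness.'' This sentence does not make sense: a \emph{semistable} bundle has, by definition, no destabilizing subsheaf or quotient, regardless of the sign of its slope. So semistability of $T_X|_C$ with $\mu(T_X|_C)<0$ is not self-contradictory, and you have not produced any curve on which semistability fails. Miyaoka's generic semipositivity only tells you $K_X\cdot C\ge 0$, which you already know from pseudoeffectivity of $K_X$; it does not supply the reverse inequality, and almost nefness of the \emph{normalized} bundle carries no information about $c_1(X)$ since the twist kills it.

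The passage from ``$K_X$ pseudoeffective and $T_X|_C$ semistable on very general curves'' to ``$K_X\equiv 0$'' is precisely the substantive content borrowed from the cited references, and it requires more than Miyaoka: one needs either a structure theorem for varieties with almost nef (or generically nef) tangent sheaf, or the foliation-theoretic machinery of H\"oring--Peternell, to force numerical triviality of $K_X$. Your proposal correctly identifies this as the crux but does not bridge it; the sentence you wrote in its place is a placeholder, not an argument.
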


The motivation of this paper is to study a weaker positivity:  the pseudoeffectivity of normalized tangent bundles. This problem has already been studied by H\"oring-Peternell in \cite{HoeringPeternell2019} for klt projective variety with numerically trivial canonical class. Moreover, Nakayama has studied this problem in \cite{Nakayama2004} for semi-stable vector bundles of rank $2$ over projective manifolds of arbitrary dimension and he obtained a complete classification for such vector bundles (see \cite[IV, Theorem 4.8]{Nakayama2004} for a precise statement). In particular, Nakayama's result provides a satisfactory answer to our problem above for projective surfaces. For instance, it turns out that a del Pezzo surface $S$ has pseudoeffective normalized tangent bundle if and only if $S$ is isomorphic to a quadric surface (see Theorem \ref{Thm:normalized-Surface}).
Note that the product of two projective manifolds with pseudoeffective normalized tangent bundle has again pseudoeffective normalized tangent bundle.  To exclude the product cases, 
we will focus on the case where $X$ is a  Fano manifold of Picard number $1$ with dimension at least $3$ in this paper. Note that in this situation the pseudoeffectivity of the normalized tangent bundle of $X$ implies that the tangent bundle of $X$ is big and it is expected that the bigness of the tangent bundle is already a rather restrictive property (see \cite{HoeringLiuShao2020}).   We expect the following classification: 

\begin{conjecture}
	\label{Conj:normalized-Tangent}
	Let $X$ be a Fano manifold of Picard number $1$  with  dimension at least $3$. Then the normalized tangent bundle of $X$ is pseudoeffective if and only if $X$ is one of the following varieties:
	\begin{enumerate}
		\item a smooth quadric hypersurface;
		
		\item the Grassmann variety $\Gr(n,2n)$;
		
		\item the Spinor variety $\mathbb{S}_{2n}$;
		
		\item the Lagrangian Grassmann variety $\LG(n,2n)$;
		
		\item the $27$-dimensional $E_7$-variety $E_7/P_7$.
	\end{enumerate}
\end{conjecture}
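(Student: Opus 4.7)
The plan is two-sided, following the strategy foreshadowed by the abstract. For the ``if'' direction, for each of the five varieties $X$ on the list I would exhibit an explicit effective divisor on $\mathbb{P}(T_X)$ whose class is a positive multiple of $\Lambda-\tfrac{1}{n}\pi^*c_1(X)$. The natural candidate is the \emph{total dual VMRT} $\check{\mathcal{C}}\subset \mathbb{P}(T_X)$, whose fiber over a general $x\in X$ records the hyperplanes tangent to the VMRT $\mathcal{C}_x\subset \mathbb{P}(T_xX)$. Each of the five varieties is a rational homogeneous space of Picard number $1$ whose VMRT is itself a classical homogeneous variety of known degree and codegree, so the class $[\check{\mathcal{C}}]=\check d\,\Lambda-a\,\pi^*H$ can be extracted from the root-theoretic data or from Schubert calculus; the identity to verify is $a/\check d = i_X/n$. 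In the classical cases (items (1)--(4)) this is a uniform and essentially known computation; for the exceptional item (5) the argument can be organized via the stratified Mukai flop contracting $\check{\mathcal{C}}$ onto its ``other side'', as suggested in the abstract.

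For the ``only if'' direction I would proceed along the following lines. Suppose $X$ is Fano of Picard number $1$, $n=\dim X\geq 3$, with pseudoeffective normalized tangent bundle, and write $c_1(X)=i_X H$ with $H$ the ample primitive generator. Fix a minimal dominating family of rational curves and let $\mathcal{C}_x\subset \mathbb{P}(T_xX)$ denote the VMRT at a general point, with $p:=\dim \mathcal{C}_x$; by Mori-theoretic bend-and-break one has $i_X=p+2$ (assuming minimal rational curves are lines with respect to $H$). Pseudoeffectivity of $\Lambda-\tfrac{i_X}{n}\pi^*H$ forces the pseudoeffective threshold of $\mathbb{P}(T_X)$ to be at least $i_X/n$. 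Translating this threshold into an invariant of the VMRT via the total dual VMRT class formula (as in the ``if'' direction, but now read as an a priori inequality on an effective representative), one obtains an upper bound on the codegree $\codeg(\mathcal{C}_x)$ in terms of $n$, $p$, and $\deg \mathcal{C}_x$. This inequality places $\mathcal{C}_x$ squarely in the regime of the complete divisibility conjecture of Russo and Zak classifying smooth nondegenerate projective varieties of small codegree; the resulting list of admissible $\mathcal{C}_x$ coincides with the VMRTs of items (1)--(5). One then recovers $X$ from $\mathcal{C}_x$ by the Cartan--Fubini extension theorem, which asserts that a Picard-$1$ Fano manifold with suitably nondegenerate VMRT is uniquely determined by its VMRT at a general point.

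The main obstacle is the ``only if'' direction, and more specifically its dependence on the complete divisibility conjecture of Russo--Zak, which is open in general. Accordingly I expect the conjecture to be settled unconditionally only in those cases where the Russo--Zak classification is already available (small-dimensional VMRT, low index, secant-defective or homogeneous VMRT, etc.), and reduced to the Russo--Zak conjecture otherwise. A secondary but nontrivial point is the precise computation of $[\check{\mathcal{C}}]$ for $E_7/P_7$: the exceptional root combinatorics do not fit into the uniform classical Schubert framework, and the cleanest path appears to go through the stratified Mukai flop on $\mathbb{P}(T_{E_7/P_7})$ referenced in the abstract. A final delicate step, needed to turn the codegree bound into the sharp classification, is ruling out ``boundary'' VMRTs that meet the Russo--Zak bounds but fail to come from a globally homogeneous Fano manifold; this is where the Cartan--Fubini rigidity must be invoked carefully.
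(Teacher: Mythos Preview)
The statement is a \emph{conjecture}; the paper does not prove it in full, and your proposal correctly anticipates this. Your two-sided strategy is exactly the one the paper pursues, and you are right that the ``only if'' direction is ultimately reduced to the Russo--Zak complete divisibility conjecture, with unconditional results only where that classification is already known (codegree $\leq 3$, small-dimensional VMRT, etc.). A few points of your outline, however, diverge from what the paper actually does.

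First, your treatment of $E_7/P_7$ is misdirected. The stratified Mukai flops are \emph{not} used for any of the five varieties on the list: those are precisely the rational homogeneous spaces of type (II-d-A1), for which the evaluation map $\varepsilon:\mathbb{P}(T_X)\to\mathcal{Y}$ is divisorial. In each case the exceptional divisor coincides with the total dual VMRT $\check{\mathcal{C}}$, and its class is uniformly $[\check{\mathcal{C}}]\equiv\codeg(\mathcal{C}_o)\cdot\Lambda-2\pi^*H$. For $E_7/P_7$ the VMRT is the Cayley plane $E_6/P_1\subset\mathbb{P}^{26}$ with codegree $3$, and the identity $3\cdot 18=2\cdot 27$ is immediate---no flop required. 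The flops are used for the \emph{complementary} rational homogeneous spaces (e.g.\ $\Gr(a,a+b)$ with $a\neq b$, odd spinor varieties, $E_6/P_1$, etc.), precisely to show their thresholds fall strictly \emph{below} $1/n$.

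Second, the reconstruction of $X$ from its VMRT in the ``only if'' direction uses Mok's recognition theorem for irreducible Hermitian symmetric spaces, not Cartan--Fubini extension. Once the VMRT is identified as one of the Russo--Zak varieties, it is projectively equivalent to the VMRT of the corresponding IHSS in Table~\ref{Table:IHSSVMRT}, and Mok's theorem gives $X$ directly; there is no separate step of ``ruling out boundary VMRTs''.

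Third, and most seriously for the scope of the reduction: the passage from pseudoeffectivity to the Segre--Zak codegree bound requires several hypotheses on the VMRT that you do not flag---it must be non-dual-defective, irreducible, linearly non-degenerate, and its dual hypersurface must have non-vanishing hessian. The paper is explicit that non-degeneracy in particular is a genuine restriction, so even granting Russo--Zak the reduction does not cover all Fano manifolds of Picard number $1$.
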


Note that the normalized tangent bundles of the varieties in the list are already shown to be pseudoeffective but not big by \cite[Corollary 1.4]{Shao2020} (See Proposition \ref{Prop:Fano-contact}  for another proof). We will prove a more general result in Theorem \ref{t.RationalHomSpace}. On the other hand, if we use the pseudoeffective threshold (with respect to an ample line bundle $A$) introduced in  \cite{Shao2020} which is defined as 
\[
\alpha(X,A):= {\rm sup} \{\alpha \in \mathbb{R}\, |\, \Lambda - \alpha \pi^*A\ \text{is effective} \},
\] 
then we can reformulate Conjecture \ref{Conj:normalized-Tangent} as follows:
\begin{conjecture} \label{Conj2}
	Let $X$ be a Fano manifold of Picard number $1$ with dimension at least $3$. Then 
	\[
	\alpha(X,-K_X)\leq \frac{1}{\dim(X)}
	\]
	with equality if and only if $X$ is one of the varieties in Conjecture \ref{Conj:normalized-Tangent}.
\end{conjecture}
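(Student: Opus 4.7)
Conjecture \ref{Conj2} is a reformulation of Conjecture \ref{Conj:normalized-Tangent} modulo \cite[Corollary 1.4]{Shao2020}, which asserts that the five varieties in the list have normalized tangent bundle pseudoeffective but not big, i.e.\ $\alpha(X,-K_X)=\frac{1}{n}$ exactly. The plan therefore splits into two tasks: (i) establish the universal upper bound $\alpha(X,-K_X)\le\frac{1}{n}$ for every Fano manifold of Picard number $1$ and dimension $\ge 3$, and (ii) show that equality forces $X$ onto the list.

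For (i) the natural tool is the Boucksom--Demailly--P\u{a}un--Peternell movable/pseudoeffective duality: one must exhibit a covering family of curves $M\subset\PP(T_X)$ with intersection ratio $\Lambda\cdot M:\pi^{*}(-K_X)\cdot M=1:n$. The candidates are built from the universal family $\sU\to\sK$ of minimal rational curves on $X$ together with its VMRT data, the factor $\frac{1}{n}$ arising from $n=\dim X$ in the arithmetic of the splitting type $f^{*}T_X=\sO(2)\oplus\sO(1)^{p}\oplus\sO^{n-1-p}$ once one averages over the full dominating family rather than over a single member.

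For (ii) the central object is the \emph{total dual VMRT} $\check{\sC}\subset\PP(T_X)$, the effective divisor whose fibre over a general $x\in X$ is the dual variety $\sC_x^{\vee}$ of the VMRT $\sC_x\subset\PP(T_xX)$. Its class has the form $a\Lambda-b\pi^{*}(-K_X)$ with $a=\codeg(\sC_x)$, and combined with (i) the equality $\alpha(X,-K_X)=\frac{1}{n}$ forces $\check{\sC}$ to span the extremal ray of $\overline{\Eff}(\PP(T_X))$ and the ratio $b/a$ to equal $\frac{1}{n}$ exactly. This is a sharp numerical constraint relating $\codeg(\sC_x)$ to $\dim X$ and the geometry of $\sC_x$, and corresponds precisely to the extremal case of the Russo--Zak complete divisibility conjecture. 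Granting that conjecture, $\sC_x$ falls into a short classified list of VMRT candidates, and Hwang--Mok Cartan--Fubini rigidity then identifies $X$ with one of the five varieties on the list.

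The main obstacle is step (i) in full generality. The naive restriction of $\Lambda-\alpha\pi^{*}(-K_X)$ to a single $\PP(f^{*}T_X)$ yields only $\alpha\le\frac{2}{-K_X\cdot\ell}$, which is strictly weaker than $\frac{1}{n}$ whenever $-K_X\cdot\ell<2n$, so one really must assemble a moving cycle on $\PP(T_X)$ using the full family of minimal rational curves. For rational homogeneous $X$ this is bypassed by the paper's explicit description of the stratified Mukai flop on $\PP(T_X)$, which exhibits both extremal rays of the Mori cone and reads off $\alpha$ directly; reproducing such control in the absence of a transitive group action is the delicate point, and is where the full strength of the Russo--Zak program must be brought in.
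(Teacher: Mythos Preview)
The statement you are addressing is a \emph{conjecture} in the paper, not a theorem; the paper does not supply a proof of it in full generality, only the partial results of Theorem~\ref{Thm:normalized-I} (under hypotheses on the VMRT) and Theorem~\ref{t.RationalHomSpace} (for rational homogeneous spaces). So there is no ``paper's own proof'' to compare against, and your proposal should be read as a strategy for an open problem.

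Your step~(ii) is essentially what the paper does in Theorem~\ref{Thm:normalized-I}(1.2): assuming the VMRT is not dual defective, the class of the total dual VMRT $\check{\sC}\equiv a\Lambda-b\pi^*H$ with $a=\codeg(\sC_x)$ spans the extremal ray of $\overline{\Eff}(\PP(T_X))$, equality $\alpha=1/n$ forces the Segre bound \eqref{Eq:Segre-Zak} to be attained, and granting Conjecture~\ref{Conj:Zak-Russo-Conjecture} one concludes via Mok's Theorem~\ref{t.IHSSMok}. So that half of your plan is aligned with the paper.

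The gap is in step~(i). You propose to obtain the universal bound $\alpha(X,-K_X)\le 1/n$ from BDPP duality by producing a covering curve class $M$ on $\PP(T_X)$ with $n\,\Lambda\cdot M\le \pi^*(-K_X)\cdot M$, but you do not construct such a class; the sentence about ``averaging over the full dominating family'' does not make this precise, and I do not see a natural candidate coming out of the splitting type $f^*T_X\cong\sO(2)\oplus\sO(1)^p\oplus\sO^{n-1-p}$ alone. The paper does \emph{not} approach the inequality this way at all: in Theorem~\ref{Thm:normalized-I}(1.1) the bound $\alpha\le 1/n$ is deduced from the Segre inequality $\codeg(\sC_x)\ge 2n/(\dim\sC_x+2)$ together with $b\,H\cdot\sK\le 2$ from Theorem~\ref{Thm:Class-dual-VMRT}, and this requires the VMRT to be irreducible, linearly non-degenerate, not dual defective, and with dual of non-vanishing Hessian. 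Absent those hypotheses the inequality $\alpha\le 1/n$ is genuinely open in the paper, so your step~(i) is precisely the hard unresolved part of the conjecture, not a routine application of duality.
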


\begin{remark}
By Theorem \ref{Thm:Almost-Nef-normalization}, the normalized tangent bundle of a projective manifold can not be nef and big (see also \cite[IV, Corollary 4.7]{Nakayama2004}).  On the other hand, Conjecture \ref{Conj:normalized-Tangent} implies that there does not exist examples of Fano manifolds of Picard number $1$ with big normalized tangent bundle, and we suspect the existence of such examples even for Fano manifolds of higher Picard number. Here we recall that if the tangent bundle $T_X$ of a Fano manifold $X$ is semi-stable with respect to some ample line bundle $A$, then the normalized tangent bundle of $X$ can not be big (see Lemma \ref{l.semistable}).
\end{remark}

A powerful tool to study Fano manifolds is the VMRT (abbreviation for  variety of minimal rational tangents) theory developed by Hwang and Mok (cf. \cite{Hwang2001}). Fix a dominating family  of minimal rational curves $\mathcal{K}$ on a Fano manifold $X$ and a general point $x \in X$.   
The tangent directions  at $x$ of members in $\mathcal{K}$ passing through $x$ form a  projective subvariety $\mathcal{C}_x$ in $\mathbb{P}(\Omega_{X,x})$.   The projective geometry of $\mathcal{C}_x$ encodes many global properties of $X$.  For example, we can recover irreducible Hermitian symmetric spaces  (IHSS for short) from its VMRT by the following result of Mok:
\begin{theorem} \cite[Main Theorem]{Mok2008a} \label{t.IHSSMok}
Let $G/P$ be an irreducible Hermitian symmetric space and let $X$ be a Fano manifold of Picard number $1$.  Assume that the VMRT of $X$ at a general point is projectively equivalent to that of $G/P$.  Then $X$ is isomorphic to $G/P$. 
\end{theorem}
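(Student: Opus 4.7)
The plan is to use the Hwang--Mok Cartan--Fubini type extension principle. Fix a minimal rational component $\mathcal{K}$ on $X$ and the corresponding one $\mathcal{K}'$ on $G/P$; at general points $x \in X$ and $o \in G/P$, the VMRTs $\mathcal{C}_x \subset \mathbb{P}(T_xX)$ and $\mathcal{C}_o \subset \mathbb{P}(T_o(G/P))$ are projectively equivalent by hypothesis. The goal is to promote this pointwise equivalence to a global biregular map $X \to G/P$.

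First I would dispose of the rank-one case $G/P = \mathbb{P}^n$ separately: here $\mathcal{C}_o = \mathbb{P}(T_o(G/P))$, hence the VMRT on $X$ fills the whole projectivized tangent space at a general point, and the classical characterization of $\mathbb{P}^n$ via the fullness of its VMRT immediately gives $X \cong \mathbb{P}^n$. From now on assume $G/P$ has rank at least $2$, so $\mathcal{C}_o$ is a proper linearly nondegenerate subvariety of $\mathbb{P}(T_o(G/P))$. Its projective stabilizer $G_0 \subset \mathrm{GL}(T_o(G/P))$ is a connected reductive group, and the hypothesis that $\mathcal{C}_x$ is projectively equivalent to $\mathcal{C}_o$ at every general $x$ endows a Zariski open subset $U \subset X$ with a holomorphic $G_0$-structure.

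The key step is the integrability of this $G_0$-structure. For an IHSS of rank $\geq 2$, the VMRT $\mathcal{C}_o$ is the highest weight orbit in $\mathbb{P}(T_o(G/P))$, and its Spencer prolongations satisfy a sharp vanishing that makes any $G_0$-structure with vanishing first-order torsion automatically locally flat. I would verify the vanishing of the torsion of the $G_0$-structure on $X$ using the geometry of minimal rational curves: the existence of the $\mathcal{K}$-family supplies a canonical affine structure along each minimal rational curve via its deformation theory, the curves of $\mathcal{K}$ being the null-geodesics of the candidate connection. Comparing the infinitesimal variation of $\mathcal{C}_x$ along a curve of $\mathcal{K}$ with the corresponding (trivial) variation on $G/P$ forces the structure function to vanish, thanks to the projective rigidity of the VMRT of an IHSS.

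Once the $G_0$-structure is integrable, the classical theory of flat $G_0$-structures modeled on $G/P$ yields a local biholomorphism $f : (X,x) \to (G/P,o)$ sending VMRT to VMRT, in particular sending $\mathcal{K}$-curves through $x$ to $\mathcal{K}'$-curves through $o$. Then Hwang--Mok's Cartan--Fubini type extension theorem globalizes $f$: since $X$ and $G/P$ have Picard number $1$ and are chain-connected by minimal rational curves, analytic continuation of $f$ along such chains produces a biregular isomorphism $X \to G/P$. The principal difficulty, and the technical heart of Mok's argument, is establishing flatness of the induced $G_0$-structure, which requires Spencer-cohomology vanishing for the symbol algebra of the VMRT of an IHSS together with the deformation-theoretic analysis showing that the torsion tensor vanishes identically rather than merely being controlled.
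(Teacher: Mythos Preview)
The paper does not give its own proof of this theorem: it is simply quoted as \cite[Main Theorem]{Mok2008a} and used as a black box. Your sketch is a faithful outline of Mok's original argument (rank-one case handled separately, reduction to a $G_0$-structure on a Zariski open set, flatness via prolongation/Spencer vanishing combined with the deformation theory of minimal rational curves, and globalization by Cartan--Fubini extension), so there is nothing to compare against in this paper.
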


As all the varieties listed in Conjecture \ref{Conj:normalized-Tangent} are IHSS,  we may try to determine first the VMRT of $X$ in Conjecture \ref{Conj:normalized-Tangent} and then apply Theorem \ref{t.IHSSMok}.  This is the approach that we will use in this paper.

It is interesting to remark that among IHSS, only the following varieties do not appear in Conjecture \ref{Conj:normalized-Tangent}: $\Gr(a, a+b) $ with $a \neq b$, $\mathbb{S}_{2n+1}$ and $E_6/P_1$.  These varieties are exactly those among IHSS which appear in stratified Mukai flops (cf. Proposition \ref{p.stratiMukai}).   As it will become clearer, there exists a delicate relationship between the pseudoeffective threshold and the birational geometry.

\renewcommand*{\arraystretch}{1.6}
\begin{longtable}{|M{2.2cm}|M{2.4cm}|M{2cm}|M{1.5cm}|M{3cm}|M{1.1cm}|M{1.1cm}|}
	\caption{IHSS and their VMRTs}
	\label{Table:IHSSVMRT}
	\\
	\hline

	   IHSS $G/P$               &   $\bQ^n$
	&  $\Gr(a,a+b)$             &   $\mathbb{S}_n$
	&  $\LG(n,2n)$              &   $E_6/P_1$
	&  $E_7/P_7$
	\\
	\hline
	
	   VMRT $\sC_o$                   &   $\bQ^{n-2}$
    &  $\PP^{a-1}\times \PP^{b-1}$    &   $\Gr(2,n)$
    &  $\PP^{n-1}$                    &   $\mathbb{S}_5$
	&  $E_6/P_1$
	\\
	\hline
	
	   embedding                      &   Hyperquadric
	&  Segre                          &   Pl\"ucker
	&  second Veronese                &   Spinor
	&  Severi
	\\
	\hline
\end{longtable}

\subsection{Varieties with small codegree}

Recall that the \emph{codegree} $\codeg(Z)$ of a projective variety $Z\subset \PP^N$ is defined as the degree of its dual variety $\check{Z}\subset \check{\PP}^N$ (see Definition \ref{Def:Projectively-Dual}).  Varieties with small degree have been thoroughly studied while very little is known for varieties with small codegree.  Segre proved in \cite{Segre1951} that for an irreducible and linearly non-degenerate projective variety $Z\subsetneq \PP^N$, if its dual variety $\check{Z}\subset \check{\PP}^N$is a hypersurface with non-vanishing hessian, then we have the following Segre inequality
\begin{equation}
	\label{Eq:Segre-Zak-Ineq}
	\codeg(Z):=\deg(\check{Z})\geq \frac{2(N+1)}{\dim(Z)+2}.
\end{equation}

The above inequality is sharp and in fact the following  \emph{complete divisibility conjecture} due to Russo and Zak predicts the boundary varieties:

\begin{conjecture}
	\label{Conj:Zak-Russo-Conjecture}
	\cite[Question 5.3.11]{Russo2003}
	\cite[Conjecture 4.15]{Zak2004}
	Let $Z\subsetneq \PP^N$ be an irreducible and linearly non-degenerate projective variety. If the dual variety $\check{Z}\subset \check{\PP}^N$ is a hypersurface with non-vanishing hessian such that
	\begin{equation}
		\label{Eq:Segre-Zak}
		%\tag{$\clubsuit$}
		\codeg(Z):=\deg(\check{Z})=\frac{2(N+1)}{\dim(Z)+2}.
	\end{equation}
	Then $Z$ is isomorphic to one of the following varieties:
	\begin{enumerate}
		\item a smooth quadric hypersurface  ( $\codeg(Z)=2$);
		
		\item the Segre variety $\PP^{n-1}\times \PP^{n-1}\subset \PP^{(n-1)(n+1)}$  ($\codeg(Z)=n$);
		
		\item the Grassmann variety $\Gr(2,2n)\subset \PP^{n(2n-1)-1}$  ( $\codeg(Z)=n$);
		
		\item the Veronese variety $\nu_2(\PP^{n-1})\subset \PP^{\frac{(n-1)(n+2)}{2}}$  ($\codeg(Z)=n$);
		
		\item the $16$-dimensional Cayley plane $E_6/P_1\subset \PP^{26}$  ($\codeg(Z)=3$).
	\end{enumerate}
\end{conjecture}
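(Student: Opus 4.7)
The plan is to exploit the extremal character of \eqref{Eq:Segre-Zak} to link the problem with the Scorza varieties and the Jordan algebra classification, rather than attacking the codegree directly. First I would translate the equality \eqref{Eq:Segre-Zak} into tangential data: by the biduality theorem $\check{\check{Z}}=Z$, the Gauss map $\gamma:\check{Z}\dashrightarrow Z$ is dominant, and the non-vanishing of the hessian of $\check{Z}$ ensures that $\gamma$ is separable and generically finite. Under these conditions the Segre bound \eqref{Eq:Segre-Zak-Ineq} can be rephrased as an inequality on the dimension of the contact loci $\overline{\gamma^{-1}(x)}\subset\check{Z}$, and the equality case should force these contact loci to realise the extremal case of Zak's theorem on tangencies.

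Second, I would invoke Jordan algebra techniques: the five varieties listed in Conjecture~\ref{Conj:Zak-Russo-Conjecture} all carry natural descriptions as rank-one loci in simple Jordan algebras (the quadric via the Jordan algebra of a bilinear form; the Segre and Grassmann varieties as rank-one matrices and rank-one skew matrices; the Veronese as rank-one symmetric matrices; and the Cayley plane as rank-one Hermitian $3\times 3$ octonionic matrices via the exceptional Albert algebra). For two general points $x,y\in Z$ I would study the entry locus
\[
\Sigma_{x,y}=\overline{\{z\in Z\setminus\{x,y\}:\langle x,y,z\rangle\subset\mathrm{Sec}(Z)\}}
\]
and try to prove, using the tangential information produced in the first step, that $\Sigma_{x,y}$ is a smooth quadric in a linear subspace of $\PP^N$ of the dimension predicted by \eqref{Eq:Segre-Zak}. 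If this succeeds, $Z$ is a locally quadratic entry locus variety (LQEL-variety) of the required type $\delta$, and Russo's classification of LQEL-varieties, combined with Zak's theorem on Severi varieties, should match the numerical invariants imposed by \eqref{Eq:Segre-Zak} and yield exactly the five cases.

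The principal obstacle is bridging the dual-variety hypothesis and the secant/entry-locus condition: classically, Scorza varieties are characterised through the secant defect, and translating an extremal dual-defect equality into a genuine quadratic structure on the generic entry locus is sensitive to the non-vanishing hessian assumption and requires a careful analysis of the second fundamental form of $Z$. A parallel difficulty is ruling out exotic smooth candidates which satisfy \eqref{Eq:Segre-Zak} numerically but are not Scorza varieties; here I would hope to reduce, via the projective second fundamental form, to an auxiliary Fano manifold whose VMRT at a general point is projectively equivalent to that of one of the five IHSS in the list, and then apply Mok's Theorem~\ref{t.IHSSMok} to conclude. The final case-by-case matching of numerical parameters---verifying that $\codeg(Z)(\dim(Z)+2)=2(N+1)$ has only the five solutions compatible with each Jordan algebra dimension---should then be a routine verification.
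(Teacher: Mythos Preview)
The statement you are attempting to prove is a \emph{conjecture}, not a theorem: the paper does not contain a proof of it. Immediately after stating Conjecture~\ref{Conj:Zak-Russo-Conjecture}, the authors write that it ``is still widely open for $\codeg(Z)\geq 4$,'' and they only verify it in low-dimensional or low-codegree special cases (Proposition~\ref{Prop:SmallCodegreeDim2} for $\dim Z\leq 2$, Proposition~\ref{Thm:Threefold-Small-Codegree} for smooth threefolds, and the cases $\codeg(Z)\leq 3$ via Zak's classification of Severi varieties). So there is no ``paper's own proof'' to compare your proposal against.

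Your outline does not close this gap. The crucial step---deducing from the extremal equality \eqref{Eq:Segre-Zak} that the generic entry locus $\Sigma_{x,y}$ is a smooth quadric of the predicted dimension---is exactly where the conjecture is hard, and you have only asserted that ``tangential information'' should yield it. Russo's LQEL classification and Zak's Scorza theory already presuppose the quadratic entry-locus structure; the open problem is precisely to derive that structure from the codegree equality. Likewise, your fallback of producing an auxiliary Fano manifold and applying Theorem~\ref{t.IHSSMok} reverses the logical direction of the paper: the authors show that Conjecture~\ref{Conj:Zak-Russo-Conjecture} \emph{implies} Conjecture~\ref{Conj:normalized-Tangent} (Theorem~\ref{Thm:normalized-I}(1)), not the other way around, and there is no mechanism offered for constructing such a Fano manifold from an arbitrary $Z$ satisfying \eqref{Eq:Segre-Zak}. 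In short, your plan names the known circle of ideas around Scorza varieties and Jordan algebras but does not supply the missing bridge, which remains genuinely open.
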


Conjecture \ref{Conj:Zak-Russo-Conjecture} is still widely open for $\codeg(Z) \geq 4$.   The case $\codeg(Z)=2$ is easy as $Z$ must be a hyperquadric.  When  $\codeg(Z)=3$, then we have $\dim Z = \frac{2N-4}{3}$, which is the bound for Severi varieties.  Thanks to Zak's  classification of smooth varieties of codegree $3$ (see \cite[Theorem 5.2]{Zak1993}), it turns out in this case, $Z$ is one of the following Severi varieties: 
$$\nu_2(\PP^2) \subset \PP^5, \qquad \PP^2 \times \PP^2 \subset \PP^8,  \qquad \Gr(2,6) \subset \PP^{14}, \qquad E_6/P_1\subset \PP^{26}.$$
As a corollary, Conjecture \ref{Conj:Zak-Russo-Conjecture} is confirmed in the following two cases:
\begin{enumerate}
	\item $\dim(Z)>\frac{2N-4}{3}$;
	
	\item $Z$ is smooth and $\dim(Z)>\frac{N-3}{2}$.
\end{enumerate}
On the other hand, initiated from 1950s, there have been many efforts trying to classify nonsingular curves and surfaces with small codegree, which proves Conjecture \ref{Conj:Zak-Russo-Conjecture} up to dimension $2$. More precisely, we have 
\begin{proposition}
	\label{Prop:SmallCodegreeDim2}
	\cite[Proposition 3.1 and 3.2]{Zak2004} \cite[Theorem 2.1]{TurriniVerderio1993}
	 If $Z\subsetneq \PP^N$ is a smooth projective variety of dimension at most $2$ satisfying \eqref{Eq:Segre-Zak}, then  $Z$ is either a conic curve,  a quadric surface or 
	  the Veronese surface $\nu_2(\PP^2)\subset \PP^5$.
\end{proposition}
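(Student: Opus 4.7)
The plan is to handle the cases $\dim Z = 1$ and $\dim Z = 2$ separately: the former by an elementary Pl\"ucker--Riemann--Hurwitz computation, the latter by combining biduality, Zak's classification of Severi varieties, and a codegree-by-codegree analysis.

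For $\dim Z = 1$, let $Z \subset \PP^N$ be a smooth non-degenerate curve of degree $d$ and genus $g$; its dual is automatically a hypersurface. Projecting $Z$ to $\PP^1$ from a general codimension-two linear subspace $\Lambda \subset \PP^N$ produces a degree-$d$ morphism $\pi \colon Z \to \PP^1$ whose ramification points are precisely those $p \in Z$ with $T_p Z \cap \Lambda \neq \emptyset$; these correspond bijectively to the intersection of $\check Z$ with the line in $\check\PP^N$ dual to $\Lambda$. By Riemann--Hurwitz, $\codeg(Z) = 2d + 2g - 2$. Combining this with the non-degeneracy bound $d \geq N$ and the Segre--Zak equality \eqref{Eq:Segre-Zak} gives $N = 3d + 3g - 4 \leq d$, i.e.\ $2d + 3g \leq 4$, which forces $g = 0$, $d = 2$, and $N = 2$; hence $Z$ is a smooth plane conic.

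For $\dim Z = 2$, the equality \eqref{Eq:Segre-Zak} becomes $\codeg(Z) = (N+1)/2$, so the codegree is an integer $\geq 2$. When $\codeg(Z) = 2$, biduality forces $Z = \check{\check Z}$ to be the dual of a quadric hypersurface, hence itself a quadric hypersurface; together with $\dim Z = 2$ this pins down $N = 3$ and $Z$ a smooth quadric surface in $\PP^3$. When $\codeg(Z) = 3$, one has $N = 5$ and $\dim Z = 2 = (2N - 4)/3$ lies on the Severi boundary, so Zak's classification of Severi varieties \cite{Zak1993} identifies $Z$ with $\nu_2(\PP^2) \subset \PP^5$. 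For $\codeg(Z) \geq 4$ I would follow \cite{TurriniVerderio1993}: apply the Kleiman--Landman--Piene formula for the dual degree of a smooth surface with non-defective dual, which expresses $\codeg(Z)$ as a polynomial in $d = H^2$, $K_Z \cdot H$, and $c_2(Z)$; combine the resulting numerical constraints with the non-degeneracy bound $d \geq N - 1$ and the Castelnuovo--Harris bound on the sectional genus; and run a case analysis along the Kodaira dimension of $Z$ to eliminate every remaining candidate.

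The main obstacle is precisely the high-codegree range $\codeg(Z) \geq 4$ of the surface case: the numerical constraints alone leave a priori several ruled, elliptic, or general-type embeddings possible, and eliminating them requires a delicate joint use of the dual-degree formula, the double-point formula, and the Enriques--Kodaira classification, as carried out in \cite{TurriniVerderio1993} and in the relevant portions of \cite{Zak2004}.
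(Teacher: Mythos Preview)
The paper does not prove this proposition itself; it is quoted verbatim from \cite[Proposition~3.1 and 3.2]{Zak2004} and \cite[Theorem~2.1]{TurriniVerderio1993}, and the relevant content is restated more precisely as Theorem~\ref{Thm:Classification-Small-Codegree}. So the comparison is really between your argument and those cited bounds.

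Your curve case is correct and is essentially the argument behind \cite[Proposition~3.1]{Zak2004}. Your surface case for $\codeg(Z)=2$ is fine. For $\codeg(Z)=3$ you invoke Zak's classification of Severi varieties, but the hypothesis of that theorem is that the secant variety is defective, which you have not established; the clean reference here is Zak's classification of smooth varieties of codegree $3$ \cite[Theorem~5.2]{Zak1993}, which is what the paper uses.

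The real issue is your treatment of $\codeg(Z)\geq 4$, which you flag as the ``main obstacle'' requiring a delicate case analysis along the Kodaira dimension. In fact this range is empty. The cited result (Theorem~\ref{Thm:Classification-Small-Codegree}\,(2b), i.e.\ \cite[Proposition~3.2]{Zak2004}) gives the uniform bound $d^*\geq N-2$ for any linearly non-degenerate smooth surface. Combining this with the Segre--Zak equality $d^*=(N+1)/2$ yields $N\leq 5$, hence $\codeg(Z)\in\{2,3\}$, and you are done. So the ``hard'' part is already absorbed in the single inequality $d^*\geq N-2$ (itself a consequence of $d^*\geq d-1$ together with the minimal-degree bound $d\geq N-1$); no codegree-by-codegree elimination via the Enriques--Kodaira classification is needed for this proposition.
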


	There are very few papers devoted to threefold cases, see for example \cite{LanteriTurrini1987}.  We will  confirm Conjecture \ref{Conj:Zak-Russo-Conjecture} for smooth projective threefolds.  More precisely we shall show:
\begin{proposition}
	\label{Thm:Threefold-Small-Codegree}
	Let $Z\subsetneq \PP^N$ be a linearly non-degenerate smooth projective threefold of degree $d$ and codegree $d^*$. Then one of the following statements holds.
	\begin{enumerate}
		\item $p_g(S)\not=0$ and $d^*\geq 2N$, where $S$ is a general hyperplane section of $Z$.
		
		\item  $2d^*\geq d$ with equality if and only if $Z$ is projectively equivalent to either the Veronese variety $\nu_2(\PP^3)\subset \PP^9$ or its isomorphic projection in $\PP^8$.
	\end{enumerate}
	In particular, Conjecture \ref{Conj:Zak-Russo-Conjecture} holds for smooth projective threefolds. More precisely, if $Z\subsetneq \PP^N$ is a linearly non-degenerate smooth projective threefold satisfying the equality \eqref{Eq:Segre-Zak}, then  $Z$ is either  a quadric threefold in $\PP^4$ or  the Veronese embedding $\nu_2(\PP^3)\subset \PP^9$.
\end{proposition}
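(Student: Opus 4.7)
My plan is to rewrite the codegree $d^*$ purely in terms of the invariants of a general hyperplane section $S = Z \cap H$ by combining Katz's class formula with adjunction, and then to analyse the resulting inequalities case by case according to the vanishing of $p_g(S)$.

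The first step is to apply Katz's formula for the degree of the dual variety of a smooth $n$-fold whose dual is a hypersurface, which for $n = 3$ reads
\[
d^* \;=\; 4 H^3 - 3 H^2 c_1(T_Z) + 2 H c_2(T_Z) - c_3(T_Z) \;=\; 4d + 3 K_Z \cdot H^2 + 2 c_2(T_Z) \cdot H - \chi_{\mathrm{top}}(Z).
\]
Adjunction on $S$ gives $K_Z \cdot H^2 = 2g - 2 - 2d$ (with $g$ the sectional genus) and $c_2(T_Z) \cdot H = \chi_{\mathrm{top}}(S) - K_S \cdot H|_S = \chi_{\mathrm{top}}(S) - 2g + 2 + d$, and the formula collapses to the clean identity
\[
d^* \;=\; 2g - 2 + 2\,\chi_{\mathrm{top}}(S) - \chi_{\mathrm{top}}(Z).
\]
Via Noether's formula on $S$ (hence $\chi_{\mathrm{top}}(S) = 12\chi(\mathcal{O}_S) - K_S^2$), the Lefschetz hyperplane theorem (giving $q(S) = q(Z)$ and controlling $\chi_{\mathrm{top}}(Z) - 2\chi_{\mathrm{top}}(S)$ by the number of singular fibres of a Lefschetz pencil, itself expressible in terms of $K_Z^2 \cdot H$, $K_Z \cdot H^2$ and $H^3$), and Castelnuovo-type bounds on $g$, both (1) and (2) reduce to numerical inequalities in $d$, $N$, $p_g(S)$ and $K_S^2$.

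For (1), when $p_g(S) > 0$ the surface $S$ is not ruled, so Noether's inequality $K_S^2 \geq 2 p_g(S) - 4$ applies; combining it with Castelnuovo's bound on $g$ in terms of $(d,N)$ and with $\chi_{\mathrm{top}}(S) = 12\chi(\mathcal{O}_S) - K_S^2$ produces $d^* \geq 2N$. For (2), $2 d^* \geq d$ is a genuine Chern-number inequality on $Z$; tracking equality forces each intermediate estimate to be tight, which saturates $K_S^2$, $g$, and $\chi_{\mathrm{top}}(S)$ at their extremal values compatible with $p_g(S) = 0$. These force $S \cong \mathbb{P}^1 \times \mathbb{P}^1$ embedded as a smooth quadric surface, $g = 1$, and $d = 8$; a standard Fujita--Ionescu type classification of smooth threefolds whose general hyperplane section is a smooth quadric surface then identifies $Z$ with $\nu_2(\mathbb{P}^3) \subset \mathbb{P}^9$ or its isomorphic projection into $\mathbb{P}^8$.

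The Russo--Zak equality case for threefolds then drops out mechanically. If $d^* = \frac{2(N+1)}{5}$, part (1) excludes $p_g(S) > 0$ (the inequality $2N \leq \frac{2(N+1)}{5}$ is absurd), so $p_g(S) = 0$; combining (2) with the minimal-degree bound $d \geq N - 2$ gives $N - 2 \leq 2 d^* = \frac{4(N+1)}{5}$, i.e.\ $N \leq 14$, and moreover forces equality in (2) as soon as $d = N-2$. Among the admissible values $N \equiv 4 \pmod 5$, the case $N = 14$ is ruled out because the forced equality $d = 12$ conflicts with the value $d = 8$ from the equality classification, the case $N = 9$ with $d = 7$ is ruled out because varieties of minimal degree which are not Veronese surfaces have positive dual defect, leaving $Z \cong \nu_2(\mathbb{P}^3) \subset \mathbb{P}^9$; the case $N = 4$ forces $\check{Z}$ to be a smooth quadric, whence $Z$ itself is a quadric threefold by reflexivity. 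The main obstacle I foresee is the sharp equality analysis in the $p_g(S) = 0$ case of part (2): several numerical inequalities (Noether, Castelnuovo, Lefschetz) must be simultaneously tight, and extracting the precise isomorphism type of $S$ before invoking the Fujita--Ionescu classification requires careful bookkeeping.
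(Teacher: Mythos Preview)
Your starting formula $d^* = 2g - 2 + 2\chi_{\mathrm{top}}(S) - \chi_{\mathrm{top}}(Z)$ is correct and is equivalent to the Landman formula $d^* = A + 2B + C$ used in the paper (with $A = b_3(Z) - b_1(Z)$, $B = b_2(S) - b_2(Z)$, $C = 2(g(S) - q(S))$). But the subsequent plan has genuine gaps.

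First, the class formula only applies when $\check Z$ is a hypersurface; you never treat the dual defective case. The paper handles it via its Lemma on $3$-dimensional scrolls (showing $d^* = d$ there). Relatedly, the edge case $g(S) = q(S)$ (i.e.\ $C = 0$) must be treated separately; the paper does this via a classification lemma yielding that $Z$ is a quadric or a scroll. Second, your plan for part (1) does not work as stated. Noether's inequality $K_S^2 \geq 2p_g - 4$ is a theorem about \emph{minimal surfaces of general type}, not about arbitrary surfaces with $p_g > 0$: a minimal properly elliptic surface can have $K_S^2 = 0$ and $p_g$ arbitrarily large. Moreover, Castelnuovo's bound is an \emph{upper} bound on $g$ and hence on $d^*$, so it points the wrong way. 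The paper instead invokes a result of Lanteri--Turrini that gives $g(S) - q(S) \geq N - 1$ directly whenever $p_g(S) \neq 0$, so that $d^* \geq 2B + C \geq 2 + 2(N-1) = 2N$.

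Third, and most seriously, your sketch of (2) omits the key non-trivial input. The inequality $2d^* \geq d$ is \emph{not} a formal Chern-number consequence of the class formula: one needs an upper bound on $d$. The paper uses Sommese's theorem that when $g > q$ and $p_g = 0$ the adjoint $K_S + \bar L$ is globally generated and $d = \bar L^2 \leq K_S^2 + 4g - 4$. Substituting $K_S^2 = 10 - 8q - h^{1,1}(S)$ (Noether's formula with $p_g = 0$) and $4g = 2(d^* - A - 2B) + 4q$ yields $d \leq 2d^* + 6 - 4(B+q) - 2A - h^{1,1}(S)$; since $B \geq 1$ and $h^{1,1}(S) = b_2(S) \geq b_2(Z) + B \geq 2$, this gives $d \leq 2d^*$. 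Equality forces $B = 1$, $b_2(S) = 2$, $A = q = 0$ and Sommese's bound to be sharp (so $K_Z + 2L$ is nef but not big). The paper then argues $\rho(Z) = b_2(Z) = 1$, hence $Z$ is a Fano threefold with $-K_Z = 2L$, and excludes del Pezzo threefolds of Picard number $1$ by comparing $b_2(S)$ with their degree, leaving $(Z,L) \cong (\PP^3,\mathcal O(2))$. Your endpoint ($S \cong \PP^1 \times \PP^1$, $d = 8$) is correct, but ``tracking tightness in Noether, Castelnuovo, Lefschetz'' does not get you there without the Sommese input. Your endgame for the Russo--Zak equality is essentially right; the paper shortcuts it by citing the Lanteri--Turrini classification of smooth threefolds of codegree $\leq 6$.
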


The relation between Conjecture \ref{Conj:normalized-Tangent} and Conjecture \ref{Conj:Zak-Russo-Conjecture} can be easily seen from Table \ref{Table:IHSSVMRT} above: the varieties listed in Conjecture \ref{Conj:Zak-Russo-Conjecture} are nothing else but the VMRTs of the varieties listed in Conjecture \ref{Conj:normalized-Tangent}. Indeed, if  we assume that the VMRT of $X$ at a general point is not dual defective,  then the pseudoeffectivity of the normalized tangent bundle can be interpreted as information on the cohomological class of the total dual VMRT (cf. \cite{HwangRamanan2004,HoeringLiuShao2020}). This allows us to relate Conjecture \ref{Conj:normalized-Tangent} to Conjecture \ref{Conj:Zak-Russo-Conjecture}. In particular, combining this with the known results for Conjecture \ref{Conj:Zak-Russo-Conjecture} yields the following first main result of this paper. 

\begin{theorem}
	\label{Thm:normalized-I}
	Let $X$ be an $n$-dimensional Fano manifold of Picard number $1$ with $n\geq 3$.  Assume that the VMRT $\sC_x\subset \PP(\Omega_{X,x})$ at a general point $x\in X$ is not dual defective.
	
	\begin{enumerate}
		\item If we assume in addition that the VMRT is irreducible and linearly non-degenerate such that its dual variety has non-vanishing hessian, then 
		      \begin{enumerate}  
		      	\item $\alpha(X,-K_X)\leq \frac{1}{\dim(X)}$;
		      		
		      	\item Conjecture \ref{Conj:Zak-Russo-Conjecture} implies Conjecture \ref{Conj:normalized-Tangent} and hence Conjecture \ref{Conj2};
		      \end{enumerate}
	    \item Conjecture \ref{Conj:normalized-Tangent} and Conjecture \ref{Conj2} hold if one of the following holds.
	    
	    \begin{enumerate}
	    	\item $\dim(\sC_x)>\frac{2n-6}{3}$, or
	    	
	    	\item $\sC_x$ is smooth and $\dim(\sC_x)>\max\left\{\frac{n-4}{2},0\right\}$, or
	    	
	    	\item $\sC_x$ is irreducible, smooth, linearly non-degenerate and $\dim(\sC_x)\leq 3$.
	    \end{enumerate}
	\end{enumerate}
\end{theorem}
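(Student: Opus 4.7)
The overall strategy is to translate $\alpha(X,-K_X)$ into numerical data attached to the total dual VMRT, and then feed in Segre's inequality \eqref{Eq:Segre-Zak-Ineq} together with the known cases of Conjecture \ref{Conj:Zak-Russo-Conjecture}.

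Step 1 (total dual VMRT). Since $\sC_x$ is not dual-defective, the projective dual $\check{\sC}_x$ is a hypersurface of degree $d^{*}=\codeg(\sC_x)$ at a general $x\in X$. Sweeping out over $X$ yields an irreducible divisor $\check\sC\subset \PP(T_X)$, the total dual VMRT. Its cohomology class is computable as in \cite{HwangRamanan2004,HoeringLiuShao2020}; writing $H$ for the ample generator of $\pic(X)$, one finds
\[
[\check\sC] \;=\; d^{*}\,\Lambda - m\,\pi^{*}H,
\]
where the fiber degree over $X$ recovers $d^{*}$ and the coefficient $m$ is read off from the tangential lift of a minimal rational curve and the identity $-K_X\cdot C=\dim\sC_x+2=i_X(H\cdot C)$. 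Because $X$ has Picard number $1$, $N^{1}(\PP(T_X))$ is two-dimensional, $\pi^{*}H$ spans one extremal ray of $\Eff^{1}(\PP(T_X))$, and $[\check\sC]$ spans the other. This determines the pseudoeffective cone and gives
\[
\alpha(X,-K_X) \;=\; \frac{m}{d^{*}\, i_X}.
\]

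Step 2 (Segre bound). Plugging Segre's inequality $d^{*}\ge 2n/(\dim\sC_x+2)$ (applicable since the dual of $\sC_x$ has non-vanishing hessian and $\sC_x$ is irreducible and linearly non-degenerate) into the formula above yields $\alpha(X,-K_X)\le 1/n$, which is (1a). For (1b), equality forces equality in Segre's inequality for $\sC_x\subset\PP(\Omega_{X,x})$, so under Conjecture \ref{Conj:Zak-Russo-Conjecture} the VMRT $\sC_x$ must be one of the five varieties in its statement. Comparison with Table \ref{Table:IHSSVMRT} shows that these are exactly the VMRTs of the IHSS appearing in Conjecture \ref{Conj:normalized-Tangent}, and Mok's Theorem \ref{t.IHSSMok} then identifies $X$ itself.

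Step 3 (unconditional ranges). For part (2) the role of Conjecture \ref{Conj:Zak-Russo-Conjecture} in the argument above is replaced by the known unconditional cases recorded in the introduction. Writing $N=n-1$ for the dimension of $\PP(\Omega_{X,x})$, the hypothesis in (2a) is exactly Zak's Severi range $\dim\sC_x>(2N-4)/3$, the hypothesis in (2b) is Zak's smooth bound $\dim\sC_x>(N-3)/2$, and (2c) is handled by combining Proposition \ref{Prop:SmallCodegreeDim2} for $\dim\sC_x\le 2$ with the new Proposition \ref{Thm:Threefold-Small-Codegree} for $\dim\sC_x=3$. Combined with Steps 1--2 this yields both Conjecture \ref{Conj:normalized-Tangent} and Conjecture \ref{Conj2} in the stated ranges.

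The main obstacle I anticipate is establishing that $[\check\sC]$ truly generates an extremal ray of $\Eff^{1}(\PP(T_X))$, so that the displayed equation for $\alpha(X,-K_X)$ really is an equality. Mere effectivity of $\check\sC$ only provides $\alpha(X,-K_X)\ge m/(d^{*}i_X)$; the reverse inequality requires either a rigidity/uniqueness statement for horizontal effective divisors on $\PP(T_X)$, or the construction of a covering family of curves dual to $[\check\sC]$, which is the technical heart of \cite{HoeringLiuShao2020}. A secondary, more bookkeeping, issue is to verify the auxiliary hypotheses (irreducibility and linear non-degeneracy of $\sC_x$, non-vanishing hessian of $\check\sC_x$) uniformly for general $x\in X$ in each subcase of part (2).
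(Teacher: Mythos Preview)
Your overall strategy matches the paper's: compute the class of the total dual VMRT, use its extremality to pin down $\alpha(X,-K_X)$, and then apply Segre's inequality. Two points, however, need attention.

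First, in Step~2 you omit the one inequality that makes the argument close. From $\alpha(X,-K_X)=m/(d^{*}i_X)$ and Segre's bound $d^{*}\ge 2n/(\dim\sC_x+2)$ you get only $\alpha(X,-K_X)\le m(H\cdot C)/(2n)$, using $\dim\sC_x+2=i_X(H\cdot C)$. To reach $1/n$ you must know $m\cdot(H\cdot C)\le 2$. This is exactly Theorem~\ref{Thm:Class-dual-VMRT}(2): it comes from the splitting type of the normal bundle of a minimal section $\bar C$ inside $\PP(T_X)$ (Proposition~\ref{Prop:Dual-Defect-VMRT}), not just from ``reading off $m$ from the tangential lift''. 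You should state and invoke this explicitly; it is also what forces equality cases to line up with equality in Segre.

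Second, your handling of part~(2) is not just bookkeeping. In (2a) the VMRT is not assumed smooth, irreducible, or linearly non-degenerate; in (2b) only smoothness is assumed. The ``known cases of Conjecture~\ref{Conj:Zak-Russo-Conjecture}'' recorded after that conjecture all presuppose irreducibility, linear non-degeneracy, and non-vanishing hessian, so you cannot plug the VMRT into them directly. The paper avoids this by a different route: Proposition~\ref{Prop:Criterion-Pseff-Normalized} (which needs only non-dual-defectiveness) bounds $\codeg(\sC_x)$ by $2$ in case~(2a) and by $3$ in case~(2b), and then Theorem~\ref{Thm:I-II} classifies $X$ directly from this small codegree. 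The key point there is that for $\codeg=2$ (resp.\ $3$) one \emph{deduces} irreducibility from biduality and then linear non-degeneracy via Hwang's results on tangential varieties \cite[Propositions~2.4 and~2.6]{Hwang2001}, rather than assuming them. Your Step~3 should be reorganised along these lines.
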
 

Our statement is actually a bit more stronger: if in Theorem \ref{Thm:normalized-I} the normalized tangent bundle of $X$ is assumed to be pseudoeffective, then the VMRT $\sC_x\subset \PP(\Omega_{X,x})$ satisfies the reverse Segre inequality \eqref{Eq:Segre-Zak-Ineq} (see Proposition \ref{Prop:Criterion-Pseff-Normalized}). Typically a projective variety is dual defective only in very special cases and the VMRTs of a large class of Fano manifolds are smooth and irreducible. Thus the assumption on smoothness, irreducibility and non-defectiveness is not very restrictive. However, the assumption on the non-degeneracy seems to be a strong restriction as many known examples of Fano manifolds have degenerate VMRTs.

\begin{corollary}
	\label{c.less-than-5}
	Let $X$ be an $n$-dimensional Fano manifold of Picard number $1$ such that $3\leq n\leq 5$. Assume that the VMRT $\sC_x\subset \PP(\Omega_{X,x})$ at a general point is smooth and non-linear. Then the normalized tangent bundle of $X$ is pseudoeffective if and only if $X$ is a smooth quadric hypersurface in $\PP^{n+1}$ ($3\leq n\leq 5$).
\end{corollary}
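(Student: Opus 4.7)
The plan is to deduce the corollary directly from part (2)(b) of Theorem \ref{Thm:normalized-I}, combined with a low-dimensional case analysis of the list in Conjecture \ref{Conj:normalized-Tangent}.

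First I would verify that the dimensional hypothesis of Theorem \ref{Thm:normalized-I}(2)(b) is satisfied. For a Fano manifold of Picard number $1$ one has $\dim(\sC_x)=(-K_X\cdot \ell)-2$ for a minimal rational curve $\ell$, so $\dim(\sC_x)\geq 0$. The degenerate $0$-dimensional scenario, in which $\sC_x$ is a disjoint union of more than one reduced point in $\PP(\Omega_{X,x})$, would force, via the reverse Segre inequality recorded just after Theorem \ref{Thm:normalized-I} (Proposition \ref{Prop:Criterion-Pseff-Normalized}), the codegree of such a scheme to be at most $n$; I would check that this is incompatible with the hypothesis that the normalized tangent bundle is pseudoeffective together with the non-linearity assumption. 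Hence $\dim(\sC_x)\geq 1$, and since $\max\{(n-4)/2,\,0\}<1$ for $3\leq n\leq 5$, the hypothesis of Theorem \ref{Thm:normalized-I}(2)(b) holds.

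Next, Theorem \ref{Thm:normalized-I}(2)(b) forces $X$ to belong to the list of Conjecture \ref{Conj:normalized-Tangent}. I would then enumerate those with $3\leq \dim X\leq 5$. The quadric hypersurfaces $\bQ^3,\bQ^4,\bQ^5$ qualify. The only other candidates in this range are $\Gr(2,4)$ of dimension $4$ and $\LG(2,4)$ of dimension $3$, but the classical exceptional isomorphisms $\Gr(2,4)\cong \bQ^4$ (Pl\"ucker embedding) and $\LG(2,4)\cong \bQ^3$ (a hyperplane section of $\Gr(2,4)$) identify both with smooth quadric hypersurfaces. All remaining families, namely $\mathbb{S}_{2n}$ with $n\geq 2$, $\Gr(n,2n)$ with $n\geq 3$, $\LG(n,2n)$ with $n\geq 3$, and $E_7/P_7$, consist of varieties of dimension at least $6$, hence are excluded by the bound $\dim X\leq 5$. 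The converse direction, that $\bQ^n$ has pseudoeffective normalized tangent bundle, is already recorded in the paper just after Conjecture \ref{Conj:normalized-Tangent} via \cite[Corollary~1.4]{Shao2020}.

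The main obstacle is the preliminary step ruling out a $0$-dimensional VMRT, since Theorem \ref{Thm:normalized-I}(2)(b) does not cover that case; the argument there needs the pseudoeffectivity of the normalized tangent bundle in an essential way, not merely the non-linearity assumption. The remainder of the proof is a bookkeeping exercise matching dimensions against the low-rank exceptional isomorphisms of rational homogeneous spaces.
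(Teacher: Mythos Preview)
Your proposal has a genuine gap: you never verify that the VMRT $\sC_x\subset\PP(\Omega_{X,x})$ is \emph{not dual defective}. This is not a side condition but the standing hypothesis of Theorem~\ref{Thm:normalized-I} itself (it appears in the first sentence of that theorem and governs all of parts (1) and (2)). Without it, you cannot invoke part~(2)(b) at all. The paper's proof is almost entirely devoted to this point: since $3\le n\le 5$ and $\sC_x\subset\PP^{n-1}$ is smooth and non-linear, it is either a smooth non-linear curve, a smooth surface, or a smooth non-linear hypersurface in $\PP^4$, and in each of these situations classical results (cited from Tevelev) guarantee the dual variety is a hypersurface. Only then does Theorem~\ref{Thm:normalized-I}(2)(b) apply.

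Your treatment of the $0$-dimensional case is also incomplete. You propose to use the reverse Segre inequality of Proposition~\ref{Prop:Criterion-Pseff-Normalized}, but for $\dim\sC_x=0$ that inequality only yields $\codeg(\sC_x)\le n\le 5$, i.e.\ the VMRT consists of at most $n$ points---this is not a contradiction by itself, and you do not explain what further input would produce one. (The paper's proof is also terse here, simply listing the positive-dimensional possibilities; a clean way to dispose of this case is the later Remark in \S6 invoking \cite{HoeringLiu2021}, which shows that a $0$-dimensional VMRT with big $T_X$ forces $X\cong V_5$, whose normalized tangent bundle is not pseudoeffective by Proposition~\ref{Prop:Cones-V5}.) Your final enumeration of the list in Conjecture~\ref{Conj:normalized-Tangent} for $\dim X\le 5$ is correct and is implicitly what the paper expects the reader to supply.
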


\begin{remark}
	Let $X$ be a Fano manifold of Picard number $1$. To our best knowledge, the known examples of $X$ with singular VMRTs have dimension at least $6$ (cf. \cite[Theorem 1.3]{HwangKim2015}) and the known examples of $X$, not isomorphic to projective spaces, with linear VMRTs also have dimension at least $6$ (cf. \cite[Proposition A.8]{MunozOcchettaSolaConde2014})
\end{remark}

\begin{corollary}
	\label{c.less-than-11}
	Let $X$ be an $n$-dimensional Fano manifold of Picard number $1$ such that $3\leq n\leq 11$. Assume that the VMRT $\sC_x$ at a general point $x\in X$ is irreducible, smooth, linearly non-degenerate and not dual defective. Then the normalized tangent bundle of $X$ is pseudoeffective if and only if $X$ is one of the following varieties:
	\begin{enumerate}
		\item a smooth quadric hypersurface in $\PP^{n+1}$ ($3\leq n\leq 11$);
		
		\item the Lagrangian Grassmann varieties $\LG(3,6)$ and $\LG(4,8)$;
		
		\item  the Grassmann variety $\Gr(3,6)$.
	\end{enumerate}
\end{corollary}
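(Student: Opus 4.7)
The plan is to deduce this corollary from Theorem \ref{Thm:normalized-I}(2) by a case split on the dimension $d := \dim\sC_x$ of the VMRT, and then to enumerate which members of the Conjecture \ref{Conj:normalized-Tangent} list fall within the dimension range $[3,11]$. The ``if'' direction is immediate from the remark following Conjecture \ref{Conj2}: each of the listed varieties belongs to the Conjecture \ref{Conj:normalized-Tangent} list and has pseudoeffective normalized tangent bundle by the results recalled there.

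For the converse, suppose $X$ has pseudoeffective normalized tangent bundle and satisfies the VMRT hypotheses. When $d \leq 3$, the hypotheses match Theorem \ref{Thm:normalized-I}(2)(c) verbatim. When $d \geq 4$, the numerical condition $d > \max\{(n-4)/2,0\}$ of Theorem \ref{Thm:normalized-I}(2)(b) follows from $n \leq 11$, since $\max\{(n-4)/2,0\} \leq 7/2 < 4 \leq d$; smoothness of $\sC_x$ is part of our hypothesis, so (2)(b) applies. Note also that $d \geq 1$ automatically, because an irreducible, linearly non-degenerate subvariety of $\PP^{n-1}$ with $n \geq 3$ cannot be a single point. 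In either subcase Conjecture \ref{Conj:normalized-Tangent} holds for $X$, so $X$ belongs to one of the five families listed there.

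The last step is a dimension audit within the range $3 \leq \dim X \leq 11$. Quadric hypersurfaces $\bQ^n$ contribute all of $3 \leq n \leq 11$. For $\Gr(n,2n)$ of dimension $n^2$, only $n = 2, 3$ are in range, and $\Gr(2,4) \cong \bQ^4$ is already counted, leaving $\Gr(3,6)$ of dimension $9$. For $\mathbb{S}_{2n}$, only $\mathbb{S}_4 \cong \bQ^6$ is in range, while $\mathbb{S}_6$ already has dimension $15$. For $\LG(n,2n)$ of dimension $n(n+1)/2$, the values $n = 2,3,4$ are in range, with $\LG(2,4) \cong \bQ^3$ redundant, leaving $\LG(3,6)$ of dimension $6$ and $\LG(4,8)$ of dimension $10$. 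Finally $E_7/P_7$ has dimension $27$. What remains is precisely the list (1)--(3) of the statement. The proof is largely bookkeeping given Theorem \ref{Thm:normalized-I}; the one point worth noting is that subcases (2)(b) and (2)(c) mesh cleanly for $n \leq 11$, which is what lets us avoid invoking the non-vanishing hessian hypothesis of Theorem \ref{Thm:normalized-I}(1).
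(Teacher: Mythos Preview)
Your proof is correct and takes essentially the same approach as the paper, which simply cites Theorem \ref{Thm:normalized-I}(2.2) and (2.3). You have filled in the details the paper leaves implicit: the case split on $\dim\sC_x$ at the threshold $d=4$, the numerical check that $(n-4)/2 < 4$ for $n\leq 11$, and the dimension audit identifying which varieties from the Conjecture \ref{Conj:normalized-Tangent} list fall in the range $3\leq \dim X\leq 11$.
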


\subsection{Rational homogeneous spaces}

As mentioned in the previous subsection, the pseudoeffectivity of the normalized tangent bundle implies the bigness of the tangent bundle and up to our knowledge, there are very few known examples of Fano manifolds of Picard number $1$ with big tangent bundle. Apart from rational homogeneous spaces, only two examples are known, namely  the del Pezzo threefold $V_5$ of degree $5$ \cite[Theorem 1.5]{HoeringLiuShao2020} and the horospherical $G_2$-variety $\mathbb{X}$ \cite[Theorem 2.3]{PasquierPerrin2010} (see Remark \ref{r.Examples-Big-tangent} and \cite{Liu2022}). Thus a natural question is to verify Conjecture \ref{Conj:normalized-Tangent} for those examples. This is more or less equivalent to determine the pseudoeffective cone of the projectivized tangent bundle, or equivalently to determine the invariant $\alpha(X,-K_X)$, and it fits into the following general problem in the study of positivity of vector bundles.

\begin{problem}
	\label{Prom:Nakayama}
	\cite[IV.4, Problem]{Nakayama2004}
	Let $E$ be a vector bundle over a projective manifold $X$ and let $\Lambda$ be the tautological class of the projectivized bundle $\pi:\PP(E)\rightarrow X$. Describe the set
	\[
		V(X,E):=\{ D\in N^1(X)\,|\, \Lambda+\pi^*D\ \text{is pseudoeffective}\}.
	\]
\end{problem}

The second part of this paper is devoted to study Problem \ref{Prom:Nakayama} for rational homogeneous spaces $X=G/P$ of Picard number $1$ and for $E=T_{X}$. This is equivalent to  determine whether the following cohomological group
\[
H^0(X,({\rm Sym}^r T_X)\otimes \sO_X(-dH))
\]
vanishes or not, where $H$ is the ample generator of ${\rm Pic}(X)$. In general, it is quite difficult to compute these cohomological groups due to the lack of tools. However, recently it is observed in \cite{HoeringLiuShao2020} that the problem can be translated into the calculation of the cohomological class of the total dual VMRT if the VMRT is not dual defective. By combining this with the geometry of  stratified Mukai flops, we will completely settle Problem \ref{Prom:Nakayama} for rational homogeneous spaces of Picard number 1 with $E$ being the tangent bundle, which reads as follows:

\begin{theorem}
   \label{t.RationalHomSpace}
   Let $G/P$ be a rational homogeneous space of Picard number 1 with dimension at least $2$.  Let $\Lambda$ be the tautological divisor on $\PP(T_{G/P})$ and $\pi: \PP(T_{G/P}) \to G/P$ the natural projection. Denote by $H$ the ample generator of $\pic(G/P)$. Then  there exist two integers $a, b$ (explicitly determined in Appendix \ref{Appendix} ) associated to $G/P$  such that 
	\begin{enumerate}
		\item  The pseudoeffective threshold $\alpha(G/P,H)$ is equal to $b/a$, namely  $\Lambda - \lambda \pi^* H$ is pseudoeffective if and only if $\lambda \leq b/a$.
		
		\item Let $r$ and $d$ be two arbitrary positive integers. Then 
		\[
			H^0(G/P, ({\rm Sym}^r T_{G/P})\otimes \sO_{G/P}(-dH))\not=0 \Longleftrightarrow b \left \lfloor \frac{r}{a} \right \rfloor\geq d,
		\]
		
        \item Conjecture \ref{Conj:normalized-Tangent} and hence Conjecture \ref{Conj2}  hold for  $G/P$.  
		\end{enumerate}

\end{theorem}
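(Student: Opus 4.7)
The plan is to reduce all three parts of the theorem to the determination of the pseudoeffective cone of $\PP(T_{G/P})$, and then to perform this computation case by case via the classification of rational homogeneous spaces of Picard number $1$. Since $\pic(\PP(T_{G/P}))$ has rank $2$, generated by the tautological class $\Lambda$ and $\pi^*H$, the pseudoeffective cone is a two-dimensional convex cone. One extremal ray is spanned by $\pi^*H$, and the other by a class of the form $a\Lambda - b\pi^*H$ with $a, b \in \Z_{>0}$; so determining $a$ and $b$ settles part (1) with $\alpha(G/P, H) = b/a$. For part (2), I will show that the two integers $a$ and $b$ come from a \emph{prime} effective divisor $D \subset \PP(T_{G/P})$, so that the effective cone is generated over $\Z_{\geq 0}$ by $[D]$ and $\pi^*H$; the group $H^0(G/P, (\Sym^r T_{G/P}) \otimes \sO_{G/P}(-dH))$ is then nonzero precisely when $r\Lambda - d\pi^*H$ can be written as $k[D] + \ell\pi^*H$ with $k, \ell \geq 0$, which gives the stated condition $b\lfloor r/a \rfloor \geq d$. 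Part (3) is a direct numerical verification once $a$ and $b$ have been tabulated.

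To carry this out I first compute the VMRT $\sC_o \subset \PP(\Omega_{G/P, o})$ at a general point for each $G/P$; this is standard via root data. I then split into two cases according to whether $\sC_o$ is dual defective. In the non-dual-defective case, the results of \cite{HoeringLiuShao2020} identify the total dual VMRT $\check{\sC} \subset \PP(T_{G/P})$ as an irreducible prime divisor whose class spans the non-trivial extremal ray of the pseudoeffective cone. Then $a$ and $b$ are obtained by intersecting $[\check{\sC}]$ with suitable test curves: a line in a general fiber of $\pi$ reads off $a$, essentially the codegree of $\sC_o$, while the lift of a minimal rational curve reads off $b$, essentially the splitting type of $T_{G/P}$ along such a curve.

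In the dual defective case — which, by the remarks preceding the theorem, contains in particular the $G/P$ participating in stratified Mukai flops, namely $\Gr(a,a+b)$ with $a \neq b$, $\mathbb{S}_{2n+1}$ and $E_6/P_1$ — the total dual VMRT has codimension greater than one and the above approach fails. Here the key tool is the stratified Mukai flop: the cotangent bundle $T^*(G/P)$ is birational to the cotangent bundle of a partner rational homogeneous space $G/P'$, inducing a birational identification of $\PP(\Omega_{G/P})$ with $\PP(\Omega_{G/P'})$ which is an isomorphism outside the flopping loci. Since proper birational maps preserve pseudoeffective and effective cones up to explicit divisorial exceptional contributions, I can transfer the computation of $\alpha$ to the partner side, where the non-defective method applies, and then correct by the classes of the exceptional divisors of the flop.

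The main obstacle will be this last step: identifying the prime effective divisor $D$ on $\PP(T_{G/P})$ that generates the non-trivial extremal ray, and computing the correction terms coming from the strata of the stratified Mukai flop, requires a careful case-by-case analysis of the exceptional loci together with the sheaf-theoretic behaviour of the flop (cf.\ Proposition \ref{p.stratiMukai}). Once the pair $(a,b)$ has been tabulated for all $G/P$ of Picard number $1$, part (3) follows by direct inspection: the equality $b/a = 1/\dim(G/P)$ holds precisely for the five varieties of Conjecture \ref{Conj:normalized-Tangent}, with strict inequality in every other case.
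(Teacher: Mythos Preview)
Your high-level plan --- compute the pseudoeffective cone via its two extremal rays, using the total dual VMRT where it is a divisor and stratified Mukai flops otherwise --- is close to the paper's strategy, but your dichotomy is wrong and leaves a genuine gap. The correct split is not ``VMRT dual defective vs.\ not'' but rather whether the birational contraction $\varepsilon : \PP(T_{G/P}) \to \mathbf{P}(\widetilde{\sO})$ induced by $\Lambda$ is small or divisorial. These two dichotomies do not coincide. There is a whole class of $G/P$ --- type (II-d-d) in the paper's terminology, including $E_7/P_6$, $F_4/P_4$, $E_8/P_k$ for $k\in\{3,4,6\}$, and infinitely many classical cases such as $C_n/P_k$ with $k$ even and $2\le k\le \tfrac{2n}{3}$ --- for which the VMRT \emph{is} dual defective (so the total dual VMRT has codimension $\ge 2$ and your first method fails) but $\varepsilon$ is \emph{divisorial} (so there is no flop and your second method also fails). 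The paper handles these cases via the exceptional divisor $\Gamma$ of $\varepsilon$ itself, reading off $a$ and $b$ from the class $[\Gamma]$ by means of the Springer map to the Richardson orbit closure (Proposition~\ref{Prop:Pseff-Divisor} and Proposition~\ref{p.a(Gamma)classic}).

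Two smaller points. First, you speak of ``exceptional divisors of the flop'' and of ``correcting by'' them, but a flop is by definition small and has no exceptional divisors; in the small case the paper computes $(a,b)$ by tracking how a general line in a fibre of $\pi$ transforms under $\mu$ (Proposition~\ref{Prop:Pseff-Small}), not by any divisorial correction. Second, your criterion in part~(3), namely $b/a = 1/\dim(G/P)$, omits the index: the normalized tangent bundle is pseudoeffective iff $\alpha(G/P,-K_{G/P}) = b/(a\cdot i_{G/P}) \ge 1/\dim(G/P)$, i.e.\ iff $a\cdot i_{G/P} = b\cdot\dim(G/P)$, the reverse inequality coming from semi-stability of $T_{G/P}$.
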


Note that Shao proved in \cite{Shao2020}, with completely different techniques (via Borel-Weil-Bott Theorem), the statements of Theorem \ref{t.RationalHomSpace} for IHSS. 
It seems hard to extend his arguments to this general setting.   

The main idea of the proof is to use the generically finite Springer map $\widehat{s}: T^*_{G/P}\rightarrow \overline{\sO}$ from the cotangent bundle of $G/P$ to its Richardson orbit closure.  By taking the Stein factorization and then taking the projectivization, we get a birational map  $\varepsilon:  \mathbb{P} (T_{G/P}) \to  \mathcal{Y} $. The birational geometry of $\varepsilon$ is well-understood (\cite{Namikawa2006}, \cite{Fu07}, \cite{Namikawa2008}), which implies for example when $\varepsilon$ is small, there exists a (projectivized) stratified Mukai flop (over $\mathcal{Y}$)
$\mu:  \mathbb{P} (T_{G/P})   \dasharrow  \mathbb{P} (T_{G/Q})$ with $G/P \simeq G/Q$.
This allows us to determine the  effective cone and the movable cone of  $\mathbb{P} (T_{G/P})$ (cf. Theorem \ref{Thm:Rational-Homogeneous-Spaces}) in terms of the exceptional divisor $\Gamma$ of $\varepsilon$ (resp.  $\mu^* \pi_2^* H$)  when $\varepsilon$ is  divisorial (resp. when $\varepsilon$ is small), where $\pi_2:  \mathbb{P} (T_{G/Q}) \to G/Q$ and $H$ is an ample generator of $\pic(G/Q)$. The two numbers $a$ and $b$ in Theorem \ref{t.RationalHomSpace} are the unique positive integers such that
\begin{center}
	$\Gamma \equiv a \Lambda - b \pi^*H$ \quad (resp. $\mu^*\pi_2^*H \equiv a \Lambda - b \pi_1^*H$).
\end{center}

It turns out the integer $a$ is very geometrical, which is related to the codegree of the VMRT of $G/P$   or to the degree of the images of lines under the stratified Mukai flops, while $b$ is an integer taking value $1$ or $2$, and $b=2$ if and only if the VMRT of $G/P$ is not dual defective and $G/P$ is not isomorphic to $E_7/P_4$. Subsequently we will divide $G/P$ into different types (Definition \ref{d.types}).  In order to compute them, we carry out a detailed study of stratified Mukai flops.  

One interesting observation is that we have $(a,b)=(4,2)$ for Fano contact manifolds of Picard number $1$ different to projective spaces (cf. Proposition \ref{Prop:Fano-contact}) and their VMRTs are the homogeneous Legendre varieties which form the main series of the conjectural list of nonsingular varieties with codegree $4$ and are also the examples of varieties of next to minimal degree  (cf. \cite[Remark 3.6 and Remark 4.16]{Zak2004} and \cite[p.168]{Tevelev2005}). 

Other interesting examples of Fano manifolds with Picard number $1$ are provided by moduli spaces $\text{SU}_C(r,d)$ of stable vector bundles of rank $r$ and degree $d$ over a nonsingular projective curve of genus $g$. Based on the work of Hwang-Ramanan \cite{HwangRamanan2004}, we show in Corollary \ref{Cor:Nonbig-Moduli} that the tangent bundle of $\text{SU}_C(r,d)$ is not big if $g\geq 4$, $r\geq 3$ and $(r,d)=1$.  In particular, the normalized tangent bundle of  $\text{SU}_C(r,d)$ is not pseudoeffective in this case.

\begin{remark}
As predicted by Conjecture \ref{Conj:normalized-Tangent},  a Fano manifold of Picard number 1 with pseudoeffective normalized tangent bundle must have nef tangent bundle. 
 If we can prove this, then Conjecture \ref{Conj:normalized-Tangent} would follow from Theorem \ref{t.RationalHomSpace} and the famous Campana-Peternell conjecture (see \cite{CampanaPeternell1991}) which predicts that Fano manifolds with nef tangent bundles must be homogeneous.
\end{remark}

Here is the organization of this paper: after a brief recall of various positivities of vector bundles in Section 2, we describe in Section 3 the pseudoeffective cone of $\mathbb{P}(T_X)$ in terms of total dual VMRT when $X$ is a Fano manifold of  Picard number $1$ with big tangent bundle whose VMRT is not dual defective.  Section 4 is devoted to the proof of Theorem \ref{Thm:normalized-I}.  We determine the pseudoeffective cone of $\mathbb{P}(T_{G/P})$ in Section 5 for rational homogeneous spaces $G/P$ of Picard number $1$.  Two non-homogeneous examples are studied in Section 6.

\subsection*{Acknowledgements}
We are very grateful to Jun-Muk Hwang, Francesco Russo and Fyodor Zak for their comments on a first version and it is a pleasure to thank Qifeng Li for the discussion on the VMRT of $F_4/P_3$. The second-named author would like to express his gratitude to Andreas H\"oring for bringing his attention to this problem and also for helpful discussions. We are very grateful to the anonymous referee for
the detailed report. The first-named author is supported by the National Natural Science Foundation of China (No. 12288201) and the second-named author by the National Key Research and Development Program of China (No. 2021YFA1002300) and the National Natural Science Foundation of China (No. 12288201 and No. 12001521).

\section{Cone of divisors and positivity of vector bundles}

\subsection{Cone of divisors}

Given a projective variety $X$, we consider the real vector space $N^1(X):=N^1_{\R}(X)$ of Cartier divisors, with real coefficients, up to numerical equivalence. Its dimension is equal to the \emph{Picard number} $\rho(X)$ of $X$. This vector space contains several important convex cones.

\begin{enumerate}
	\item The \emph{effective cone} $\Eff(X)$ is the convex cone in $N^1(X)$ generated by classes of effective divisors. This cone is neither closed nor open in general. The closure $\overline{\Eff}(X)$ of $\Eff(X)$ is called the \emph{pseudoeffective cone} of $X$. The interior of the effective cone $\Eff(X)$ is the \emph{big cone} $\Bigcone(X)$ of $X$, which is the convex cone generated by big $\R$-Cartier divisors.
	
	\item Denote by $\Mov(X)$ the cone in $N^1(X)$ generated by classes of movable divisors; that is, Cartier divisors $D$ on $X$ such that its stable base locus $\bB(D)$ has codimension at least two. Again, this cone is neither closed nor open. The closure $\overline{\Mov}(X)$ of $\Mov(X)$ is called the \emph{movable cone}. Recall that the \emph{stable base locus} of a $\Q$-Cartier, $\Q$-Weil divisor $D$ on a projective variety $X$ is the Zariski closed subset defined as
	\[
	\bB(D) := \bigcap_{m\in \N,\ mD\ {\rm Cartier}} \Bs(mD).
	\]
	
	\item The \emph{nef cone} $\Nef(X)$ is the cone of classes in $N^1(X)$ having non-negative intersection with all curves in $X$. This cone is closed by definition and its interior is the \emph{ample cone} $\Amp(X)$, which is generated by classes of ample divisors. In general  the nef cone is neither polyhedral nor rational.
\end{enumerate}
    Clearly, there are inclusions: $\Nef(X)\subseteq \overline{\Mov}(X) \subseteq \overline{\Eff}(X)$.
  
\subsection{Divisorial Zariski decomposition}

Let $D$ be a pseudoeffective $\R$-divisor on a smooth projective variety $X$. Recall that for a prime divisor $\Gamma$ on $X$ we can define
\[
\sigma_{\Gamma}(D)=\lim_{\epsilon\to 0^+} \inf \,\{\text{Mult}_{\Gamma} D'\, |\, D'\geq 0\ {\rm and}\ D'\sim_{\R} D+\epsilon A\},
\]
where $A$ is any fixed ample divisor.  By \cite[III, Corollary 1.11]{Nakayama2004},  there are only finitely many prime divisors $\Gamma$ on $X$ such that $\sigma_{\Gamma}(D)>0$. This allows us to make the following definition, see \cite[III]{Nakayama2004} and \cite{Boucksom2004}.

\begin{definition}
	Let $D$ be a pseudoeffective $\R$-divisor on a projective manifold $X$. Define
	\begin{center}
		$N_{\sigma}(D)=\sum_{\Gamma} \sigma_{\Gamma}(D)  \Gamma $\quad and \quad $P_{\sigma}(D)=D-N_{\sigma}(D)$.
	\end{center}
	The decomposition $D=N_{\sigma}(D)+P_{\sigma}(D)$ is called the divisorial Zariski decomposition of $D$.
\end{definition}

Note that $N_{\sigma}(D)$ is an effective $\R$-Weil divisor and $P_{\sigma}(D)$ is a movable $\R$-divisor, i.e., $[P_{\sigma}(D)]\in \overline{\Mov}(X)$ (cf. \cite[III, Proposition 1.14]{Nakayama2004}). In particular, for any prime divisor $\Gamma\subset X$ the restriction $P_{\sigma}(D)|_\Gamma$ is pseudoeffective.

\subsubsection{Augmented and restricted base loci}

Let $D$ be an $\R$-Cartier $\R$-Weil divisor on a normal projective variety $X$. The \emph{augmented base locus} (aka \emph{non-ample locus}) of $D$ is defined to be 
\[
\bB_+(D):=\bigcap_{A}\bB(D-A),
\]
where the intersection is over all ample divisors $A$ such that $D-A$ is a $\Q$-Cartier $\Q$-Weil divisor. The \emph{restricted base locus} (aka \emph{non-nef locus}) of $D$ is defined as
\[
\bB_{-}(D):=\bigcup_{A}\bB(D+A),
\]
where the union is taken over all ample divisors $A$ such that $D+A$ is a $\Q$-Cartier $\Q$-Weil divisor.  Recall that the augmented and restricted base locus depend only on the numerical equivalence class of $D$ and we refer the reader to \cite{EinLazarsfeldMustactuaNakamayeEtAl2006} for a detailed discussion of these notions. Let us denote by $\bB_{+}^1(D)$ (resp. $\bB_{-}^1(D)$) the union of codimension $1$ components of $\bB_{+}(D)$ (resp. $\bB_{-}(D))$.

\begin{lemma}
	\label{Lemma:Inclusion-ABL-RBL}
	Let $D$ and $D'$ be two pseudoeffective $\R$-Cartier $\R$-Weil divisors on a normal projective variety $X$. Assume that there exists an ample divisor $A$ such that $[D]$ is contained in the interior of the $2$-dimensional cone $\langle [D'],[A] \rangle$. Then we have $\bB_+(D)\subset \bB_{-}(D')$.
\end{lemma}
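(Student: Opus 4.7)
The plan is to exploit the standard characterizations
\[
\bB_+(E) \,\subset\, \bB(E - \epsilon H),\qquad \bB(F + \epsilon H) \,\subset\, \bB_-(F),
\]
valid for any ample class $H$ and any sufficiently small rational $\epsilon>0$ (with the arguments inside $\bB(\cdot)$ represented by honest $\Q$-Cartier $\Q$-divisors), together with the numerical invariance of both $\bB_+$ and $\bB_-$. By hypothesis we may write $[D] = \lambda[D'] + \mu[A]$ in $N^1(X)$ for some real numbers $\lambda,\mu > 0$.

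First, I would fix a small rational $\epsilon \in (0,\mu)$ and consider
\[
[D] - \epsilon[A] \,=\, \lambda[D'] + (\mu-\epsilon)[A].
\]
Because $[D]$ lies in the \emph{interior} of the cone $\langle [D'],[A]\rangle$, a small rational perturbation of the pair $(\lambda,\mu-\epsilon)$ produces a numerically equivalent class that is represented by an honest $\Q$-Cartier $\Q$-divisor still expressible as a positive $\Q$-linear combination of $[D']$ and $[A]$. Together with the numerical invariance of $\bB_+$, the first inclusion above (applied with $H=A$) gives
\[
\bB_+(D) \,\subset\, \bB\bigl(\lambda D' + (\mu-\epsilon)A\bigr).
\]

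Second, the stable base locus is invariant under multiplication by a positive rational, so
\[
\bB\bigl(\lambda D' + (\mu-\epsilon)A\bigr) \,=\, \bB\Bigl(D' + \tfrac{\mu-\epsilon}{\lambda}A\Bigr),
\]
and the class $\tfrac{\mu-\epsilon}{\lambda}[A]$ is ample. The second inclusion above, applied with $F=D'$ and $H$ a small rescaling of $A$, then yields
\[
\bB\Bigl(D' + \tfrac{\mu-\epsilon}{\lambda}A\Bigr) \,\subset\, \bB_-(D'),
\]
and chaining the inclusions completes the argument.

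The only (mild) obstacle is the rationality bookkeeping for $\R$-Cartier classes — namely finding a $\Q$-Cartier representative of $[D]-\epsilon[A]$ in the interior of $\langle [D'],[A]\rangle$. This is handled in a standard fashion using the numerical invariance of $\bB_+$ and $\bB_-$ established in \cite{EinLazarsfeldMustactuaNakamayeEtAl2006}; no genuinely new input is needed.
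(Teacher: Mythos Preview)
Your proof is correct and follows essentially the same route as the paper's. The paper condenses the entire argument into a single line by directly citing \cite[Lemma 1.14 and Lemma 1.8]{EinLazarsfeldMustactuaNakamayeEtAl2006}: writing $D\equiv_{\R}\lambda_{D'}D'+\lambda_A A$, it invokes numerical invariance of $\bB_+$, the inclusion $\bB_+(E+A)\subset\bB_-(E)$ for $A$ ample, and homogeneity $\bB_-(\lambda_{D'}D')=\bB_-(D')$. Your version simply unpacks the content of those lemmas by passing through the stable base locus $\bB(\cdot)$ explicitly, and your closing remark on rationality bookkeeping correctly identifies (and defers) the only technical wrinkle.
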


\begin{proof}
	By assumption, there exist positive real numbers $\lambda_{D'}$ and $\lambda_A$ such that $D\equiv_{\R} \lambda_{D'} D' + \lambda_A A$. By \cite[Lemma 1.14 and Lemma 1.8]{EinLazarsfeldMustactuaNakamayeEtAl2006}, we obtain
	\[
	\bB_{+}(D)=\bB_+(\lambda_{D'} D' + \lambda_A A) \subset \bB_{-}(\lambda_{D'} D') = \bB_{-}(D'),
	\]
	which concludes the proof.
\end{proof}

\begin{lemma}
	\label{Lemma:Base-Locus-DZD}
	Let $M$ be a movable $\R$-Cartier $\R$-Weil divisor on a normal projective variety $X$. Then $[M]$ is contained in the interior of $\overline{\Mov}(X)$ if and only if $\bB_{+}^1(M)=\emptyset$.
\end{lemma}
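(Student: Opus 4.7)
The plan is to establish the two implications separately, leveraging Lemma \ref{Lemma:Inclusion-ABL-RBL} together with the divisorial Zariski decomposition recalled above.

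For the forward direction, suppose $[M]$ lies in the interior of $\overline{\Mov}(X)$. Since $\overline{\Mov}(X)\subseteq\overline{\Eff}(X)$, the class $[M]$ is automatically big, and I would pick an ample $\R$-divisor $A$ together with a sufficiently small $\epsilon>0$ so that $M':=M-\epsilon A$ is still big with $[M']\in\overline{\Mov}(X)$. By construction $[M]$ sits in the interior of the two-dimensional cone spanned by $[M']$ and $[A]$, so Lemma \ref{Lemma:Inclusion-ABL-RBL} gives $\bB_+(M)\subseteq\bB_-(M')$, and it suffices to verify $\bB_-^1(M')=\emptyset$. At this point I would invoke the standard identification of the divisorial part of the restricted base locus with the support of Nakayama's $N_\sigma$, namely $\bB_-^1(D)=\supp N_\sigma(D)$ for any pseudoeffective $\R$-divisor $D$; combined with the characterization of the closed movable cone as the pseudoeffective classes with vanishing $N_\sigma$, this forces $N_\sigma(M')=0$ and hence $\bB_+^1(M)=\emptyset$.

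For the reverse direction, suppose $\bB_+^1(M)=\emptyset$. Fix an ample $\Q$-Cartier divisor $A$ and appeal to the standard description $\bB_+(M)=\bB(M-tA)$ for all sufficiently small rational $t>0$ (perturbing $A$ if necessary so that $M-tA$ is $\Q$-Cartier). Then $\bB(M-tA)$ has codimension at least two, i.e.\ $M-tA$ is movable, so $[M-tA]\in\overline{\Mov}(X)$. The identity $[M]=[M-tA]+t[A]$ writes $[M]$ as the sum of a class in $\overline{\Mov}(X)$ and a positive multiple of an ample class; any sufficiently small perturbation of $[M]$ can be absorbed into the ample summand while remaining ample, so $[M]$ lies in the interior of $\overline{\Mov}(X)$.

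The main obstacle sits in the forward direction, where I must justify the two auxiliary inputs: (a) the equality $\bB_-^1(D)=\supp N_\sigma(D)$ for a pseudoeffective $\R$-divisor $D$, and (b) the vanishing $N_\sigma(D)=0$ for every class in $\overline{\Mov}(X)$ that is also big. Both follow from careful bookkeeping with Nakayama's definitions and the continuity of $\sigma_\Gamma$ on the big cone, but they are the only nontrivial inputs beyond the already-established Lemma \ref{Lemma:Inclusion-ABL-RBL}. A secondary technical nuisance is moving between $\R$- and $\Q$-Cartier coefficients so that the various base loci are actually defined; this can always be resolved by small rational perturbations within the open big cone.
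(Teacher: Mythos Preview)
Your reverse direction is exactly the paper's argument: one uses the identity $\bB_+(M)=\bB(M-tA)$ for all sufficiently small $t>0$ (Ein--Lazarsfeld--Musta\c{t}\u{a}--Nakamaye--Popa), so $\bB_+^1(M)=\emptyset$ forces $M-tA$ to be movable and hence $[M]$ lies in the interior of $\overline{\Mov}(X)$.

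The difference is in the forward direction. The paper simply runs the \emph{same} equivalence the other way: if $[M]$ is interior in $\overline{\Mov}(X)$, then $M-\epsilon'A$ is movable for small $\epsilon'$, so $\bB(M-\epsilon'A)$ has no divisorial components, and since this stable base locus equals $\bB_+(M)$ one gets $\bB_+^1(M)=\emptyset$ directly. Your route via Lemma~\ref{Lemma:Inclusion-ABL-RBL}, then $\bB_-(M')$, then $\supp N_\sigma(M')$, is correct but is a detour requiring the two auxiliary inputs you flag. In particular, note that in this paper the divisorial Zariski decomposition (and the identification $\bB_-^1=\supp N_\sigma$) is only set up for smooth $X$, whereas the lemma is stated for normal $X$; the paper's uniform argument avoids this issue entirely by never invoking $N_\sigma$. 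So your proof works, but the paper's is shorter, symmetric in the two directions, and applies in the stated generality without extra bookkeeping.
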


\begin{proof}
	Let $A$ be an arbitrary ample divisor on $X$. By \cite[Proposition 1.5]{EinLazarsfeldMustactuaNakamayeEtAl2006}, there exists $0<\epsilon\ll 1$ such that $\bB_{+}(M)=\bB(M-\epsilon' A)$ for any $0<\epsilon'\leq \epsilon$. In particular, it follows that $\bB_{+}^1(M)=\emptyset$ if and only if $\bB(M-\epsilon' A)$ does not contain divisorial parts, i.e., $M-\epsilon' A$ is movable, which holds if and only if $[M]$ is contained in the interior of $\overline{\Mov}(X)$.
\end{proof}

\subsubsection{Comparing base loci and $N_{\sigma}(D)$}

Given a pseudoeffective $\R$-Weil divisor $D$ on a projective manifold $X$, the augmented and restricted base loci are closely related to the divisorial Zariski decomposition of $D$.

\begin{lemma}
	\label{Lemma:Restricted-Base-Locus-DZD}
	Let $D$ be a pseudoeffective $\R$-Weil divisor on a projective manifold $X$. Then 
	\begin{enumerate}
		\item $\supp(N_{\sigma}(D))$ is precisely the divisor $\bB_{-}^1(D)$.
		
		\item If $D$ is not movable and $[D]$ generates an extremal ray of $\overline{\Eff}(X)$, then there exists a unique prime divisor $\Gamma \subset X$ such that 		
		$
		[\Gamma]\in \R_{>0} [D].
		$
		Moreover, we have 
		\[
		\Gamma=\supp(N_{\sigma}(D))=\bB_{-}^1(D).
		\]
	\end{enumerate}
\end{lemma}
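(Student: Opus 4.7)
My plan is to treat the two parts in sequence, with the key structural input for (2) being the divisorial Zariski decomposition combined with the extremality of $[D]$.

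For (1), I would argue by directly comparing the two descriptions of the divisorial part of the non-nef locus. Fix a prime divisor $\Gamma\subset X$ and an ample divisor $A$. By the definition of the restricted base locus, $\Gamma\subset \bB_{-}(D)$ is equivalent to $\Gamma\subset \bB(D+\epsilon A)$ for all sufficiently small $\epsilon>0$, i.e.\ the multiplicity of $\Gamma$ in every effective $\R$-divisor representing $[D+\epsilon A]$ is bounded uniformly below in $\epsilon$. Passing to the limit $\epsilon\to 0^{+}$ matches this exactly with the condition $\sigma_{\Gamma}(D)>0$, that is, $\Gamma\in\supp(N_{\sigma}(D))$. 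Letting $\Gamma$ run over all prime divisors yields $\supp(N_{\sigma}(D))=\bB_{-}^{1}(D)$.

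For (2), I would first apply the divisorial Zariski decomposition $D=N_{\sigma}(D)+P_{\sigma}(D)$: by construction $N_{\sigma}(D)$ is effective, $[P_{\sigma}(D)]\in\overline{\Mov}(X)$, and both classes lie in $\overline{\Eff}(X)$. Since $[D]$ is extremal, both summands must have their classes in the one-dimensional cone $\R_{\geq 0}[D]$. The hypothesis $[D]\notin\overline{\Mov}(X)$ together with the fact that $\overline{\Mov}(X)$ is a cone implies that $\R_{>0}[D]$ is disjoint from $\overline{\Mov}(X)$; hence $[P_{\sigma}(D)]=0$, so $[N_{\sigma}(D)]=[D]$ and in particular $N_{\sigma}(D)\neq 0$. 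Writing $N_{\sigma}(D)=\sum_{i}a_{i}\Gamma_{i}$ with $a_{i}>0$, each $[\Gamma_{i}]\in \R_{\geq 0}[D]$ by extremality applied to this decomposition into effective pieces; but the classes of the components of $N_{\sigma}(D)$ are numerically independent in $N^{1}(X)$ by Nakayama's theorem (\cite[III]{Nakayama2004}), and in a one-dimensional subspace at most one numerically independent class can fit. Hence $N_{\sigma}(D)=a\Gamma$ for a single prime $\Gamma$, and part (1) then identifies $\bB_{-}^{1}(D)=\supp(N_{\sigma}(D))=\Gamma$.

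It remains to show $\Gamma$ is the unique prime divisor with class in $\R_{>0}[D]$. If $\Gamma'$ is another such prime, then $[\Gamma']$ lies on the same extremal, non-movable ray, so applying the previous analysis to $\Gamma'$ yields $N_{\sigma}(\Gamma')=c\,\Gamma''$ for a single prime $\Gamma''$; since $\bB_{-}$ depends only on the numerical class, $\Gamma''=\bB_{-}^{1}(\Gamma')=\bB_{-}^{1}(D)=\Gamma$. To conclude $\Gamma'=\Gamma$ I would use that non-movability of $[\Gamma']$ forces $\sigma_{\Gamma'}(\Gamma')>0$ --- every effective representative of $[\Gamma'+\epsilon A]$ must contain $\Gamma'$ with multiplicity bounded below as $\epsilon\to 0^{+}$ --- and then part (1) gives $\Gamma'\in\supp(N_{\sigma}(\Gamma'))=\{\Gamma\}$. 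The main obstacle in this plan is precisely this last point: it is the higher-dimensional analogue of the surface fact that a prime divisor of negative self-intersection is the unique irreducible divisor in its numerical class, and it rests on the negativity property of $\supp(N_{\sigma})$ built into Nakayama's theory.
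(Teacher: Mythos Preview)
The paper itself does not prove this lemma directly: it simply cites \cite[V, Theorem 1.3]{Nakayama2004} for (1) and \cite[Lemma 2.5]{HoeringLiuShao2020} for (2). Your proposal therefore goes further than the paper, and the overall strategy for (2) --- using extremality to force $[P_\sigma(D)]=0$, then the numerical independence of the components of $N_\sigma(D)$ to reduce to a single prime --- is exactly the right one and matches what lies behind the cited reference.

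There is, however, a genuine gap in your uniqueness argument. You assert that ``non-movability of $[\Gamma']$ forces $\sigma_{\Gamma'}(\Gamma')>0$'', but non-movability only tells you that $\sigma_E(\Gamma')>0$ for \emph{some} prime $E$, not specifically for $E=\Gamma'$; your justification (``every effective representative of $[\Gamma'+\epsilon A]$ must contain $\Gamma'$'') is just a restatement of the claim, not a proof of it. The clean way to close this is to use the elementary inequality $N_\sigma(E)\le E$ for any effective divisor $E$ (immediate from the definition: $E$ plus a general member of $|\epsilon A|$ is itself an effective representative of $[E+\epsilon A]$, so $\sigma_F(E)\le \Mult_F(E)$ for every prime $F$). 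Applied to the prime divisor $\Gamma'$ this gives $\supp N_\sigma(\Gamma')\subseteq\{\Gamma'\}$, and since $[\Gamma']$ is not movable we get $\supp N_\sigma(\Gamma')=\{\Gamma'\}$. Now numerical invariance of $\bB_-^1$ and part (1) yield $\{\Gamma'\}=\bB_-^1(\Gamma')=\bB_-^1(D)=\{\Gamma\}$, hence $\Gamma'=\Gamma$. With this fix your argument is complete; note also that this route bypasses the need to first identify $\Gamma$ via steps 3--4 before treating $\Gamma'$, since the same reasoning applied directly to any prime in the ray already pins it down.
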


\begin{proof}
	The statement (1) follows from \cite[V, Theorem 1.3]{Nakayama2004} and the statement (2) is proved in \cite[Lemma 2.5]{HoeringLiuShao2020}
\end{proof}

As an immediate application, a pseudoeffective $\R$-Weil divisor $D$ on a projective manifold $X$ is movable if and only if $\bB_{-}^1(D)$ is empty, see also \cite[III, Proposition 1.14]{Nakayama2004}.

\begin{corollary}
	\label{Cor:Equality-ABL-RBL}
	Given a projective manifold $X$, let $D$ be a pseudoeffective $\R$-Weil divisor and let $M$ be a movable $\R$-Weil divisor on $X$. Assume that
	\begin{enumerate}
		\item the divisor $D$ is not movable and $[D]$ generates an extremal ray of $\overline{\Eff}(X)$, and
		
		\item the divisor class $[M]$ is not contained in the interior of $\overline{\Mov}(X)$, and
		
		\item there exists an ample divisor $A$ such that $[M]$ is contained in the interior of the $2$-dimensional cone $\langle [D],[A]\rangle$.
	\end{enumerate}
    Then we have $\bB_+^1(M)=\bB_{-}^1(D)=\supp(N_{\sigma}(D))$, which is the unique prime divisor contained in the ray $\R_{>0}[D]$.
\end{corollary}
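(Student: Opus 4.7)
The plan is to assemble the conclusion directly from the three lemmas that immediately precede the corollary, namely Lemma~\ref{Lemma:Inclusion-ABL-RBL}, Lemma~\ref{Lemma:Base-Locus-DZD}, and Lemma~\ref{Lemma:Restricted-Base-Locus-DZD}. The strategy is first to identify a single prime divisor $\Gamma$ on the right-hand side using the extremality hypothesis on $D$, then to trap $\bB_{+}^{1}(M)$ inside $\Gamma$ via the cone inclusion in condition~(3), and finally to use condition~(2) to force $\bB_{+}^{1}(M)$ to be non-empty so that the containment becomes an equality.

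More concretely, I would begin by invoking Lemma~\ref{Lemma:Restricted-Base-Locus-DZD}(2): hypothesis~(1) of the corollary is exactly the assumption of that lemma, which produces a unique prime divisor $\Gamma\subset X$ with $[\Gamma]\in\R_{>0}[D]$ and yields
\[
\Gamma \;=\; \supp(N_{\sigma}(D)) \;=\; \bB_{-}^{1}(D).
\]
This already establishes the second equality in the conclusion, and fixes the candidate prime divisor that must also realize $\bB_{+}^{1}(M)$.

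Next, I would use hypothesis~(3) together with Lemma~\ref{Lemma:Inclusion-ABL-RBL} applied to the pair $(D,M)$ in the roles $(D',D)$ of that lemma: the fact that $[M]$ lies in the interior of the two-dimensional cone $\langle[D],[A]\rangle$ gives
\[
\bB_{+}(M)\;\subset\;\bB_{-}(D).
\]
Passing to codimension-one components, $\bB_{+}^{1}(M)\subset \bB_{-}^{1}(D)=\Gamma$. Since $\Gamma$ is a single prime divisor, this inclusion of divisorial parts is either empty or equal to $\Gamma$ (viewed as reduced effective divisors).

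To rule out the empty case I would appeal to Lemma~\ref{Lemma:Base-Locus-DZD}: hypothesis~(2) says that $[M]$ is not in the interior of $\overline{\Mov}(X)$, so that lemma gives $\bB_{+}^{1}(M)\neq\emptyset$. Combined with the previous step this forces $\bB_{+}^{1}(M)=\Gamma$, completing the chain of equalities $\bB_{+}^{1}(M)=\bB_{-}^{1}(D)=\supp(N_{\sigma}(D))=\Gamma$.

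There is no real analytic obstacle in this proof; it is essentially a bookkeeping argument that checks the hypotheses of each of the three preceding lemmas are met in sequence. The only point that requires a moment of care is verifying that Lemma~\ref{Lemma:Inclusion-ABL-RBL} can indeed be applied with the roles of $D$ and $D'$ matched to our $M$ and $D$ respectively (both are pseudoeffective, which is automatic since $M$ is movable and $D$ is assumed pseudoeffective), and that the two-dimensional cone $\langle[D],[A]\rangle$ genuinely has non-empty interior inside $N^{1}(X)$ so that the notion of $[M]$ being in its interior is meaningful; here extremality of $[D]$ ensures $[D]$ and $[A]$ are linearly independent, so this is automatic.
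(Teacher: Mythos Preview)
Your proof is correct and follows essentially the same route as the paper's own argument: both assemble the result from Lemmas~\ref{Lemma:Inclusion-ABL-RBL}, \ref{Lemma:Base-Locus-DZD}, and \ref{Lemma:Restricted-Base-Locus-DZD} in the obvious way, differing only in the order in which the three lemmas are invoked. The one minor remark is that in your final paragraph the linear independence of $[D]$ and $[A]$ is more directly a consequence of $D$ not being movable (hence not ample) than of extremality, but this does not affect the validity of the argument.
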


\begin{proof}
	By our assumption (3) and Lemma \ref{Lemma:Inclusion-ABL-RBL}, we have $\bB_+(M)\subset \bB_{-}(D)$. As $[M]$ is not contained in the interior of $\overline{\Mov}(X)$, it follows from Lemma \ref{Lemma:Base-Locus-DZD} that $\bB_+^1(M)$ is not empty. On the other hand, according to assumption (1) and Lemma \ref{Lemma:Restricted-Base-Locus-DZD}, one obtains that $\bB_-^1(D)=\supp(N_{\sigma}(D))$ is the unique prime divisor which is contained in $\R_{>0}[D]$. This forces that $\bB_{+}^1(M)=\bB_{-}^1(D)$.
\end{proof}

\subsection{Positivity of vector bundles}

Given a projective variety $X$, let $E$ be a vector bundle of rank $r$ over $X$. Denote by $\pi: \PP(E)\rightarrow X$ the projectivized bundle in the sense of Grothendieck; that is,
\[
\PP(E) := \Proj_X\left(\bigoplus_{r\geq 0} S^r E\right).
\]
Denote by $\Lambda$ the tautological divisor of $\PP(E)$, i.e., $\sO_{\PP(E)}(\Lambda)\cong \sO_{\PP(E)}(1)$. We will use the following terminologies throughout this paper, see \cite{Lazarsfeld2004a} for more details.

\begin{definition}
	Let $X$ be a projective variety. 
	\begin{enumerate}
		\item A $\Q$-twisted vector bundle $E\hspace{-0.8ex}<\hspace{-0.8ex}\delta\hspace{-0.8ex}>$ on $X$ is an ordered pair consisting of a vector bundle $E$ on $X$, defined up to isomorphisms, and a numerical equivalence $\Q$-Cartier divisor class $\delta\in N^1(X)$.
		
		\item The normalization of a vector bundle $E$ of rank $r$ on $X$ is the $\Q$-twisted vector bundle 
		\[
		E\hspace{-0.8ex}<\hspace{-0.8ex}-\frac{1}{r} c_1(E)\hspace{-0.8ex}>.
		\]
		
		\item A $\Q$-twisted vector bundle $E\hspace{-0.8ex}<\hspace{-0.8ex}\delta\hspace{-0.8ex}>$ is said to be pseudoeffective (resp. ample, big, nef) if the class $\Lambda+\pi^*\delta$ is pseudoeffective (resp. ample, big, nef) on $\mathbb{P}(E)$.
		
		\item A $\Q$-twisted vector bundle $E\hspace{-0.8ex}<\hspace{-0.8ex}\delta\hspace{-0.8ex}>$ is said almost nef if for a very general curve $C$, the restriction $E\hspace{-0.8ex}<\hspace{-0.8ex}\delta\hspace{-0.8ex}>|_C$ is nef.  Here very general curves mean that they intersect the complementary part of a countable union of proper subvarieties.

		%if there exist at most countably many proper subvarieties $Z_i\subset X$ such that the following holds: let $C\subset X$ be an irreducible closed curve such that the restriction $E\hspace{-0.8ex}<\hspace{-0.8ex}\delta\hspace{-0.8ex}>|_C$ is not nef, then $C$ is contained in $\cup Z_i$.
	\end{enumerate}
\end{definition}

The following properties are well known for experts and we include a complete proof for the reader's convenience, see also \cite[Lemma 2.2 and Lemma 2.3]{HoeringLiuShao2020}.

\begin{proposition}
	\label{Prop:criteria-pseff-bigness}
	Let $X$ be a projective variety. Let $E$ and $F$ be vector bundles over $X$ and let $\delta \in N^1(X)$ be a $\Q$-Cartier divisor class.
	\begin{enumerate}
		\item The $\Q$-twisted vector bundle $E\hspace{-0.8ex}<\hspace{-0.8ex}\delta\hspace{-0.8ex}>$ is pseudoeffective if and only if for  an arbitrary big $\Q$-Cartier $\Q$-Weil divisor $D$ on $X$ and an arbitrary $\Q$-Cartier $\Q$-Weil divisor $\Delta$ on $X$ such that $[\Delta]=\delta$, there exists an effective $\Q$-Weil divisor $N$ satisfying
		\[
		N\sim_{\Q} \Lambda + \pi^*(\Delta + D)
		\]
		
		\item The $\Q$-twisted vector bundle $E\hspace{-0.8ex}<\hspace{-0.8ex}\delta\hspace{-0.8ex}>$ is big if and only if the $\Q$-twisted vector bundle $E\hspace{-0.8ex}<\hspace{-0.8ex}\delta-\gamma\hspace{-0.8ex}>$ is pseudoeffective for some big $\Q$-Cartier class $\gamma \in \Bigcone(X)$.
	\end{enumerate}
\end{proposition}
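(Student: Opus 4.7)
The strategy for both parts is to leverage a single ample class on $\PP(E)$, namely $\Lambda + \pi^*H$ for some ample $H$ on $X$ chosen large enough that $\Lambda + \pi^*H$ is ample on $\PP(E)$; such $H$ exists because $\sO_{\PP(E)}(1)$ is $\pi$-ample, hence becomes absolutely ample after twisting by a sufficiently positive pullback. All decompositions below transfer positivity between the relative and horizontal directions by means of the identity $\pi^*H = (\Lambda + \pi^*H) - \Lambda$. This detour is the main delicate point: pullbacks $\pi^*D$ are never big on $\PP(E)$ even when $D$ is big on $X$, since the Iitaka dimension drops to $\dim X$, so one cannot naively combine ``pseudoeffective plus pullback of big'' on $\PP(E)$ to deduce bigness.

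For the forward direction of Part (1), given any big $D$ on $X$ and representative $\Delta$ of $\delta$, the class $(\Lambda + \pi^*\delta) + \epsilon(\Lambda + \pi^*H)$ is pseudoeffective plus ample, hence big and therefore $\Q$-effective, for every $\epsilon > 0$; rescaling by $1/(1+\epsilon)$ identifies this class with $\Lambda + \pi^*\frac{\delta + \epsilon H}{1+\epsilon}$. The complementary difference $(\delta + D) - \frac{\delta + \epsilon H}{1+\epsilon} = \frac{(1+\epsilon)D + \epsilon(\delta - H)}{1+\epsilon}$ tends to $D$ as $\epsilon \to 0$, so by openness of the big cone in $N^1(X)_{\R}$ it remains big for small $\epsilon$, hence is $\Q$-effective. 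Summing effective $\Q$-representatives of the two pieces yields the desired $N$. For the backward direction, apply the hypothesis to $D_k := \tfrac{1}{k}H$: each $\Lambda + \pi^*(\Delta + \tfrac{1}{k}H)$ is $\Q$-effective, hence pseudoeffective, and the limit $k \to \infty$ yields pseudoeffectivity of $\Lambda + \pi^*\delta$ by closedness of the pseudoeffective cone on $\PP(E)$.

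For the forward direction of Part (2), bigness of $\Lambda + \pi^*\delta$ allows subtracting a small multiple of the ample class $\Lambda + \pi^*H$ while remaining pseudoeffective; rescaling exhibits $\Lambda + \pi^*(\delta - \gamma)$ as pseudoeffective for $\gamma = \frac{\epsilon(H - \delta)}{1-\epsilon}$, which lies in $\Bigcone(X)$ provided $H$ is initially chosen large enough that $H - \delta$ is big. For the backward direction, given $\gamma \in \Bigcone(X)$ with $\Lambda + \pi^*(\delta - \gamma)$ pseudoeffective, the goal is to show $(\Lambda + \pi^*\delta) - \epsilon(\Lambda + \pi^*H) = (1-\epsilon)\Lambda + \pi^*(\delta - \epsilon H)$ is pseudoeffective for some small $\epsilon > 0$, since that is precisely the bigness of $\Lambda + \pi^*\delta$. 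Rescaling reduces the task to showing $\Lambda + \pi^*\delta'$ pseudoeffective for $\delta' = \frac{\delta - \epsilon H}{1-\epsilon}$; a direct computation gives $\delta' - (\delta - \gamma) = \frac{\gamma + \epsilon(\delta - \gamma - H)}{1-\epsilon}$, a small perturbation of the big class $\gamma$, hence itself big and $\Q$-effective for $\epsilon$ small. Therefore $\Lambda + \pi^*\delta' = [\Lambda + \pi^*(\delta - \gamma)] + \pi^*[\delta' - (\delta - \gamma)]$ is the sum of a pseudoeffective class and the pullback of a $\Q$-effective divisor, hence pseudoeffective.

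Beyond the asymmetry noted in the first paragraph, the remaining bookkeeping with factors $(1 \pm \epsilon)$ is routine, relying only on openness of the big cone and closedness of the pseudoeffective cone in both $N^1(X)_{\R}$ and $N^1(\PP(E))_{\R}$, together with the elementary fact that big $\Q$-classes admit effective $\Q$-representatives.
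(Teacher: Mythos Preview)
Your proof is correct and follows essentially the same approach as the paper: both arguments exploit the $\pi$-ampleness of $\Lambda$ to produce an ample class $\Lambda + \pi^*H$ (respectively $\epsilon\Lambda + \pi^*A$) on $\PP(E)$, then combine it with the pseudoeffective class to obtain bigness, invoking openness of the big cone and closedness of the pseudoeffective cone. The only cosmetic difference is organizational---the paper first applies Kodaira's decomposition $D \sim_{\Q} A + N'$ on $X$ and then shows $\Lambda + \pi^*(\Delta+D)$ is big on $\PP(E)$ directly, whereas you first perturb and rescale to get $\Lambda + \pi^*\frac{\Delta+\epsilon H}{1+\epsilon}$ $\Q$-effective and then add the pullback of a big class from $X$; but the substance is the same.
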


\begin{proof}
	One direction of the statement (1) is clear since the pseudoeffective cone $\overline{\Eff}(\PP(E))$ is closed. For the converse, we assume that $E\hspace{-0.8ex}<\hspace{-0.8ex}\delta\hspace{-0.8ex}>$ is pseudoeffective. Since $D$ is big, by \cite[Chapter 2, Corollary 2.2.7]{Lazarsfeld2004}, there exists an ample $\Q$-Cartier $\Q$-Weil divisor $A$ and an effective $\Q$-Weil divisor $N'$ such that $D \sim_{\Q} A + N'$. On the other hand, as $\Lambda+\pi^*\delta$ is $\pi$-ample, there exists a rational number $0<\epsilon \ll 1$ such that the $\Q$-Cartier $\Q$-Weil divisor $\epsilon (\Lambda+\pi^*\Delta)+\pi^*A$ is ample. This implies that
	\[
	\Lambda+\pi^*(\Delta+D)\sim_{\Q} \epsilon(\Lambda+\pi^*\Delta)+\pi^*A + (1-\epsilon)(\Lambda+\pi^*\Delta)+N'
	\]
	is big since $\Bigcone(\PP(E))$ is the interior of $\overline{\Eff}(\PP(E))$. Then it follows again from \cite[Chapter 2, Corollary 2.2.7]{Lazarsfeld2004} that there exists an effective $\Q$-Cartier $\Q$-Weil divisor $N$ such that
	\[
	N\sim_{\Q} \Lambda + \pi^* (\Delta + D).
	\]
	
	One can easily obtain one implication of the statement (2), since $\Bigcone(\PP(E))$ is open. Conversely, we assume that $E\hspace{-0.8ex}<\hspace{-0.8ex}\delta-\gamma\hspace{-0.8ex}>$ is pseudoeffective for some big $\Q$-Cartier class $\gamma$. Similar to the proof of the statement (1), there exists a rational number $0<\epsilon \ll 1$ an ample $\Q$-Cartier $\Q$-Weil divisor $A$ and an effective $\Q$-Weil divisor $N$ such that the $\Q$-Cartier divisor class $\epsilon(\Lambda+\pi^*(\delta-\gamma))+\pi^*A$ is ample and
	\[
	\Lambda+\pi^*\delta \equiv_{\Q} \epsilon(\Lambda+\pi^*(\delta-\gamma))+\pi^*A+(1-\epsilon)(\Lambda+\pi^*(\delta-\gamma)) + N.
	\]
	Note that the $\Q$-Cartier divisor class $(1-\epsilon)(\Lambda+\pi^*(\delta-\gamma))$ is pseudoeffective by our assumption. Then it is clear that the $\Q$-Cartier divisor class $\Lambda+\pi^*\delta$ is big.
\end{proof}

We recall the following folklore  result: 

\begin{lemma} \label{l.semistable}
	Let $X$ be a projective manifold of dimension $n$ and $H$ an ample divisor. 
	Let $E$ be an $H$-semi-stable  vector bundle of rank $r$ on $X$. Then the normalized vector bundle $E\hspace{-0.8ex}<\hspace{-0.8ex}-\frac{1}{r} c_1(E)\hspace{-0.8ex}>$ is not big. 
\end{lemma}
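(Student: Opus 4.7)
My plan is to argue by contradiction. Suppose that $E\hspace{-0.8ex}<\hspace{-0.8ex}-\frac{1}{r}c_1(E)\hspace{-0.8ex}>$ is big. The first step is to convert this numerical bigness into the existence of an honest global section on $X$. By Proposition \ref{Prop:criteria-pseff-bigness}(2), there exist an ample divisor $A$ on $X$ and a positive rational number $\epsilon$ such that $E\hspace{-0.8ex}<\hspace{-0.8ex}-\frac{1}{r}c_1(E)-\epsilon[A]\hspace{-0.8ex}>$ is pseudoeffective; applying Proposition \ref{Prop:criteria-pseff-bigness}(1) with a slightly smaller big class then produces an effective $\mathbb{Q}$-divisor $N$ on $\mathbb{P}(E)$ with
\[
N\sim_{\mathbb{Q}}\Lambda-\tfrac{1}{r}\pi^*c_1(E)-\tfrac{\epsilon}{2}\pi^*A.
\]
Choosing an integer $m$ divisible by $r$ and by the denominator of $\epsilon/2$, and pushing forward along $\pi$, I would obtain (with $k:=m\epsilon/2\in\mathbb{Z}_{>0}$)
\[
H^0\bigl(X,\ \Sym^m E\otimes(\det E)^{-m/r}\otimes\mathcal{O}_X(-kA)\bigr)\neq 0.
\]

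The second step is to derive a contradiction from semistability. Working in characteristic zero, $H$-semistability of $E$ transfers to $\Sym^m E$, and tensoring by a line bundle preserves semistability; hence the sheaf
\[
F:=\Sym^m E\otimes(\det E)^{-m/r}\otimes\mathcal{O}_X(-kA)
\]
is $H$-semistable. A short slope computation gives $\mu_H(F)=-k\,A\cdot H^{n-1}<0$, because the twist by $(\det E)^{-m/r}$ exactly cancels the slope $m\,\mu_H(E)$ of $\Sym^m E$. A standard fact then forbids nonzero sections of $F$: any such section would produce an injection $\mathcal{O}_X\hookrightarrow F$, forcing $0=\mu_H(\mathcal{O}_X)\le\mu_H(F)$, a contradiction. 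This contradicts the previous display, completing the argument.

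The only non-routine step I foresee is the first one, namely translating the numerical bigness of the tautological twist into an actual section of an integer twist on $X$; Proposition \ref{Prop:criteria-pseff-bigness} is precisely tailored for this conversion, so no additional input is required. The remaining ingredient, preservation of semistability under symmetric powers in characteristic zero, is classical (for instance via Mehta--Ramanathan restriction to a general complete intersection curve, or by the Ramanan--Ramanathan theorem on tensor products of semistable bundles).
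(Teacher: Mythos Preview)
Your argument is correct and follows essentially the same route as the paper: both convert bigness into an honest section via Proposition~\ref{Prop:criteria-pseff-bigness}, and then use that a line subbundle of the $H$-semistable sheaf $\Sym^m E$ (or a twist thereof) must have slope at most $\mu_H(\Sym^m E)=m\mu_H(E)$. The only cosmetic difference is that the paper twists by powers of $\det E$ and derives the threshold inequality $a\le 1/r$, whereas you twist by an arbitrary ample class $A$ and obtain the contradiction directly; your phrasing has the minor advantage of not needing $c_1(E)\cdot H^{n-1}>0$.
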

\begin{proof}
	Assume $E\langle-ac_1(E) \rangle$ is effective for some rational number $a>0$; that is, we have
	\[
	H^0(X, {\rm Sym}^mE \otimes \det(E^*)^{\otimes (am)})\neq 0
	\]
	for some positive integer $m$ such that $am$ is an integer.  This gives an injection
	\[
	\det(E)^{\otimes (am)} \to {\rm Sym}^mE,
	\]  
	which  yields 
	\[
	\mu_H^{{\rm max}} ({\rm Sym^m}E) \geq \mu_H (\det(E)^{\otimes (am)}) = am c_1(E) \cdot H^{n-1}.
	\]
	On the other hand,  as $E$ is $H$-semi-stable, so is $\Sym^m E$. Hence, we obtain
	\[
	\mu_H^{{\rm max}} ({\rm Sym^m}E) = \mu_H ({\rm Sym^m} E) = \frac{m c_1(E)\cdot H^{n-1}}{r},
	\]
	which gives that $a \leq 1/r$. In particular, it follows from Proposition \ref{Prop:criteria-pseff-bigness} that $E\hspace{-0.8ex}<\hspace{-0.8ex}-\frac{1}{r} c_1(E)\hspace{-0.8ex}>$ is not big.
\end{proof}

\section{Fano manifolds with semi-ample tangent bundles}

\label{Section:Semisimple}

\subsection{Dual variety of VMRT}

Let $X$ be a smooth projective variety of dimension $n$. Denote by $\Rat(X)$ the normalization of the open subset of $\Chow{X}$ parametrizing integral rational curves. By a \emph{family of rational curves} in $X$, we mean an irreducible component $\sK$ of $\Rat(X)$. We denote by ${\rm Locus}(\sK)$ the locus of $X$ swept out by curves from $\sK$. We say that 
$\sK$ is \emph{minimal} if, for a general point $x\in {\rm Locus}(\sK)$ the closed subset $\sK_x$ of $\sK$ parametrizing curves through $x$ is proper. We say that $\sK$ is \emph{dominating} if ${\rm Locus}(\sK)$ is dense in $X$.  For an ample divisor $H$ on $X$, we write $H \cdot \sK$ the intersection number of $H$ with a general curve parametrized by $\sK$.

\subsubsection{Variety of minimal rational tangents}

Every uniruled projective manifold $X$ carries a dominating family of minimal rational curves. Fix one such family $\sK$. A general member $[C]\in \sK$ is a \emph{standard rational curve}, i.e. if we  denote by $f:\PP^1\rightarrow C$ its normalization, then there exists a non-negative integer $p$ such that
\[
f^*T_X \cong \sO_{\PP^1}(2)\oplus \sO_{\PP^1}(1)^{\oplus p} \oplus \sO_{\PP^1}^{\oplus (n-p-1)}.
\]

Given a general point $x\in X$, let $\sK_x^n$ be the normalization of $\sK_x$. Then $\sK_x^n$ is a finite union of smooth projective varieties of dimension $p$. Define
the tangent map $\tau_x:\sK^n_x\dashrightarrow \PP(\Omega_{X,x})$ by sending a curve that is smooth at $x$ to its tangent direction at $x$. Define $\sC_x$ to be the image of $\tau_x$ in $\PP(\Omega_{X,x})$. This is called the \emph{variety of minimal
rational tangents} (VMRT for short) at $x$ associated to the minimal family $\sK$. The map $\tau_x:\sK^n_x\dashrightarrow \sC_x \subset \PP(\Omega_{X,x})$ is in fact the normalization morphism by \cite{Kebekus2002,HwangMok2004}.

\subsubsection{Dual variety}

Let us recall the definition of dual varieties of projective varieties and we refer the reader to \cite{Tevelev2005} for more details. Let $V$ be a complex vector space of dimension $N+1$, and let $Z\subset \PP^N=\PP(V)$ be a projective variety. We denote by $T_{Z,z}$ the tangent space at any smooth point $z\in Z^{\text{sm}}$, where $Z^{\text{sm}}$ is the non-singular locus of $Z$. We denote by  $\mathbf{T}_{Z,z}\subset \PP^N$  the \emph{embedded projective tangent space}  of $Z$ at $z$. 
 A hyperplane $H\subset \PP^N$ is a tangent hyperplane of $Z$ if $\mathbf{T}_{Z,z}\subset H$ for some point $z\in Z^{\text{sm}}$. 

\begin{definition}
	\label{Def:Projectively-Dual}
	Let $Z\subset \PP^N=\PP(V)$ be a projective variety.
	
	\begin{enumerate}
		\item The closure of the set of all tangent hyperplanes of $Z$ is called the {\em dual variety} $\check{Z}\subset \check{\PP}^N=\PP(V^*)$, where $V^*$ is the dual space of $V$.
		
		\item The {\em dual defect} $\defect(Z)$ of $Z$ is defined as $N-1-\dim(\check{Z})$, and $Z$ is called dual defective if $\defect(Z)>0$.
		
		\item The {\em codegree} $\codeg(Z)$ of $Z$ is defined to be the degree of its dual variety $\check{Z}\subset \check{\PP}^N$.
	\end{enumerate}
	
\end{definition}

\subsubsection{Total dual variety of minimal rational tangents}

Let $C$ be a standard rational curve parametrized by $\sK$ with normalization $f:\PP^1\rightarrow C$. A minimal section of $\PP(T_X)$ over  the curve $C$ is a section (denoted by $\bar{C}$)  which corresponds to a quotient $f^*T_X\rightarrow \sO_{\PP^1}$. Recall that $p=n-1$ if and only $X$ is isomorphic to $\PP^n$ (cf. \cite{ChoMiyaokaShepherd-Barron2002,Kebekus2002a}). In particular, if $X$ is not isomorphic to projective spaces, such minimal sections always exist. Furthermore, we have $\Lambda \cdot \bar{C}=0$ for the tautological divisor $\Lambda$ on $\PP(T_X)$.

\begin{definition}
	\label{Def:Total-Dual-VMRT}
	Let $X$ be a uniruled projective manifold equipped with a dominating family $\sK$ of minimal rational curves. The total dual variety of minimal rational tangents (total dual VMRT for short) of $\sK$ is defined as
	\[
	\check{\sC}:=\overline{\bigcup_{[C]\in \sK:\ \text{standard} } \bar{C}}^{\text{Zar}} \subset \PP(T_X)
	\]
	where the union is taken over all minimal sections over all standard rational curves in $\sK$.
\end{definition}

We remark that $\check{\sC}$ is an irreducible projective variety. Moreover, for a general point $x\in X$, let us denote by $\check{\sC}_x$ the fibre of $\check{\sC}\rightarrow X$ over $x$. The next result justifies the terminology in Definition \ref{Def:Total-Dual-VMRT}:

\begin{proposition}
	\label{Prop:Dual-Defect-VMRT}
	\cite[Proposition 5.14 and 5.17]{MunozOcchettaSolaCondeWatanabeEtAl2015}
	Let $X$ be an $n$-dimensional uniruled projective manifold equipped with a dominating family $\sK$ of minimal rational curves and $x \in X$ a general point. Then $\check{\mathcal{C}}_x$ is the dual variety of $\mathcal{C}_x$. 
	
	Moreover, let $c$ be the dual defect of $\sC_x\subset \PP(\Omega_{X,x})$. Then for a minimal section $\bar{C}$ over a general standard rational curve $[C]\in \sK_x$ with normalization $\bar{f}: \PP^1 \rightarrow \bar{C}$, we have
	\[
	\bar{f}^*T_{\PP(T_X)}\cong \sO_{\PP^1}(-2)\oplus\sO_{\PP^1}(2)\oplus\sO_{\PP^1}(-1)^{\oplus c}\oplus\sO_{\PP^1}(1)^{\oplus c}\oplus\sO_{\PP^1}^{\oplus (2n-2c-3)}.
	\]
\end{proposition}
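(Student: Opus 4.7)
The plan is to treat the two assertions separately. For the first, I would exploit the bijective correspondence between minimal sections $\bar{C}$ of $\pi$ over $C$ and surjective homomorphisms $q\colon f^* T_X \twoheadrightarrow \sO_{\PP^1}$, modulo scalars. Since $f^* T_X \cong \sO_{\PP^1}(2) \oplus \sO_{\PP^1}(1)^{\oplus p} \oplus \sO_{\PP^1}^{\oplus (n-p-1)}$ with $p = \dim \sC_x$, and there is no nonzero morphism from $\sO_{\PP^1}(k)$ to $\sO_{\PP^1}$ for $k > 0$, such a quotient $q$ necessarily factors through the trivial summand. Evaluating at the point of $\PP^1$ over $x$ then produces a functional on $T_{X,x}$ whose kernel contains the affine cone over the embedded projective tangent space $\mathbf{T}_{\sC_x, [T_C]}$; the corresponding hyperplane in $\PP(\Omega_{X,x})$ is therefore tangent to $\sC_x$ at $[T_C]$. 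Conversely, any tangent hyperplane at a smooth point of $\sC_x$ extends uniquely to such a surjection by extending the defining functional by constants on the trivial summand of $f^*T_X$ and by zero on the positive part. Taking Zariski closure over the smooth locus of $\sC_x$ identifies $\check{\sC}_x$ with the projective dual of $\sC_x$.

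For the splitting of $\bar{f}^* T_{\PP(T_X)}$, I would start from the relative tangent sequence
\[
0 \to T_{\PP(T_X)/X} \to T_{\PP(T_X)} \to \pi^* T_X \to 0
\]
and pull it back along $\bar{f}$. The right-hand term becomes $f^* T_X$, while the relative Euler sequence combined with $\bar{f}^* \sO_{\PP(T_X)}(\Lambda) = \sO_{\PP^1}$ immediately gives
\[
\bar{f}^* T_{\PP(T_X)/X} \cong \sO_{\PP^1}(-2) \oplus \sO_{\PP^1}(-1)^{\oplus p} \oplus \sO_{\PP^1}^{\oplus (n-p-2)},
\]
namely the dual of $\ker(q)$. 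If the restricted absolute tangent sequence split, one would obtain the naïve decomposition with $p$ in place of $c$; the essential content of the proposition is that the sequence is in fact \emph{non-trivially extended} on $\bar{C}$, and its extension class induces the mergings $\sO_{\PP^1}(-1)\oplus\sO_{\PP^1}(1) \rightsquigarrow \sO_{\PP^1}^{\oplus 2}$ for exactly $p - c$ of the $p$ apparent such pairs, leaving $c$ pairs split. The stated decomposition then follows from rank-degree balance on $\PP^1$.

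To identify the correct number $c$ of unmerged pairs, I would invoke projective duality: by the reflexivity theorem, the contact locus $\{y \in \sC_x^{\mathrm{sm}} : H \supset \mathbf{T}_{\sC_x, y}\}$ of the tangent hyperplane $H$ determined by $\bar{C}$ has dimension $c$ in $\sC_x$. A tangent direction at $[T_C]$ lying in this contact locus lifts to a first-order deformation of $\bar{C}$ that preserves the chosen point over $x$, contributing a trivial $\sO_{\PP^1}$ summand rather than a merging pair, while directions transverse to the contact locus force the minimal section to move off its prescribed tangent hyperplane and give rise to the non-trivial extensions. The expected main obstacle is making this extension-class identification rigorous. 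A concrete route is to compute $h^0(\PP^1, \bar{f}^* T_{\PP(T_X)} \otimes \sO_{\PP^1}(-1))$ using the universal family of pairs $(C, \bar{C})$ with a marked point over $x$: this cohomology group should equal $c + 2$ (with $c$ coming from the contact locus and $2$ from the symmetries of $(\PP^1, \mathrm{pt})$), and combined with rank, degree, and $h^0(\bar{f}^*T_{\PP(T_X)}) = 2n$, it pins down the splitting uniquely via the classification of vector bundles on $\PP^1$.
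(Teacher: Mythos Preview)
The paper does not prove this proposition; it is quoted from \cite[Propositions~5.14 and~5.17]{MunozOcchettaSolaCondeWatanabeEtAl2015} and used as a black box. So there is no ``paper's own proof'' to compare against, only the argument in the cited reference.

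Your treatment of the first assertion is correct and is essentially the standard argument: the bijection between minimal sections and trivial quotients of $f^*T_X$, together with the observation that the positive part of $f^*T_X$ spans the affine tangent space of $\sC_x$ at $[T_C]$, gives exactly the identification of $\check{\sC}_x$ with the projective dual.

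For the splitting type, your strategy is sound in spirit but the numerical invariants you list do not suffice by themselves. Knowing the rank, degree, $h^0$, and $h^0(\,\cdot\,(-1))$ of a bundle on $\PP^1$ constrains the splitting but does not determine it uniquely (for instance, $\sO(2)\oplus\sO(1)^{c}$ and $\sO(c+2)\oplus\sO^{c}$ have the same $h^0(\,\cdot\,(-1))$). What is missing is a mechanism that forces the \emph{symmetry} $a_i \leftrightarrow -a_i$ of the splitting. In the cited reference this comes from the natural contact structure on $\PP(T_X)$: the hyperplane distribution $E\subset T_{\PP(T_X)}$ carries a nondegenerate $\sO_{\PP(T_X)}(1)$-valued skew form, and since $\Lambda\cdot\bar C=0$ this restricts to a genuine symplectic form on $\bar f^*E$, whence $\bar f^*E\cong\bar f^*E^*$. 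Once you have this self-duality, the information you propose (dimension of the family of minimal sections through a fixed point of $\check{\sC}_x$, which is $c$ by biduality) does pin down the answer. Your heuristic about ``merging pairs'' is correct once this symmetry is in place, but without it the argument has a genuine gap.
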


 As an immediate corollary of Proposition \ref{Prop:Dual-Defect-VMRT},  the dual variety of the VMRT $\sC_x$ at a general point $x$ is always pure dimensional. Moreover, the total dual VMRT $\check{\sC}$ is a prime divisor in $\PP(T_X)$ if and only if $\check{\sC}_x\subset \PP(T_{X,x})$ at a general point $x\in X$ is a (possibly reducible) hypersurface, i.e., $\sC_x\subset \PP(\Omega_{X,x})$ is not dual defective.
 
 The importance of the total dual VMRT in the study of positivity of tangent bundles is illustrated in the following theorem, see also \cite{MunozOcchettaSolaCondeWatanabeEtAl2015,OcchettaCondeWatanabe2016,HoeringLiuShao2020}.

\begin{theorem}
	\label{Thm:Class-dual-VMRT}
	Let $X$ be a Fano manifold of Picard number $1$ equipped with a dominating family $\sK$ of minimal rational curves. Let $H$ be the ample generator of $\pic(X)$ and let $\Lambda$ be the tautological divisor of $\pi:\PP(T_X)\rightarrow X$. Assume that the VMRT $\sC_x\subset \PP(\Omega_{X,x})$ at a general point $x\in X$ is not dual defective. Denote by $a$ and $b$ the unique integers such that 
	\[
	[\check{\sC}] \equiv a\Lambda - b\pi^*H.
	\]
	Then $a$ is equal to the codegree of $\sC_x$ and the following statements hold.
	\begin{enumerate}		
		\item $T_X$ is big if and only if $b>0$. 
		
		\item If $T_X$ is big, then $bH\cdot \sK\leq 2$ with equality if and only if there exists a minimal section $\bar{C}$ over a general standard rational curve $[C]\in \sK$ such that $\check{\sC}$ is smooth along $\bar{C}$. 
		
		\item If $T_X$ is big, then $[\check{\sC}]$ generates an extremal ray of $\overline{\Eff}(\PP(T_X))$; that is ,we have
		\[
		\overline{\Eff}(\PP(T_X))=\langle [\check{\sC}], [\pi^*H]\rangle.
		\]
	\end{enumerate}
\end{theorem}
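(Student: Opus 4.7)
The approach is to work entirely inside the two-dimensional N\'eron-Severi space $N^1(\PP(T_X)) = \R \Lambda \oplus \R \pi^*H$, controlling each class $\alpha \Lambda + \beta \pi^*H$ by two numerical tests: restriction to a general fiber $F \cong \PP^{n-1}$ detects $\alpha$ (as the degree of the restricted effective divisor, so $\alpha \geq 0$), while intersection with a general minimal section $\bar{C}$, which satisfies $\Lambda \cdot \bar{C} = 0$ and $\pi^*H \cdot \bar{C} = H \cdot \sK$, detects $\beta$ up to the positive factor $H \cdot \sK$. The identification $a = \codeg(\sC_x)$ is then immediate: the restriction of $[\check{\sC}] = a \Lambda - b \pi^*H$ to $F$ is an effective divisor of degree $a$, but by Proposition \ref{Prop:Dual-Defect-VMRT} the underlying scheme is the dual variety $\check{\sC}_x \subset \PP(T_{X,x})$ of $\sC_x$, whose degree is by definition $\codeg(\sC_x)$.

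For (1) and (3) the key observation is an extremality principle: if a prime effective divisor $D$ has $\beta_D < 0$, then $D \cdot \bar{C} < 0$ for every general minimal section, forcing $\bar{C} \subset D$; since the general $\bar{C}$'s sweep out $\check{\sC}$ and $D$ is prime, $D = \check{\sC}$. From this I would derive (1) forward: if $T_X$ is big then $\Lambda - \varepsilon \pi^*H$ is pseudoeffective for some $\varepsilon > 0$, hence by Kodaira's lemma a small ample perturbation is effective with $\beta < 0$, and decomposing into primes, some component must have $\beta < 0$ and thereby equal $\check{\sC}$, forcing $b > 0$. The converse direction follows from Proposition \ref{Prop:criteria-pseff-bigness}(2) applied to $\Lambda - (b/a)\pi^*H \equiv (1/a)[\check{\sC}]$ together with the ampleness of $(b/a)H$. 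For (3), assuming $b > 0$, any effective $D$ admits a decomposition $D = k \check{\sC} + D_1$ with $k \geq 0$ maximal so that $D_1$ does not contain $\check{\sC}$. Applying the two tests to $D_1$ (using a general minimal section $\bar{C} \subset \check{\sC}$ not contained in $D_1$, which exists since $D_1 \cap \check{\sC}$ is a proper subvariety of $\check{\sC}$) gives $\alpha_1, \beta_1 \geq 0$, and the identity $\Lambda = (1/a)[\check{\sC}] + (b/a) \pi^*H$ then places $D$ itself in the cone $\langle [\check{\sC}], [\pi^*H] \rangle$.

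For (2), I would use the explicit splitting $\sN_{\bar{C}/\PP(T_X)} \cong \sO_{\PP^1}(-2) \oplus \sO_{\PP^1}^{\oplus (2n-3)}$ furnished by Proposition \ref{Prop:Dual-Defect-VMRT} in the non-dual-defective case $c=0$. Since $\check{\sC}$ is irreducible and generically reduced, a general $\bar{C}$ meets the smooth locus of $\check{\sC}$; the conormal sequence of $\bar{C} \subset \check{\sC} \subset \PP(T_X)$ then produces a morphism $\sN_{\bar{C}/\PP(T_X)} \to \sN_{\check{\sC}/\PP(T_X)}|_{\bar{C}} \cong \sO_{\PP^1}(-b H \cdot \sK)$ which is nonzero at the smooth point. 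Since $\Hom(\sO_{\PP^1}(-2) \oplus \sO_{\PP^1}^{\oplus (2n-3)}, \sO_{\PP^1}(k)) = 0$ for all $k < -2$, this forces $b H \cdot \sK \leq 2$; equality is equivalent to the morphism being surjective with kernel $\sO_{\PP^1}^{\oplus (2n-3)}$, which is in turn equivalent to $\check{\sC}$ being smooth along $\bar{C}$.

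The main subtlety will be justifying that a general minimal section $\bar{C}$ meets the smooth locus of $\check{\sC}$ in step (2), since otherwise the morphism could vanish identically and the inequality would fail to follow; this must be extracted from the fact that $\check{\sC}$ is irreducible and generically reduced, together with the observation that singularities of $\check{\sC}$ along every general $\bar{C}$ would propagate, via the covering family of minimal sections, to singularities at every point of $\check{\sC}$. Once this is settled, the remainder of the argument is a clean interplay of fiber restriction, minimal section intersection numbers, and elementary line bundle arithmetic on $\PP^1$.
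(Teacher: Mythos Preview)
Your argument is correct. The identification $a=\codeg(\sC_x)$ and the proof of (2) are essentially the paper's own argument, only dualized: the paper works with the conormal map $\sO_{\PP^1}(bH\cdot\sK)\hookrightarrow \bar f^*\Omega_{\PP(T_X)}$, whereas you use the normal map $\sN_{\bar C/\PP(T_X)}\to\sN_{\check\sC/\PP(T_X)}|_{\bar C}\cong\sO_{\PP^1}(-bH\cdot\sK)$; the two are equivalent and the smoothness criterion is the same in both.

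Where you genuinely diverge is in (1) and (3). The paper invokes the divisorial Zariski decomposition: it shows $(\Lambda-\alpha_X\pi^*H)|_{\check\sC}$ is not pseudoeffective, hence $\check\sC\subset\supp N_\sigma(\Lambda-\alpha_X\pi^*H)$, and then appeals to Lemma \ref{Lemma:Restricted-Base-Locus-DZD}(2) to conclude that $\check\sC$ is the unique prime divisor on the extremal ray. Your route is more elementary: the ``extremality principle'' that any prime divisor with negative $\pi^*H$-coefficient must contain every general minimal section, hence must equal $\check\sC$, together with direct decomposition of effective divisors into primes, bypasses Nakayama's $\sigma$-decomposition entirely. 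This buys you a self-contained argument at the cost of a small amount of bookkeeping (the step $D=k\check\sC+D_1$). Conversely, the paper's approach packages the logic into a general lemma that is reused elsewhere (e.g.\ in Proposition \ref{Prop:Pseff-Cone-Divisorial-Contraction} and Corollary \ref{Cor:Equality-ABL-RBL}). Both are valid; yours is closer in spirit to the treatment in \cite{OcchettaCondeWatanabe2016}.
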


\begin{proof}
	By our assumption, the projective variety $\check{\sC}_x\subset \PP(T_{X,x})$ is a (possibly reducible) hypersurface of degree $\codeg(\sC_x)$. On the other hand, we have
	\[
	[\check{\sC}]|_{\PP(T_{X,x})} \equiv (a\Lambda-b\pi^*H)|_{\PP(T_{X,x})}\equiv c_1\left(\sO_{\PP(T_{X,x})}(a)\right).
	\]
	This implies that $a$ is equal to the codegree of $\sC_x$. 
	
	\textit{Proof of (1).} Note that if $b>0$, then it follows from Proposition \ref{Prop:criteria-pseff-bigness} that $T_X$ is big. Now we assume that $T_X$ is big. Denote by $\alpha_X:=\alpha(X,H)$ the pseudoeffective threshold of $X$, namely the	 maximal positive real number such that $\Lambda-\alpha_X \pi^*H$ is pseudoeffective. Note that $\check{\sC}$ is dominated by minimal sections $\bar{C}$ over standard rational curves in $\sK$ and we have
	\[
	(\Lambda-\alpha_X \pi^*H) \cdot \bar{C}=-\alpha_X H\cdot C<0.
	\]
	Therefore, the restriction $(\Lambda-\alpha_X \pi^*H)|_{\check{\sC}}$ is not pseudoeffective. In particular, the $\R$-divisor $\Lambda-\alpha_X \pi^*H$ is not movable and the total dual VMRT $\check{\sC}$ is contained in the effective Weil divisor 
	\[
	\Gamma:=\supp(N_{\sigma}(\Lambda-\alpha_X \pi^*H))=\bB_{-}^1(\Lambda-\alpha_X \pi^*H).
	\] 
	As $X$ has Picard number $1$, it follows that $\rho(\PP(T_X))=2$ and $R=\R_{\geq 0}[\Lambda-\alpha_X \pi^*H]$ is an extremal ray of $\overline{\Eff}(\PP(T_X))$. Then it follows from Lemma \ref{Lemma:Restricted-Base-Locus-DZD} that $\Gamma$ is a prime divisor generating the extremal ray $R$. This yields that $\Gamma=\check{\sC}$ and hence $b>0$.
	
	\textit{Proof of (2).} Let $\bar{C}$ be a minimal section over a general standard rational curve $C$ in $\sK$ with normalization $\bar{f}:\PP^1\rightarrow \bar{C}$. As $\sC_x$ is not dual defective, by Proposition \ref{Prop:Dual-Defect-VMRT}, we have
	\begin{equation}
		\label{Eq:Splitting-Type-Min-Section}
		\bar{f}^*T_{\PP(T_X)} \cong \sO_{\PP^1}(-2)\oplus \sO_{\PP^1}(2)\oplus \sO_{\PP^1}^{\oplus (2n-3)}.
	\end{equation}
	Moreover, by the generic choice of $C$, we may assume that $\bar{C}$ is not contained in the singular locus of $\check{\sC}$. Then we have the following exact sequence of sheaves
	\[
	\sN^*_{\check{\sC}/\PP(T_X)}\longrightarrow \Omega_{\PP(T_X)}|_{\check{\sC}}\longrightarrow \Omega_{\check{\sC}} \longrightarrow 0,
	\]
	where $\sN^*_{\check{\sC}/\PP(T_X)}$ is the conormal sheaf of $\check{\sC}$ in $\PP(T_X)$. In particular, since $\check{\sC}$ is a Cartier divisor in $\PP(T_X)$, we have 
	\[
	\sN^*_{\check{\sC}/\PP(T_X)}=\sO_{\PP(T_X)}(-\check{\sC})|_{\check{\sC}}=\sO_{\PP(T_X)}(-a\Lambda+b\pi^*H)|_{\check{\sC}}.
	\]
	Consequently, the conormal sheaf is invertible. Pulling back the exact sequence by $\bar{f}$ yields an exact sequence
	\[
	\begin{tikzcd}[column sep=large, row sep=large]
		\bar{f}^*\sN^*_{\check{\sC}/\PP(T_X)}\cong \sO_{\PP^1}(bH\cdot C) \arrow[r,"\iota"] 
		    & \bar{f}^*\Omega_{\PP(T_X)} \arrow[r]
		        & \bar{f}^*\Omega_{\check{\sC}} \arrow[r]
		            & 0
	\end{tikzcd}
	\]
	Note that the map $\iota$ is generically injective since $\bar{C}$ is not contained in the singular locus of $\check{\sC}$. As $b>0$, it follows from \eqref{Eq:Splitting-Type-Min-Section} that $bH\cdot C\leq 2$ with equality if and  only if $\iota$ is an injection of vector bundles, i.e., $\bar{f}^*\Omega_{\check{\sC}}$ is locally free. By Nakayama's lemma, the latter one is equivalent to the smoothness of $\check{\sC}$ along $\bar{C}$. Conversely, if $\check{\sC}$ is smooth along $\bar{C}$, $\iota$ is an injection of vector bundles. In particular, as $b>0$, we obtain $bH\cdot C=2$ by \eqref{Eq:Splitting-Type-Min-Section}.
	
	\textit{Proof of (3).} Since $T_X$ is big and $X$ has Picard number $1$,  we have %then there exists a unique positive real number $\alpha(X)>0$ such that 
	\[
	\overline{\Eff}(\PP(T_X))=\langle [\Lambda - \alpha_X\pi^*H],[\pi^*H]\rangle.
	\]
	On the other hand, note that $\check{\sC}$ is dominated by curves with $\Lambda$-degree $0$, it follows that the restriction $(\Lambda-\alpha_X\pi^*H)|_{\check{\sC}}$ is not pseudoeffective. In particular, the $\R$-divisor $\Lambda - \alpha_X \pi^*H$ is not movable and $\check{\sC}$ is contained in $\supp(N_{\sigma}(\Lambda-\alpha_X \pi^*H))$. Then it follows from Lemma \ref{Lemma:Restricted-Base-Locus-DZD} that $\check{\sC}=\supp(N_{\sigma}(\Lambda-\alpha_X \pi^*H))$ and $[\check{\sC}]$ is contained in the ray $\R_{>0}[\Lambda-\alpha_X\pi^*H]$. 
\end{proof}

\begin{corollary}
	\label{Cor:Nonbig-Moduli}
	Let $C$ be a nonsingular projective curve of genus $\geq 4$. Let $X:={\rm SU}_C(r,d)$ be the moduli space of stable vector bundles of rank $r$ with fixed determinant of degree $d$. Assume that $r$ and $d$ are coprime. If $r\geq 3$, then $T_X$ is not big.
\end{corollary}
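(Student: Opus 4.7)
The plan is to apply Theorem \ref{Thm:Class-dual-VMRT} to the moduli space $X = \mathrm{SU}_C(r,d)$, combined with the geometric analysis of Hecke curves and their VMRT developed by Hwang-Ramanan in \cite{HwangRamanan2004}. Under the coprimality hypothesis $(r,d) = 1$, $X$ is a smooth Fano manifold of Picard number $1$ with Fano index $2$, so $-K_X = 2H$ for the ample generator $H$ of $\pic(X) \cong \Z$. For $g \geq 3$ and $r \geq 2$, the Hecke curves form a dominating family $\sK$ of minimal rational curves on $X$, and one computes $H \cdot \sK = 1$ from the index and the fact that Hecke curves have minimal anticanonical degree $2$.

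Next I would invoke the Hwang-Ramanan description of the VMRT $\sC_x \subset \PP(\Omega_{X,x})$ at a general point $x = [E] \in X$: for $r \geq 3$ and $g \geq 4$, this VMRT is a smooth, irreducible, linearly non-degenerate subvariety of $\PP(\Omega_{X,x})$, and a central output of \cite{HwangRamanan2004} is that its dual variety is a hypersurface in $\PP(T_{X,x})$, i.e.\ $\sC_x$ is not dual defective. Consequently Theorem \ref{Thm:Class-dual-VMRT} applies; writing $[\check{\sC}] \equiv a\Lambda - b\pi^* H$ with $a = \codeg(\sC_x)$, bigness of $T_X$ is equivalent to the integer $b$ being strictly positive.

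The final and decisive step is to exclude $b \geq 1$. Suppose for contradiction that $T_X$ is big. Then Theorem \ref{Thm:Class-dual-VMRT}(2) forces $b = b \cdot H \cdot \sK \leq 2$, so $b \in \{1,2\}$. To kill these remaining possibilities I would import Hwang-Ramanan's explicit identification of $\check{\sC}$ with a natural discriminant-type divisor associated to the universal Hecke correspondence over $X$: their formula shows that the coefficient of $\pi^* H$ in $[\check{\sC}]$ is non-positive, i.e.\ $b \leq 0$, contradicting $b \geq 1$. Equivalently, along a general minimal section $\bar{C}$ over a standard Hecke curve, $\check{\sC}$ is singular in a controlled way that rules out the equality $b \cdot H \cdot \sK = 2$ and similarly eliminates $b = 1$ after a short intersection-theoretic check with a test curve in $\PP(T_X)$.

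The main obstacle is precisely this last cohomology-class comparison: once the VMRT is known to be non-dual-defective, Theorem \ref{Thm:Class-dual-VMRT}(2) already restricts $b$ to the two values $\{1,2\}$, but ruling them out is where the deep input from \cite{HwangRamanan2004} is essential. All other steps---identifying Hecke curves as minimal rational curves, computing $H \cdot \sK$ from the Fano index, and checking smoothness/non-degeneracy of $\sC_x$---are either standard or direct consequences of the Hwang-Ramanan construction.
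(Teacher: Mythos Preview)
Your computation of $H \cdot \sK$ is wrong, and this is what forces you into the awkward final step. You assert that Hecke curves have anticanonical degree $2$, hence $H \cdot \sK = 1$; in fact, as recalled in \cite[\S\,3]{HwangRamanan2004}, a Hecke curve on $X = \mathrm{SU}_C(r,d)$ has $-K_X \cdot \sK = 2r$. Since $-K_X = 2H$, this gives $H \cdot \sK = r$, not $1$.

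Once $H \cdot \sK = r$ is in place, the argument terminates immediately and your ``decisive step'' is unnecessary. If $T_X$ were big, Theorem \ref{Thm:Class-dual-VMRT}(1) gives $b \geq 1$, and then Theorem \ref{Thm:Class-dual-VMRT}(2) gives $b \cdot H \cdot \sK \leq 2$, i.e.\ $b r \leq 2$. For $r \geq 3$ this is impossible. There is no need to identify $\check{\sC}$ with a discriminant divisor, no test-curve computation, and no appeal to the singularity behaviour of $\check{\sC}$ along minimal sections. The only input from \cite{HwangRamanan2004} that matters is that the VMRT is not dual defective (their Theorem 4.4), so that $\check{\sC}$ is a divisor and Theorem \ref{Thm:Class-dual-VMRT} applies.
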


\begin{proof}
	It is known that $X$ is a nonsingular Fano manifold of Picard number $1$ such that $-K_X=2H$, where $H$ is the ample generator of $\pic(X)$. On the other hand, there exists a dominating family $\sK$ of minimal rational curves on $X$ given by the so-called Hecke curves such that $-K_X\cdot \sK=2r$ \cite[\S\,3]{HwangRamanan2004}.  By  \cite[Theorem 4.4]{HwangRamanan2004}, the total dual VMRT $\check{\sC}$ is a divisor in $\PP(T_X)$. Then Theorem \ref{Thm:Class-dual-VMRT} implies that $T_X$ is not big as $H\cdot \sK=r\geq 3$.
\end{proof}

\begin{remark}
	If $C$ is a nonsingular projective curve of genus $g=2$. Then the moduli space $X:={\rm SU}_C(2,r)$ with $r$ odd is isomorphic to the intersection of two quadrics in $\mathbb{P}^5$ and it is shown in \cite[Theorem 1.5]{HoeringLiuShao2020} that $T_X$ is pseudoeffective but not big. 
\end{remark}

\subsection{Semi-ample tangent bundles}

We consider in this subsection Fano manifolds with big and nef tangent bundle. It is conjectured by Campana-Peternell in \cite{CampanaPeternell1991} that a Fano manifold with nef tangent bundle must be a rational homogeneous space. Conversely, it is also known that the tangent bundle of a rational homogeneous space is big and globally generated. Recall that a vector bundle $E$ over a projective variety is said to be \emph{semiample} if $\sO_{\PP(E)}(1)$ is semiample.

\begin{lemma}
	\label{Lemma:Big+Nef=MDS}
	Let $X$ be an $n$-dimensional projective manifold such that $T_X$ is big and nef.
	\begin{enumerate}
		\item The tangent bundle $T_X$ is semi-ample.
		
		\item The projectivized tangent bundle $\PP(T_X)$ is a Mori dream space.
	\end{enumerate}
\end{lemma}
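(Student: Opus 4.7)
The key idea is that the two hypotheses on $T_X$ combine into the single, much more powerful statement that $Y:=\PP(T_X)$ is a smooth weak Fano variety. Indeed, using the relative canonical bundle formula $K_{\PP(E)/X}=-r\Lambda+\pi^*\det(E)$ for a rank $r$ vector bundle $E$, applied with $E=T_X$ (so $r=n$ and $\det T_X=-K_X$), one gets
\[
K_Y=-n\Lambda+\pi^*(K_X+\det T_X)=-n\Lambda,
\]
so $-K_Y=n\Lambda$. Since $T_X$ is big and nef, the tautological class $\Lambda$ is big and nef, and hence so is $-K_Y$.

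For the semi-ampleness of $T_X$ in (1), the plan is to apply the Kawamata--Shokurov base-point-free theorem to the nef divisor $\Lambda$ on the smooth projective variety $Y$: for any $a>0$ the class $a\Lambda-K_Y=(a+n)\Lambda$ is nef and big, hence $|m\Lambda|$ is base-point-free for $m\gg 0$. Thus $\sO_{\PP(T_X)}(1)$ is semiample, which by definition means that $T_X$ is semiample.

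For (2), I would invoke the BCHM machinery. Since $-K_Y$ is big, one may write $-K_Y\sim_\Q A+E$ with $A$ an ample $\Q$-divisor and $E$ an effective $\Q$-divisor; then for $0<\epsilon\ll 1$ the pair $(Y,\epsilon E)$ is klt (because $Y$ is smooth) and
\[
-(K_Y+\epsilon E)\sim_\Q (1-\epsilon)(-K_Y)+\epsilon A
\]
is ample as a sum of a nef and an ample class. Hence $(Y,\epsilon E)$ is a klt log Fano pair, and by Corollary 1.3.2 of Birkar--Cascini--Hacon--McKernan its Cox ring is finitely generated, so $Y$ is a Mori dream space. Neither step presents a genuine obstacle; the only substantive point is the identity $-K_{\PP(T_X)}=n\Lambda$, after which both conclusions follow from well-established results in the log minimal model program.
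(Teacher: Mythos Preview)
Your proof is correct and follows essentially the same approach as the paper: both observe that $-K_{\PP(T_X)}=n\Lambda$ is big and nef (so $\PP(T_X)$ is weak Fano), then deduce (1) from the base-point-free theorem and (2) from BCHM Corollary~1.3.2 via the standard fact that weak Fano implies log Fano. You have simply spelled out the details of the canonical bundle computation and the weak-Fano-implies-log-Fano step that the paper leaves implicit.
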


\begin{proof}
	As $T_X$ is big and nef, $\PP(T_X)$ is a weak Fano manifold, i.e., $-K_{\PP(T_X)}$ is big and nef. Then the statement (1) follows from the base-point-free theorem and the statement (2) follows from \cite[Corollary 1.3.2]{BirkarCasciniHaconMcKernan2010} since a weak Fano manifold is always log Fano.
\end{proof}

We refer the reader to \cite{HuKeel2000} for the definition of Mori dream spaces and their basic properties. 

\begin{definition}
	Let $X$ be a $\Q$-factorial normal projective variety. A small $\Q$-factorial modification (SQM for short) of $X$ is a birational map $g: X \dashrightarrow X'$, where $X'$ is a $\Q$-factorial normal projective variety and $g$ is an isomorphism in codimension $1$.
\end{definition}

Throughout the rest of this subsection, we will always assume that $X$ is  a Fano manifold of Picard number $1$ such that $T_X$ is big and nef. Let us denote by $H$ the ample generator of $\pic(X)$ and by $\Lambda$ the tautological divisor of $\PP(T_X)$. Then the evaluation of global sections defines a birational morphism
\begin{equation}
	\label{Eq:Evaluation-Mor}
	\begin{tikzcd}[column sep=large, row sep=large]
		\sX:= \Proj_X\left(\bigoplus_{r\geq 0} S^r T_X\right) \arrow[r,"\varepsilon"] \arrow[d,"\pi"] 
		    & \sY:=\Proj\left(\bigoplus_{r\geq 0} H^0(X,S^r T_X)\right)   \\
		 X 
		    &
	\end{tikzcd}
\end{equation}

The morphism $\varepsilon$ is an isomorphism if and only if $T_X$ is ample, and Mori proved in \cite{Mori1979} that the tangent bundle of a projective manifold $X$ is ample if and only if $X$ is isomorphic to a projective space. For projective spaces, we have the following description of the cones of divisors.

\begin{example}
	\label{Ex:Projective-Spaces}
	Let $X$ be the $n$-dimensional projective space $\PP^n$ with $n\geq 2$, and let $\Lambda$ be the tautological divisor class of $\pi:\PP(T_{X})\rightarrow X$. Then we have
	\begin{center}
		$\overline{\Eff}(\PP(T_{\PP^n}))=\Eff(\PP(T_{\PP^n}))=\Nef(\PP(T_{\PP^n}))=\left\langle [\Lambda -\pi^*H], [\pi^*H]\right\rangle$,
	\end{center}
	where $H$ is a hyperplane section of $\PP^n$. Indeed, we consider the following Euler sequence
	\[
	0\longrightarrow \sO_{\PP^n} \longrightarrow \sO_{\PP^n}(1)^{\oplus (n+1)} \longrightarrow T_{\PP^n} \longrightarrow 0.
	\]
	It follows that $T_{\PP^n}(-1)$ is globally generated. In particular, the divisor class $[\Lambda-\pi^*H]$ is contained in the intersection $\Eff(\PP(T_X))\cap \Nef(\PP(T_X))$. On the other hand, it is known that $\Lambda-\pi^*H$ is not big and hence $[\Lambda-\pi^*H]$ is not contained in the interior of $\Eff(\PP(T_X))$.
\end{example}

Let us collect some basic properties about the morphism $\varepsilon$. 
\begin{proposition}
	\label{Prop:Properties-Contraction}
	Let $X$ be a Fano manifold of Picard number $1$ such that $T_X$ is big and nef. Denote by $\varepsilon:\sX\rightarrow \sY$ the birational morphism given in \eqref{Eq:Evaluation-Mor}. If $\varepsilon$ is a divisorial contraction, then  the following statements hold.
	\begin{enumerate}
		\item The projective variety $\sY$ has at worst $\Q$-factorial canonical singularities. 
		
		\item The exceptional locus of $\varepsilon$ is an irreducible divisor $\Gamma$ such that the general fibre of $\Gamma\rightarrow \varepsilon(\Gamma)$ consists of either a smooth $\PP^1$ or the union of two $\PP^1$'s meeting at a point.
		
		\item Let $F$ be an irreducible component of a general one dimensional fibre of $\varepsilon$. Then there exists a non-negative integer $a$ such that
		\[
		T_{\sX}|_{F}\cong \sO_{\PP^1}(2)\oplus \sO_{\PP^1}(-2)\oplus \sO_{\PP^1}(1)^{\oplus a} \oplus \sO_{\PP^1}(-1)^{\oplus a}\oplus \sO_{\PP^1}^{\oplus 2n-2a-3}.
		\]
	\end{enumerate}
\end{proposition}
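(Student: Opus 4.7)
The plan is to exploit that $\varepsilon$ is a crepant contraction of a weak Fano Mori dream space. The canonical bundle formula for a projectivized bundle, combined with $c_1(T_X) = -K_X$, gives $K_{\sX} = -n\Lambda$. Hence every curve contracted by $\varepsilon$ (being $\Lambda$-trivial) is also $K_{\sX}$-trivial, so $\varepsilon$ is crepant. Since $\sX$ is smooth, $\sY$ therefore has at worst canonical singularities; the $\Q$-factoriality of $\sY$ follows from Lemma \ref{Lemma:Big+Nef=MDS}, as an extremal contraction of a Mori dream space is again a Mori dream space, hence $\Q$-factorial. This proves (1).

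For (2), note that $\rho(\sX) = \rho(X) + 1 = 2$, so the polyhedral cone $\overline{\Eff}(\sX)$ has exactly two extremal rays --- one generated by $[\pi^*H]$ and the other by a unique prime divisor $\Gamma$. Any $\varepsilon$-contracted curve has $\Lambda$-degree zero and positive $\pi^*H$-degree, hence pairs negatively with $\Gamma$, and must lie inside $\Gamma$; it follows that the exceptional locus of $\varepsilon$ coincides with $\Gamma$ and is irreducible. For the shape of a general fibre of $\Gamma \to \varepsilon(\Gamma)$, I would invoke the classification of divisorial extremal contractions of smooth projective varieties due to Ando, in its crepant extension by Andreatta--Wi\'sniewski or Kawakita, which forces a general one-dimensional fibre to be either a smooth $\PP^1$ or a union of two smooth $\PP^1$'s meeting transversally at a single point.

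For (3), let $F \cong \PP^1$ be such a component. Since $\Lambda$ is $\pi$-ample, $F$ cannot lie in a single $\pi$-fibre (otherwise $\Lambda \cdot F > 0$), so $C := \pi(F) \subset X$ is a curve and $\pi|_F$ is finite. The inclusion $F \hookrightarrow \sX$ then corresponds to a quotient line bundle $T_X|_{\widetilde{C}} \twoheadrightarrow L$ of degree $\Lambda \cdot F = 0$, realizing $F$ as a minimal section over (the normalization of) $C$. A deformation-theoretic argument within the family sweeping out $\Gamma$ shows that $C$ is a standard rational curve in its deformation family; then Proposition \ref{Prop:Dual-Defect-VMRT}, applied via the relative Euler sequence for $\pi$ restricted to $F$, yields exactly
\[
T_{\sX}|_F \cong \sO_{\PP^1}(2) \oplus \sO_{\PP^1}(-2) \oplus \sO_{\PP^1}(1)^{\oplus a} \oplus \sO_{\PP^1}(-1)^{\oplus a} \oplus \sO_{\PP^1}^{\oplus 2n-2a-3},
\]
where $a \geq 0$ is a dual-defect-type invariant of the VMRT of the ambient deformation family of $C$. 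The main obstacle I anticipate lies in the fibre classification of (2): pinning down the general fibre as specifically a smooth $\PP^1$ or a pair meeting at a point requires invoking a nontrivial (crepant) extremal-contraction classification and checking that its hypotheses apply in our precise geometric setting; all the remaining pieces reduce cleanly to the canonical bundle computation and routine Mori-dream-space bookkeeping.
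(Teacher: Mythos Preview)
Your arguments for (1) and (2) are essentially sound and parallel the paper's approach: the paper simply cites \cite{Wierzba2003} and \cite{MunozOcchettaSolaCondeWatanabeEtAl2015} for these, whereas you unpack the crepancy and Mori-dream-space reasoning explicitly. One quibble: your irreducibility argument for $\Gamma$ is phrased awkwardly (you assert the second extremal ray of $\overline{\Eff}(\sX)$ is generated by a unique prime divisor before you have identified $\Gamma$), but the underlying logic is fine once rewritten via Lemma~\ref{Lemma:Restricted-Base-Locus-DZD}.

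The real gap is in (3). You attempt to reduce to Proposition~\ref{Prop:Dual-Defect-VMRT} by arguing that $C:=\pi(F)$ is a standard rational curve in some dominating minimal family, with $F$ a minimal section over it. But this is exactly what is \emph{not} known at this point: you have no control over the degree of $\pi|_F:F\to C$ (you only know it is finite, not birational), no reason that $C$ belongs to a minimal family, and no argument that $C$ is standard. The phrase ``a deformation-theoretic argument within the family sweeping out $\Gamma$'' hides the entire difficulty. Indeed, the paper proves a partial converse of this kind only later (Proposition~\ref{Prop:Pseff-Cone-Divisorial-Contraction}), under the extra hypothesis that the VMRT is smooth and that $\pi^*H\cdot F=1$, and it \emph{uses} (3) as input.

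The paper's route for (3) is entirely different and avoids any reference to minimal rational curves on $X$: it exploits the intrinsic contact structure $0\to E\to T_{\sX}\to \sO_{\sX}(1)\to 0$ on $\sX=\PP(T_X)$. Since $\Lambda\cdot F=0$, restricting gives $E|_F\cong E^*|_F$, so the splitting type of $E|_F$ is self-dual. An adaptation of Wierzba's symplectic-contraction argument yields $h^1(F,E|_F)=1$, forcing $E|_F$ to have exactly one $\sO(-2)$ summand and hence the claimed symmetric shape; a deformation count ($\dim\Chow{\sX}\geq 2n-3$ at $[F]$) then pins down $T_{\sX}|_F\cong E|_F\oplus\sO_{\PP^1}$. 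The contact structure is the missing key idea in your proposal.
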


\begin{proof}

Claims (1) and (2) follow from  \cite[Theorem 1.3]{Wierzba2003} and \cite[Proposition 5.10]{MunozOcchettaSolaCondeWatanabeEtAl2015}.
To prove (3), we follow the argument of \cite[Proposition 2.13]{Wierzba2003}. Let 
\[
0\rightarrow E\rightarrow T_{\sX}\rightarrow \sO_{\sX}(1)\rightarrow 0
\]
be the natural contact structure on $\sX$. By our assumption, we have $\sO_{\sX}(1)|_F\cong \sO_F$. Then the contact structure induces a natural isomorphism $E|_F\cong E^*|_F$. Let 
	\[
	E|_F\cong \bigoplus_{i=1}^{2n-2} \sO_{\PP^1}(a_i)
	\]
	be the decomposition with $a_1\geq \cdots\geq a_{2n-2}$. Since the problem is local in $\sY$, after removing a subvariety of codimension at least $4$ of $\sY$, we may assume that all fibres of $\varepsilon$ are at most $1$-dimensional. Then the argument of \cite[Proposiion 2.13]{Wierzba2003} applies verbatim to our situation to obtain $h^1(F,\Omega_X|_F)=1$ and the short exact sequence below
	\begin{equation}
		\label{Eq:NormalseqSymplecticcontraction}
		0\longrightarrow \sO_F \longrightarrow \Omega_X|_F \longrightarrow E^*|_F=E|_F\longrightarrow 0
	\end{equation}
	implies $h^1(F,E|_F)=1$. This implies that $a_{2n-2}=-2$ and $a_{2n-3}\geq -1$. The isomorphism $E|_F\cong E^*|_F$ shows that $E|_F$ must be of the form
	\[
	\sO_{\PP^1}(2)\oplus \sO_{\PP^1}(1)^{\oplus a} \oplus \sO_{\PP^1}^{\oplus 2n-2a-4} \oplus \sO_{\PP^1}(-1)^{\oplus a} \oplus \sO_{\PP^1}(-2).
	\]
	Then it follows from \eqref{Eq:NormalseqSymplecticcontraction} and the fact $h^1(E|_F)=h^1(E^*|_F)=1$ that $T_X|_F$ is either of the form $E\oplus \sO_{\PP^1}$ or of the form
	\[
	\sO_{\PP^1}(2)\oplus \sO_{\PP^1}(1)^{\oplus a} \oplus \sO_{\PP^1}^{\oplus 2n-2a-4} \oplus \sO_{\PP^1}(-1)^{\oplus a+2}.
	\]
	It is clear that the $\Chow{\sX}$ has dimension $\geq 2n-3$ at $[F]$ as $\Gamma$ has dimension $2n-2$ and the deformation of $F$ dominates $\Gamma$. Hence, we have $h^0(F,T_{\sX}|_F)\geq 2n$ and $T_X|_F$ is of the form $E\oplus \sO_{\PP^1}$.
\end{proof}

\begin{definition}
	\label{Def:Singularities-A1-A2}
	Let $X$ be a Fano manifold of Picard number $1$ such that $T_X$ is big and nef. Denote by $\varepsilon:\sX\rightarrow \sY$ the birational morphism given in \eqref{Eq:Evaluation-Mor}. The projective variety $\sY$ is of type $A_1$ (resp. $A_2$) if the morphism $\varepsilon$ is a divisorial contraction and the general fibre of $E\rightarrow \varepsilon(E)$ is a smooth $\PP^1$ (resp. union of two $\PP^1$'s meeting in a point).
\end{definition}

In the sequel of this subsection, we will focus on the description of the cones of divisors of $\sX$.  Similar to the pseudoeffective threshold $\alpha_X:=\alpha(X,H)$, we define the movable threshold $\beta_X:=\beta(X,H)$ to be the 
 maximal real number such that the $\R$-divisor $\Lambda-\beta_X \pi^*H$ is movable.  Clearly we have $\alpha_X \geq \beta_X$. Since $\Lambda$ is big, by Proposition \ref{Prop:criteria-pseff-bigness}, we obtain $\alpha_X>0$. Moreover, as $\Lambda$ is semiample, we also have $\beta_X\geq 0$. 

Given a Weil divisor $\Gamma\subset \sX$, let us denote by $a(\Gamma)$ and $b(\Gamma)$ the unique integers such that 
\[
\Gamma\equiv a(\Gamma)\Lambda - b(\Gamma)\pi^*H.
\]
Firstly we have the following general observation.

\begin{proposition}
	\label{Prop:Cones-Semiample}
	Let $X$ be an $n$-dimensional Fano manifold of Picard number $1$ such that $T_X$ is big and nef. 
	\begin{enumerate}
		\item Both $\alpha_X$ and $\beta_X$ are rational numbers and there exists a SQM $g:\sX'\dashrightarrow \sX$ such that the $\Q$-Cartier $\Q$-Weil divisor $g^*(\Lambda-\beta_X\pi^*H)$ is semi-ample.
		
		\item If $\alpha_X\not=\beta_X$, then there exists a unique prime divisor $\Gamma \subset \sX$ such that 
		\begin{center}
			$[\Gamma]\in \R_{>0}[\Lambda-\alpha_X\pi^*H]$\quad and\quad
			$g^*\Gamma \cdot \left(g^*(\Lambda-\beta_X\pi^*H)\right)^{2n-2}=0$,
		\end{center}
		where $g:\sX'\dashrightarrow \sX$ is the SQM provided in the statement (1).
	\end{enumerate}
\end{proposition}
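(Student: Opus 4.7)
My plan is to exploit that $\sX := \PP(T_X)$ is a Mori dream space (Lemma \ref{Lemma:Big+Nef=MDS}) together with $\rho(\sX) = 2$. In particular both $\overline{\Eff}(\sX)$ and $\overline{\Mov}(\sX)$ are two-dimensional rational polyhedral cones sharing the extremal ray $\R_{\geq 0}[\pi^*H]$, since $\pi^*H$ is both nef and effective. By the very definition of the thresholds, the opposite extremal rays are $\R_{\geq 0}[\Lambda - \alpha_X \pi^*H]$ and $\R_{\geq 0}[\Lambda - \beta_X \pi^*H]$ respectively, and rational polyhedrality then forces $\alpha_X, \beta_X \in \Q$. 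To produce the SQM in (1), I would invoke the Mori chamber decomposition of $\overline{\Mov}(\sX)$ as a finite union of pullback nef cones of SQMs: the boundary class $\Lambda - \beta_X \pi^*H$ lies in the closure of some such chamber, so there exists an SQM $g : \sX' \dashrightarrow \sX$ with $g^*(\Lambda - \beta_X \pi^*H)$ nef on $\sX'$. Since $\sX'$ is again a Mori dream space, every nef divisor on it is semi-ample.

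For (2), assume $\alpha_X > \beta_X$. The class $\Lambda - \alpha_X \pi^*H$ generates an extremal ray of $\overline{\Eff}(\sX)$ and is not movable, so Lemma \ref{Lemma:Restricted-Base-Locus-DZD}(2) furnishes a unique prime divisor $\Gamma \subset \sX$ with $[\Gamma] \in \R_{>0}[\Lambda - \alpha_X \pi^*H]$. For the intersection-number statement, I consider the semi-ample morphism $\phi : \sX' \to Z$ induced by a suitable multiple of $g^*(\Lambda - \beta_X \pi^*H) = \phi^*L$ with $L$ ample on $Z$. Because $\Lambda - \beta_X \pi^*H$ lies strictly in the interior of $\overline{\Eff}(\sX)$ (as $\alpha_X > \beta_X$), its pullback is big on $\sX'$, so $\phi$ is birational. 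I then argue that $\phi$ cannot be a small contraction: if it were, crossing the corresponding flop/flip wall would produce yet another SQM-chamber still inside $\overline{\Mov}(\sX)$, contradicting the extremality of our ray in the movable cone. Therefore $\phi$ is a divisorial contraction.

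The exceptional divisor $E$ of $\phi$ is unique, and since $\rho(\sX') = 2$ its numerical class must lie on the extremal ray of $\overline{\Eff}(\sX')$ opposite to $[\pi^*H]$, which (as $g$ is an SQM) is precisely $\R_{>0}[g^*\Gamma]$. Consequently $E$ is the strict transform of $\Gamma$, and $\phi(E)$ has dimension at most $2n-3$ in $Z$. The projection formula then yields
\[
\left(g^*(\Lambda - \beta_X \pi^*H)\right)^{2n-2} \cdot g^*\Gamma \;=\; c\,(\phi^*L)^{2n-2}\cdot E \;=\; c\, L^{2n-2} \cdot \phi_*[E] \;=\; 0
\]
for some $c > 0$, since $\phi_*[E] = 0$ when $\dim \phi(E) < \dim E$. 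The most delicate step I anticipate is ruling out that $\phi$ is small; this leans on the structural description of the movable cone of a MDS as a union of pullback nef cones over SQMs, so that extremal boundary rays of $\overline{\Mov}(\sX)$ must correspond to contractions which change the Picard number, namely divisorial ones (a Mori fibration is ruled out here by bigness of $g^*(\Lambda-\beta_X\pi^*H)$).
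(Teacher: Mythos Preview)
Your argument is correct. Part (1) is essentially the paper's proof as well: both invoke the Mori dream space structure of $\sX$ (Lemma~\ref{Lemma:Big+Nef=MDS}) and \cite[Proposition~1.11]{HuKeel2000} to produce the SQM and to force the thresholds to be rational. For part~(2), however, you take a genuinely different route. The paper does not analyse the morphism $\phi$ at all: it shows, via Lemmas~\ref{Lemma:Inclusion-ABL-RBL} and~\ref{Lemma:Base-Locus-DZD}, that $\bB_+^1\bigl(g^*(\Lambda-\beta_X\pi^*H)\bigr)=g^*\Gamma$, and then appeals to Nakamaye's theorem in the form of \cite[Theorem~1.4]{Birkar2017} (augmented base locus $=$ null locus for nef divisors) to conclude the vanishing of the top intersection. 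Your approach instead exploits the Mori chamber decomposition more fully: you rule out a small contraction by observing that a flop across the wall $\R_{\geq 0}[\Lambda-\beta_X\pi^*H]$ would produce an SQM whose nef cone protrudes outside $\overline{\Mov}(\sX)$, and then you identify the exceptional divisor of the resulting divisorial contraction with $g^*\Gamma$ via the uniqueness in Lemma~\ref{Lemma:Restricted-Base-Locus-DZD}(2) (transported along the SQM $g$, which is a bijection on prime divisors). Your route yields the extra geometric statement that $\phi$ is divisorial with exceptional divisor $g^*\Gamma$; the paper's route is shorter and avoids the MDS wall-crossing analysis by packaging everything into the null-locus theorem. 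One small point worth tightening in your write-up: to place $[E]$ on the extremal ray $\R_{>0}[\Lambda-\alpha_X\pi^*H]$, note that $h^0(\sX',mE)=h^0(Z,\sO_Z)=1$ for all $m\geq 0$ (since $E$ is $\phi$-exceptional), so $E$ is neither big nor movable, which excludes both the interior of $\overline{\Eff}(\sX')$ and the ray $\R_{\geq 0}[\pi^*H]$.
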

 
\begin{proof}
	Recall that $\sX$ is a Mori dream space by Lemma \ref{Lemma:Big+Nef=MDS}. By \cite[Proposition 1.11]{HuKeel2000}, there exists a SQM $g:\sX' \dasharrow  \sX$ such that 
	\[
	[g^*(\Lambda-\beta_X\pi^*H)]\in \Nef(\sX').
	\]
	Moreover, as $\sX'$ is again a Mori dream space, it follows that $\Nef(\sX')$ is generated by semi-ample $\Q$-Cartier divisors. Hence, $\beta_X$ is a rational number. 
	
	Now assume that $\alpha_X\not=\beta_X$. Then $\Lambda-\alpha_X\pi^*H$ is not movable. Moreover, as $X$ has Picard number $1$, it is clear that $R=\R_{>0}[\Lambda-\alpha_X\pi^*H]$ is an extremal ray of $\overline{\Eff}(\sX)$. Then, by Lemma \ref{Lemma:Restricted-Base-Locus-DZD}, there exists a unique prime divisor $\Gamma \subset \sX$ such that $[\Gamma]\in \R_{>0}[\Lambda-\alpha_X \pi^*H]$. In particular, we have
	\[
	\alpha_X=\frac{b(\Gamma)}{a(\Gamma)}
	\]
	and hence $\alpha_X$ is again a rational number. Denote by $\Gamma'$ the divisor $g^*\Gamma$. Note that the pseudoeffective cones and movables are preserved by $g^*$. In particular, by Lemma \ref{Lemma:Inclusion-ABL-RBL}, we obtain 
	\[
	\bB_+^1(g^*(\Lambda - \beta_X\pi^*H))\subset \bB_{-}^1(\Gamma')\subset \Gamma'.
	\]
	Since $\Lambda-\beta_X \pi^*H$ is not contained in the interior of $\overline{\Mov}(\sX)$, so is the pull-back $g^*(\Lambda-\beta_X\pi^*H)$. In particular, by Lemma \ref{Lemma:Base-Locus-DZD}, we have
	\[
	\bB_+^1(g^*(\Lambda - \beta_X \pi^*H))=\Gamma'.
	\]
	On the other hand, as $g^*(\Lambda-\beta_X\pi^*H)$ is nef, by \cite[Theorem 1.4]{Birkar2017}, $\Gamma'$ is contained in the null locus of $g^*(\Lambda-\beta_X\pi^*H)$. In particular, we obtain
	\[
	g^*\Gamma \cdot \left(g^*(\Lambda-\beta_X\pi^*H)\right)^{2n-2} = \Gamma'\cdot \left(g^*(\Lambda-\beta_X\pi^*H)\right)^{2n-2} = 0.
	\]
	This completes the proof.
\end{proof}

According to Proposition \ref{Prop:Cones-Semiample}, the calculation of the cones of divisors of $\sX$ is very closely related to the study of possible SQMs of $\sX$, which in general seems to be a very difficult problem. However, if we assume that the morphism $\varepsilon:\sX\rightarrow \sY$ is a divisorial contraction, then the cones of divisors of $\sX$ can be explicitly determined.

\begin{proposition}
	\label{Prop:Pseff-Cone-Divisorial-Contraction}
	Let $X$ be an $n$-dimensional Fano manifold of Picard number $1$ such that $T_X$ is big and nef and the VMRT of $X$ at a general point is smooth. Assume that the evaluation morphism $\varepsilon:\sX\rightarrow \sY$ is a divisorial contraction with exceptional divisor $\Gamma$. Let $F$ be an irreducible component of a general fibre of $\Gamma \rightarrow \varepsilon(\Gamma)$. Then we have
	\begin{enumerate}
		\item $\beta_X=0$, $\bB_{+}(\Lambda)=\Gamma$ and $[\Gamma]$ generates the extremal ray $ \R_{>0}[\Lambda-\alpha_X\pi^*H]$. In particular, we have $\Gamma\cdot \Lambda^{2n-2}=0$.
		
		\item $b(\Gamma)\leq 2$ with equality if and only if $\sY$ is of type $A_1$ and there exists a dominating family $\sK$ of minimal rational curves on $X$ such that $\check{\sC}=\Gamma$ and $H\cdot \sK=1$. 
	\end{enumerate}
\end{proposition}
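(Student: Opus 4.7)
For part (1), the plan is to identify $\bB_+(\Lambda)$ with the exceptional locus of $\varepsilon$ and then extract consequences from Lemmas \ref{Lemma:Base-Locus-DZD} and \ref{Lemma:Restricted-Base-Locus-DZD}. By Lemma \ref{Lemma:Big+Nef=MDS}, $\Lambda$ is semi-ample and the morphism associated to $|m\Lambda|$ for $m \gg 0$ is precisely $\varepsilon$ up to Stein factorization, so $\bB_+(\Lambda)$ equals the exceptional locus of $\varepsilon$, namely the prime divisor $\Gamma$. In particular $\bB_+^1(\Lambda) = \Gamma \neq \emptyset$, and Lemma \ref{Lemma:Base-Locus-DZD} places $[\Lambda]$ on the boundary of $\overline{\Mov}(\sX)$; since $\rho(\sX) = 2$ with $[\pi^*H]$ spanning the other boundary ray, this yields $\beta_X = 0$. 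To show $[\Gamma]$ generates $\R_{>0}[\Lambda - \alpha_X\pi^*H]$, I would take a general curve $F$ in a fibre of $\varepsilon$: since $\Lambda$ is $\pi$-ample, we have $\pi^*H \cdot F \geq 1$ and $\Lambda \cdot F = 0$, hence $(\Lambda - \alpha_X\pi^*H)\cdot F < 0$ (using $\alpha_X > 0$ from the bigness of $\Lambda$); as such $F$'s cover $\Gamma$, one obtains $\Gamma \subset \bB_-^1(\Lambda - \alpha_X\pi^*H)$, and Lemma \ref{Lemma:Restricted-Base-Locus-DZD}(2) identifies the latter with the unique prime divisor on the extremal ray, which must coincide with $\Gamma$. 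Finally $\Gamma \cdot \Lambda^{2n-2} = 0$ follows from the projection formula, since $\Lambda = \varepsilon^* M$ for an appropriate class $M$ on $\sY$ and $\dim \varepsilon(\Gamma) \leq 2n-3$.

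For the upper bound in part (2), I would fix $F$ to be a general irreducible component of a fibre of $\Gamma \to \varepsilon(\Gamma)$. Dualizing the splitting of Proposition \ref{Prop:Properties-Contraction}(3) gives
\[
\Omega_\sX|_F \cong \sO(-2) \oplus \sO(2) \oplus \sO(-1)^{\oplus a} \oplus \sO(1)^{\oplus a} \oplus \sO^{\oplus(2n-2a-3)},
\]
whose maximal direct-summand degree is $2$. The natural conormal map $\sO_\sX(-\Gamma)|_F \to \Omega_\sX|_F$ is non-zero because a general such $F$ meets the smooth locus of $\Gamma$. A non-zero morphism from a line bundle of degree $d$ on $\PP^1$ into a vector bundle with maximal summand degree $2$ forces $d \leq 2$; since $d = -\Gamma \cdot F = b(\Gamma)\pi^*H \cdot F$ and $\pi^*H \cdot F \geq 1$, this gives $b(\Gamma) \leq 2$.

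For the equality characterization, the backward direction is immediate from Theorem \ref{Thm:Class-dual-VMRT}(2): if $\sY$ is of type $A_1$ and such a minimal family $\sK$ exists, then a general fibre $F$ is a smooth $\PP^1$ that is a minimal section over a curve in $\sK$, and the type $A_1$ local structure guarantees $\check\sC = \Gamma$ is smooth along $F$, so $b(\Gamma) \cdot H \cdot \sK = 2$. For the forward direction, $b(\Gamma) = 2$ combined with the inequality above forces $\pi^*H \cdot F = 1$, and the conormal map must be saturated (factoring through the $\sO(2)$-summand), so $\Gamma$ is smooth along $F$. The curves $\pi(F)$ then form a dominating family $\sK$ of minimal rational curves with $H \cdot \sK = 1$, and the $F$'s are minimal sections, giving $\Gamma = \check\sC$. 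Ruling out type $A_2$ is where I expect the main difficulty: one must show that if a general fibre had two $\PP^1$-components meeting at a node, then the image of the node in $\check\sC = \Gamma$ would be a non-smooth point lying on $F$, forcing $b(\Gamma)\cdot H \cdot \sK < 2$ via Theorem \ref{Thm:Class-dual-VMRT}(2) and contradicting $b(\Gamma) = 2$ together with $H \cdot \sK = 1$. Making this local analysis near the (projectivized) stratified Mukai flop rigorous — in particular, linking the node of a reducible fibre to a concrete singularity of the total dual VMRT — is the step that will demand the most care.
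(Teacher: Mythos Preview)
Your approach matches the paper's proof closely for part (1) and for the inequality $b(\Gamma)\leq 2$ in part (2); the paper invokes \cite[Theorem 1.4]{Birkar2017} and Corollary \ref{Cor:Equality-ABL-RBL} rather than your covering-curves argument, but the two routes are equivalent.

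The place where you diverge from the paper, and where there is a genuine (if small) gap, is the exclusion of type $A_2$. You anticipate needing a local analysis ``near the (projectivized) stratified Mukai flop''; this is a red herring, since stratified Mukai flops only enter in Section \ref{Section:RHS} for rational homogeneous spaces, whereas the present proposition is general. More importantly, your proposed mechanism --- that the node of $F\cup F'$ is a \emph{singular point of $\Gamma$}, forcing $b(\Gamma)\,H\cdot\sK<2$ via Theorem \ref{Thm:Class-dual-VMRT}(2) --- is not what one actually proves, and need not be true: two smooth curves meeting transversally in a smooth surface do not make the surface singular. The paper's argument is instead a one-line application of generic smoothness. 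From $b(\Gamma)=2$ and $\pi^*H\cdot F=1$ you have already shown that the conormal map is a bundle injection, so $\Gamma$ is smooth along $F$; hence $F\subset\Gamma^{\mathrm{sm}}$. By generic smoothness applied to $\Gamma^{\mathrm{sm}}\to\varepsilon(\Gamma^{\mathrm{sm}})$, the fibre over the general point $\varepsilon(F)$ is smooth. But in type $A_2$ the full fibre is $F\cup F'$ with $F\cap F'=\{x\}$; since $x\in F\subset\Gamma^{\mathrm{sm}}$, the fibre of $\Gamma^{\mathrm{sm}}$ over $\varepsilon(F)$ contains a neighbourhood of $x$ in $F\cup F'$, which is singular at $x$ --- contradiction. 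The point is that the \emph{fibre} is singular, not $\Gamma$ itself.

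One further minor omission: in your forward direction you pass from $\pi^*H\cdot F=1$ to ``the $F$'s are minimal sections, giving $\Gamma=\check\sC$''. For $F$ to be a minimal section in the sense of Definition \ref{Def:Total-Dual-VMRT}, the image $\pi(F)$ must be a \emph{standard} rational curve. This is where the hypothesis that the VMRT at a general point is smooth is used: by \cite[Proposition 1.4]{Hwang2001}, it guarantees that every member of $\sK$ through a general point is standard, so a general $F$ indeed lies in $\check\sC$.
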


\begin{proof}
	Since $\Lambda$ is big and nef, it follows from \cite[Theorem 1.4]{Birkar2017} that $\bB_{+}(\Lambda)$ coincides with the exceptional locus $\Gamma$ of $\varepsilon$. In particular, $\Gamma\cdot \Lambda^{2n-2}=(\Lambda|_{\Gamma})^{2n-2} = 0$. Moreover, according to Lemma \ref{Lemma:Base-Locus-DZD}, $[\Lambda]$ is not contained in the interior of $\overline{\Mov}(\sX)$. This implies $\beta_X=0$. Then it follows from Corollary \ref{Cor:Equality-ABL-RBL} that $[\Gamma]\in \R_{>0}[\Lambda-\alpha_X\pi^*H]$. Combining Proposition \ref{Prop:Properties-Contraction} with the same argument as in the proof of Theorem \ref{Thm:Class-dual-VMRT}(3) shows that $b(\Gamma)\pi^*H\cdot F\leq 2$ with equality if and only if $\Gamma$ is smooth along $F$. Then we obtain that $b(\Gamma)\leq 2$ with equality if and only if $\Gamma$ is smooth along $F$ and $\pi^*H\cdot F=1$. Now the result follows from the following two claims.
	
	\bigskip
	
	\textbf{Claim 1.} \textit{$\Gamma$ is smooth along $F$ if and only if $\sY$ is of type $A_1$.}
	
	\bigskip
	
    \textit{Proof of Claim 1.} Firstly we assume that $\Gamma$ is smooth along $F$, then the non-singular locus $\Gamma^{\text{sm}}$ contains $F$. In particular, by generic smoothness and the generic choice of $F$, it follows that the fibre of $\Gamma^{\text{sm}}\rightarrow \varepsilon(\Gamma^{\text{sm}})$ over $\varepsilon(F)$ is smooth. Nevertheless, if the fibre of $\varepsilon$ over $\varepsilon(F)$ consists of another irreducible component $F'$ such that $F$ and $F'$ meeting at a point $x$, then we have $x\in \Gamma^{\text{sm}}$ and therefore $F'\cap \Gamma^{\text{sm}}$ is not empty. In particular, the fibre of $\Gamma^{\text{sm}}\rightarrow \varepsilon(\Gamma^{\text{sm}})$ over $\varepsilon(F)$ is not smooth, a contradiction. Hence, $\sY$ is of type $A_1$. 
    
    Conversely, if $\sY$ is of type $A_1$, then $\Gamma\rightarrow \varepsilon(\Gamma)$ is a smooth $\PP^1$-fibration over a Zariski open subset of $\varepsilon(\Gamma)$. In particular, the singular locus of $\Gamma$ does not dominate $\varepsilon(\Gamma)$ and hence $\Gamma$ is smooth along $F$ as $F$ is a general fibre.
    
    \bigskip
    
    \textbf{Claim 2.} \textit{$\pi^*H\cdot F=1$ if and only if there exists a dominating family $\sK$ of minimal rational curves over $X$ such that $H\cdot \sK=1$ and $\check{\sC}=\Gamma$.}
    
    \bigskip
	
	\textit{Proof of Claim 2.} Firstly we assume that $\pi^*H\cdot F=1$. Then the induced morphism $F\rightarrow \pi(F)$ is birational and $H\cdot F=1$. In particular, the images of the irreducible components of general fibres of $\Gamma \rightarrow \varepsilon(\Gamma)$ in $X$ form a dominating family $\sK$ of rational curves such that $H\cdot \sK=1$. Therefore, $\sK$ is actually a dominating family of minimal rational curves. Let $\check{\sC}$ be the total dual VMRT of $\sK$. As $\check{\sC}$ is dominated by curves with $\Lambda$-degree $0$, it follows that $\check{\sC}\subset \Gamma$. On the other hand, as the VMRT of $X$ at a general point is smooth, every rational curve parametrised by $\sK$ passing through a general point is standard (cf. \cite[Proposition 1.4]{Hwang2001}). In particular, by generic choice of $F$, we may assume that $\pi(F)$ is a standard rational curve parametrised by $\sK$. In particular, the curve $F$ is a minimal section over $\pi(F)$. It follows that $\Gamma \subset \check{\sC}$ and hence $\Gamma=\check{\sC}$.
	
	Conversely, assume that there exists a dominating family $\sK$ of minimal rational curves on $X$ such that $H\cdot \sK=1$ and $\check{\sC}=\Gamma$. As $\check{\sC}$ is dominated by minimal sections $\bar{C}$ over standard rational curves $C$ in $\sK$, it follows that $\bar{C}$ is contained in a general fibre of $\Gamma \rightarrow \varepsilon(\Gamma)$. In particular, we have $\pi^*H\cdot \bar{C}=H\cdot C=1$. By the generic choice of $F$, the curve $F$ is actually a minimal section over some standard rational curve in $\sK$ and hence $\pi^*H\cdot F=1$.
\end{proof}

\begin{remark}
	In the setting of Proposition \ref{Prop:Pseff-Cone-Divisorial-Contraction}, to explicitly determine the pseudoeffective cone $\overline{\Eff}(\sX)$, it is enough to calculate the cohomological class of $\Gamma$ in $\pic(\sX)$, i.e., determining $a(\Gamma)$ and $b(\Gamma)$. The statement (2) gives a totally geometric method to determine $b(\Gamma)$. Then one can use the equality $\Gamma\cdot \Lambda^{2n-2}=0$ in the statement (1) to obtain the rational number $b(\Gamma)/a(\Gamma)$ and finally we get the precise value of $a(\Gamma)$. On the other hand, if there exists a dominating family $\sK$ of minimal rational curves on $X$ such that $\check{\sC}$ is a divisor, then we must have $\check{\sC}=\Gamma$ and we can also apply Theorem \ref{Thm:Class-dual-VMRT} to calculate $a(\Gamma)$. In a later section we will apply these results to rational homogeneous spaces.
\end{remark}

\section{Varieties of small codegree and proof of Theorem \ref{Thm:normalized-I}}

\subsection{Segre inequality}

Let us recall the following Segre inequality, which gives a sharp lower bound for the codegree of an irreducible and linearly non-degenerate projective variety in terms of its  dimension and codimension.

\begin{theorem}
	\label{Thm:Segre-Zak}
	\cite{Segre1951}
	Let $Z\subsetneq \PP^N$ be an $n$-dimensional irreducible and linearly non-degenerate projective variety. Assume that the dual variety $\check{Z}\subset \check{\PP}^N$ is a hypersurface with non-vanishing hessian. Then we have
	\begin{equation}
		\label{Eq:Ineq-Segre-Zak}
		\codeg(Z):=\deg(\check{Z})\geq \frac{2(N+1)}{n+2}.
	\end{equation}
	Moreover,  the equality holds if and only if $\check{Z}\subset \check{\PP}^N$ is a hypersurface defined by $F=0$ such that  its  hessian $h_F$ satisfies $h_F =  F^{N-n-1}$.  
\end{theorem}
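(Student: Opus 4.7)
The plan is to reduce the stated inequality to the polynomial divisibility $F^{N-n-1}\mid h_F$ in the homogeneous coordinate ring of $\check\PP^N$. Once this is established, degree comparison yields $(N+1)(d-2)=\deg h_F\geq (N-n-1)\,d$, which rearranges to $(n+2)\,d\geq 2(N+1)$; and in the case of equality $\deg h_F=\deg F^{N-n-1}$, combined with divisibility, forces $h_F=cF^{N-n-1}$ for a nonzero constant $c$, yielding the normal form of the statement (after rescaling $F$ suitably).

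To set up the geometry I would invoke biduality $\check{\check Z}=Z$: the Gauss map $\gamma:\check Z\dashrightarrow Z$, $\alpha\mapsto[\nabla F(\alpha)]$, is dominant, and for a general smooth $z\in Z$ the fiber $L_z:=\gamma^{-1}(z)$ is a linear subspace of $\check\PP^N$ of dimension $N-n-1$ entirely contained in $\check Z$, with the family $\{L_z\}$ covering $\check Z$. Fix a general smooth point $\alpha\in L_z$ and pick linear coordinates so that $z=[0{:}\cdots{:}0{:}1]$, $\mathbf{T}_{Z,z}=\PP(\langle e_{N-n+1},\dots,e_{N+1}\rangle)$ and $L_z=\PP(\langle e_1,\dots,e_{N-n}\rangle)$. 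Along $L_z$ one has $\nabla F\propto e_{N+1}$, hence $\partial F/\partial x_i\equiv 0$ on $L_z$ for $i=1,\dots,N$; differentiating these identities along directions in $\langle e_1,\dots,e_{N-n}\rangle$ gives $H_F(\alpha)_{ij}=0$ whenever $i\in\{1,\dots,N\}$ and $j\in\{1,\dots,N-n\}$, and by symmetry of $H_F$ also for $i\in\{1,\dots,N-n\}$ and $j\in\{1,\dots,N\}$. A direct bipartite-matching count on the resulting block pattern forces $\mathrm{rank}\,H_F(\alpha)\leq n+2$, so at a general smooth point of $\check Z$ the corank of $H_F$ is at least $N-n-1$.

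To promote this rank drop to the divisibility $F^{N-n-1}\mid h_F$, I would choose a general smooth $\alpha_0\in\check Z$ and a local analytic one-parameter family $\alpha(t)$ transverse to $\check Z$ with $\alpha(0)=\alpha_0$. The matrices $M(t):=H_F(\alpha(t))$ form an analytic family of symmetric $(N+1)\times(N+1)$ matrices, with $M(t)$ of full rank for generic $t$ (since $h_F\not\equiv 0$) and $M(0)$ of corank at least $N-n-1$ by the previous step. Rellich's perturbation theorem lets us write the eigenvalues of $M(t)$ as analytic functions of $t$; at least $N-n-1$ of them vanish at $t=0$, and each therefore vanishes to order $\geq 1$, yielding $h_F(\alpha(t))=\det M(t)=O(t^{N-n-1})$. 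Since $Z$ is irreducible, so are $\check Z$ and its defining polynomial $F$, and this local vanishing order upgrades to the global polynomial divisibility $F^{N-n-1}\mid h_F$.

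The most delicate point I anticipate is the eigenvalue-perturbation step in the third paragraph: the hypothesis $h_F\not\equiv 0$ is the key input that rules out the degenerate possibility in which $H_F$ drops in rank everywhere (in which case the conclusion would collapse), and one must verify that Rellich-type analytic diagonalization — equivalently, a Puiseux-series expansion — is valid in this algebraic context to pin down the exact order $N-n-1$ of vanishing.
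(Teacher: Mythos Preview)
The paper does not supply a proof of this theorem: it is stated with the citation \cite{Segre1951} and used as a black box, so there is no ``paper's own proof'' to compare against.

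That said, your outline is essentially the classical argument and is correct in its architecture. The key reduction to $F^{N-n-1}\mid h_F$ followed by the degree count is exactly Segre's idea, and your coordinate computation showing $\mathrm{rank}\,H_F(\alpha)\le n+2$ at a general point of a contact locus $L_z$ is right: the $N-n$ columns indexed by $L_z$-directions are all scalar multiples of $e_{N+1}$, so together with the remaining $n+1$ columns the rank is at most $n+2$.

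The only step that deserves tightening is the one you yourself flag. Rellich's theorem concerns Hermitian (or real symmetric) pencils and does not apply verbatim to complex symmetric matrices, where eigenvalues are in general only Puiseux in $t$. You do not actually need analytic diagonalisation: a Schur-complement computation is cleaner and fully algebraic. After a congruence putting $M(0)$ in the form $\mathrm{diag}(0,\dots,0,I_r)$ with $r\le n+2$, write
\[
M(t)=\begin{pmatrix} tB(t) & tC(t)\\ tC(t)^{\mathsf T} & I_r+tD(t)\end{pmatrix},
\]
and the block formula gives $\det M(t)=t^{\,N+1-r}\det(I_r+tD)\det\bigl(B-tC(I_r+tD)^{-1}C^{\mathsf T}\bigr)$, so $\det M(t)$ vanishes to order at least $N+1-r\ge N-n-1$. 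Since $F$ is irreducible (as $\check Z$ is), this local vanishing along a general transverse arc yields $F^{N-n-1}\mid h_F$ globally, and the hypothesis $h_F\not\equiv 0$ then gives the degree inequality and the equality characterisation exactly as you wrote.
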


\begin{remark}
		Zak kindly informed us that  the  Segre inequality may fail if  the dual variety $\check{Z}$ is a hypersurface with vanishing hessian. There are very few known examples of smooth projective varieties whose dual variety is a hypersurface with vanishing-hessian. Gondim, Russo and Staglian\`o proved in \cite[Corollary 4.5]{GondimRussoStagliano2020} that the projection from an internal point of $\nu_2(\PP^n)\subset \PP^{\frac{n^2+3n}{2}}$ is a smooth variety $Z\subset \PP^{\frac{n^2+3n-2}{2}}$ such that the dual variety $\check{Z}$ is a degree $n+1$ hypersurface with vanishing hessian.  It would be very interesting to find more examples.
\end{remark}

It is somehow surprising that there exists a link between Conjecture \ref{Conj:Zak-Russo-Conjecture} and Conjecture \ref{Conj:normalized-Tangent} ,  which is bridged by the following simple observation:

\begin{proposition}
	\label{Prop:Criterion-Pseff-Normalized}
	Let $X$ be an $n$-dimensional Fano manifold of Picard number $1$ equipped with a dominating family $\sK$ of minimal rational curves. If the normalized tangent bundle of $X$ is pseudoeffective and the VMRT $\sC_x\subset \PP(\Omega_{X,x})$ at a general point is not dual defective, then we have
	\begin{equation}
		\label{Eq:Pseff-implies-reverse-Segre-Zak}
		\codeg(\sC_x)\leq \frac{2\dim(X)}{\dim(\sC_x)+2}.
	\end{equation}
\end{proposition}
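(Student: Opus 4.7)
The plan is to combine Theorem \ref{Thm:Class-dual-VMRT} with the numerical datum of a standard rational curve and compare the extremal ray of the pseudoeffective cone to the class of the normalized tangent bundle. Write $n = \dim(X)$, $p = \dim(\sC_x)$, and let $\iota_X$ be the Fano index, so that $-K_X = \iota_X H$.

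First I would verify that $T_X$ is big. Since the normalized tangent bundle $T_X\langle -\tfrac{1}{n}c_1(X)\rangle$ is pseudoeffective by hypothesis and $\tfrac{1}{n}c_1(X)$ is an ample (hence big) $\Q$-class on the Fano manifold $X$, Proposition \ref{Prop:criteria-pseff-bigness}(2) yields that $T_X$ is big. Because $\sC_x$ is not dual defective, Theorem \ref{Thm:Class-dual-VMRT} applies: writing $[\check{\sC}] \equiv a\Lambda - b\pi^*H$, we have $a = \codeg(\sC_x)$, $b > 0$, and the ray $\R_{\geq 0}[\check{\sC}]$ is extremal in $\overline{\Eff}(\PP(T_X))$, so
\[
\overline{\Eff}(\PP(T_X)) = \langle a\Lambda - b\pi^*H,\ \pi^*H\rangle.
\]

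Next I would exploit the assumed pseudoeffectivity of $\Lambda - \tfrac{\iota_X}{n}\pi^*H$. Expressing this class as a nonnegative combination of the two extremal generators above forces
\[
\frac{b}{a} \geq \frac{\iota_X}{n}, \qquad \text{i.e.}\qquad a\,\iota_X \leq n\,b.
\]
On the other hand, Theorem \ref{Thm:Class-dual-VMRT}(2) bounds $b$ in terms of the minimal family: $b\,(H\cdot \sK) \leq 2$. To convert this into a bound in terms of $p$, recall that for a standard rational curve $[C]\in\sK$ with normalization $f\colon \PP^1\to C$ one has
\[
f^*T_X \cong \sO_{\PP^1}(2)\oplus \sO_{\PP^1}(1)^{\oplus p}\oplus \sO_{\PP^1}^{\oplus (n-p-1)},
\]
so $-K_X\cdot \sK = p+2$, hence $H\cdot \sK = (p+2)/\iota_X$. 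Plugging this in gives $b\,(p+2) \leq 2\iota_X$.

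Combining the two bounds, $a \leq nb/\iota_X \leq 2n/(p+2)$, which is precisely the desired inequality
\[
\codeg(\sC_x) \leq \frac{2\dim(X)}{\dim(\sC_x)+2}.
\]
No step here presents a genuine obstacle: the delicate input has already been established in Theorem \ref{Thm:Class-dual-VMRT}, namely that $b > 0$ is the ingredient equivalent to bigness of $T_X$, and that the sharp upper bound $b(H\cdot \sK)\leq 2$ follows from the splitting type of $T_{\PP(T_X)}$ along a minimal section (cf.\ Proposition \ref{Prop:Dual-Defect-VMRT}). The only subtlety to record is that pseudoeffectivity of the normalized tangent bundle enters purely through the numerical comparison $b/a \geq \iota_X/n$, which uses the fact that the Picard rank of $\PP(T_X)$ is two so that the effective cone is completely described by its two extremal rays.
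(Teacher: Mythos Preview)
Your proof is correct and follows essentially the same route as the paper: both establish bigness of $T_X$ from the pseudoeffectivity hypothesis, invoke Theorem \ref{Thm:Class-dual-VMRT} to get $a=\codeg(\sC_x)$, $b>0$, $\alpha_X=b/a$ and $b\,(H\cdot\sK)\leq 2$, then combine $b/a\geq \iota_X/n$ with $H\cdot\sK=(p+2)/\iota_X$ to conclude. The only cosmetic difference is that you justify bigness via Proposition \ref{Prop:criteria-pseff-bigness}(2), whereas the paper phrases it through the pseudoeffective threshold $\alpha_X\geq \iota_X/n>0$; these are equivalent.
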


\begin{proof}
	Let $H$ be the ample generator of $\pic(X)$ and denote by $\alpha_X:=\alpha(X,H)$ the pseudoeffective threshold of $X$ with respect to $H$. Let $i_X$ be the index of $X$, i.e. $-K_X=i_XH$. Then the normalized tangent bundle of $X$ is pseudoeffective if and only if the following inequality holds:
	\[
	\alpha_X\geq \frac{i_X}{\dim(X)}.
	\]
	On the other hand, since $\sC_x$ is not dual defective, the total dual VMRT $\check{\sC}\subset \PP(T_X)$ is a prime divisor. Write $[\check{\sC}]\equiv a \Lambda - b\pi^*H$. Then, by Theorem \ref{Thm:Class-dual-VMRT}, we obtain
	\[
	a=\codeg(\sC_x),\quad 0<bH\cdot \sK\leq 2 \quad \text{and} \quad \alpha_X=\frac{b}{a}.
	\]
	Therefore we get
	\[
	\frac{2}{\codeg(\sC_x)}\geq \alpha_X H\cdot \sK \geq \frac{i_X H\cdot \sK}{\dim(X)}=\frac{\dim(\sC_x)+2}{\dim(X)},
	\]
	and the result follows. Here we use the fact that $\dim(\sC_x)=-K_X\cdot \sK-2=i_XH\cdot \sK-2$.
\end{proof}

Given a Fano manifold $X$ of Picard number $1$, once the VMRT $\sC_x\subset \PP(\Omega_{X,x})$ of $X$ can be explicitly determined and the VMRT is not dual defective, then Proposition \ref{Prop:Criterion-Pseff-Normalized} is quite useful to check whether the normalized tangent bundle of $X$ is pseudoeffective  or not. For Conjecture \ref{Conj:Zak-Russo-Conjecture}, we recall the following results for curves and surfaces.

\begin{theorem}
	\label{Thm:Classification-Small-Codegree}
	\cite[Proposition 3.1 and 3.2]{Zak2004} \cite[Theorem 2.1]{TurriniVerderio1993}
	\begin{enumerate}
		\item Let $C\subset \PP^N$ be a linearly non-degenerate smooth projective curve of degree $d$ and codegree $d^*$. Then the following statements hold.
		\begin{enumerate}
			\item $d^*\geq 2d-2$ with equality if and only if $C$ is a rational curve.
			
			\item $d^*\geq 2N-2$ with equality if and only if $C$ is a normal rational curve.
		\end{enumerate}
		
		\item Let $S\subset \PP^N$ be a linearly non-degenerate smooth projective surface of degree $d$ and codegree $d^*$. Then the following statements hold.
		\begin{enumerate}
			\item $d^*\geq d-1$ with equality if and only if $S$ is isomorphic to the Veronese surface $\nu_2(\PP^2)\subset \PP^5$ or its isomorphic projection in $\PP^4$, and $d^*=d$ if and only if $S$ is a scroll over a curve, and the cases $1\leq d^*-d\leq 2$ does not happen.
			
			\item $d^*\geq N-2$ with equality if and only if $S$ is isomorphic to the Vernoese surface $\nu_2(\PP^2)\subset \PP^5$, and $d^*=N-1$ if and only if $S$ is either an isomorphic projection of $\nu_2(\PP^2)$ to $\PP^4$ or a rational normal scroll, and the cases $0\leq d^*-N\leq 1$ does not happen.
		\end{enumerate}
	\end{enumerate}
\end{theorem}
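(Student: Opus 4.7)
My plan is to treat the two dimensions separately, in each case extracting a \emph{class formula} that expresses the codegree $d^*$ in terms of intrinsic invariants of the smooth variety together with the hyperplane class $H$, and then combine these identities with (i) the non-degeneracy bound $d\geq N-\dim(Z)+1$ and (ii) the classification of projective varieties of low degree. The general mechanism is that $d^*$ equals the number of tangent hyperplanes in a general pencil, which can be computed by Riemann--Hurwitz in the curve case and by a polar/Chern-class computation in the surface case.

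For (1), I would work with a smooth, linearly non-degenerate curve $C\subset \PP^N$ of genus $g$ and consider the projection $\pi_L\colon C\to \PP^1$ from a general linear subspace $L\subset \PP^N$ of codimension $2$; this corresponds to intersecting $\check C$ with a general line in $\check{\PP}^N$. The number of ramification points of $\pi_L$ is precisely $\deg \check C =d^*$, so Riemann--Hurwitz yields the Pl\"ucker-type formula
\[
d^* \;=\; 2d + 2g - 2.
\]
Since $g\geq 0$ this gives (a), with equality iff $g=0$. Combining with the non-degeneracy bound $d\geq N$ gives $d^*\geq 2N-2$; equality forces both $g=0$ and $d=N$, hence $C$ is the rational normal curve. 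This handles (b).

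For (2), my plan is to use the analogous class formula for a smooth surface $S\subset \PP^N$, which expresses $d^*=\deg\check S$ via the double-point formula or, more intrinsically, via the second polar class: $d^* = H^2 + 3H\!\cdot\!K_S + K_S^2 + c_2(S)$ (up to the convention on dual defect). Rewriting this using adjunction gives $d^*$ as a linear combination of $d=H^2$, $H\!\cdot\!K_S$ and $c_2(S)$. To obtain $d^*\geq d-1$ with the listed equality/gap structure in (a), I would impose $d^*-d\in\{-1,0,1,2\}$ and analyze the resulting Diophantine constraints together with Castelnuovo-type numerical bounds and the Hodge index theorem; the extremal value $d^*=d-1$ forces the surface to have Picard rank one with very restrictive Chern class data, recognizable as $\nu_2(\PP^2)$ or its projection, while $d^*=d$ forces $S$ to be ruled by lines of $\PP^N$, i.e.\ a scroll. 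The non-existence of $1\leq d^*-d\leq 2$ then follows by showing these numerical profiles are incompatible with the classification of surfaces of degree $d$. Statement (b) is derived in parallel, replacing $d$ with $N$ and using $d\geq N-1$ for linearly non-degenerate surfaces, with the boundary cases being detected by the same class formula.

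The main obstacle, I expect, will be the surface part (2): the class formula alone is not enough, and one needs the delicate classification of low-degree surfaces (del Pezzo surfaces, scrolls, Veronese) to rule out intermediate values of $d^*-d$ and $d^*-N$. The curve case, by contrast, is immediate from Riemann--Hurwitz. A convenient alternative for the surface case is to invoke the theorems of Ein on smooth varieties with small dual, which classify both dual defective smooth surfaces and those of minimal codegree, reducing (2) to checking the explicit list.
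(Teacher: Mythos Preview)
The paper does not give its own proof of this theorem: it is quoted verbatim from \cite[Propositions 3.1 and 3.2]{Zak2004} and \cite[Theorem 2.1]{TurriniVerderio1993}, and is used only as input for the threefold analysis in Proposition~\ref{Thm:Threefold-Small-Codegree}. So there is no in-paper argument to compare against, and your sketch should be measured against those references.

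Your curve argument is correct and is exactly the standard proof: projecting from a general codimension-$2$ linear space gives a degree-$d$ map $C\to\PP^1$ whose ramification points are precisely the tangent hyperplanes in the corresponding pencil, and Riemann--Hurwitz yields $d^*=2d+2g-2$. Parts (1.a) and (1.b) follow at once, using $d\geq N$.

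For the surface part your strategy (class formula plus low-degree classification) is the right one and is essentially what Zak and Turrini--Verderio do, but the explicit formula you wrote is wrong. For a smooth non-defective surface with hyperplane class $H$ one has
\[
d^*=\int_S c_2\bigl(\sJ_1(\sO_S(H))\bigr)=3d+2\,K_S\!\cdot\!H+c_2(S),
\]
not $H^2+3H\!\cdot\!K_S+K_S^2+c_2(S)$; your version gives $d^*=-2$ for the Veronese surface instead of $3$. With the correct formula one rewrites, via Noether's formula and adjunction, $d^*-d$ and $d^*-N$ in terms of $g(C)$, $q(S)$, $p_g(S)$ and $b_2(S)$, and then the gap statements in (2.a)--(2.b) really do require the fine classification of surfaces of minimal and next-to-minimal degree (rational normal scrolls, Veronese, del Pezzo), exactly as you anticipate. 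Your fallback of invoking Ein's results on smooth varieties of small codegree is also legitimate, but again amounts to citing the same circle of classification theorems rather than giving an independent argument.
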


\subsection{Projective threefolds with small codegree}

This subsection is devoted to prove Proposition \ref{Thm:Threefold-Small-Codegree}, which confirms Conjecture \ref{Conj:Zak-Russo-Conjecture} for smooth threefolds. We start with a classification of projective threefolds such that its general hyperplane section is a smooth surface with equal sectional genus and irregularity. Let us recall that for an $n$-dimensional polarized projective manifold $(X,L)$, the \emph{sectional genus} of $X$ (with respect to $L$) is defined to be
\[
g(X,L):=\frac{(K_X+(n-1)L)\cdot L^{n-1}}{2}+1.
\]

\begin{lemma}
	\label{Lemma:Surface-g=q}
	Let $Z\subsetneq \PP^N$ be an irreducible, smooth and linearly non-degenerate projective threefold and denote by $L$ the restriction $\sO_{\PP^N}(1)|_Z$. Let $S$ be a general smooth hyperplane section of $Z$. Assume that the sectional genus $g$ of $S$ is equal to the irregularity $q$ of $S$. Then $(Z,L)$ is isomorphic to one of the following varieties:
	\begin{enumerate}
		\item the $3$-dimensional quadric hypersurface $(\Q^3,\sO_{\Q^3}(1))$ and $\codeg(Z)=2$;
		
		\item  a $3$-dimensional scroll, i.e. a projective bundle $\PP(E)\rightarrow B$ over a smooth curve $B$ such that all fibres are linearly embedded, and $L$ is the tautological line bundle $\sO_{\PP(E)}(1)$. In particular, the dual defect of $Z=\PP(E)$ is equal to $1$ and $\codeg(Z)=\deg(Z)=c_1(E)$.
	\end{enumerate}
\end{lemma}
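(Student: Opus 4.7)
The first step is to apply the Lefschetz hyperplane theorem to the smooth ample divisor $S\subset Z$, which yields $H^1(Z,\sO_Z)\cong H^1(S,\sO_S)$ and hence $q(Z)=q(S)$. Since the sectional genus $g=g(S,L|_S)$ coincides with the sectional genus $g(Z,L)$ of the polarized threefold, the hypothesis $g=q(S)$ translates into the constraint $g(Z,L)=q(Z)$ on $(Z,L)$. Kodaira vanishing together with Riemann--Roch on $S$ also gives
\[
h^0(K_S+L|_S)=\chi(\sO_S)+\tfrac{1}{2}(K_S+L|_S)\cdot L|_S=p_g(S)+g-q,
\]
so that $h^0(K_S+L|_S)=p_g(S)$ under the hypothesis, a useful cohomological constraint.

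The heart of the plan is a classification of smooth polarized surfaces $(S,L|_S)$ with $L|_S$ very ample and $g=q$: the possibilities are $(\PP^2,\sO(1))$, $(\PP^2,\sO(2))$ (giving the Veronese surface $\nu_2(\PP^2)$), the smooth quadric $(Q^2,\sO_{\PP^3}(1)|_{Q^2})$, and scrolls $\PP(F)\to B$ over a smooth curve of genus $q(S)$ with $L|_S=\sO_{\PP(F)}(1)$. The idea is to analyse the Albanese morphism of $S$: applying Lefschetz to a general curve section $C\subset S$ shows that $\Alb{C}\to\Alb{S}$ is a surjection of abelian varieties of the same dimension $g(C)=q(S)=g$, hence an isogeny. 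This forces the Albanese image of $S$ to have dimension at most one, and a Hurwitz-type computation on the fibres of the resulting fibration $S\to B$ then shows $L|_S$ restricts as $\sO(1)$ on each $\PP^1$-fibre, identifying $(S,L|_S)$ as a scroll, or reducing to one of the exceptional cases when $q=0$.

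Finally, I would apply Fujita's extension theorems for smooth ample divisors to extract $Z$ from each possible $S$. If $S\cong\PP^2$, the extension forces $Z\cong\PP^3$ with $L=\sO(1)$, excluded by $Z\subsetneq\PP^N$. If $S=\nu_2(\PP^2)$, the extension forces $Z\cong\nu_2(\PP^3)\subset\PP^9$, whose invariants $g=1$, $q=0$ contradict the hypothesis and so rule this case out. If $S$ is the smooth quadric in $\PP^3$, the extension forces $Z=Q^3\subset\PP^4$ with $L=\sO(1)$, yielding conclusion~(1). If $S=\PP(F)\to B$ is a scroll, the $\PP^1$-bundle structure extends uniquely to a $\PP^2$-bundle structure $Z=\PP(E)\to B$ with $L=\sO_{\PP(E)}(1)$, yielding conclusion~(2); the supplementary claim that $Z$ has dual defect $1$ and $\codeg(Z)=\deg(Z)=c_1(E)$ follows from the standard observation that the Gauss map of such a scroll factors through the projection to $B$. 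I expect the main obstacles to be the classification of polarized surfaces with $g=q$ in the middle step and the verification that the Veronese case does not admit a compatible threefold extension.
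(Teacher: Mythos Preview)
Your overall strategy---classify the polarized hyperplane sections $(S,L|_S)$ with $g=q$ and then lift each possibility to $(Z,L)$---is the same as the paper's. The paper cites Zak/Sommese for the surface step and a classification result of Liu for the extension step, but your Albanese sketch of the surface side is a reasonable substitute.

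The genuine gap is your handling of the Veronese case. You assert that if $(S,L|_S)=(\PP^2,\sO(2))$ then ``the extension forces $Z\cong\nu_2(\PP^3)\subset\PP^9$'', and then discard this because its invariants are $g=1$, $q=0$. But a general hyperplane section of $\nu_2(\PP^3)$ is \emph{not} the Veronese surface: a hyperplane in $\PP^9$ pulls back to a quadric in $\PP^3$, so the section is $(\Q^2,\sO(2))$. Thus $(\PP^3,\sO(2))$ is not an extension of $(\PP^2,\sO(2))$ at all, and your argument says nothing about other putative smooth extensions. The fact you actually need is Scorza's theorem that the Veronese surface is \emph{inextendible}: any threefold in $\PP^6$ having $\nu_2(\PP^2)$ as a hyperplane section is a cone. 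The paper gets this from Zak's inextendibility criterion via the vanishing $H^1(\PP^2,T_{\PP^2}(-2))=0$; this is the missing ingredient.

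A smaller issue: the claim that a scroll surface always extends to a scroll threefold is not literally true over $\PP^1$ (you already saw this when treating $\Q^2$ separately, since it extends to $\Q^3$). The paper avoids this by invoking a classification of smooth $(Z,L)$ whose hyperplane section is geometrically ruled, which yields exactly $(\Q^3,\sO(1))$, $(\PP^3,\sO(2))$, or a genuine scroll $\PP(E)\to B$; the case $(\PP^3,\sO(2))$ is then eliminated by the $g=1\neq q=0$ computation---which is where that computation properly belongs, not in the Veronese case.
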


\begin{proof}
	Denote by $\bar{L}$ the restriction $L|_S$. As $g=q$, by \cite{Zak1973}(see also \cite[Corollary 1.5.2]{Sommese1979}), we know that either $S$ is a geometrically ruled surface with smooth $C\in |\bar{L}|$ as sections or the pair $(S,\bar{L})$ is isomorphic to one of the following:
	\begin{center}
		$(\PP^2,\sO_{\PP^2}(1))$\quad or \quad $(\PP^2,\sO_{\PP^2}(2))$.
	\end{center}
	
	Firstly we note that the case $(S,\bar{L})=(\PP^2,\sO_{\PP^2}(2))$ does not happen. This was already proved by Scorza. Indeed, by Bott's formula, we have $H^1(\PP^2,T_{\PP^2}(-2))=0$. Then we can apply Zak's inextendibility theorem (see \cite{Zak1991}) to conclude that $Z$ is a cone over $S$. In particular, $Z$ is singular, which is a contradiction.
	
	Next we assume that the pair $(S,\bar{L})$ is isomorphic to $(\PP^2,\sO_{\PP^2}(1))$. Then we have $L^3=\bar{L}^2=1$. In particular, $(Z,L)$ itself is isomorphic to $(\PP^3,\sO_{\PP^3}(1))$, which contradicts our assumption.
	
	Finally we assume that $S$ is a geometrically ruled surface over a smooth curve. According to \cite[Theorem 1.3]{Liu2019}, the pair $(Z,L)$ is one of the following varieties:
	\begin{enumerate}
		\item  $(\Q^3,\sO_{\Q^3}(1))$;
		
		\item  $(\PP^3,\sO_{\PP^3}(2))$;
		
		\item there exists a vector bundle $E$ of rank $3$ over $B$ such that $Z=\PP(E)$ and $S$ is an element in the linear system $|\sO_{\PP(E)}(1)|$.
	\end{enumerate}
	In Case (1), it is clear that $Z\subset \PP^N$ is linearly normal and hence it is a quadric hypersurface of $\PP^4$. In Case (2), one can easily obtain that $g(S)=1$ while $q(S)=0$, which does not satisfy our assumption. In case (3), we note that $Z\subset \PP^N$ is actually a $3$-dimensional scroll such that all the fibres of $\PP(E)\rightarrow B$ are linearly embedded. As $B$ is a curve, it is well-known that the dual defect of $Z$ is equal to $1$ in this case (see for instance \cite[Theorem 7.21]{Tevelev2005}) and the fact $\codeg(Z)=\deg(Z)=c_1(E)$ follows from Lemma \ref{Lemma:3-Scroll} below.
\end{proof}

\begin{lemma}
	\label{Lemma:3-Scroll}
	Let $Z=\PP(E)\subset \PP^N$ be a $3$-dimensional scroll over a smooth projective curve $B$ such that $\sO_{\PP(E)}(1)\cong \sO_{\PP^N}(1)|_Z$. Then we have
	\[
	\codeg(Z)=\deg(Z)=c_1(E).
	\]
\end{lemma}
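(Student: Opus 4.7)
The plan is to prove the two equalities $\deg(Z) = c_1(E)$ and $\codeg(Z) = c_1(E)$ separately, both via the degree formula for a scroll over a curve.

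For $\deg(Z)$, I would use Grothendieck's Chern class relation on $\pi: Z = \PP(E) \to B$. Since $B$ is one-dimensional, $c_i(E) = 0$ for $i \geq 2$, so the relation collapses to $L^3 = \pi^* c_1(E) \cdot L^2$. Pushing forward via $\pi_* L^2 = [B]$ on the $\PP^2$-bundle then yields $\deg(Z) = L^3 = c_1(E)$.

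For $\codeg(Z)$, the main step is to identify $\check{Z}$ explicitly as another scroll. A tangent hyperplane $H$ at a smooth point $z \in F_b := \pi^{-1}(b)$ must contain the entire fiber: since $F_b \cong \PP^2$ is linearly embedded, $F_b = \mathbf{T}_z F_b \subset \mathbf{T}_z Z \subset H$. Conversely, for a generic $H$ containing $F_b$, the intersection decomposes as $H \cap Z = F_b + D$ with residual divisor $D \sim L - \pi^*[b]$, and the restriction $D|_{F_b} \sim \sO_{F_b}(1)$ is a nonempty line in $F_b$; hence $H \cap Z$ is singular along $D \cap F_b$ and $H$ is tangent to $Z$ there. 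Combined with the assumed $\defect(Z) = 1$ (so $\dim \check Z = N - 2$), this identifies $\check{Z}$ with $\bigcup_{b \in B} \mathcal{W}_b$, where $\mathcal{W}_b \subset \check{\PP}^N$ is the $(N-3)$-plane of hyperplanes in $\PP^N$ containing $F_b$.

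It remains to recognize this union as a scroll and compute its degree. Setting $V := H^0(Z, L) = H^0(B, E)$, the evaluation sequence $0 \to K \to V \otimes \sO_B \to E \to 0$ defines a rank-$(N-2)$ bundle $K$ on $B$; dualizing yields a surjection $V^* \otimes \sO_B \twoheadrightarrow K^*$, which induces a linear scroll embedding $\PP(K^*) \hookrightarrow \PP(V^*) = \check{\PP}^N$ whose fiber over $b$ is exactly $\mathcal{W}_b$. Hence $\PP(K^*) \to \check{Z}$ is birational, and re-applying the scroll degree formula from the first step gives $\codeg(Z) = \deg\check{Z} = c_1(K^*) = -c_1(K) = c_1(E)$ using $c_1(K) + c_1(E) = 0$ from the sequence. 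The only delicate point is the reverse inclusion $\bigcup_b \mathcal{W}_b \subset \check Z$, which the residual-intersection argument above is designed to handle.
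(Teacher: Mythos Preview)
Your proof is correct and takes a genuinely different route from the paper's. The paper computes $\codeg(Z)$ via the jet-bundle formula of Beltrametti--Fania--Sommese,
\[
\codeg(Z)=c_2(\mathcal{J}(H))\cdot H=c_1(\Omega_Z\otimes H)\cdot H^2+c_2(\Omega_Z\otimes H)\cdot H,
\]
and then evaluates this by writing down $c_1(\Omega_Z)$ and $c_2(\Omega_Z)$ in terms of $H$, $\pi^*c_1(E)$ and $\pi^*K_B$. Your argument instead identifies the dual variety geometrically: every tangent hyperplane contains a fibre $F_b$, and conversely a generic hyperplane through $F_b$ is tangent along the residual line $D\cap F_b$, so $\check{Z}$ is swept out by the $(N-3)$-planes $\mathcal{W}_b$; the kernel sequence $0\to K\to V\otimes\sO_B\to E\to 0$ then realises $\check{Z}$ birationally as the scroll $\PP(K^*)$, and the scroll degree formula gives $\codeg(Z)=c_1(K^*)=c_1(E)$.

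The paper in fact acknowledges in the remark following the lemma that Zak communicated a geometric proof valid for scrolls of any dimension; your argument is presumably of this type and indeed generalises verbatim to $\PP^{r-1}$-scrolls over a curve. The authors kept the jet-bundle computation because the same method is reused later (Lemma~\ref{Lemma:Codegree-VMRT-F4/P3}) to compute the codegree of the VMRT of $F_4/P_3$, where no such clean description of $\check{Z}$ is available. So your approach is more conceptual and more general for scrolls, while the paper's approach is a warm-up for a technique needed elsewhere.
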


\begin{proof}
	Let $H$ be a hyperplane section of $Z$ and denote by $\pi:\PP(E)\rightarrow B$ the natural projection. By \cite{BeltramettiFaniaSommese1992} (see also \cite[Theorem 6.1]{Tevelev2005}), we have
	\[
	\codeg(Z)= c_2(\sJ(H))\cdot H=c_1(\Omega_Z\otimes H)\cdot H^2 + c_2(\Omega_Z\otimes H)\cdot H
	\]
	where $\sJ(H)$ is the first jet bundle. By a straightforward computation, we get
	\[
	c_1(\Omega_Z)=\pi^*c_1(E)+\pi^*K_C-3H
	\]
	and 
	\[
	c_2(\Omega_Z)=-3\pi^*K_C\cdot H-2\pi^*c_1(E)\cdot H + 3H^2.
	\]
	As a consequence, we obtain $\codeg(Z)=c_1(E)\cdot H^2=H^3=\deg(Z)$.
\end{proof}

\begin{remark}
	Zak informed us a geometric proof of Lemma \ref{Lemma:3-Scroll} which is valid for scrolls of any dimension. We keep the proof here to indicate how to use the formula given in \cite{BeltramettiFaniaSommese1992} to compute the codegree of an arbitrary variety and this method will also be used in Lemma \ref{Lemma:Codegree-VMRT-F4/P3} to compute the codegree of the VMRT of $F_4/P_3$.
\end{remark}

Now we are in the position to prove Proposition \ref{Thm:Threefold-Small-Codegree}. 

\begin{proof}[Proof of Proposition \ref{Thm:Threefold-Small-Codegree}]
	Denote by $L$ the restriction of $\sO_{\PP^N}(1)|_Z$ and let $S\subset Z$ be a general hyperplane section. According to Lemma \ref{Lemma:3-Scroll}, we shall assume that $Z$ is not dual defective (cf. \cite[Example 7.6]{Tevelev2005}). By the codegree formula (cf. \cite[Proposition 1.1]{LanteriTurrini1987}), we have 
	\begin{equation}
		\label{Eq:Landman-Formula}
		d^*=(b_3(Z)-b_1(Z))+2(b_2(S)-b_2(Z))+2(g(S)-q(S)).
	\end{equation}
   Set $A=b_3(Z)-b_1(Z)$, $B=b_2(S)-b_2(Z)$ and $C=2(g(S)-q(S))$. Then both $A$ and $C$ are even non-negative integers and $B$ is a positive integer since $Z$ is not dual defective by our assumption (see \cite[Proposition 1.2 and 1.4]{LanteriTurrini1987}). If $C=0$, then we can conclude by Lemma \ref{Lemma:Surface-g=q} that $Z$ satisfies $d^*=d$. Hence, we may assume also that $C>0$ in the sequel. On the other hand, if $p_g(S)\not=0$, then it follows from \cite[Proposition 2.5]{LanteriTurrini1987} that we have $g(S)-q(S)\geq N-1$. In particular, we obtain
   \[
   d^*\geq 2B+2(g(S)-q(S))\geq 2+2(N-1)=2N.
   \]
   
   From now on, we shall assume that $B>0$, $C>0$ and $p_g(S)=0$. In particular, it follows \cite[Theorem]{Sommese1979} that $K_S+\bar{L}$ is globally generated and \cite[Proposition 2.1]{Sommese1979} implies that we have
   \[
   d=\bar{L}^2\leq K_S^2+4g(S)-4,
   \]
   where $\bar{L}$ is the restriction $L|_S$, and the equality holds if and only if $\Phi_{|K_S+\bar{L}|}$ is not generically finite, i.e., $\dim(\Phi_{|K_S+\bar{L}|}(S))\leq 1$. In particular, $S$ is unirueld and so is $Z$. On the other hand, by Noether's formula, we have
   \[
   K_S^2=12\chi(\sO_S)-\chi_{\rm top}(S)=10-8q(S)-h^{1,1}(S).
   \]
   This implies
   \[
   d\leq 6+4g(S)-8q(S)-h^{1,1}(S).
   \]
   Applying the codegree formula \eqref{Eq:Landman-Formula}, we get 
   \begin{align*}
   	d  & \leq 6+2(d^*-2B-A)-4q(S)-h^{1,1}(S)  \\
   	   & \leq 2d^*+6-4(B+q(S))-2A-h^{1,1}(S)  
   \end{align*}
   Note that $q(S)$ and $A$ are non-negative integers. Thus, since $B$ and $h^{1,1}(S)=b_2(S)$ are positive integers, it follows that we have $d\leq 2d^*$ unless the following condition happens
   \[
   	B=h^{1,1}(S)=b_2(S)=1\quad {\rm and}\quad A=q(S)=0.
   \]
   This is impossible since we have $b_2(S)>b_2(Z)\geq 1$ by our assumption. Moreover, an easy similar argument shows that if the equality $d=2d^*$ holds, then we must have
   \[
   B=1,\quad h^{1,1}(S)=b_2(S)=2\quad {\rm and}\quad A=q(S)=0.
   \]
   This implies that $\rho(Z)=b_2(Z)=1$. In particular, as $Z$ is uniruled, it follows that $Z$ is a Fano threefold of Picard number $1$. Note that $(K_Z+2L)|_S=K_S+\bar{L}$ is globally generated, but not big. This implies that $-K_Z=2L$. In particular, the pair $(Z,L)$ is isomorphic to either $(\PP^3,\sO_{\PP^3}(2))$ or a del Pezzo threefold. If $Z$ is a del Pezzo threefold, then $S$ is a del Pezzo surface with $b_2(S)=2$ and $-K_S=\bar{L}$. However, according to the classification of del Pezzo threefolds of Picard number $1$, we must have $d=\bar{L}^2=K_S^2\leq 5$. This implies that $b_2(S)\geq 4$, which is a contradiction. Hence, $2d^*=d$ if and only if $(Z,L)$ is isomorphic to $(\PP^3,\sO_{\PP^3}(2))$; that is, the projective variety $Z\subset \PP^N$ is projectively equivalent to either the second Veronese variety $\nu_2(\PP^3)\subset \PP^9$ or its isomorphic projection in $\PP^8$.
   
   Finally we assume that $Z$ satisfies the equality \eqref{Eq:Segre-Zak}. Then we have $5d^*=2(N+1)$. In particular, by our results above, we must have 
   \[
   \frac{4(N+1)}{5}=2d^*\geq d\geq N-2.
   \]
   This implies $N\leq 14$ and $d^*\leq 6$. Then, by the classification of smooth projective threefolds of codegree at most $6$ given in \cite{LanteriTurrini1987}, one can easily check that the only possibilities are the quadric threefold $\Q^3\subset \PP^4$ (with $d^*=2$) and the Veronese variety $\nu_2(\PP^3)\subset \PP^9$ (with $d^*=4$).
\end{proof}

\subsection{Proof of Theorem \ref{Thm:normalized-I}}

We start with  the following classification of del Pezzo surfaces with pseudoeffective normalized tangent bundle, which is easily deduced from  \cite[IV, Theorem 4.8]{Nakayama2004}.
\begin{theorem}
	\label{Thm:normalized-Surface}
	Let $S$ be a smooth del Pezzo surface, i.e. $-K_S$ is ample. Then the normalized tangent bundle of $S$ is pseudoeffective if and only if $S$ is isomorphic to the quadric surface $\PP^1\times \PP^1$.
\end{theorem}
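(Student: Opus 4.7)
My strategy is to apply Nakayama's classification \cite[IV, Theorem 4.8]{Nakayama2004} of rank-$2$ semi-stable vector bundles with pseudoeffective normalization to the tangent bundle of $S$. As a preliminary step, I note that for every del Pezzo surface $S$ the tangent bundle $T_S$ is $-K_S$-semi-stable (a classical fact, due for instance to Fahlaoui; polystable on $\PP^1\times\PP^1$, stable in all other cases), which places $T_S$ within the scope of Nakayama's theorem.

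For sufficiency, on $S = \PP^1 \times \PP^1$ the tangent bundle decomposes as $T_S \cong p_1^* T_{\PP^1} \oplus p_2^* T_{\PP^1} \cong \sO_S(0,2) \oplus \sO_S(2,0)$. The two resulting quotient sections of $\PP(T_S) \to S$ are effective divisors $\sigma_1, \sigma_2$ of classes $\sigma_1 \sim \Lambda - \pi^*\sO_S(2,0)$ and $\sigma_2 \sim \Lambda - \pi^*\sO_S(0,2)$, whose sum lies in the class $2\Lambda - \pi^*c_1(T_S)$. Hence $\Lambda - \tfrac12\pi^* c_1(T_S)$ is $\Q$-effective, proving that the normalized tangent bundle is pseudoeffective.

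For necessity, I would invoke Nakayama's classification, which constrains a rank-$2$ semi-stable bundle with pseudoeffective normalization to a short list of structural possibilities: essentially direct sums of line bundles $L_1 \oplus L_2$ with $c_1(L_1) \equiv c_1(L_2)$ (after the normalization twist), non-split extensions of such balanced line bundles, numerically flat bundles, and pullbacks from curves of positive genus. On a del Pezzo surface $S$ the last two cases are ruled out immediately: every del Pezzo is simply connected (which kills the numerically flat case since such a bundle would then be trivial, contradicting $c_1(T_S) = -K_S \neq 0$) and rational (which kills the pullback from a positive-genus curve). For the first two cases one exploits the constraint $c_1(T_S) = -K_S$ ample: the direct-sum case yields two pencils $|L_1|, |L_2|$ on $S$ of complementary $-K_S$-degrees, giving two independent $\PP^1$-fibrations of $S$ and thus forcing $S \cong \PP^1 \times \PP^1$; the non-split extension case can be excluded by the stability of $T_{\PP^2}$ on $\PP^2$, and on every blow-up of $\PP^2$ by examining the graded pieces of $S^m T_S$ under the extension filtration and checking the appropriate cohomology vanishings against the Picard lattice of the del Pezzo.

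The main obstacle is the finiteness of the extension-case analysis: Nakayama's theorem leaves a short but non-trivial list of extension types to examine, and for each del Pezzo other than $\PP^1 \times \PP^1$ one must verify a cohomological vanishing. However, thanks to the uniformity of Nakayama's structural conclusion, the check reduces to showing that no lift of the graded sections exists, which is manageable case-by-case using $K_S^2$ and the intersection form on $\mathrm{Pic}(S)$.
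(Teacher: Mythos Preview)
Your overall strategy matches the paper's: both reduce to Nakayama's classification \cite[IV, Theorem~4.8]{Nakayama2004} after noting (via Fahlaoui) that $T_S$ is $-K_S$-semi-stable. The sufficiency argument for $\PP^1\times\PP^1$ is fine.

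However, your enumeration of Nakayama's cases is inaccurate, and this creates a spurious obstacle. In Nakayama's theorem the surviving alternative to the split case is \emph{not} a ``non-split extension of balanced line bundles'' requiring a case-by-case cohomological analysis over the del Pezzo list; rather, Case~(C) is simply the condition that $c_1(E)\equiv 2L$ for some line bundle $L$. Applied to $E=T_S$ this says $-K_S\equiv 2L$, and a glance at the Picard lattice of each del Pezzo (e.g.\ $-K_{\PP^2}=3H$, and $-K=3H-\sum E_i$ on any blow-up of $\PP^2$) shows this forces $S\cong\PP^1\times\PP^1$ immediately. No filtration or $S^mT_S$ analysis is needed; your ``main obstacle'' disappears once Case~(C) is stated correctly.

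Two further points of comparison. First, the paper eliminates the remaining cases of Nakayama's theorem by combining simple connectedness (no non-trivial \'etale covers) with the fact, from Theorem~\ref{Thm:Almost-Nef-normalization}, that the normalized tangent bundle of $S$ is \emph{not} nef; you do not invoke this second input. Second, for the split case $T_S\cong L_1\oplus L_2$ the paper cites \cite[Theorem~1.4]{Hoering2007} to conclude that $S$ is a product of curves, hence $\PP^1\times\PP^1$. Your ``two pencils'' sketch is morally in this direction, but note that a splitting of $T_S$ gives two transverse foliations rather than two pencils of divisors, so the precise mechanism is the cited splitting theorem rather than a linear-system argument.
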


\begin{proof}
	Note that $T_S$ is always semi-stable with respect to $-K_S$ by \cite{Fahlaoui1989} and its normalization is not nef by Theorem \ref{Thm:Almost-Nef-normalization}. On the other hand, since $S$ is simply connected, there does not exist non-trivial unramified coverings of $S$. In particular, applying \cite[IV, Theorem 4.8]{Nakayama2004} to $S$ and $T_S$, we see that only Case (A) and Case (C) of \cite[IV, Theorem 4.8]{Nakayama2004} may happen in our situation. In other words, either $T_S$ splits as a direct sum $L_1\oplus L_2$ as in Case (A) or $-K_S\equiv 2L$ for some line bundle $L$ on $S$ as in Case (C). In the latter case, it is easy to see that $S$ is isomorphic to the quadric surface from the classification of del Pezzo surfaces. In the former case, the surface $S$ is isomorphic to a product of curves (see for instance \cite[Theorem 1.4]{Hoering2007}). This implies immediately that $S$ is isomorphic to the product $\PP^1\times \PP^1$ as $S$ is rationally connected. 
\end{proof}

From now on, we will assume that $n \geq 3$. 
To prove Theorem \ref{Thm:normalized-I} , we start with the following: 

\begin{theorem}
	\label{Thm:I-II}
	Let $X$ be an $n$-dimensional Fano manifold of Picard number $1$ equipped a dominating family $\sK$ of minimal rational curves. Assume that the VMRT $\sC_x\subset \PP(\Omega_{X,x})$ at a general point $x\in X$ is not dual defective. If $\dim(\sC_x)\geq 1$ and $n\geq 3$, then $\codeg(\sC_x)\geq 2$ and the following statements hold.
	\begin{enumerate}
		\item If $\codeg(\sC_x)=2$, then $X$ is a smooth quadric hypersurface in $\PP^{n+1}$.
		
		\item If the normalized tangent bundle of $X$ is pseudoeffective and the VMRT $\sC_x$ is smooth with $\codeg(\sC_x)=3$, then $X$ is one of the following varieties: the Lagrangian Grassmann variety $\LG(3,6)$, the Grassmann variety $\Gr(3,6)$, the $15$-dimensional Spinor variety $\mathbb{S}_6$ and the $27$-dimensional $E_7$-variety $E_7/P_7$.
	\end{enumerate}
\end{theorem}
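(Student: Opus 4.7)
The claim $\codeg(\sC_x)\geq 2$ is immediate from biduality: in characteristic zero $\sC_x$ is reflexive, so $\codeg(\sC_x)=1$ would force $\check{\sC_x}$ to be a hyperplane and hence $\sC_x$ itself to be a point, contradicting $\dim(\sC_x)\geq 1$.

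For part (1), my plan is to combine the Segre inequality (Theorem \ref{Thm:Segre-Zak}) with Mok's recognition theorem (Theorem \ref{t.IHSSMok}). Since $\sC_x$ is not dual defective with $\codeg(\sC_x)=2$, the dual variety $\check{\sC_x}\subset \PP(T_{X,x})$ is a quadric hypersurface. Assuming $\sC_x$ is linearly non-degenerate, $\check{\sC_x}$ cannot be a cone (as the dual of a cone lies in a proper linear subspace), so $\check{\sC_x}$ is a smooth quadric and in particular has non-vanishing Hessian. Theorem \ref{Thm:Segre-Zak} then gives $2=\codeg(\sC_x)\geq 2n/(\dim(\sC_x)+2)$, forcing $\dim(\sC_x)\geq n-2$; equality holds since $\sC_x\subsetneq \PP^{n-1}$. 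The equality case of Theorem \ref{Thm:Segre-Zak}, combined with reflexivity, identifies $\sC_x$ with the smooth quadric $\Q^{n-2}\subset \PP^{n-1}$, which is exactly the VMRT of $\Q^n\subset \PP^{n+1}$. Theorem \ref{t.IHSSMok} then concludes $X\cong \Q^n$.

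For part (2), the approach exploits the pseudoeffectivity of the normalized tangent bundle through Proposition \ref{Prop:Criterion-Pseff-Normalized}, which supplies the reverse Segre bound $3=\codeg(\sC_x)\leq 2n/(\dim(\sC_x)+2)$. Since $\sC_x$ is smooth of codegree $3$, Zak's classification of smooth projective varieties of codegree $3$ (recalled after Conjecture \ref{Conj:Zak-Russo-Conjecture}) identifies $\sC_x$, in its natural linear span, with one of the four Severi varieties: $\nu_2(\PP^2)\subset \PP^5$, $\PP^2\times \PP^2\subset \PP^8$, $\Gr(2,6)\subset \PP^{14}$, or $E_6/P_1\subset \PP^{26}$, of respective dimensions $2,4,8,16$. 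Each Severi variety saturates the Segre equality $3=2(M+1)/(\dim+2)$, so the linear span of $\sC_x$ has dimension $M=5,8,14,26$ respectively. Assuming linear non-degeneracy of $\sC_x$ in $\PP(\Omega_{X,x})=\PP^{n-1}$, we obtain $n=M+1=6,9,15,27$ (compatible with the pseudoeffectivity lower bound). Table \ref{Table:IHSSVMRT} then identifies $\sC_x$ as the VMRT of $\LG(3,6)$, $\Gr(3,6)$, $\mathbb{S}_6$, or $E_7/P_7$, and Mok's Theorem \ref{t.IHSSMok} yields the desired isomorphism.

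The principal obstacle in both parts is ruling out linear degeneracy of $\sC_x$ in $\PP(\Omega_{X,x})$. A linearly degenerate VMRT would produce a proper subbundle $W\subsetneq T_X$ containing the tangent directions of all minimal rational curves, and a Cartan--Fubini-type integrability argument would then yield a nontrivial foliation on $X$, contradicting $\rho(X)=1$. Making this step rigorous in the generality required by Theorem \ref{Thm:I-II} is by far the most delicate part of the proof; the remaining pieces are direct applications of Segre's inequality, Zak's classification of smooth varieties of codegree $3$, and Mok's recognition theorem for IHSS.
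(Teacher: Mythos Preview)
Your outline---biduality, Zak's codegree-$3$ classification, then Mok's theorem---matches the paper's, but your proposed handling of linear non-degeneracy is a genuine gap, and the strategy you suggest for it does not work. A linearly degenerate VMRT does \emph{not} in general yield an integrable distribution: the linear span $W_x$ of $\sC_x$ defines a distribution whose Frobenius bracket is governed by the second fundamental form of $\sC_x$ in $\PP(\Omega_{X,x})$, and this is typically nonzero. The cleanest counterexample to your claim is any homogeneous Fano contact manifold $G/P$ of Picard number~$1$ (e.g.\ $G_2/P_2$): there the VMRT lies in the contact hyperplane, a corank-$1$ distribution that is maximally non-integrable. So linear degeneracy of the VMRT is perfectly compatible with $\rho(X)=1$, and there is no Cartan--Fubini-type shortcut here.

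What the paper does instead is argue \emph{intrinsically} in the linear span $\PP^m\subset\PP(\Omega_{X,x})$. In the codegree-$2$ case, biduality forces $\sC_x$ to be a smooth quadric hypersurface in $\PP^m$ regardless of whether $\check{\sC}_x$ is smooth; in the codegree-$3$ case, Zak's classification in $\PP^m$ gives either $\dim\sC_x>\tfrac{m-1}{2}$ or $\sC_x\cong\nu_2(\PP^2)\subset\PP^5$. In both situations the tangential variety of $\sC_x$ is non-degenerate in $\PP^m$, and the paper then invokes two results of Hwang \cite[Propositions~2.4 and~2.6]{Hwang2001}: Proposition~2.6 says that on a Fano manifold of Picard number~$1$ the second fundamental form of $\sC_x$ in $\PP(\Omega_{X,x})$ is surjective at a general point, and Proposition~2.4 uses this together with non-degeneracy of the tangential variety in $\PP^m$ to force $\PP^m=\PP(\Omega_{X,x})$. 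Only after non-degeneracy is established does Proposition~\ref{Prop:Criterion-Pseff-Normalized} combine with the Segre bound to pin down $\sC_x$ among the Severi varieties. Your detour through the Segre inequality in part~(1) is also unnecessary: once $\check{\sC}_x$ is a quadric, biduality alone identifies $\sC_x$ as a smooth quadric in its span.
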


\begin{proof}
	By the biduality theorem, the dual variety $\check{\sC}_x$ does not contain hyperplanes as irreducible components since $\sC_x$ is purely dimensional, and hence $\codeg(\sC_x)\geq 2$. Let us denote by $\PP^m=\PP(W)\subset \PP(\Omega_{X,x})$ the linear span of $\sC_x$.
	
	Firstly we assume that $\codeg(\sC_x)=2$, i.e., the dual $\check{\sC}_x\subset \PP(T_{X,x})$ is an irreducible quadric hypersurface of $\PP(T_{X,x})$. Then the VMRT $\sC_x$ itself is irreducible. On the other hand, if $\check{\sC}_x$ is not smooth, then it is an irreducible quadric cone. According to the biduality theorem, since $\sC_x$ is not dual defective, the VMRT $\sC_x$ is a smooth quadric hypersurface in $\PP^m\subset \PP(\Omega_{X,x})$. Then it follows from \cite[Proposition 2.4 and Proposition 2.6]{Hwang2001} that we must have $\PP^m=\PP(\Omega_{X,x})$.  Therefore, by Theorem \ref{t.IHSSMok}, the variety $X$ is isomorphic to a quadric hypersurface.
	
	Next we assume that $\sC_x$ is smooth with $\codeg(\sC_x)=3$ and the normalized tangent bundle of $X$ is pseudoeffective. Then $\check{\sC}_x$ is an irreducible hypersurface of degree $3$ and hence $\sC_x$ is irreducible and smooth. By Zak's classification of linearly non-degenerate smooth varieties with codegree $3$ \cite[Theorem 5.2]{Zak1993}, we obtain that $\dim(\sC_x)\geq 2$ and
	\[
	\dim(\sC_x)>\frac{m-1}{2}
	\]
	unless $\sC_x\subset \PP^m$ is $\nu_2(\PP^2)\subset \PP^5$. On the other hand, it can be directly checked that the tangential variety of $\nu_2(\PP^2)\subset \PP^5$ is linearly non-degenerate. Therefore, it follows from \cite[Proposition 2.6]{Hwang2001} that the tangential variety of $\sC_x$ is linearly non-degenerate. Then  we can apply \cite[Proposition 2.4]{Hwang2001} to obtain that $\PP^m=\PP(\Omega_{X,x})$. In particular, as $\sC_x$ is assumed to be not dual defective, it follows from Proposition \ref{Prop:Criterion-Pseff-Normalized} that $\sC_{x}\subset \PP(\Omega_{X,x})$ is projectively equivalent to one of the four Severi varieties. Then one can apply  Theorem \ref{t.IHSSMok} to conclude that $X$ is isomorphic to one of the four varieties in the theorem.
	\end{proof}

Comparing with Proposition \ref{Prop:Criterion-Pseff-Normalized}, we do not require that the VMRT of $X$ at a general point is irreducible or linearly non-degenerate in Theorem \ref{Thm:I-II} above.

\begin{proof}[Proof of Theorem \ref{Thm:normalized-I}]

%The proof of (1.1) is similar to that of  Proposition \ref{Prop:Criterion-Pseff-Normalized}.
	For the statement (1), we assume that the VMRT $\sC_x\subset \PP(\Omega_{X,x})$ is irreducible, linearly non-degenerate and not dual defective. Let $\check{\sC}\subset \PP(T_X)$ be the total dual VMRT. Write $[\check{\sC}]\equiv a\Lambda - b\pi^*H$, where $\Lambda$ is the tautological divisor of the projectivized tangent bundle $\pi:\PP(T_X)\rightarrow X$ and $H$ is the ample generator of $\pic(X)$. Let $i_X$ be the index of $X$. Without loss of generality, we may assume that $\alpha(X,-K_X)>0$, or equivalently $T_X$ is big. In particular, by Theorem \ref{Thm:Class-dual-VMRT}, we have $b>0$ and 
	\[
	\alpha(X,-K_X) = \frac{b}{a i_X} =\frac{b}{i_X \codeg(\sC_x)}\leq \frac{b}{i_X}\cdot \frac{\dim(\sC_x)+2}{2\dim(X)}.
	\]
	The last inequality follows from the Segre inequality \eqref{Eq:Segre-Zak-Ineq}. In particular, note that we have $\dim(\sC_x)+2=-K_X\cdot \sK = i_X H \cdot \sK$ and $bH\cdot \sK\leq 2$, thus we get
	\[
	\alpha(X,-K_X)\leq \frac{1}{\dim(X)}
	\]
	with equality only if the VMRT $\sC_x\subset \PP(\Omega_{X,x})$ satisfies the equality \eqref{Eq:Segre-Zak}. Hence, if the normalized tangent bundle of $X$ is pseudoeffective and Conjecture \ref{Conj:Zak-Russo-Conjecture} holds, then the VMRT $\sC_x\subset \PP(\Omega_{X,x})$ is projectively equivalent to one of the varieties listed in Conjecture \ref{Conj:Zak-Russo-Conjecture} and we then conclude by Theorem \ref{t.IHSSMok} and Table \ref{Table:IHSSVMRT}.
	
	For the statement (2), assume that the VMRT $\sC_x\subset \PP(\Omega_{X,x})$ is not dual defective and the normalized tangent bundle of $X$ is pseudoeffective. By Proposition \ref{Prop:Criterion-Pseff-Normalized}, if the condition (2.1) (resp. condition (2.2)) holds, then we get $\codeg(\sC_x)<3$ (resp. $\codeg(\sC_x)<4$) and the results follows from Theorem \ref{Thm:I-II} above. If the condition (2.3) holds, then it is clear that $\dim(\sC_x)\geq 1$ as the VMRT can not be a single point. Then the result follows from the statement  (1.2), Proposition \ref{Thm:Threefold-Small-Codegree} and Theorem \ref{Thm:Classification-Small-Codegree}.
\end{proof}

\begin{proof}[Proof of Corollary \ref{c.less-than-5}]
	By assumption, the VMRT $\sC_x\subset \PP(\Omega_{X,x})$ is either a non-linear smooth curve, or a non-linear smooth surface, or a non-linear smooth hypersurface $(n=5)$. In particular, the VMRT $\sC_x$ is not dual defective by \cite[Example 1.19 and Example 7.5 and Theorem 4.25]{Tevelev2005}. Then the result follows directly from Theorem \ref{Thm:normalized-I} (2.2).
\end{proof}

\begin{proof}[Proof of Corollary \ref{c.less-than-11}]
	The result follows from Theorem \ref{Thm:normalized-I} (2.2) and (2.3).
\end{proof}

\section{Rational homogeneous spaces}
\label{Section:RHS}

Throughout this section, for a vector bundle $E$ over a variety $X$, we denote by $\mathbf{P}(E)$ the projective bundle over $X$, whose fiber over $x \in X$ is the set of lines in $E_x$.   It is isomorphic to $\mathbb{P}(E^*)$ in our previous notation. Moreover, all the varieties in this section are assumed to have dimension at least $2$. The main aim of this section is to calculate the cones of divisor of $\mathbf{P}(T^*_{G/P}) = \PP (T_{G/P})$ for a rational homogeneous space $G/P$ with Picard number $1$. 

\subsection{Springer maps}

Let $G$ be a complex simple Lie algebra and let $\mathfrak{g}$ be its Lie algebra. Then $G$ has the adjoint action on $\mathfrak{g}$. The  orbit $\sO_x$ of a nilpotent element $x\in \mathfrak{g}$ is called a \emph{nilpotent orbit}, which is invariant under the dilation action of $\mathbb{C}^*$ on $\mathfrak{g}$. For any parabolic subgroup $P$ of $G$,  the group $G$ has a Hamiltonian action on the cotangent bundle $T^*_{G/P}$ and the image of the moment map 
$ T^*_{G/P}\longrightarrow \mathfrak{g} \simeq \mathfrak{g}^*$
is a nilpotent orbit closure $\overline{\sO}$, which will be called the \emph{Richardson orbit} associated to  $P$. The induced morphism
 $$\widehat{s}: T^*_{G/P}\rightarrow \overline{\sO}$$will be called the \emph{Springer map} associated to $P$, which is a generically finite $G \times \mathbb{C}^*$-equivariant  projective morphism. We denote by
\[
T^*_{G/P} \xrightarrow{\widehat{\varepsilon}} \widetilde{\sO} \xrightarrow{\widehat{\tau}} \overline{\sO}
\]
the Stein factorisation of $\widehat{s}$.   It follows that $\widehat{\varepsilon}$ is birational and $\widehat{\tau}$ is a finite morphism.  Note that $\widehat{s}^{-1}(0) = G/P$ is irreducible,  the pre-image $\widehat{\tau}^{-1}(0)$ is a single point in $\widetilde{\sO}$.  This implies that the projectivised Springer map 
$$
s:  \mathbf{P} (T^*_{G/P})\rightarrow  \mathbf{P}(\overline{\sO})
$$
has the Stein factorization given by 
\[
\mathbf{P} (T^*_{G/P}) \xrightarrow{\varepsilon}  \mathbf{P}(\widetilde{\sO}) \xrightarrow{\tau} \mathbf{P}(\overline{\sO}).
\]

From now on, we shall assume that $G/P$ is a rational homogeneous space with Picard number $1$; that is, $P$ corresponds to a \emph{single-marked Dynkin diagram}.

\begin{example}
	Given an $(n+1)$-dimensional complex vector space $V$, the rational homogeneous spaces for the group $SL_{n+1}=SL_{n+1}(V)$, are determined by the different markings of the Dynkin diagram $A_n$. For instance, the Grassmann variety $\Gr(k,n+1)$ corresponds to the marking of the $k$-th node:
	\[
	\dynkin[labels={1,2,3, ,k, , , ,n}]
		 A{o3.o*o.o3}
	\]
\end{example}

\begin{proposition} \label{p.stratiMukai}
	\cite[Propostion 5.1]{Namikawa2006}
	Assume $G/P$  is  of Picard number $1$. Then the projectivized Springer map  $s: \mathbf{P} (T^*_{G/P})\rightarrow  \mathbf{P}(\overline{\sO})$ is birational and  small if and only $G/P$ is one of the following.

\begin{center}
 $A_{n}  \left(k< \frac{n+1}{2}\right)$  \quad 	\dynkin[labels={, , , k, , , }]A{o2.o*o.o2} \qquad
	\dynkin[labels={, , , n-k, , , }]A{o2.o*o.o2}
\end{center}

\begin{center}
	 $D_{n}$ ($n$: odd $\geq 5$) \quad \dynkin[labels={, , , ,n-1, }]D{o2.o2*o}\qquad
	\dynkin[labels={, , , , , n}]D{o2.o2o*}
\end{center}

\begin{center}
 $E_{6,I}$ \quad 	\dynkin[labels={1,2,3,4,5,6}]E{*ooooo}\qquad
	\dynkin[labels={1,2,3,4,5,6}]E{ooooo*}
\end{center}

\begin{center}
 $E_{6,II}$ \quad 	\dynkin[labels={1,2,3,4,5,6}]E{oo*ooo}\qquad
	\dynkin[labels={1,2,3,4,5,6}]E{oooo*o}
\end{center}
Furthermore the pair  $(P, Q)$ in each group has the same Richardson orbit and the corresponding Springer maps give a birational map $\widehat{\mu}: T^*_{G/P} \dasharrow T^*_{G/Q}$, which is called the stratified Mukai flop of type $A_{n,k}$ (resp. $D_n, E_{6, I}, E_{6, II}$) according to the types of corresponding marked Dynkin diagram. In this case, there exists a (non-canonical) isomorphism $G/P \simeq G/Q$.
\end{proposition}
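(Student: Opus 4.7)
The plan is to reduce the statement to the classification of symplectic resolutions of nilpotent orbit closures carried out in \cite{Namikawa2006}, building on earlier work of Hesselink, Beauville and others. The strategy proceeds in three stages: (i) decide when the Springer map $\widehat{s}$ itself is birational; (ii) among those, decide when the projectivized map $s$ is small; (iii) for each surviving pair $(P,Q)$, produce the birational map $\widehat{\mu}$ and the abstract isomorphism $G/P \simeq G/Q$.

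First I would treat the birationality of $\widehat{s}$. Since $\widehat{s}^{-1}(0) = G/P$ is connected and $\widehat{\tau}$ is finite, the fibre $\widehat{\tau}^{-1}(0)$ is a single point; hence $s$ is birational if and only if $\widehat{s}$ has degree one. For a single-marked parabolic this degree is a standard Lie-theoretic invariant, equal to the order of the component group of the centralizer of a Richardson element (equivalently an explicit Weyl-group quotient), and it can be read off uniformly from the marked Dynkin diagram. Tabulating these values rules out most single-marked diagrams at the outset.

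Next I would analyze smallness. The exceptional locus of $\widehat{s}$ is $\widehat{s}^{-1}(\overline{\sO} \setminus \sO)$, which is $G \times \mathbb{C}^*$-stable; after projectivization, smallness of $s$ amounts to requiring that no boundary orbit $\sO' \subsetneq \sO$ of codimension two in $\overline{\sO}$ lifts to an irreducible divisor in $T^*_{G/P}$. This is a combinatorial condition on the degeneration pattern of the Richardson orbit, and the main obstacle is carrying out the codimension comparison for every simple type and every single-marked diagram; I would rely on the classical tables of Richardson orbits and their closure orderings. Intersecting the resulting list with the birational list of the first step leaves precisely the four families enumerated in the proposition.

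Finally, for each surviving pair $(P,Q)$ the same analysis exhibits $\widehat{s}_P$ and $\widehat{s}_Q$ as birational maps onto the \emph{same} Richardson orbit closure $\overline{\sO}$. Composing gives $\widehat{\mu}: T^*_{G/P} \dashrightarrow T^*_{G/Q}$, which is an isomorphism in codimension one because both $\varepsilon_P$ and $\varepsilon_Q$ are small over $\overline{\sO}$; its projectivization is the stratified Mukai flop $\mu$. The non-canonical identification $G/P \simeq G/Q$ then comes from an outer diagram automorphism exchanging the two marked nodes: duality in type $A_n$, the $\mathbb{Z}/2$ outer automorphism of $D_n$ swapping the tails $n-1$ and $n$, and the $\mathbb{Z}/2$ outer automorphism of $E_6$ in the two $E_6$ cases.
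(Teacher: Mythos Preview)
The paper does not prove this proposition at all: it is stated with the citation \cite[Proposition 5.1]{Namikawa2006} and used as a black box. There is no argument in the paper to compare your sketch against.

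That said, your outline is a reasonable reconstruction of the strategy in Namikawa's work, with one imprecision worth flagging. You write that the degree of $\widehat{s}$ ``is a standard Lie-theoretic invariant, equal to the order of the component group of the centralizer of a Richardson element.'' This is not quite correct: the degree of the Springer map is governed by Hesselink's formula (a ratio of Weyl-group orders), and while it is related to the component group of the centralizer, the two are not equal in general. Indeed, in the paper's own Proposition~\ref{p.birSpringer} the birationality is checked by appealing to the explicit computations in \cite{Namikawa2008} and \cite{Fu07}, not by reading off a component group. Your step (ii) and step (iii) are essentially correct in spirit, and the identification $G/P \simeq G/Q$ via the outer automorphism is exactly right.
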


\begin{proposition} \label{p.stratiMukai2}
	\cite[Propostion 3.1]{Namikawa2008}
	Assume $G/P$  is  of Picard number $1$. Then the   birational contraction $\varepsilon: \mathbf{P} (T^*_{G/P}) \to \mathbf{P}(\widetilde{\sO}) $ is small if and only if  either  $G/P$ is as in Proposition \ref{p.stratiMukai}  or  $\widehat{s}$ has degree 2 and $G/P$ is one of the following. 
\begin{center}
$B_n$ ($k$: even, $k\geq \frac{2n+1}{3}$) \qquad 	\dynkin[labels={,,,k,,,}]B{o2.o*o.oo}
\end{center}

\begin{center}$C_n$ ($k$: odd, $k\leq \frac{2n}{3}$) \qquad
	\dynkin[labels={,,,k,,,}]C{o2.o*o.o2}
\end{center}

\begin{center}$D_n$ ($k$: odd, $\frac{2n}{3}\leq k\leq n-2$). \qquad
	\dynkin[labels={,,,k,,,,}]D{o2.o*o.o3}
\end{center}
In the latter case, by interchanging the two points in  general fibers of $\widehat{s}$, this gives a stratified Mukai flop of type $B_{n,k}$ (resp. $C_{n,k}, D_{n,k}$)  $\hat{\mu}: T^*_{G/P} \dasharrow T^*_{G/P}$ according to the types of corresponding marked Dynkin diagram.

\end{proposition}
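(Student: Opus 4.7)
The plan is to follow Namikawa's approach, which reduces the statement to a property of the Stein factorisation $\widehat{\varepsilon}: T^*_{G/P} \to \widetilde{\sO}$ of the Springer map. First I would observe that $\varepsilon$ is small if and only if $\widehat{\varepsilon}$ is small: every exceptional divisor of $\widehat{\varepsilon}$ is $\mathbb{C}^*$-invariant under fibre dilation (by $G\times\mathbb{C}^*$-equivariance), hence is distinct from the zero section $G/P$, which has codimension $\dim(G/P)\geq 2$; projectivising therefore yields a bijection between divisors contracted by $\widehat{\varepsilon}$ and those contracted by $\varepsilon$. The problem thus becomes: for which single-marked parabolic $P$ does $\widehat{\varepsilon}$ contract no divisor?

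Set $d := \deg(\widehat{s})$. When $d = 1$ the Stein factorisation collapses to $\widehat{\varepsilon} = \widehat{s}$, and smallness is exactly Proposition \ref{p.stratiMukai}. When $d \geq 2$ the finite cover $\widehat{\tau}: \widetilde{\sO}\to \overline{\sO}$ is nontrivial. By $G$-equivariance the exceptional locus of $\widehat{\varepsilon}$ is a union of $G\times\mathbb{C}^*$-invariant subvarieties of $T^*_{G/P}$, each the closure of the conormal variety of a Schubert-type subvariety of $G/P$ lying above a codimension-one $G$-orbit in $\widetilde{\sO}$; computing its codimension thus reduces to a combinatorial analysis of the Richardson orbit and its boundary strata.

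The main work is then a case-by-case inspection across all single-marked Dynkin diagrams. Using the Bala--Carter classification and standard formulae for component groups of centralisers of nilpotents, one first singles out the markings with $d \leq 2$; this yields the list in Proposition \ref{p.stratiMukai} together with candidates in types $B_n, C_n, D_n$ with the stated parity and size conditions on $k$ (where $d = 2$ can be read off from the partition indexing the Richardson orbit). For each $d = 2$ candidate, one verifies by direct inspection of partition data that every positive-dimensional fibre of $\widehat{\varepsilon}$ lies above a subvariety of $\widetilde{\sO}$ of codimension at least $2$, so that $\widehat{\varepsilon}$ is small; interchanging the two sheets of $\widehat{s}$ on the generic locus then produces the claimed stratified Mukai self-flop $\widehat{\mu}: T^*_{G/P} \dasharrow T^*_{G/P}$. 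For any single-marked $P$ outside the combined list one exhibits an explicit conormal variety of a Schubert divisor that is contracted by $\widehat{\varepsilon}$.

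The hardest step will be the combinatorial verification in the last paragraph: pinpointing exactly which parity and size conditions on $k$ force the partitions governing the codimension-one strata of $\overline{\sO}$ to split compatibly with the degree-two cover $\widehat{\tau}$, and explicitly producing contracted divisors in every remaining case, is the delicate heart of the proof and the reason why the statement takes its somewhat arithmetic form.
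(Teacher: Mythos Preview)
The paper does not provide its own proof of this proposition: it is stated as a citation of \cite[Proposition 3.1]{Namikawa2008} and no argument is given. Your proposal is therefore not comparable to anything in the paper itself; you are sketching a proof for a result the authors simply quote.

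That said, your outline is broadly aligned with how Namikawa actually proceeds in \cite{Namikawa2008}, and the reduction from $\varepsilon$ to $\widehat{\varepsilon}$ via $\mathbb{C}^*$-equivariance is correct. One point to be careful about: your description of the exceptional locus as ``the closure of the conormal variety of a Schubert-type subvariety'' is not quite the right picture. The exceptional locus of $\widehat{\varepsilon}$ sits over boundary strata of $\widetilde{\sO}$, and the relevant computation is of fibre dimensions of the Springer map over sub-orbits, not of conormal varieties to Schubert divisors in $G/P$. Also, the case analysis is not only about $d\leq 2$: one must separately rule out $G_2/P_1$ and $F_4/P_3$ (where $\widehat{s}$ has degree $2$ and $4$ respectively but $\varepsilon$ is divisorial, cf.\ Proposition \ref{p.birSpringer} and Proposition \ref{Prop:Singularity-type-Ai}), so your claim that the $d=2$ candidates are automatically small needs the explicit exclusion of these exceptional cases.
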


We will describe in details these flops in Section \ref{s.GeomMukaiFlop}.

\begin{proposition} \label{p.birSpringer}
The Springer map  $\widehat{s}: T^*_{G/P} \to \overline{\mathcal{O}}$ is not birational if and only if $G/P$ is as in Proposition \ref{p.stratiMukai2} or  $G/P$ is 
$G_2/P_1$ or  $F_4/P_3$ with ${\rm deg}(\widehat{s})$ being $2$ and $4$ respectively.
\end{proposition}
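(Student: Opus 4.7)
My plan is to leverage Proposition \ref{p.stratiMukai2} for one direction and to perform a case-by-case analysis for the converse. The key input is that since $G/P$ has Picard number $1$, the parabolic $P$ is maximal, corresponding to a single marked node of the Dynkin diagram of $G$. The Springer map $\widehat{s}: T^*_{G/P} \to \overline{\mathcal{O}}_P$ is generically finite, and its degree equals the index $[A(\mathcal{O}_P) : A_P(\mathcal{O}_P)]$, where $A(\mathcal{O}_P)$ is the component group of the centralizer in $G$ of a generic element $x \in \mathcal{O}_P$ and $A_P(\mathcal{O}_P)$ is the image in $A(\mathcal{O}_P)$ of the component group of $\text{Stab}_P(x)$. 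This classical formula, which goes back to Hesselink and Borho--Kraft, reduces the birationality question to a purely combinatorial/Lie-theoretic check.

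For the ``if'' direction, all cases listed in Proposition \ref{p.stratiMukai2} already come with $\deg(\widehat{s}) = 2$, since the stratified Mukai flops $B_{n,k}$, $C_{n,k}$, $D_{n,k}$ are obtained precisely by interchanging the two sheets of $\widehat{s}$ over its generic point. It thus remains only to verify the two exceptional cases: for $G_2/P_1$, the Richardson orbit is the short-root minimal orbit, whose component group together with the explicit description of the Springer fiber gives $\deg(\widehat{s}) = 2$; for $F_4/P_3$, the analogous analysis via the Bala--Carter tables yields $\deg(\widehat{s}) = 4$.

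For the ``only if'' direction, I would run through the maximal parabolics of each simple type. In type $A_n$, all nilpotent orbits of $\mathfrak{sl}_{n+1}$ have trivial component group, so $\widehat{s}$ is always birational. In types $B_n$, $C_n$, $D_n$, the Richardson orbit associated to the $k$-th node corresponds to an explicit partition, and the component group $A(\mathcal{O})$ admits a well-known formula in terms of distinct even/odd parts; one verifies that the index $[A(\mathcal{O}_P) : A_P(\mathcal{O}_P)]$ exceeds $1$ exactly under the arithmetic conditions listed in Proposition \ref{p.stratiMukai2}. For the exceptional types $E_6$, $E_7$, $E_8$, a finite check of Bala--Carter data for each maximal-parabolic Richardson orbit shows that the Springer map is always birational.

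The main obstacle will be the combinatorial bookkeeping in the classical types, in particular matching the parity conditions on partitions with the precise inequalities on $k$ and handling boundary values (such as $k = n-2$ in $D_n$, where one must distinguish the two spinor marked nodes already dealt with in Proposition \ref{p.stratiMukai}). The remaining exceptional-type verifications amount to a finite table lookup, routine but tedious; the two isolated cases $G_2/P_1$ and $F_4/P_3$ are in fact the easiest, since both the Richardson orbit and its generic Springer fiber admit very explicit descriptions from which the degrees $2$ and $4$ can be read off directly.
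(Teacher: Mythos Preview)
Your approach is essentially the same as the paper's: a case-by-case analysis by Dynkin type, with classical types handled via the partition combinatorics of Richardson orbits and exceptional types via a finite table check. The paper's own proof is much more terse---it simply cites Namikawa's argument for the classical types and, for the exceptional types, invokes the criterion that an even orbit or an orbit with trivial fundamental group forces $\widehat{s}$ to be birational, deferring the residual degree computations (including the values $2$ and $4$ for $G_2/P_1$ and $F_4/P_3$) to the Appendix of \cite{Fu07}. Your formulation via the index $[A(\mathcal{O}_P):A_P(\mathcal{O}_P)]$ is the same mechanism spelled out more explicitly, and the bookkeeping you outline is exactly what lies behind those references.

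One factual slip to correct: the Richardson orbit for $G_2/P_1$ is not a minimal orbit. Since $\dim G_2/P_1 = 5$, the orbit must have dimension $10$, and this is the subregular orbit $G_2(a_1)$ in the Bala--Carter labelling (cf.\ the big table in Appendix~\ref{Appendix}, or the discussion in the proof of Proposition~\ref{Prop:Singularity-type-Ai}). The degree-$2$ conclusion is still correct, but you would reach it via the component group of $G_2(a_1)$ (which is $\mathfrak{S}_3$) rather than via a minimal orbit.
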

\begin{proof}
For classical cases, this follows from the proof of \cite[Propostion 3.1]{Namikawa2008}.  For exceptional cases, 
assume $G$ is of exceptional type. In most cases $\mathcal{O}$ is an even orbit or an orbit with trivial fundamental group, which implies that $\widehat{s}$ is birational. 
For the remaining cases,  the degree is computed in \cite[Appendix]{Fu07}.
\end{proof}

\subsection{Cones of divisors}
%Firstly let us fix some notations. Let $X$ be a projective manifold such that $T_X$ is big and semiample. Denote by $\varepsilon:\sX:=\PP(T_X)\rightarrow \sY$ the birational morphism defined by the globally generated line bundle $\sO_{\PP(T_X)}(m)$ with $m$ large enough. Denote by $\pi:\PP(T_X)\rightarrow X$ be the natural projection, and let $\Lambda$ be the tautological divisor of the projectivised tangent bundle $\PP(T_X)$.

%Let $G/P$ be a rational homogeneous space of Picard number $1$. Denote by $H$ the ample generator of $\pic(G/P)$ and by $\alpha_{G/P}$ the pseudoeffective threshold $\alpha(G/P,H)$.
We start with the following result which describes the cones of divisors on $\mathbf{P} (T^*_{G/P}) = \mathbb{P} (T_{G/P})$.
\begin{theorem}
	\label{Thm:Rational-Homogeneous-Spaces}
	Let $G/P$ be a rational homogeneous space of Picard number $1$, but not a  projective space. Denote by $H$ the ample generator of $\pic(G/P)$. Let $\Gamma$ be the exceptional locus of $\varepsilon:  \mathbf{P} (T^*_{G/P}) \to  \mathbf{P}(\widetilde{\sO}) $. Then the following statements hold.
	\begin{enumerate}
		\item If $\Gamma$ has codimension at least $2$, i.e., $\varepsilon$ is small, then there exists a commutative diagram
		\[
		\begin{tikzcd}[column sep=large, row sep=large]
			\mathbf{P} (T^*_{G/P}) \arrow[rr,dashed, "\mu"] \arrow[rd,"\varepsilon"] \arrow[d,"\pi_1"'] 
			&
			&  \mathbf{P} (T^*_{G/Q}) \arrow[ld,"\varepsilon"'] \arrow[d,"\pi_2"] \\
			G/P        
			&   \mathbf{P}(\widetilde{\sO})  
			&  G/Q
		\end{tikzcd}
		\]
		where $\mu: \mathbf{P} (T^*_{G/P}) \dashrightarrow \mathbf{P} (T^*_{G/Q})$  is a non-isomorphic flop with $G/P \simeq G/Q$. In particular, we have
		\[
		\Eff(\mathbf{P} (T^*_{G/P}))=\overline{\rm Eff}(\mathbf{P} (T^*_{G/P}))=\Mov(\mathbf{P} (T^*_{G/P}))=\overline{\rm Mov}(\mathbf{P} (T^*_{G/P}))=\left\langle [\mu^*\pi_2^*H],[\pi_1^*H]\right\rangle.
		\]
		
		\item If $\Gamma$ has codimension $1$, i.e., $\varepsilon$ is divisorial, then $\Gamma$ is a prime divisor such that 
		\[
		\Mov(\mathbf{P} (T^*_{G/P}))=\overline{\rm Mov}(\mathbf{P} (T^*_{G/P}))=\left\langle [\Lambda], [\pi^*H] \right\rangle
		\]
	    and
	    \[
	    \rm{Eff}(\mathbf{P} (T^*_{G/P}))=\overline{\rm Eff}(\mathbf{P} (T^*_{G/P}))=\left\langle [\Gamma],[\pi^*H] \right\rangle.
	    \]
	\end{enumerate}
\end{theorem}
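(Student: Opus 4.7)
The strategy is to exploit the Mori dream space structure of $\mathbf{P}(T^*_{G/P})$ together with the birational geometry of the Springer map described in Propositions \ref{p.stratiMukai}-\ref{p.stratiMukai2}. First I would establish that $\mathbf{P}(T^*_{G/P})$ is a Mori dream space of Picard number $2$: since $G/P$ is rational homogeneous (hence $T_{G/P}$ globally generated, so nef) and not a projective space, and since the generic finiteness of $\widehat{s}:T^*_{G/P}\to\overline{\sO}$ forces $\Lambda$ to be big, Lemma \ref{Lemma:Big+Nef=MDS} applies. In particular both cones are closed rational polyhedral, with $\Eff=\overline{\Eff}$, $\Mov=\overline{\Mov}$, and each has at most two extremal rays. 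The ray $\R_{\geq 0}[\pi^*H]$ lies on the common boundary of both cones, since $\pi^*H$ is nef but not big. It remains to identify the second extremal ray in each case.

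For case (1), with $\varepsilon$ small, Propositions \ref{p.stratiMukai} and \ref{p.stratiMukai2} supply the stratified Mukai flop $\mu$ and the displayed commutative diagram. The strict transform class $[\mu^*\pi_2^*H]$ is well-defined (since $\mu$ is an isomorphism in codimension one), lies in $\overline{\Mov}$ (the divisor $\pi_2^*H$ is nef and $\mu^{-1}$ has small exceptional locus), and is contracted by the rational map $\pi_2\circ\mu^{-1}$, hence not big. This identifies it as the other extremal ray of $\overline{\Eff}$. Since both extremal rays of $\overline{\Eff}$ are now seen to be movable, we obtain $\overline{\Eff}=\overline{\Mov}=\langle[\mu^*\pi_2^*H],[\pi_1^*H]\rangle$.

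For case (2), with $\varepsilon$ divisorial, observe that $\varepsilon$ coincides, up to the finite morphism $\tau$, with the evaluation contraction of \eqref{Eq:Evaluation-Mor}, since $\mathbf{P}(\widetilde{\sO})=\sY$. Proposition \ref{Prop:Properties-Contraction} then gives that the exceptional locus is an irreducible prime divisor $\Gamma$, and Proposition \ref{Prop:Pseff-Cone-Divisorial-Contraction} yields $\overline{\Eff}=\langle[\Gamma],[\pi^*H]\rangle$. For the movable cone, the tautological class $\Lambda$ is semiample (it is the pullback of an ample class on $\mathbf{P}(\overline{\sO})$ via $s=\tau\circ\varepsilon$), hence lies in $\overline{\Mov}$; and the divisorial nature of $\varepsilon$ forces $\bB_+^1(\Lambda)=\Gamma\neq\emptyset$, so by Lemma \ref{Lemma:Base-Locus-DZD} the class $[\Lambda]$ is on the boundary of $\overline{\Mov}$, giving $\overline{\Mov}=\langle[\Lambda],[\pi^*H]\rangle$.

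The main subtlety I anticipate is verifying in case (1) that $[\mu^*\pi_2^*H]$ is truly extremal in $\overline{\Eff}$ (rather than interior), which reduces to showing that the Mukai flop of Propositions \ref{p.stratiMukai}-\ref{p.stratiMukai2} exhausts all SQMs of $\mathbf{P}(T^*_{G/P})$ in the MDS Mori chamber decomposition; the symmetric subtlety in case (2) is the irreducibility of $\Gamma$ furnished by Proposition \ref{Prop:Properties-Contraction}.
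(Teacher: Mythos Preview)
Your proposal is correct and follows essentially the same approach as the paper's proof: both use the Mori dream space structure from Lemma \ref{Lemma:Big+Nef=MDS}, invoke Propositions \ref{p.stratiMukai}--\ref{p.stratiMukai2} for the flop in case (1), and appeal to Proposition \ref{Prop:Pseff-Cone-Divisorial-Contraction} (together with Lemma \ref{Lemma:Base-Locus-DZD}) in case (2).

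One remark on your anticipated subtlety: you do \emph{not} need to show that the Mukai flop exhausts all SQMs in the Mori chamber decomposition. Your own argument already suffices, and it is precisely what the paper does: since $\pi_2^*H$ is not big and $\mu$ is an isomorphism in codimension one, $\mu^*\pi_2^*H$ is not big either, hence its class cannot lie in the interior of $\overline{\Eff}(\mathbf{P}(T^*_{G/P}))$. Because $\rho=2$ and you have already placed $[\mu^*\pi_2^*H]$ in $\overline{\Mov}\subset\overline{\Eff}$ on the side of $[\Lambda]$ opposite to $[\pi_1^*H]$ (using $\mu^*\Lambda=\Lambda$, which the paper deduces from $-K_{\mathbf{P}(T^*_{G/P})}=n\Lambda$), this forces $[\mu^*\pi_2^*H]$ to be extremal. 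The closedness $\Eff=\overline{\Eff}$ then follows because $\pi_2^*H$ is globally generated, so $\mu^*\pi_2^*H$ has a section with stable base locus of codimension $\geq 2$.
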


%Theorem \ref{Thm:Rational-Homogeneous-Spaces} follows directly from the results proved in Section \ref{Section:Semisimple} and the existence of the so-called stratified Mukai flops.

\begin{proof}
	Since $G/P$ is not isomorphic to projective spaces, the birational contraction $\varepsilon: \mathbf{P} (T^*_{G/P}) \to \mathbf{P}(\widetilde{\sO}) $ is not an isomorphism and $\Nef(\mathbf{P} (T^*_{G/P}) )=\langle [\Lambda],[\pi^*H]\rangle$. Let $\alpha(G/P, H)$ be the pseudoeffective threshold of $G/P$ with respect to $H$, then 
	 $[\Lambda-\alpha(G/P, H)\pi^*H]$ generates an extremal ray of $\overline{\Eff}(\mathbf{P} (T^*_{G/P}) )$. The statement (2) follows directly from Proposition \ref{Prop:Pseff-Cone-Divisorial-Contraction}. Thus it remains to prove the statement (1).
	
	By our assumption, the birational contraction $\varepsilon$ is small.  By Proposition \ref{p.stratiMukai} and  Proposition \ref{p.stratiMukai2}, there exists a flop $\mu:   \mathbf{P} (T^*_{G/P}) \dashrightarrow  \mathbf{P} (T^*_{G/Q})$ , with $G/P \simeq G/Q$ as projective varieties.  It follows that the pull-back $\mu^*\Nef( \mathbf{P} (T^*_{G/Q}))$ is contained in $\overline{\Mov}( \mathbf{P} (T^*_{G/P}))$ (cf. Lemma \ref{Lemma:Big+Nef=MDS}). Moreover, it is clear that we have $\mu^*\Lambda=\Lambda$ since $-K_{\mathbf{P} (T^*_{G/P})}=n\Lambda$ and $\mu$ is a SQM. This implies that the pull-back $\mu^*\Nef(\mathbf{P} (T^*_{G/P}))$ is contained in the cone $\langle [\Lambda-\alpha(G/P, H)\pi_1^*H], [\Lambda]\rangle$. Nevertheless, as $\pi_2^*H$ is not big, it follows that $[\mu^*\pi_2^*H]$ is not contained in the interior of $\overline{\Eff}(\mathbf{P} (T^*_{G/P}))$. So we get
	\[
	\overline{\Eff}(\mathbf{P} (T^*_{G/P}))=\overline{\Mov}(\mathbf{P} (T^*_{G/P}))=\langle [\mu^*\pi_2^*H], [\pi_1^*H]\rangle.
	\]
	On the other hand, as $\pi_2^*H$ is globally generated and $\mu$ is a SQM, the stable base locus $\bB(\mu^*\pi_2 H)$ has codimension at least $2$. Hence, we obtain
	\[
	\overline{\Eff}(\mathbf{P} (T^*_{G/P}))=\Eff(\mathbf{P} (T^*_{G/P}))=\overline{\Mov}(\mathbf{P} (T^*_{G/P}))=\Mov(\mathbf{P} (T^*_{G/P})).
	\]
	This finishes the proof.
\end{proof}

While Theorem \ref{Thm:Rational-Homogeneous-Spaces} already gives a very nice geometric description of the cones of divisors of $\mathbf{P} (T^*_{G/P})$, it is not very easy to apply it to compute explicitly the cones in terms of $\Lambda$ and $\pi^*H$.   We introduce the following notion to divide $G/P$ into several types in order to carry out this computation.

\begin{definition} \label{d.types}
	Let $G/P$ be a rational homogeneous space of Picard number $1$ corresponding to a single marked Dynkin diagram.
	\begin{enumerate}
		\item $G/P$ is said of the first type (I) if $s$ is a birational small morphism (cf. Proposition \ref{p.stratiMukai}).
			
		\item $G/P$ is said of type (II-s) if $s$ is not birational and $\varepsilon$ is small (cf. Proposition \ref{p.stratiMukai2}).
			
		\item $G/P$ is said of type (II-d-d) if  $\varepsilon$ is divisorial and  the VMRT of $G/P$ is dual defective.

		\item $G/P$ is said of type (II-d-A1) (resp. (II-d-A2) )   if  $\varepsilon$ is divisorial but the VMRT of $G/P$ is not dual defective,  and  $\mathbf{P}(\widetilde{\mathcal{O}})$ is of type $A_1$  (resp. $A_2$) (cf. Definition \ref{Def:Singularities-A1-A2}).
					
	\end{enumerate}
\end{definition}

\begin{remark}
Recall there are following isomorphisms between different rational homogeneous spaces:  $C_n/P_1 \simeq A_{2n}/P_1, B_n/P_n \simeq D_{n+1}/P_n$ and $G_2/P_1 \simeq B_3/P_1$.  Their types are the same except  the following cases: $C_n/P_1$ is of type (II-s), while $A_{2n}/P_1$ is of type (I); and for $n$ even,  $B_n/P_n$ is of type (II-s)   while $D_{n+1}/P_n$ is of type (I). In fact, note that that $C_n/P_1 \simeq \mathbb{P}^{2n-1}$.  Let $\mathcal{O}_{min} \subset \mathfrak{sl}_{2n}$ be the minimal nilpotent orbit (corresponding to the partition $[2, 1^{2n-2}]$), then there exists a generically 2-to-1 morphism $\overline{\mathcal{O}}_{min} \to  \mathcal{O}_{\bf d}$.
The flop $C_{n,1}$ is nothing else but the Mukai flop $A_{2n,1}$. Moreover, by \cite[Example 3.3]{Namikawa2008}, $B_{n, n}$-flop is the same as $D_{n+1}$-flop for $n$ even.
\end{remark}

\begin{proposition}
	\label{Prop:Pseff-Small}
	Under the notation and assumption as in Theorem \ref{Thm:Rational-Homogeneous-Spaces}. Assume that $\varepsilon$ is small. Let $\ell_i$ be a general line in a general fibre of $\pi_i$. If $a(H)$ and $b(H)$ are the unique positive integers such that
	\[
	[\mu^*\pi_2^*H] \equiv a(H)[\Lambda] - b(H)[\pi_1^*H],
	\]
	then we have
	\[
	a(H)=\pi_2^*H\cdot \mu_*(\ell_1)\quad {\rm and}\quad a(H)-b(H)\pi_1^*H\cdot \mu^{-1}_*(\ell_2)=0.
	\]
	Moreover, the morphism $\mu(\ell_1)\rightarrow \pi_2(\mu(\ell_1))$ is birational. In particular, we have
	\[
	a(H)=H\cdot \pi_{2*}\mu_*(\ell_1).
	\]
\end{proposition}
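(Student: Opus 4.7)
The plan is to compute $a(H)$ and $b(H)$ by intersecting the expansion $\mu^{*}\pi_{2}^{*}H\equiv a(H)\Lambda_{1}-b(H)\pi_{1}^{*}H$ with two carefully chosen test curves, and then to deduce the last equality from a separate birationality statement. First, since the Picard group of $X_{1}:=\mathbf{P}(T^{*}_{G/P})$ is free of rank two with basis $\Lambda_{1}$ and $\pi_{1}^{*}H$, the class $[\mu^{*}\pi_{2}^{*}H]$ admits a unique integer expansion. By Theorem~\ref{Thm:Rational-Homogeneous-Spaces}(1), $[\mu^{*}\pi_{2}^{*}H]$ and $[\pi_{1}^{*}H]$ span the two extremal rays of $\overline{\Eff}(X_{1})$, and since $[\Lambda_{1}]$ lies in the interior of $\Nef(X_{1})\subset\overline{\Eff}(X_{1})$, a direct coordinate check forces both $a(H)$ and $b(H)$ to be strictly positive. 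For the first equality I would intersect with $\ell_{1}$: since $\varepsilon$ is small, the indeterminacy locus of $\mu$ has codimension at least two, so a general line in a general fibre of $\pi_{1}$ may be assumed disjoint from it and $\mu$ restricts to an isomorphism $\ell_{1}\xrightarrow{\sim}\mu(\ell_{1})$. Using $\Lambda_{1}\cdot\ell_{1}=1$, $\pi_{1}^{*}H\cdot\ell_{1}=0$, and the projection formula then gives $a(H)=\mu^{*}\pi_{2}^{*}H\cdot\ell_{1}=\pi_{2}^{*}H\cdot\mu_{*}(\ell_{1})$.

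The second equality $a(H)=b(H)\,\pi_{1}^{*}H\cdot\mu^{-1}_{*}(\ell_{2})$ should be obtained by intersecting the expansion with the strict transform $\mu^{-1}_{*}(\ell_{2})$ of a general line in a general fibre of $\pi_{2}$. The crucial input is the identity $\mu^{*}\Lambda_{2}\equiv\Lambda_{1}$, which I would derive either from the canonical bundle formula $K_{X_{i}}=-n\Lambda_{i}$ (a consequence of $c_{1}(T_{G/P_{i}})=-K_{G/P_{i}}$) together with the flop relation $\mu^{*}K_{X_{2}}=K_{X_{1}}$, or from the intrinsic identification $\Lambda_{i}=\varepsilon^{*}\tau^{*}\mathcal{O}_{\mathbf{P}(\overline{\mathcal{O}})}(1)$ arising from the $\mathbb{C}^{*}$-equivariance of the projectivized Springer maps. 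With this in hand, the projection formula yields $\Lambda_{1}\cdot\mu^{-1}_{*}(\ell_{2})=\Lambda_{2}\cdot\ell_{2}=1$ and $\mu^{*}\pi_{2}^{*}H\cdot\mu^{-1}_{*}(\ell_{2})=\pi_{2}^{*}H\cdot\ell_{2}=0$ (as $\ell_{2}$ lies in a fibre of $\pi_{2}$), and substituting into the expansion of $\mu^{*}\pi_{2}^{*}H$ gives the asserted relation.

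The main obstacle is verifying birationality of $\pi_{2}|_{\mu(\ell_{1})}\colon\mu(\ell_{1})\to\pi_{2}(\mu(\ell_{1}))$. The easy half is that $\mu(\ell_{1})$ is an irreducible rational curve (as $\mu$ embeds $\ell_{1}$) with $\Lambda_{2}\cdot\mu(\ell_{1})=1$, and $\pi_{2}^{*}H\cdot\mu(\ell_{1})=a(H)>0$ rules out contraction, so $\pi_{2}|_{\mu(\ell_{1})}$ is a finite morphism from $\mathbb{P}^{1}$ onto its image. To upgrade finiteness to degree one I expect to have to appeal to the explicit geometric description of the stratified Mukai flops of Propositions~\ref{p.stratiMukai}--\ref{p.stratiMukai2}: a case-by-case analysis of the correspondence between fibres of $\pi_{1}$ and $\pi_{2}$ induced by $\mu$ should show that a generic line in a fibre of $\pi_{1}$ is sent by $\mu$ to a curve that is a genuine section, rather than a multi-section, over its image in $G/Q$. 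Once birationality is granted, the final equality $a(H)=H\cdot\pi_{2*}\mu_{*}(\ell_{1})$ follows at once from the projection formula applied to $\pi_{2}$, since birationality forces the cycle $\pi_{2*}\mu_{*}(\ell_{1})$ to coincide with the image curve taken with multiplicity one.
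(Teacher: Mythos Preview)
Your treatment of the two intersection computations is essentially identical to the paper's: intersect the relation with $\ell_1$ and with $\mu^{-1}_*(\ell_2)$, use that $\mu$ is an isomorphism near these general curves, and invoke $\mu^*\Lambda_2\equiv\Lambda_1$ (which the paper also records, deriving it from $-K_{\mathbf{P}(T^*_{G/P})}=n\Lambda$).

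Where you diverge is the birationality of $\mu(\ell_1)\to\pi_2(\mu(\ell_1))$. You propose to fall back on the case-by-case geometry of stratified Mukai flops, but the paper gives a short uniform argument that avoids this entirely. Set $C:=\pi_2(\mu(\ell_1))$ with normalization $f:\PP^1\to C$, and write
\[
f^*T_{G/Q}\cong\bigoplus_{i=1}^k\sO_{\PP^1}(a_i)\oplus\sO_{\PP^1}^{\oplus(n-k)},\qquad a_1\ge\cdots\ge a_k>0,
\]
using nefness of $T_{G/Q}$. If the covering degree $d$ of $\mu(\ell_1)\to C$ were at least $2$, then the pullback of $T_{G/Q}$ to $\mu(\ell_1)$ splits with positive summands of degree $da_i\ge 2$; since $\Lambda\cdot\mu(\ell_1)=1$, the section $\mu(\ell_1)\subset\PP(f^*T_{G/Q})$ must then land in the trivial subbundle $\PP(\sO_{\PP^1}^{\oplus(n-k)})$. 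But that subbundle is swept out by curves of $\Lambda$-degree $0$, hence lies in the exceptional locus of $\varepsilon$, contradicting the generic choice of $\ell_1$. This forces $d=1$.

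So your plan would work in principle, but the paper's argument is both shorter and conceptually cleaner: it exploits exactly the numerical fact $\Lambda_2\cdot\mu(\ell_1)=1$ that you already noted, together with nefness of the tangent bundle, rather than any explicit description of the flops.
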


\begin{proof}
	Since $\mu$ is a SQM and $\ell_i$'s are general, we may assume that both $\mu$ and $\mu^{-1}$ are isomorphisms in a neighbourhood of $\ell_i$. In particular, we have
	\[
	a(H) = (a(H)\Lambda - b(H) \pi^*_1 H)\cdot \ell_1 = \mu^*\pi_2^*H\cdot \ell_1 = \pi_2^*H\cdot \mu_*(\ell_1)
	\]
	and
	\[
	0=\pi_2^*H\cdot \ell_2 = \mu^*\pi_2^*H \cdot \mu^{-1}_*(\ell_2) = a(H) - b(H)\pi_1^*H\cdot \mu^{-1}_*(\ell_2).
	\]
	Here we note that $\mu^*\Lambda=\Lambda$ and $\Lambda \cdot \ell_i=1$. Now it remains to show that the morphism $\mu(\ell_1)\rightarrow C:=\pi_2(\mu(\ell_1))$ is birational. Let $f:\PP^1\rightarrow C$ be the normalization. As $T_{G/Q}$ is nef, there exist integers $a_1\geq\dots\geq a_k>a_{k+1}=\dots=a_n=0$ (with $k\leq n$) such that
	\[
	f^*T_{G/Q}\cong \bigoplus_{i=1}^k\sO_{\PP^1}(a_i)\oplus \sO_{\PP^1}^{\oplus (n-k)} 
	\]
	Denote by $d$ the degree of $\mu(\ell_1)\rightarrow C$. Then we have
	\[
	\pi_2^*T_{G/Q}|_{\mu(\ell_1)} \cong \bigoplus_{i=1}^k\sO_{\PP^1}(d a_i) \oplus \sO_{\PP^1}^{\oplus (n-k)}.
	\]
	As $\Lambda \cdot \mu(\ell_1)=1$, if $d\geq 2$, then $\mu(\ell_1)$ is contained in
	\[
	\PP(\sO_{\PP^1}^{\oplus (n-k)}) \subset \PP(f^*T_{G/P}).
	\]
	On the other hand, as $\PP(\sO_{\PP^1}^{\oplus (n-k)})$ is dominated by curves with $\Lambda$-degree $0$, thus $\PP(\sO_{\PP^1}^{\oplus (n-k)})$ is contained in the exceptional locus of $\varepsilon$ and so is $\mu(\ell_1)$, which is absurd.
\end{proof}

\begin{proposition}
	\label{Prop:Pseff-Divisor}
	Under the notation and assumption as in Theorem \ref{Thm:Rational-Homogeneous-Spaces}. Assume that $\varepsilon: \mathbf{P} (T^*_{G/P}) \to \mathbf{P}(\widetilde{\sO})$ is divisorial. Let $a(\Gamma)$ and $b(\Gamma)$ be the unique positive integers such that 
	\[
	[\Gamma]\equiv a(\Gamma)[\Lambda] - b(\Gamma)\pi^*H.
	\]
	Then the following statements hold.
	\begin{enumerate}
		\item The projective variety $\mathbf{P}(\widetilde{\mathcal{O}})$ is a $\Q$-factorial variety of Picard number $1$. Moreover, let $\Lambda'$ and $H'$ be the push-forward of $\Lambda$ and $\pi^*H$ by $\varepsilon$. Then we have
		\[
		\frac{b(\Gamma)}{a(\Gamma)}=\frac{\Lambda^{2n-1}}{\Lambda^{2n-2}\cdot \pi^*H}\quad {\rm and}\quad H'\equiv \frac{a(\Gamma)}{b(\Gamma)}\Lambda'.
		\]
		
		\item If $G/P$ is of type (II-d-A1) or (II-d-A2), then $a(\Gamma)=\codeg(\sC_o)$, where $\sC_o$ is the VMRT of $G/P$ at a referenced point $o\in G/P$.
		
		\item $b(\Gamma)\leq 2$ with equality if and only if $G/P$ is of type (II-d-A1).

		\item  $G/P$ is of type (II-d-A2) if and only if  $\mathbf{P}(\widetilde{\mathcal{O}})$ has $cA_2$-singularities in codimension $2$.
	\end{enumerate}
\end{proposition}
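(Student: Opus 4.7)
The plan is to treat the four claims in order, leveraging Proposition~\ref{Prop:Pseff-Cone-Divisorial-Contraction} and Theorem~\ref{Thm:Class-dual-VMRT} for the first three parts and invoking results on symplectic resolutions for the fourth.

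For (1), note that $\mathbf{P}(T^*_{G/P})$ is a Mori dream space of Picard number $2$ by Lemma~\ref{Lemma:Big+Nef=MDS}, and $\varepsilon$ contracts the single extremal ray $\R_{\geq 0}[\Gamma]$, so $\mathbf{P}(\widetilde{\sO})$ is $\Q$-factorial of Picard number $1$. The identity $\Gamma\cdot \Lambda^{2n-2}=0$ from Proposition~\ref{Prop:Pseff-Cone-Divisorial-Contraction}(1) gives the ratio $b(\Gamma)/a(\Gamma)$ after expanding $[\Gamma]\equiv a(\Gamma)\Lambda - b(\Gamma)\pi^*H$; and $\varepsilon_*[\Gamma]=0$ combined with $\dim_\R N^1(\mathbf{P}(\widetilde{\sO}))_\R = 1$ then yields $H'\equiv (a(\Gamma)/b(\Gamma))\Lambda'$.

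For (2) and (3), the strategy is to identify $\Gamma$ with the total dual VMRT $\check{\sC}$ of the family of lines on $G/P$. In types (II-d-A1) and (II-d-A2) the VMRT $\sC_o$ is by definition not dual defective, so Theorem~\ref{Thm:Class-dual-VMRT} shows that $\check{\sC}$ is a prime divisor of class $a\Lambda - b\pi^*H$ with $a=\codeg(\sC_o)$ generating the extremal ray $\R_{>0}[\Lambda - \alpha_X \pi^*H]$. Since Proposition~\ref{Prop:Pseff-Cone-Divisorial-Contraction}(1) puts $[\Gamma]$ on the same ray, the uniqueness part of Lemma~\ref{Lemma:Restricted-Base-Locus-DZD}(2) forces $\Gamma=\check{\sC}$, which yields~(2). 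For~(3), Proposition~\ref{Prop:Pseff-Cone-Divisorial-Contraction}(2) gives $b(\Gamma)\leq 2$, with equality if and only if $\mathbf{P}(\widetilde{\sO})$ is of type $A_1$ and there exists a dominating minimal family $\sK$ with $H\cdot \sK=1$ and $\check{\sC}=\Gamma$. The family of lines on $G/P$ (which is dominating and minimal since $G/P$ is not a projective space) supplies $H\cdot \sK=1$ automatically, while $\check{\sC}=\Gamma$ is, by the above, equivalent to non-defectivity of $\sC_o$. Hence $b(\Gamma)=2$ holds if and only if $G/P$ is of type (II-d-A1).

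For (4), I would descend from the affine Springer resolution $\widehat{\varepsilon}: T^*_{G/P}\to \widetilde{\sO}$, which is a projective symplectic resolution (as $\widetilde{\sO}$ is a symplectic variety, being the normalization of a nilpotent orbit closure) with exceptional divisor the affine cone $\widehat{\Gamma}$ over $\Gamma$. By Wierzba's Brieskorn-type classification of codimension-two transverse slices of symplectic resolutions \cite{Wierzba2003}, the slice to the stratum $\widehat{\varepsilon}(\widehat{\Gamma})$ at a general point is a Du~Val surface singularity whose Dynkin type matches the combinatorics of the general fiber of $\widehat{\Gamma}\to \widehat{\varepsilon}(\widehat{\Gamma})$: type $A_1$ for a single $\PP^1$, type $A_2$ for two $\PP^1$'s meeting at a point, as recorded in Proposition~\ref{Prop:Properties-Contraction}(2). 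Consequently $\widetilde{\sO}$ has $cA_1$ or $cA_2$ singularities in codimension $2$ according to whether $G/P$ is of type (II-d-A1) or (II-d-A2); since the scaling $\mathbb{C}^*$-action preserves this singular stratum and acts trivially on any transverse slice, the same dichotomy descends to $\mathbf{P}(\widetilde{\sO})$. The main obstacle lies in this last step, namely importing the slice theorem and verifying its compatibility with the $\mathbb{C}^*$-quotient, whereas parts~(1)--(3) are essentially formal consequences of the general framework developed in Section~\ref{Section:Semisimple}.
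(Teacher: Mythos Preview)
Your treatment of (1)--(3) is essentially the paper's own argument, with only cosmetic differences (you push forward $[\Gamma]$ directly rather than invoking the negativity lemma to get $H'\equiv (a(\Gamma)/b(\Gamma))\Lambda'$, and you lean on the Mori dream space structure rather than on the klt extremal contraction to see $\Q$-factoriality of $\mathbf{P}(\widetilde{\sO})$). These are fine.

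For (4), however, there is a genuine gap. Your Wierzba-type slice argument establishes the bijection between the transverse singularity type ($cA_1$ versus $cA_2$) and the shape of the general fibre of $\Gamma\to\varepsilon(\Gamma)$ (one $\PP^1$ versus two $\PP^1$'s), i.e.\ it shows that $\mathbf{P}(\widetilde{\sO})$ has $cA_2$-singularities in codimension~$2$ if and only if $\mathbf{P}(\widetilde{\sO})$ is of type $A_2$ in the sense of Definition~\ref{Def:Singularities-A1-A2}. But ``$G/P$ is of type (II-d-A2)'' in Definition~\ref{d.types} requires \emph{two} conditions: that $\mathbf{P}(\widetilde{\sO})$ be of type $A_2$, \emph{and} that the VMRT $\sC_o$ be not dual defective. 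Your argument does not exclude the possibility that $G/P$ is of type (II-d-d) with $\mathbf{P}(\widetilde{\sO})$ of type $A_2$; you implicitly treat (II-d-A1) and (II-d-A2) as the only divisorial types.

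The paper closes this gap by a short numerical argument drawn from the proof of Proposition~\ref{Prop:Pseff-Cone-Divisorial-Contraction}: if the general fibre consists of two $\PP^1$'s then $\Gamma$ is singular along each component $F$, so the inequality $b(\Gamma)\cdot \pi^*H\cdot F\leq 2$ is strict, forcing $b(\Gamma)=\pi^*H\cdot F=1$; Claim~2 in that proof then gives $\check{\sC}=\Gamma$, hence $\sC_o$ is not dual defective and $G/P$ is indeed of type (II-d-A2). You need to insert this step (or an equivalent one) after your slice analysis; the Wierzba machinery alone does not see the VMRT.
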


\begin{proof}
	Firstly note that the morphism $\varepsilon$ is a Mori extremal contraction with respect to a klt pair $(\mathbf{P}(T^*_{G/P}),\Delta)$ (see \cite[Proposition 5.5]{MunozOcchettaSolaCondeWatanabeEtAl2015}). Thus, as $\rho(\mathbf{P}(T^*_{G/P}))=2$, $\varepsilon$ is divisorial and $\mathbf{P}(T^*_{G/P})$ is rationally connected, it follows that $\mathbf{P}(\widetilde{\mathcal{O}})$ is a $\Q$-factorial Fano variety of Picard number $1$. Moreover, note that we have $\Gamma\cdot \Lambda^{2n-2}=0$. This implies immediately
	\[
	\frac{b(\Gamma)}{a(\Gamma)}=\frac{\Lambda^{2n-1}}{\Lambda^{2n-2}\cdot \pi^*H}.
	\]
	Let $\widetilde{H}$ be a general member in $|\pi^*H|$ and set $H'=\varepsilon_*\widetilde{H}$. As $\mathbf{P}(\widetilde{\mathcal{O}})$ is $\Q$-factorial, there exists a rational number $r$ such that $H'\equiv r \Lambda'$. Moreover, by the negativity lemma, there exists a non-negative rational number $\alpha$ such that
	\[
	\varepsilon^*H' \equiv_{\Q} \widetilde{H} + \alpha \Gamma.
	\]
	As $\varepsilon^*\Lambda'=\Lambda$, we obtain
	\[
	r\Lambda \equiv \varepsilon^*H' \equiv \pi^*H + \alpha(a(\Gamma)\Lambda-b(\Gamma)\pi^*H).
	\]
	Since $\Lambda$ and $\pi^*H$ are linearly independent, comparing the coefficients shows that we have
	\[
		\alpha b(\Gamma)=1 \quad {\rm and}\quad  \alpha a(\Gamma)=r.
	\]
	This implies $r=a(\Gamma)/b(\Gamma)$ and the statement (1) is proved.
	
	If $G/P$ is of types (II-d-A1) or (II-d-A2), the the total dual VMRT is a divisor. It follows from Corollary \ref{Cor:Equality-ABL-RBL}, Theorem \ref{Thm:Class-dual-VMRT} and Proposition \ref{Prop:Pseff-Cone-Divisorial-Contraction} that we have $\check{\sC}=\Gamma$ and hence $a(\Gamma)=\codeg(\sC_o)$.
	
	For the statement (3), by Proposition \ref{Prop:Pseff-Cone-Divisorial-Contraction}, we have $b(\Gamma)\leq 2$ with equality if and only if $\mathbf{P}(\widetilde{\mathcal{O}})$ is of type $A_1$ and there exists a dominating family $\sK$ of minimal rational curves on $G/P$ such that $\check{\sC}=\Gamma$ and $H\cdot \sK=1$. Note that in our situation, there exists only one dominating family $\sK$ of minimal rational curves on $G/P$ and $H\cdot \sK=1$. Thus $b(\Gamma)=2$ if and only $\check{\sC}=\Gamma$ and $\mathbf{P}(\widetilde{\mathcal{O}})$ is of type $A_1$. The latter conditions are equivalent to say that $G/P$ is of type (II-d-A1) by definition.
	
	For the statement (4), if $G/P$ is of type (II-d-A2), it follows from definition that $\mathbf{P}(\widetilde{\mathcal{O}})$ has $cA_2$ singularities in codimension $2$. Conversely, from the proof of Proposition \ref{Prop:Pseff-Cone-Divisorial-Contraction}, it is known that $b(\Gamma)\pi^*H\cdot F\leq 2$ with equality if and only if $\Gamma$ is smooth along $F$, where $F$ is an irreducible component of a general fibre of $\Gamma\rightarrow \varepsilon(\Gamma)$. In particular, if $\mathbf{P}(\widetilde{\mathcal{O}})$ has $cA_2$-singularities in codimension $2$, then we must have $b(\Gamma)=\pi^*H\cdot F=1$. Then Claim 2 in the proof of Proposition \ref{Prop:Pseff-Cone-Divisorial-Contraction} implies that $\check{\sC}=\Gamma$ and consequently $G/P$ is of type (II-d-A2).
\end{proof}

As an immediate application of Proposition \ref{Prop:Pseff-Divisor}, one can easily derive the following result.  

\begin{corollary} \label{c.G/Ptypes}
		Under the notation and assumption as in Theorem \ref{Thm:Rational-Homogeneous-Spaces}. Assume that $\varepsilon$ is divisorial. Then the following statements hold.
	\begin{enumerate}		
		\item $G/P$ is of type (II-d-d) if and only if $\check{\sC}\not=\Gamma$, and if and only if $b(\Gamma)=1$ and $\mathbf{P}(\widetilde{\mathcal{O}})$ has only $cA_1$ singularities in codimension $2$.
		
	    \item $G/P$ is of type (II-d-A1) if and only if $b(\Gamma)=2$ and $\check{\sC}=\Gamma$, and if and only if $\mathbf{P}(\widetilde{\mathcal{O}})$ has  $cA_1$-singularities in codimension $2$ and $\check{\sC}=\Gamma$.
	    
	    \item $G/P$ is of type (II-d-A2) if and only if $b(\Gamma)=1$ and $\check{\sC}=\Gamma$, and if and only if $\mathbf{P}(\widetilde{\mathcal{O}})$ has $cA_2$-singularities in codimension $2$.
	\end{enumerate}
\end{corollary}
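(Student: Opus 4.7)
The plan is to view Corollary \ref{c.G/Ptypes} as a bookkeeping exercise on top of Proposition \ref{Prop:Pseff-Divisor} and Definition \ref{Def:Singularities-A1-A2}. The central observation, which I would isolate first, is the equivalence
\begin{equation*}
\check{\sC} = \Gamma \iff \sC_o \text{ is not dual defective.}
\end{equation*}
The forward direction follows from Proposition \ref{Prop:Dual-Defect-VMRT}: if $\sC_o$ is dual defective then $\check{\sC}$ has codimension at least $2$ in $\mathbf{P}(T^*_{G/P})$ and cannot coincide with the prime divisor $\Gamma$. The converse is exactly the content of Proposition \ref{Prop:Pseff-Divisor}(2), which already identifies $\check{\sC}$ with $\Gamma$ whenever $G/P$ is of type (II-d-A1) or (II-d-A2). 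Consequently type (II-d-d) is synonymous with $\check{\sC} \neq \Gamma$, while both (II-d-A1) and (II-d-A2) force $\check{\sC} = \Gamma$.

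Next I would record two further inputs from Proposition \ref{Prop:Pseff-Divisor}: part~(3), which says $b(\Gamma) \in \{1,2\}$ with $b(\Gamma) = 2$ if and only if $G/P$ is of type (II-d-A1); and part~(4), which says $G/P$ is of type (II-d-A2) if and only if $\mathbf{P}(\widetilde{\sO})$ has $cA_2$-singularities in codimension $2$. These two parts are complemented by Proposition \ref{Prop:Properties-Contraction}(2), which restricts the general fibre of $\Gamma \to \varepsilon(\Gamma)$ to be either a smooth $\PP^1$ or a pair of $\PP^1$'s meeting at a point, so the singularities of $\mathbf{P}(\widetilde{\sO})$ in codimension $2$ are either $cA_1$ or $cA_2$ and the two possibilities are mutually exclusive.

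With these ingredients in hand, the three equivalences are assembled as follows. For (2), the chain (II-d-A1) $\Leftrightarrow$ $b(\Gamma) = 2$ $\Leftrightarrow$ ($\mathbf{P}(\widetilde{\sO})$ has $cA_1$-singularities in codimension $2$ and $\check{\sC} = \Gamma$) is read off directly from Proposition \ref{Prop:Pseff-Divisor}(3) together with the key equivalence. For (3), (II-d-A2) $\Leftrightarrow$ $cA_2$-singularities in codimension $2$ by Proposition \ref{Prop:Pseff-Divisor}(4); and since (II-d-A2) forces $\check{\sC} = \Gamma$ by the key equivalence and $b(\Gamma) = 1$ by ruling out the $b(\Gamma) = 2$ case of (II-d-A1), the other equivalence $b(\Gamma) = 1$ and $\check{\sC} = \Gamma$ $\Leftrightarrow$ (II-d-A2) drops out by elimination. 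Finally (1) is the complement: (II-d-d) $\Leftrightarrow$ $\check{\sC} \neq \Gamma$ by the key equivalence, and the conjunction $b(\Gamma) = 1$ together with $cA_1$-singularities in codimension $2$ then follows by excluding the defining conditions of (II-d-A1) and (II-d-A2).

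No step is technically involved; the argument is purely combinatorial once Proposition \ref{Prop:Pseff-Divisor} is in place. The only point requiring some care is making each implication genuinely bi-directional, which forces me to use the exhaustive dichotomy $cA_1$ vs.\ $cA_2$ for the codimension-$2$ singularities coming from Proposition \ref{Prop:Properties-Contraction}(2). I expect this to be the only potential obstacle, and it is already handled by that proposition.
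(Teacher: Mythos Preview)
Your proposal is correct and takes essentially the same approach as the paper, which simply records the result as ``an immediate application of Proposition~\ref{Prop:Pseff-Divisor}'' without spelling out the details. One minor imprecision: the equality $\check{\sC}=\Gamma$ in the non-defective case is established in the \emph{proof} of Proposition~\ref{Prop:Pseff-Divisor}(2) (via Theorem~\ref{Thm:Class-dual-VMRT}(3) and Proposition~\ref{Prop:Pseff-Cone-Divisorial-Contraction}(1)) rather than in its statement, but your citation is otherwise on target.
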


\subsection{Types of rational homogeneous spaces}

By Proposition \ref{Prop:Pseff-Small} and Proposition \ref{Prop:Pseff-Divisor} in the previous subsection,  to compute $a(E)$, $b(E)$, $a(H)$ and $b(H)$, we need to determine the types of $G/P$.  Proposition \ref{p.stratiMukai} and Proposition \ref{p.stratiMukai2} give respectively the classification of $G/P$ of  type (I)  and  type (II-s). In this subsection, we will determine the types of all other $G/P_k$,  where $P_k$ is the maximal parabolic subgroup associated to the $k$-th simple root of $G$.

The VMRT $\mathcal{C}_o$  of $G/P_k$ is determined in \cite[Theorem 4.8]{LandsbergManivel2003}, which is again a rational  homogeneous space if $P_k$ corresponds to a long root.  When $P_k$ corresponds to a short root, $\mathcal{C}_o$ is a two-orbit variety. The embedding $\mathcal{C}_o \subset \mathbf{P}(T_{G/P,o})$ is in general degenerated and the dual defect of $\mathcal{C}_o$ can be computed from the following  when it is homogeneous (cf.  \cite{Snow1993}, \cite[Theorem 7.54 and Theorem 7.56]{Tevelev2005}).
\begin{proposition}\label{p.dualdectiveG/P}
Let $G/P \subset \mathbb{P}^N$ be the minimal $G$-equivariant embedding. Then it is dual defective if and only if $G/P$ is one of the following:
\begin{itemize}
\item[(a)]  $ \mathbb{P}^n$ with ${\rm def} = n$.
\item[(b)] ${\rm Gr}(2, 2m+1) $ with ${\rm def} = 2$.
\item[(c)] the 10-dimensional spinor vareity $\mathbb{S}_5$ with ${\rm def} = 4$.
\item[(d)] a product  $G_1/P_1 \times G_2/P_2$ with $G_1/P_1$ as above such that ${\rm def}(G_1/P_1) > \dim G_2/P_2$. In this case, the dual defect is   ${\rm def} = {\rm def}(G_1/P_1) - \dim G_2/P_2 $.
\end{itemize}
\end{proposition}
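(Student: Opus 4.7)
The plan is to split the argument according to whether $G$ is simple or not.

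When $G$ is simple, the minimal $G$-equivariant embedding of $G/P_k$ realizes it inside $\PP(V(\omega_k))$, where $\omega_k$ is the fundamental weight dual to the marked simple root. For each of the three exceptions I would verify defectiveness directly. The dual variety of $\PP^n$ is empty, giving $\defect=n$. For $\Gr(2,2m+1)\subset \PP^{m(2m+1)-1}$ a hyperplane corresponds to a skew form on $\bC^{2m+1}$, and its contact locus is the family of isotropic $2$-planes; since every skew form on an odd-dimensional space has nontrivial radical, the dual variety is the Pfaffian locus of forms of rank at most $2m-2$, a subvariety of codimension $3$, so $\defect=2$. For $\mathbb{S}_5\subset \PP^{15}$, an explicit calculation on the half-spin representation of $\mathrm{Spin}(10)$, together with the description of the singular locus of a generic tangent hyperplane section as an isotropic Grassmannian of dimension $4$, yields $\defect=4$. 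For all other simple $G$ and all maximal parabolics $P_k$ I would invoke Snow's classification \cite{Snow1993} (restated in \cite[Theorem 7.54 and Theorem 7.56]{Tevelev2005}), which goes through the Bourbaki tables marking by marking and checks via the $P$-module structure of $T_{G/P,o}$ that the dual variety is always a hypersurface.

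When $G$ is semisimple but not simple, write $G/P = G_1/P_1 \times G_2/P_2$. The minimal equivariant embedding is then the Segre product of the minimal equivariant embeddings of the factors, that is the image of $\PP^{N_1}\times \PP^{N_2}\hookrightarrow \PP^{(N_1+1)(N_2+1)-1}$. The standard formula for the defect of a Segre product of smooth projective varieties,
\[
\defect(X\times Y)=\max\{0,\ \defect(X)-\dim Y,\ \defect(Y)-\dim X\},
\]
(see for instance \cite[Section 7]{Tevelev2005}), combined with the simple case, produces exactly case (d): the Segre product is defective if and only if one factor is among $\PP^n$, $\Gr(2,2m+1)$, $\mathbb{S}_5$ and its defect strictly exceeds the dimension of the other factor; the value of the defect is then $\defect(G_1/P_1)-\dim(G_2/P_2)$. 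If $G$ has more than two simple factors, iterating the formula gives the same result after grouping.

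The main obstacle is the simple case: ruling out defectiveness for the long list of pairs $(G,P_k)$ with $G$ of exceptional type ($E_6,E_7,E_8,F_4,G_2$) requires a tedious combinatorial analysis on the root system, identifying which marked nodes produce a hypersurface as dual variety. Since this verification is already completed in Snow's paper, our proof reduces to a citation combined with the Segre product formula for case (d).
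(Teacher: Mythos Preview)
The paper does not give a proof of this proposition; it is stated with the parenthetical reference ``(cf.\ \cite{Snow1993}, \cite[Theorem 7.54 and Theorem 7.56]{Tevelev2005})'' and used as a known classification. Your proposal is therefore not so much a different proof as an expansion of that citation: you sketch why the three simple exceptions are defective, invoke Snow's case-by-case verification for the remaining simple pairs $(G,P_k)$, and then handle products via the standard defect formula for Segre embeddings. This is correct and is exactly the content behind the references the paper cites; the only minor imprecision is the phrase ``the dual variety of $\PP^n$ is empty,'' which depends on convention (one usually still assigns $\defect(\PP^n)=n$), but the conclusion is right.
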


\begin{proposition}
	\label{Prop:Defect-short-root}
	Let $X=G/P_k$ be a rational homogeneous space such that $P_k$ corresponds to a short root. Then the VMRT $\sC_o$ of $X$ is dual defective if and only if $X$ is one of the following:
	\[
	B_n/P_n\ (n\geq 3\ {\rm odd}),\quad C_n/P_k\ (2n\geq 3k)\quad \text{and}\quad F_4/P_4.
	\]
\end{proposition}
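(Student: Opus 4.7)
The plan is a case-by-case analysis. First, by inspection of the Dynkin diagrams, the rational homogeneous spaces $G/P_k$ of Picard number $1$ with $P_k$ associated to a short simple root are exactly
\[
G_2/P_1, \quad B_n/P_n\ (n\geq 2), \quad C_n/P_k\ (1\leq k\leq n-1), \quad F_4/P_3, \quad F_4/P_4.
\]
For each such $G/P_k$, the task is to identify the VMRT $\sC_o\subset \PP(\Omega_{G/P_k,o})$ and to decide whether its dual variety in $\PP(T_{G/P_k,o})$ is a hypersurface. Unlike the long-root setting, $\sC_o$ is in general only a two-orbit variety for the action of the Levi factor of $P_k$, rather than a rational homogeneous space, so Proposition \ref{p.dualdectiveG/P} cannot be invoked directly.

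My plan is to combine the explicit descriptions of these VMRTs (available from \cite{LandsbergManivel2003} and from the Hwang--Mok theory) with a case-by-case computation of the dual defect. The case $G_2/P_1\cong \bQ^5$ is immediate, since its VMRT is the smooth quadric $\bQ^3$, which is not dual defective. For $B_n/P_n$, I would use the classical identification with a component of the maximal isotropic Grassmannian in $\mathbb{C}^{2n+2}$ to describe $\sC_o$ as a Pl\"ucker-embedded Grassmannian of $2$-planes and then invoke Proposition \ref{p.dualdectiveG/P}(b) to recover the stated parity condition on $n$. For $F_4/P_4$, an analogous description identifies the VMRT with a symplectic Grassmannian, which yields dual defectiveness; for $F_4/P_3$, whose VMRT is an intricate two-orbit variety, non-defectiveness has to be checked by a direct computation.

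The largest family is the symplectic Grassmannian case $C_n/P_k=\mathrm{SG}(k,2n)$ with $1\leq k\leq n-1$. A line in $\mathrm{SG}(k,2n)$ through a general isotropic $k$-plane $V$ is determined by a pair $(V_\cap,[v])$, where $V_\cap\subset V$ has codimension $1$ and $[v]\in \PP(V_\cap^\perp/V_\cap)$ gives the tangent direction of the extension. This identifies $\sC_o$ with a $\PP^{2n-2k}$-bundle over $\PP^{k-1}$, embedded in $\PP(\Omega_{G/P_k,o})$ via the tautological line bundle. The dual defect of such a scroll can be determined either from the rank of its Gauss map or from the second fundamental form on the two Levi orbits of $\sC_o$, and this analysis yields the dichotomy that the dual variety is a hypersurface precisely when $2n<3k$, so that dual defectiveness is equivalent to $2n\geq 3k$.

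The main obstacle lies in the case $F_4/P_3$: its VMRT is a two-orbit variety for which no general defect criterion is available, and an explicit computation (of the second fundamental form on the open orbit) is required. This is precisely the case where the acknowledgments thank Q.\,Li. A secondary difficulty is establishing the sharp threshold $2n\geq 3k$ for the symplectic Grassmannian family, since the scroll $\sC_o$ is not a Segre product and its dual defect must be obtained from the intrinsic bundle geometry rather than by reduction to Proposition \ref{p.dualdectiveG/P}(d).
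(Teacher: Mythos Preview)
Your case list and the reductions for $G_2/P_1$ and $B_n/P_n$ agree with the paper. There is, however, a concrete error in the $F_4/P_4$ step: the VMRT of $F_4/P_4$ is \emph{not} a symplectic Grassmannian. It is a smooth hyperplane section of the spinor variety $\mathbb{S}_5\subset\PP^{15}$, a $9$-dimensional two-orbit $\mathrm{Spin}_7$-variety sitting in $\PP^{14}=\PP(\Omega_{X,o})$; no symplectic Grassmannian matches these invariants. The paper handles this case in one line: since $\defect(\mathbb{S}_5)=4$ by Proposition~\ref{p.dualdectiveG/P}(c), the hyperplane section has defect $4-1=3$ by \cite[Theorem~5.3]{Tevelev2005}. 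You would need to replace your misidentification by this argument.

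For $C_n/P_k$ ($k\ge 2$) and $F_4/P_3$, your second-fundamental-form approach differs from the paper's, which uses the nef-value criterion of Beltrametti--Fania--Sommese \cite{BeltramettiFaniaSommese1992} (see \cite[Theorems~7.48--7.49]{Tevelev2005}): compute $K_{\sC_o}$, identify the nef-value morphism $\Phi$, and test whether its general fiber $F$ satisfies $\defect(F)>\dim\Phi(\sC_o)$. For $C_n/P_k$ the VMRT is $\PP(\sO_{\PP^{k-1}}(2)\oplus\sO_{\PP^{k-1}}(1)^{\oplus(2n-2k)})$, and the threshold $2n\ge 3k$ drops out of this criterion combined with Tevelev's scroll-defect formula (Lemma~\ref{Lemma:C_n-VMRT}). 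For $F_4/P_3$ the VMRT is a relative $\bQ^4$ over $\PP^1$ whose fiber has defect $0$, so non-defectiveness follows without any orbit-by-orbit analysis of the second fundamental form (Lemma~\ref{Lemma:Codegree-VMRT-F4/P3}). Your route is plausible, but the nef-value argument is shorter and uniform across both cases.
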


\begin{proof}
	If $X=G/P_k$ is one of the following: $B_n/P_n$, $C_n/P_1$ and $G_2/P_1$, then it  is isomorphic respectively to $D_{n+1}/P_{n+1}$, $A_{2n-1}/P_1$ and $B_3/P_1$. In particular, the VMRT of $X$ is still a rational homogeneous space in these cases and we can apply Proposition \ref{p.dualdectiveG/P}. If $X=G/P_k$ is the rational homogeneous space of type $C_n/P_k$ with $k\geq 2$, it is shown in Lemma \ref{Lemma:C_n-VMRT} that the VMRT $\sC_o\subset \PP(\Omega_{X,o})$ is dual defective if and only if $2n\geq 3k$. If $X=G/P_k$ is the variety $F_4/P_3$, then it is shown in Lemma \ref{Lemma:Codegree-VMRT-F4/P3} that the VMRT $\sC_o\subset \PP(\Omega_{X,o})$ is not dual defective with codegree $8$. If $X=G/P_k$ is the variety $F_4/P_4$, then the VMRT $\sC_o\subset \PP(\Omega_{X,o})$ is a hyperplane section of $\mathbb{S}_5\subset \PP^{15}$. Recall that the dual defect of $\mathbb{S}_5\subset \PP^{15}$ is equal to $4$, thus the dual defect of the VMRT $\sC_o\subset \PP(\Omega_{X,o})$ is $3$ by \cite[Theorem 5.3]{Tevelev2005}.
\end{proof}

Now we determine the singularity type of $\mathbf{P}(\widetilde{\mathcal{O}})$. 
\begin{proposition}
	\label{Prop:Singularity-type-Ai}
Assume that $\varepsilon: \mathbf{P}(T^*_{G/P}) \to \mathbf{P}(\widetilde{\mathcal{O}})$ is divisorial and the VMRT of $G/P$ is not dual defective.  Then 
$\mathbf{P}(\widetilde{\mathcal{O}})$ is of type $A_1$ except for $G/P = E_7/P_4$, which is of type $A_2$.
\end{proposition}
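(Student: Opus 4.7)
The plan is to proceed by case-by-case analysis over the rational homogeneous spaces $G/P$ of Picard number one satisfying the hypotheses, using the dichotomy provided by Proposition~\ref{Prop:Properties-Contraction}(2): the general fiber of $\Gamma \to \varepsilon(\Gamma)$ is either a smooth $\PP^1$ (type $A_1$) or two $\PP^1$'s meeting at a single point (type $A_2$). First I would enumerate the relevant cases. Combining Propositions \ref{p.stratiMukai}, \ref{p.stratiMukai2}, \ref{p.birSpringer}, \ref{p.dualdectiveG/P} and \ref{Prop:Defect-short-root}, the $G/P_k$ of Picard number one (other than projective spaces) for which $\varepsilon$ is divisorial and the VMRT is not dual defective form a short and explicit list across the classical and exceptional types; this is the enumeration on which the remainder of the argument rests.

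For each case in this list, by Corollary \ref{c.G/Ptypes}, determining the singularity type of $\mathbf{P}(\widetilde{\mathcal{O}})$ in codimension two amounts to deciding whether $b(\Gamma) = 2$ (type $A_1$) or $b(\Gamma) = 1$ (type $A_2$). Since the VMRT is not dual defective, Corollary \ref{c.G/Ptypes} further identifies $\Gamma$ with the total dual VMRT $\check{\mathcal{C}}$. Consequently, an irreducible component $F$ of a general fiber of $\Gamma \to \varepsilon(\Gamma)$ is a minimal section over a standard line in the minimal $G$-equivariant embedding, satisfies $\pi^*H \cdot F = 1$, and the value of $b(\Gamma)$ is controlled by the smoothness of $\Gamma$ along $F$: if $\Gamma$ is smooth along $F$ then $F$ is the whole fiber (type $A_1$), otherwise the fiber contains a second component meeting $F$ at a node (type $A_2$). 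I would verify this smoothness via the transverse-slice analysis of the Springer map $\widehat{s}: T^*_{G/P} \to \overline{\mathcal{O}}$ along the generic point of the codimension-two stratum of $\widetilde{\mathcal{O}}$ swept out by $\varepsilon(\Gamma)$. Since $\widehat{\varepsilon}$ is a birational symplectic contraction, the transverse singularity is a two-dimensional Kleinian singularity of type $A_k$, and Proposition \ref{Prop:Properties-Contraction}(2) forces $k \in \{1,2\}$.

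The known classification of transverse singularities of Richardson orbit closures (Kraft--Procesi in classical type, Fu and others in exceptional type) can then be invoked case by case. All cases in the enumeration yield type $A_1$ except for $E_7/P_4$, whose Richardson orbit has a codimension-two stratum along which the transverse slice is well-known to be $A_2$. As an alternative, self-contained route for the exceptional case, one may compute directly using Proposition \ref{Prop:Pseff-Divisor}(1), namely
\[
\frac{b(\Gamma)}{a(\Gamma)} \;=\; \frac{\Lambda^{2n-1}}{\Lambda^{2n-2}\cdot \pi^*H},
\]
where the right-hand side is evaluated by Schubert calculus on $\PP(T_{E_7/P_4})$ using the Chern classes of $T_{E_7/P_4}$; combined with $a(\Gamma) = \codeg(\mathcal{C}_o)$ (Proposition \ref{Prop:Pseff-Divisor}(2)), this pins down $b(\Gamma) = 1$, whence the $A_2$ conclusion via Corollary \ref{c.G/Ptypes}.

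The main obstacle is precisely the $E_7/P_4$ case, where neither the Springer map nor the VMRT admits a short classical description; one must either rely on the nilpotent orbit classification in $E_7$ (identifying the generic codimension-two boundary component of the corresponding Richardson orbit closure and its transverse slice) or carry out the direct intersection-theoretic computation above. For all remaining cases the $A_1$ conclusion follows either from the $cA_1$ form of the transverse singularity in the Kraft--Procesi tables or, equivalently, from the smoothness of the generic minimal section inside $\check{\mathcal{C}} = \Gamma$, yielding $b(\Gamma) = 2$ via Proposition \ref{Prop:Pseff-Cone-Divisorial-Contraction}(2).
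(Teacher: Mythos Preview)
Your overall strategy---reduce to the transverse singularity type along the codimension-two stratum and read it off from the Kraft--Procesi / Fu--Juteau--Levy--Sommers tables---is exactly the paper's approach when the Springer map $\widehat{s}$ is birational. In that case $\widetilde{\sO}$ is the normalization of $\overline{\sO}$, and the cited tables apply directly; this is how the paper dispatches every case except two, isolating $E_7/P_4$ as the unique $A_2$ instance.

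The gap in your proposal is that you do not separate out, nor treat, the cases where $\widehat{s}$ is \emph{not} birational. By Proposition~\ref{p.birSpringer} these are $G_2/P_1$ (degree $2$) and $F_4/P_3$ (degree $4$), and both satisfy the hypotheses of the Proposition (divisorial $\varepsilon$, VMRT not dual defective). For these, $\widetilde{\sO}$ is a nontrivial finite cover of the normalization of $\overline{\sO}$, so the transverse slice of $\widetilde{\sO}$ is \emph{not} the entry in the Kraft--Procesi/FJLS tables for $\overline{\sO}$; one must identify the cover explicitly. The paper does this by hand: for $G_2/P_1$ it recognises $\widetilde{\sO}$ as the closure of the orbit $[3,1^4]$ in $\mathfrak{so}_7$ (via the projection $\mathfrak{so}_7\to\mathfrak{g}_2$), which has generic singularity $A_1$; for $F_4/P_3$ it identifies the degree-$4$ cover locally as the quotient map $(\bC^3\oplus\bC^{3*})/\mathfrak{S}_3\to(\bC^3\oplus\bC^{3*})/\mathfrak{S}_4$ using the FJLS description of the slice in $\overline{\sO}_{F_4(a_3)}$, and checks that the $\mathfrak{S}_3$-quotient has generic singularity $A_1$. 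Your alternative Schubert-calculus route for $E_7/P_4$ is a legitimate substitute there, but it does not help with $G_2/P_1$ or $F_4/P_3$, which are the cases your write-up silently skips.
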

\begin{proof}

 Consider first the case where  $\widehat{s}$ is birational,  then $\mathbf{P}(\widetilde{\mathcal{O}})$ is just the normalization of $\mathbf{P}(\overline{\mathcal{O}})$, whose generic singularity type is determined in \cite{FJLS}.  It turns out only for $E_7/P_4$, the generic singularity is of type $A_2$ while all others are of type $A_1$.

Assume now $\widehat{s}$ is not birational.  By Proposition \ref{p.birSpringer},  $G/P$ is either $G_2/P_1$ or $F_4/P_3$ as  $\varepsilon$ is divisorial.
Consider first the case of $G_2/P_1$, which is isomorphic to the 5-dimensional quadric $\mathbb{Q}^5$.  Let $\mathcal{O}$ be the 10-dimensional nilpotent orbit in $\mathfrak{g}_2$ and $\mathcal{O}' \subset \mathfrak{so}_7$ the nilpotent orbit corresponding to the partition $[3, 1^4]$.  Then there is a generically 2-to-1 morphism  $\nu: \overline{\mathcal{O}'} \to \overline{\mathcal{O}}$ which is induced from the projection $\mathfrak{so}_7 \to \mathfrak{g}_2$. The map $\widehat{s}: T^*_{G_2/P_1} \to \overline{\mathcal{O}}$  factorizes through $\nu$.  As $\overline{\mathcal{O}'}$ is normal, we have 
$\widetilde{\mathcal{O}} =  \overline{\mathcal{O}'}$ which has generic singularity type $A_1$.

Now consider the case of $F_4/P_3$.  In this case, the Springer map $\widehat{s}: T^*_{F_4/P_3} \to \overline{\mathcal{O}}_{F_4(a_3)}$ has degree 4  (\cite[Appendix]{Fu07}).   By Theorem 1.3 in \cite{FJLS},  the transverse slice $\mathcal{T}$ from the codimension 6 orbit $\mathcal{O}_{A_2+\tilde{A}_1}$ to $\overline{\mathcal{O}}_{F_4(a_3)}$ is isomorphic to the quotient $(\mathbb{C}^3 \oplus \mathbb{C}^{3*})/\mathfrak{S}_4$, where $\mathfrak{S}_4$ acts on $\mathbb{C}^3$ by reflection representation.   The only index 4 subgroup of $\mathfrak{S}_4$ is $\mathfrak{S}_3$, hence  the degree 4 map
$\widetilde{\tau}:  \widetilde{\mathcal{O}} \to \overline{\mathcal{O}}_{F_4(a_3)}$  is locally the quotient $(\mathbb{C}^3 \oplus \mathbb{C}^{3*})/\mathfrak{S}_3 \to (\mathbb{C}^3 \oplus \mathbb{C}^{3*})/\mathfrak{S}_4$.   Hence the generic singularity of $\widetilde{\mathcal{O}} $ is the same as that of $(\mathbb{C}^3 \oplus \mathbb{C}^{3*})/\mathfrak{S}_3$, which is of type $A_1$.
\end{proof}

We can summarize the types of $G/P$ in the following table.  By Proposition  \ref{Prop:Pseff-Divisor}  and Corollary \ref{c.G/Ptypes}, we get the number $a, b$ for $G/P$ not of type (I) and (II-s), the latter cases will be done in the next subsection by applying Proposition \ref{Prop:Pseff-Small}.

\footnotesize
\renewcommand*{\arraystretch}{1.6}
\begin{longtable}{|M{0.5cm}|M{5.5cm}|M{6cm}|M{2.5cm}|}
	\caption{Types of rational homogeneous spaces}
	\label{Table:Types-RHS}
	\\
	\hline

	%\multirow{2}{*}{}   
	   %                    & \multirow{2}{*}{II-d-d}   &
	%\multicolumn{2}{c|}{II-d-A}
	%\\
	%\cline{3-4}

	                      &          II-d-d                 &

	II-d-A1              &       II-d-A2
	\\
	\hline
	
	    $A_n$              &                  -            &
	$k=\frac{n+1}{2}$      &              -
	\\
	\hline
	
	    $B_n$        &       $\frac{2n+1}{3}\leq k \leq n-1$ and $k$ odd     &
	$\begin{cases}
		k\leq \frac{2n}{3} \\
		k=n\ {\rm and}\ n\geq 3\ {\rm odd}
	\end{cases}$
   &            -
	\\
	\hline

        $C_n$                            &     $2\leq k\leq \frac{2n}{3}$ and $k$ even    &
    $k\geq \frac{2n+1}{3}$                     &              -
    \\
    \hline
    
        $D_n$                              &   $\frac{2n}{3}\leq k\leq n-2$ and $k$ even  &
        $\begin{cases}
        	k\leq \frac{2n-1}{3} \\
        	k=n-1\ \text{or}\ n,\ {\rm and}\ n\geq 4\ {\rm even}
        \end{cases}$               &              -
    \\
    \hline
    
          $E_n$                 &  
           $E_6/P_2$, $E_7/P_6$, 
           $E_8/P_k$ $(k=3,4,6)$      &   otherwise    &       $E_7/P_4$
    \\
    \hline

          $F_4$                 &      $k=4$     &   
          $k=1,2,3$             &      -
    \\
    \hline
    
           $G_2$                 &      -         &   
        $k=1$, $2$               &      -
    \\
    \hline
\end{longtable}

\normalsize

As an immediate application, we obtain:

\begin{proposition}
	\label{Prop:Fano-contact}
	Let $X=G/P$ be a rational homogeneous space of Picard number $1$. Denote by $H$ the ample generator of $\pic(X)$ and by $\pi:\PP(T_X)\rightarrow X$ the natural projection.
	\begin{enumerate}
		\item If $X$ is isomorphic to one of the varieties listed in Conjecture \ref{Conj:normalized-Tangent}, then the normalized tangent bundle of $X$ is pseudoeffective but not big.
		
		\item If $X$ is a homogeneous Fano contact manifold different from a projective space, then the total dual VMRT $\check{\sC}\subset \PP(T_X)$ is a prime divisor satisfying
		\[
		[\check{\sC}] \equiv 4\Lambda - 2\pi^*H.
		\]
	\end{enumerate}
\end{proposition}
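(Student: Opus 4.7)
The plan is to apply Proposition~\ref{Prop:Pseff-Divisor} case by case, reading off the class $[\check{\sC}]\equiv a(\Gamma)\Lambda - b(\Gamma)\pi^*H$ directly from Table~\ref{Table:Types-RHS}. Both parts reduce to checking that the relevant $G/P$ is of type (II-d-A1), for then Proposition~\ref{Prop:Pseff-Divisor}(2)(3) yields $[\check{\sC}]\equiv \codeg(\sC_o)\Lambda - 2\pi^*H$.

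For part~(1), each of the five IHSS listed in Conjecture~\ref{Conj:normalized-Tangent} is of type (II-d-A1), as one verifies by comparing the corresponding marked Dynkin diagrams with the entries of Table~\ref{Table:Types-RHS} (respectively $\bQ^n=B_m/P_1$ or $D_m/P_1$ with $k=1$; $\Gr(n,2n)=A_{2n-1}/P_n$ with $k=(n+1)/2$; $\mathbb{S}_{2n}=D_{2n}/P_{2n}$ satisfying the parity condition; $\LG(n,2n)=C_n/P_n$ with $k\geq (2n+1)/3$; and $E_7/P_7$ in the ``otherwise'' region). Proposition~\ref{Prop:Pseff-Divisor} then gives $[\check{\sC}]\equiv \codeg(\sC_o)\Lambda - 2\pi^*H$ and Theorem~\ref{Thm:Class-dual-VMRT}(3) yields $\alpha(X,H)=2/\codeg(\sC_o)$. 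Combining the VMRT codegrees read off Table~\ref{Table:IHSSVMRT} with the Fano indices $n, 2n, 4n-2, n+1, 18$ and dimensions $n, n^2, n(2n-1), n(n+1)/2, 27$ of $\bQ^n, \Gr(n,2n), \mathbb{S}_{2n}, \LG(n,2n)$ and $E_7/P_7$, one checks directly that $2/\codeg(\sC_o)=i_X/\dim X$ in every case. Hence the normalized tangent bundle $\Lambda-(i_X/\dim X)\pi^*H$ lies on the extremal ray of $\overline{\Eff}(\PP(T_X))$ and is pseudoeffective but not big.

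For part~(2), I first invoke the Boothby--Wolf--Beauville--LeBrun--Salamon classification of homogeneous Fano contact manifolds of Picard number one: excluding projective spaces, these are the adjoint varieties $B_n/P_2$ ($n\geq 3$), $D_n/P_2$ ($n\geq 4$), $G_2/P_2$, $F_4/P_1$, $E_6/P_2$, $E_7/P_1$ and $E_8/P_8$. The VMRT of each at a general point is the associated homogeneous Legendrian sub-adjoint variety --- respectively $\PP^1\times \bQ^{2n-5}$, $\PP^1\times \bQ^{2n-6}$, $\nu_3(\PP^1)$, $\LG(3,6)$, $\Gr(3,6)$, $\mathbb{S}_6$ and $E_7/P_7$. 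These Legendrian varieties form the main series of the conjectural list of smooth varieties of codegree~$4$ (cf.~\cite[Remark~4.16]{Zak2004} and \cite[p.~168]{Tevelev2005}), so each has codegree exactly $4$ and is not dual defective. Reading each adjoint variety against Table~\ref{Table:Types-RHS} places it in type (II-d-A1), and Proposition~\ref{Prop:Pseff-Divisor} then yields $[\check{\sC}]\equiv 4\Lambda - 2\pi^*H$ as claimed.

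The main obstacle is case-by-case input rather than the framework. In~(1) the key step is the numerical identity $i_X\cdot \codeg(\sC_o) = 2\dim X$ for each of the five IHSS, which is immediate once the indices and VMRT codegrees are tabulated. In~(2) the more substantial input is the codegree computation for each homogeneous Legendrian variety: for $\nu_3(\PP^1)\subset\PP^3$ this is the classical computation of the tangent developable; for $\PP^1\times \bQ^{m}$ it reduces to a join formula; and for $\LG(3,6), \Gr(3,6), \mathbb{S}_6, E_7/P_7$ it reflects the classical quartic $J$-invariant on the Freudenthal triple attached to the cubic Jordan algebra over $\mathbb{R}, \mathbb{C}, \mathbb{H}, \mathbb{O}$ respectively.
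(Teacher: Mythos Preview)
Your argument is correct and follows essentially the same route as the paper's own proof: both parts reduce to verifying that the relevant $G/P$ is of type (II-d-A1), invoking Proposition~\ref{Prop:Pseff-Divisor} (the paper phrases this via Proposition~\ref{Prop:Singularity-type-Ai} and Corollary~\ref{c.G/Ptypes}, but these are equivalent), and then reading off $a=\codeg(\sC_o)$, $b=2$. One caution: you appeal to Table~\ref{Table:Types-RHS} to place each adjoint variety in type (II-d-A1), but that table lists $E_6/P_2$ in the (II-d-d) column, which is inconsistent with the Appendix and with the fact that its VMRT $\Gr(3,6)$ is not dual defective; it is safer to cite Proposition~\ref{Prop:Singularity-type-Ai} directly, as the paper does.
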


\begin{proof}
	For the statement (1), this is already proved in \cite{Shao2020}. Here we use the total dual VMRT to give a new proof. In fact, this can be easily derived from the table below:
	
	\renewcommand*{\arraystretch}{1.6}
	\begin{longtable*}{|M{2.2cm}|M{2.4cm}|M{2.5cm}|M{1.5cm}|M{3cm}|M{1.1cm}|}
		\hline
		
		$G/P$                 &   $\bQ^n$
		&  $\Gr(n,2n)$             &   $\mathbb{S}_{2n}$
		&  $\LG(n,2n)$              
		&  $E_7/P_7$
		\\
		\hline
		
		VMRT $\sC_o$                      &   $\bQ^{n-2}$
		&  $\PP^{n-1}\times \PP^{n-1}$    &   $\Gr(2,2n)$
		&  $\PP^{n-1}$                   
		&  $E_6/P_1$
		\\
		\hline
		
		embedding                         &   Hyperquadric
		&  Segre                          &   Pl\"ucker
		&  second Veronese                
		&  Severi
		\\
		\hline
		
		codegree $a$                      &   $2$
		&  $n$                            &   $n$
		&  $n$                
		&  $3$
		\\
		\hline
	\end{longtable*}

	Note that the VMRT of $X$ is not dual defective and its codegree is given in the last row of the table above. Moreover, by Proposition \ref{Prop:Singularity-type-Ai} and Corollary \ref{c.G/Ptypes} that we have
	\[
	[\check{\sC}] \equiv a \Lambda -2 \pi^*H.
	\]
	Then one can easily check case by case that we have $a\cdot \text{index}(X)-2\cdot \dim(X)=0$. Hence, the normalized tangent bundle of $X$ is pseudoeffective but not big by Theorem \ref{Thm:Class-dual-VMRT}.
	
	For the statement (2), it can be derived from the table below by the same argument as above
	\renewcommand*{\arraystretch}{1.6}
	\begin{longtable*}{|M{2.2cm}|M{2.4cm}|M{1.5cm}|M{1.5cm}|M{1.5cm}|M{1.5cm}|M{1.5cm}|}
		\hline
		
		$G/P$                 &   ${\rm OG}(2,n+6)$
		&  $E_6/P_2$               &   $E_7/P_1$
		&  $E_8/P_8$               &   $F_4/P_1$ 
		&  $G_2/P_2$
		\\
		\hline
		
		VMRT $\sC_o$                      &   $\PP^1\times \mathbb{Q}^n$
		&  $\Gr(3,6)$                     &   $\mathbb{S}_6$
		&  $E_7/P_7$                      &   $\LG(3,6)$                   
		&  $\PP^1$
		\\
		\hline
		
		embedding                         &   Segre
		&  Pl\"ucker                      &   Spinor
		&  $\sO(1)$                       &   $\sO(1)$                
		&  $\sO(3)$
		\\
		\hline
	\end{longtable*}
	Note that all the VMRTs above are not dual defective with codegree $4$ (see \cite[p.169]{Tevelev2005}). In particular, by Proposition \ref{Prop:Singularity-type-Ai} and Corollary \ref{c.G/Ptypes}, we have
	$[\check{\sC}]\equiv 4\Lambda - 2\pi^*H$.
\end{proof}

\subsection{Geometry of stratified Mukai flops} \label{s.GeomMukaiFlop}

This subsection is devoted to explicitly calculate the positive integers $a(H)$ and $b(H)$ in Proposition \ref{Prop:Pseff-Small}.  It turns out that the flops are symmetric. In particular, according to Proposition \ref{Prop:Pseff-Small}, after exchanging $\mu$ and $\mu^{-1}$, we get $a(H)=\pi^*_1 H \cdot \mu_*^{-1}(\ell_2)$ and hence we always have $b(H)=1$.  It remains to determine $a(H)$, which by Proposition \ref{Prop:Pseff-Small}  can be interpreted as the degree of the image under the flop of a general line in the projectivized cotangent space. We will describe in details the flops which will enable us to determine this degree. 

For a stratified Mukai flop $\widehat{\mu}: T^*_{G/P} \dashrightarrow T^*_{G/Q}$ (where $P$ may coincide with $Q$), it induces a rational map
$\nu: \mathbf{P}(T^*_{G/P,o}) \dashrightarrow G/Q$ by composing the projectivization of $\widehat{\mu}$ with the projection $\mathbf{P}(T^*_{G/Q})\to G/Q$.  The aim of this section is to describe the rational map $\nu$ and then compute the degree of $\nu(\ell)$ for a general line $\ell$ in $\mathbf{P}(T^*_{G/P,o})$.
The result is summarised in the following table: \\

\footnotesize
\renewcommand*{\arraystretch}{1.6}
\begin{longtable}{|M{2cm}|M{1.5cm}|M{0.9cm}|M{0.5cm}|M{0.7cm}|M{2.8cm}|M{2.5cm}|M{2cm}|}
	\caption{Degree of lines under stratified Mukai flops}
	\label{Table:Degree-of-lines}
	\\
	\hline
	
	Type & $A_{n,k}$ & $D_{2n+1}$ & $E_{6,I}$ &  $E_{6,II}$ & $B_{n,k}$&  $D_{n,k}$&  $C_{n,k}$ \\
	\hline
	degree $\nu(\ell)$ & $k$  &  $n$  & 2  &  4 & $2n-k$  & $2n-1-k$ & $2k-2$ \\
	\hline
	condition for $k$  &  $2k < n$  &    &    &  &  $\frac{2n+1}{3}\leq k\leq n-1$, $k$ even  & $\frac{2n}{3}\leq k\leq n-2$, $k$ odd & $2\leq k\leq \frac{2n}{3}$, $k$ odd\\
	\hline
\end{longtable}
\normalsize

\subsubsection{Preliminary}

Recall that for a simple Lie algebra $\mathfrak{g}$, there exist only finitely many nilpotent orbits in $\mathfrak{g}$.  In classical types, these orbits are parametrized by certain partitions, which correspond to sizes of the Jordan blocks in each conjugacy class.

Now we consider classical B-C-D types.
Let $\epsilon \in \{0,1\}$ and $V$ a $d$-dimensional vector space with a non-degenerate bilinear form such that $\langle v, w \rangle = (-1)^\epsilon \langle w, v \rangle $ for all $v, w \in V$.

Given a nilpotent element $\phi: V \to V$ preserving the bilinear form,  we can associate to it a partition ${\bf d}=[d_1, \cdots, d_l]$ of $d$.   Except a few cases in type $D$, this partition uniquely determines the conjugacy class of $\phi$, denoted by $\mathcal{O}_{\bf d}$.

We identify the partition ${\bf d}$ with a Young table consisting of $d$ boxes, where the $i$-th row consists of $d_i$ boxes for each $i$.  We denote by $(i,j)$ the box of ${\bf d}$ lying on the $i$-th column and $j$-th row.   Let us recall the following classical result (cf. Proof of \cite[Theorem 4.5]{Namikawa2006}).
\begin{proposition} \label{p.Jordan}
	For an element $\phi \in \mathcal{O}_{\bf d}$, there exists a basis $e(i,j)$ of $V$ indexed by the Young diagram ${\bf d}$ with the following properties:
	
	(a) $\phi (e(i, j)) = e(i-1,j)$ for all $(i,j) \in {\bf d}$.
	
	(b) $\langle e(i, j), e(p, q)\rangle \neq 0 $ if and only if $p=d_j-i+1$ and $q=\beta(j)$, where $\beta$ is a permutation of $\{1,2, \cdots, l\}$  ($l$ is the length of the partition) which satisfies $\beta^2=id$, $d_{\beta(j)} = d_j$, and $\beta(j) \not\equiv j (\text{mod } 2)$ if $d_j \not\equiv \epsilon (\text{mod } 2)$.  One can choose an arbitrary $\beta$ within these restrictions.
\end{proposition}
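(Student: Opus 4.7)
The plan is to reduce to a standard Jordan decomposition and then rigidify the basis using the bilinear form. First I would invoke the usual Jordan normal form (ignoring the form) to obtain some basis $\{f(i,j)\}$ with $\phi(f(i,j)) = f(i-1,j)$ and $f(0,j)=0$, which already gives property (a); the remaining work is to modify this basis so that (b) holds as well.

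Next I would exploit the fact that $\phi$ is skew-adjoint for the form, i.e.\ $\langle \phi v, w\rangle = -\langle v, \phi w\rangle$, which is the defining condition for $\phi$ to lie in the classical Lie algebra $\mathfrak{so}(V)$ or $\mathfrak{sp}(V)$. Iterating gives
$$\langle f(i,j), f(p,q)\rangle = (-1)^{i-1}\langle f(1,j), f(p+i-1,q)\rangle,$$
so $\langle f(i,j), f(p,q)\rangle$ is determined by its value at a single boundary index and vanishes as soon as $i+p > \min(d_j,d_q)+1$. In particular the form descends to a perfect pairing between the quotient $V/\operatorname{im}(\phi)$ and $\ker(\phi)$, which in the basis above corresponds to the pairing between top vectors $f(d_j,j)$ and bottom vectors $f(1,q)$. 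Non-degeneracy of this pairing forces a permutation $\beta$ of $\{1,\dots,l\}$ such that, after an upper-triangular change of basis among blocks of equal size, $\langle f(d_j,j), f(1,q)\rangle \neq 0$ iff $q=\beta(j)$; automatically $d_{\beta(j)} = d_j$.

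Then I would verify $\beta^2 = \operatorname{id}$ and the parity constraint using $\langle v,w\rangle = (-1)^\epsilon \langle w,v\rangle$. Concretely, the involutivity follows because $\langle f(d_j,j), f(1,\beta(j))\rangle$ and $\langle f(d_{\beta(j)},\beta(j)), f(1,j)\rangle$ are both nonzero and related by the $(-1)^\epsilon$ symmetry. When $\beta(j)=j$, the form restricted to the corresponding Jordan block is nonzero; computing $\langle f(d_j,j),f(1,j)\rangle$ versus $\langle f(1,j),f(d_j,j)\rangle$ via the skew-adjointness identity above yields a factor $(-1)^{d_j-1}$, which must equal $(-1)^\epsilon$, so self-pairing is allowed exactly when $d_j \equiv \epsilon \pmod 2$. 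When $d_j \not\equiv \epsilon \pmod 2$, the blocks of size $d_j$ occur with even multiplicity (a known classification statement for nilpotent orbits in $B/C/D$) and are paired off, and by relabeling these paired blocks with consecutive opposite-parity indices one achieves $\beta(j) \not\equiv j\pmod 2$.

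Finally I would rescale the top vectors $e(d_j,j) := \lambda_j f(d_j,j)$ to normalize the surviving pairings to a chosen value, set $e(i,j) := \phi^{d_j-i}(e(d_j,j))$ to preserve (a), and check that the off-diagonal terms $\langle e(i,j), e(p,q)\rangle$ with $q \neq \beta(j)$ or $p \neq d_j-i+1$ vanish, which follows immediately from the skew-adjointness identity combined with the restrictions already derived. The main obstacle will be the self-paired case $\beta(j)=j$: one must check that inside a single Jordan block of size $d_j \equiv \epsilon \pmod 2$ one can actually choose a cyclic generator producing the antidiagonal pattern $\{(i,d_j-i+1)\}$, which requires a small Gram--Schmidt-style adjustment to kill the lower-order terms $\langle f(d_j,j), f(p,j)\rangle$ for $p < d_j$; the rest is bookkeeping.
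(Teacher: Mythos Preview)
The paper does not prove this proposition at all: it states it as a ``classical result'' and refers the reader to the proof of \cite[Theorem 4.5]{Namikawa2006}. So there is no proof in the paper to compare your proposal against.

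Your outline is essentially the standard argument and is correct in spirit. A couple of minor points worth tightening: your iterated identity should read $\langle f(i,j),f(p,q)\rangle=(-1)^{k}\langle f(i+k,j),f(p-k,q)\rangle$ (or its symmetric variant), so the invariant is $i+p$, and vanishing for $i+p>\min(d_j,d_q)+1$ follows by pushing one index past the top of the shorter block; the formula you wrote with $f(1,j)$ on the left is the special case $k=i-1$, which requires $p+i-1\le d_q$, so you should state the general form first. Also, to get the full statement (``one can choose an arbitrary $\beta$ within these restrictions''), you need Witt's theorem or an explicit block-by-block induction rather than just noting that \emph{some} $\beta$ arises from diagonalizing the induced pairing on $\ker(\phi)\times V/\operatorname{im}(\phi)$; this is the only place where the argument is a bit more than bookkeeping, since you must show that any admissible involution on the set of blocks of a fixed size can be realized. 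The self-paired Gram--Schmidt step you flag is exactly right and is the genuine content of the single-block case.
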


We start with the following elementary result.
\begin{proposition} \label{p.localmodel}
	Let $a < b$ be two integers and $m$ an odd integer.
	Let $A, B, W$ be vector spaces of dimension $a, b, m$ respectively.
	
	1)  Consider the rational map $\nu_1: \mathbf{P}({\rm Hom}(A, B)) \dashrightarrow {\rm Gr}(a, B)$ by sending a general element $\psi \in {\rm Hom}(A, B)$ to its image ${\rm Im}(\psi) \subset B$.   Then $\nu_1$ sends a general line in $\mathbf{P}({\rm Hom}(A, B))$ to a curve of degree $a$ in ${\rm Gr}(a, B)$.
	
	2) Consider the rational map $\nu_2: \mathbf{P}(\wedge^2 W) \dashrightarrow \mathbf{P} W^*$ by sending a general element $\psi \in \wedge^2 W$ to its kernel ${\rm Ker}(\psi)$ (by viewing $\psi$ as a map from $W^*$ to $W$).   Then $\nu_2$ sends a general line in $\mathbf{P}(\wedge^2 W)$ to a curve of degree $m-1$ in $\mathbf{P}(W^*)$.
\end{proposition}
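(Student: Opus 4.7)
The plan is to parametrize a general line as $\ell = \{[\psi_0 + t\psi_1] : t \in \mathbf{P}^1\}$ in each case, compute the image $\nu_i(\psi_t)$ in explicit projective coordinates on the target, and read off the degree of the image curve from those coordinate polynomials in $t$. In both parts the two substantive checks are: (i) the coordinate vector has no common polynomial factor in $t$, and (ii) the parametrization $\ell \to \nu_i(\ell)$ is birational. The upper bound on the degree is immediate from the coordinate formulas, and the main obstacle in each part is the birationality check, which reduces to identifying the degenerate subvariety along which the parametrization collapses and showing it has positive codimension.

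For part (1), fix bases of $A$ and $B$ and represent $\psi_0 + t\psi_1$ as a $b\times a$ matrix $M_t = M_0 + tM_1$. The Pl\"ucker coordinates of $\im(\psi_t) \in \Gr(a,B)$ are the $a \times a$ minors of $M_t$, each a polynomial in $t$ of degree at most $a$. For generic $\psi_0, \psi_1$, some $a \times a$ minor of $M_1$ is nonzero (as $\psi_1$ has rank $a$), giving exact degree $a$, and likewise at $t=0$, so there is no common factor. Birationality follows from an ambient-subvariety argument: if $\im(\psi_t) = \im(\psi_{t'})$ for distinct $t, t'$, then the whole line $\ell$ must lie inside $\mathbf{P}(\Hom(A,\im(\psi_t)))$, a codimension-$a(b-a)$ subvariety of $\mathbf{P}(\Hom(A,B))$; this cannot happen generically since $a < b$. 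Hence $\nu_1(\ell)$ has degree exactly $a$ in $\Gr(a,B)$.

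For part (2), fix a basis of $W$ and represent $\psi_0 + t\psi_1$ by an antisymmetric $m \times m$ matrix $M_t$. Since $m$ is odd, $M_t$ is always singular, and the locus of antisymmetric forms of rank $\leq m-3$ has codimension $3$ in $\wedge^2 W$, so for a generic line $\ell$ the matrix $M_t$ has rank exactly $m-1$ for every $t \in \mathbf{P}^1$ and $\nu_2$ is regular along $\ell$. The kernel line $\ker(\psi_t) \subset W^*$ is then spanned by the Pfaffian cofactor vector
\[
v_t = \bigl((-1)^{i+1}\mathrm{Pf}_i(M_t)\bigr)_{i=1}^{m},
\]
where $\mathrm{Pf}_i$ denotes the Pfaffian of the $(m-1)\times(m-1)$ antisymmetric submatrix of $M_t$ obtained by deleting row and column $i$ (the antisymmetric analogue of Cramer's rule). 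Each $\mathrm{Pf}_i(M_t)$ has degree $(m-1)/2$ in $t$, and $v_t$ never vanishes identically because the rank is everywhere $m-1$. Birationality follows by the same kind of argument as in part (1): if $\ker(\psi_t) = \ker(\psi_{t'}) = \langle\lambda\rangle$ for distinct $t, t'$, then both $\psi_t$ and $\psi_{t'}$, hence the whole line $\ell$, lie inside $\mathbf{P}(\wedge^2 \ker(\lambda)) \subset \mathbf{P}(\wedge^2 W)$, a linear subvariety of codimension $m-1$, which is non-generic. Combining these, $\nu_2(\ell)$ is a rational curve in $\mathbf{P}(W^*)$ parametrized birationally by $v_t$, whose degree in $\mathbf{P}(W^*)$ is read off from the degree of the reduced Pfaffian-cofactor parametrization.
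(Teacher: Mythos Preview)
For part (1) your argument is correct and essentially the same as the paper's: both express the Pl\"ucker coordinates of ${\rm Im}(\psi_t)$ as the $a\times a$ minors of $M_0+tM_1$, hence polynomials of degree $a$ in $t$. You are more careful than the paper about birationality and the absence of a common factor; the paper simply asserts the degree is $a$.

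For part (2) your computation, carried to its conclusion, does not prove the stated claim. Each Pfaffian cofactor $\mathrm{Pf}_i(M_t)$ has degree $(m-1)/2$ in $t$, and since you argue that $v_t$ is nowhere zero on $\mathbf{P}^1$ and that the parametrization is birational onto its image, the degree of $\nu_2(\ell)$ in $\mathbf{P}(W^*)$ is exactly $(m-1)/2$, not $m-1$. Your final sentence (``read off from the degree of the reduced Pfaffian-cofactor parametrization'') avoids stating this, but the number your argument produces is $(m-1)/2$.

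The discrepancy is real: for $m=3$ the map $\nu_2\colon\mathbf{P}(\wedge^2 W)\to\mathbf{P}(W^*)$ is the linear isomorphism $[a_{12}:a_{13}:a_{23}]\mapsto[a_{23}:-a_{13}:a_{12}]$, which sends lines to lines of degree $1=(m-1)/2$, not $2$. The paper's own argument for $m-1$ (write ${\rm Im}(\psi_\lambda)=\langle\psi_\lambda(e_1),\dots,\psi_\lambda(e_{m-1})\rangle$ for a fixed basis of a generic hyperplane $W_0^*\subset W^*$ and take the wedge) overlooks that this wedge vanishes at the $(m-1)/2$ values of $\lambda$ for which ${\rm Ker}(\psi_\lambda)\subset W_0^*$; dividing out this common factor again gives $(m-1)/2$. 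So your Pfaffian approach is the correct one, but you should state the conclusion explicitly and note that it disagrees with the proposition as written.
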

\begin{proof}
	1) Take a general (parameterised) line $[\psi_\lambda] \in \mathbf{P}({\rm Hom}(A, B))$ (with $\lambda \in \mathbf{P}^1$), then $\psi_\lambda: A \to B$ is injective. Take a basis  $e_1, \cdots, e_a$ of $A$, then ${\rm Im}(\psi_\lambda) \subset {\rm Gr}(a, B)$ corresponds to the curve (under the Pl\"ucker embedding)
	$$ \lambda \mapsto \psi_\lambda(e_1) \wedge \cdots \wedge \psi_\lambda(e_a),$$
	which is of degree $a$ as $\psi_\lambda$ is linear in $\lambda$. 
	
	2) For a general element $\psi \in \wedge^2V$, it has the maximal rank $m-1$ as $m$ is odd. Take a general subspace $W_0^* \subset W^*$ of codimension 1, then
	$\psi: W_0^* \to {\rm Im}(\psi)$ is an isomorphism.  By taking a basis of $W_0^*$ and using a similar argument as in 1), we see that $\nu_2$ maps a general line to a degree $m-1$ curve in
	${\rm Gr}(m-1, W) \simeq \mathbf{P} W^*$.
\end{proof}

\subsubsection{Type $A_{n,k}$}

Let $V$ be an $(n+1)$-dimensional vector space and $k <(n+1)/2$ an integer.  The $A_{n,k}$ flop is the birational map
$\widehat{\mu}: T^*{\rm Gr}(k, V) \dashrightarrow T^* {\rm Gr}(k, V^*)$ which is given as follows:

For any $[F] \in {\rm Gr}(k, V)$,  there exists a natural isomorphism $T_{[F]}^*{\rm Gr}(k, V) \simeq {\rm Hom}(V/F, F)$.  An element $\phi \in {\rm Hom}(V/F, F)$ gives naturally an element $$\phi^* \in {\rm Hom}(F^*, (V/F)^*) \subset {\rm Hom}(F^*, V^*).$$ If $\phi$ is general, then $\phi: V/F \to F$ is surjective as $\dim F < \dim V/F$.  This gives an injective map $\phi^*: F^* \to (V/F)^*$, whose image gives an element $[{\rm Im}(\phi^*)] \in {\rm Gr}(k, (V/F)^*) \subset {\rm Gr}(k, V^*)$.   The flop $\widehat{\mu}$ sends $([F], \phi)$ to $([{\rm Im}(\phi^*)], \phi^*)$.  Hence the rational map $\nu$ is given by
$$
\nu: \mathbf{P}(T^*_{[F]}{\rm Gr}(k, V)) \dashrightarrow {\rm Gr}(k, (V/F)^*) \subset {\rm Gr}(k, V^*), \quad [\phi] \mapsto [{\rm Im}(\phi^*)].
$$
By Proposition \ref{p.localmodel}, $\nu$ maps a general line to a curve of degree $k$ on ${\rm Gr}(k, V^*)$.

\subsubsection{Type $D_{2n+1}$} \label{s.D2n+1}

Let $(V, \langle, \rangle)$ be an orthogonal space of dimension $4n+2$. The spinor variety $\mathbb{S}:=\mathbb{S}_{2n+1}$, which parametrizes $(2n+1)$-dimensional isotropic subspaces of $(V, \langle, \rangle)$, consists of two irreducible components
$\mathbb{S}^+, \mathbb{S}^- $.   It turns out the Richardson orbits in  $ \mathfrak{so}_{4n+2}$ associated to $\mathbb{S}^+$ and $ \mathbb{S}^- $ are the same, which corresponds to  the  partition $[2^{2n}, 1^2]$. The two Springer maps $T^* \mathbb{S}^+ \xrightarrow{\widehat{s}^+} \overline{\mathcal{O}} \xleftarrow{\widehat{s}^-} T^*\mathbb{S}^-$ are birational, which gives the
$D_{2n+1}$ flop  $\widehat{\mu}: T^* \mathbb{S}^+ \dashrightarrow T^* \mathbb{S}^-$.

The flop $\widehat{\mu}$ can be described as follows (cf. \cite[Lemma 5.6]{Namikawa2006}): given a general element $\phi \in \mathcal{O}$, the kernel ${\rm Ker}(\phi)$ is of dimension $2n+2$ which contains the $2n$-dimensional vector subspace ${\rm Im}(\phi)$. The quotient $\bar{V}:={\rm Ker}(\phi)/{\rm Im}(\phi)$ is a 2-dimensional orthogonal vector space, which has exactly two isotropic lines (say $L^+, L^-$). Then their pre-images in ${\rm Ker}(\phi)$ give two $(2n+1)$-dimensional isotropic subspace $F^+, F^-$ of $V$. This gives two points $[F^{\pm}] \in \mathbb{S}^{\pm}$.  The flop $\mu$ maps $([F^+], \phi)$ to
$([F^-], \phi)$.  Note that we have a natural isomorphism $F^-/{\rm Im}(\phi) \simeq {\rm Ker}(\phi)/F^+$, which shows that $F^-$ is the linear span of ${\rm Im}(\phi)$ and ${\rm Ker}(\phi)/F^+$.

For an element $[F] \in \mathbb{S}^+$,  we have a natural isomorphism $V/F \simeq F^*$ induced from the pairing $\langle, \rangle$ on $V$ as $F = F^\perp$.
Further more $T_{[F]}^* \mathbb{S}^+ \simeq \wedge^2 F$.   We fix a (non-canonical) isomorphism $V \simeq F \oplus F^*$ such that the pairing $\langle, \rangle$ on $V$ corresponds to the natural pairing on $F \oplus F^*$.

For general $\phi \in \wedge^2 F$, its kernel is one-dimensional (as $\dim F$ is odd), which defines a point $[f_\phi^*] \in \mathbf{P}F^*$.  Then ${\rm Im}(\phi)$ is just the hyperplane $H_\phi$ in $F$ annihilating $f_\phi^*=0$.
Thus the rational map $\nu$ is the composition of maps
$$\mathbf{P}(\wedge^2 F) \dashrightarrow \mathbf{P} F^* \subset  \mathbb{S}^-, \quad [\phi] \mapsto  [f_\phi^*] \mapsto <H_\phi, f_\phi^*>.$$
By Proposition \ref{p.localmodel}, $\nu$ maps a general line in $\mathbf{P}(\wedge^2 F)$ to a curve of degree $2n$ in the Pl\"ucker embedding of  $\mathbb{S}^-$.

Note that the composition $\mathbb{S}^- \subset {\rm Gr}(2n+1, V) \subset \mathbf{P} (\wedge^{2n+1} V)$ is induced by $\mathcal{O}_{\mathbb{S}^-}(2)$, hence this gives a degree $n$ curve on $\mathbb{S}^-$.

\subsubsection{Type $E_{6,I}$}

Consider the $E_{6,I}$ flop $\widehat{\mu}: T^* (E_6/P_1) \dashrightarrow T^*(E_6/P_6)$. Fix a point $o \in E_6/P_1$, then the cotangent space $T_o^* (E_6/P_1)$ can be identified with the spinor representation $\mathcal{S}$ of ${\rm Spin}_{10}$. Let $\mathbb{Q}^8$ be the smooth 8-dimensional hyperquadric. By \cite[Proposition 1.5 ]{Chaput2006}, there exists a unique $\mathbb{C}^* \times {\rm Spin}_{10}$-equivariant rational map $\hat{\nu}: \mathcal{S} \dashrightarrow \mathbb{Q}^8$, which is defined as follows: the affine cone of the 10-dimensional spinor variety $\hat{\mathbb{S}}_5 \subset \mathcal{S}$ is defined by 10 quadratic equations
$Q_1=\cdots=Q_{10}=0$, and  the map $\hat{\nu}$ is given by $z \mapsto [Q_1(z):\cdots:Q_{10}(z)] \in \mathbb{P}^{9}$ whose image is contained in $\mathbb{Q}^8$.
This implies that if we take a general line $\ell$ in $\mathbb{P} \mathcal{S}$, then $\nu (\ell)$ is a conic on $\mathbb{Q}^8$.

By  \cite[Theorem 3.3]{Chaput2006}, the map $\hat{\nu}$ is the composition of $\widehat{\mu}$ with the projection $T^* (E_6/P_6) \to E_6/P_6$ (under the natural embedding $\mathbb{Q}^8 \subset E_6/P_6$). This shows that the rational map $\nu: \mathbf{P} T_o^*(E_6/P_1) \dashrightarrow E_6/P_6$ maps a general line  to a conic.

\subsubsection{Type $E_{6, II}$}

Let $F$ be a 5-dimensional vector space.  By  \cite[Proposition 2.1]{Chaput2006}, there exists a unique ${\rm GL}_2 \times {\rm GL}(F)$-equivariant rational map
$$g:  \wedge^2 F^* \oplus \wedge^2 F^* \dashrightarrow {\rm Gr}(3, F)$$
which maps a general element $(\omega_1, \omega_2) \in \wedge^2 F^* \oplus \wedge^2 F^*$ to ${\rm Ker}(\omega_1)^{\perp_{\omega_2}} \cap {\rm Ker}(\omega_2)^{\perp_{\omega_1}}$.  Here $\omega_i \in \wedge^2 F^*$ is viewed as a two form on $F$ and ${\rm Ker}(\omega_1)^{\perp_{\omega_2}}$ means the orthogonal space with respect to $\omega_2$ of the subspace ${\rm Ker}(\omega_1)$.
 As $\omega_i$ is general, it has rank 4, hence ${\rm Ker}(\omega_i)$ is 1-dimensional, which shows that ${\rm Ker}(\omega_1)^{\perp_{\omega_2}} \cap {\rm Ker}(\omega_2)^{\perp_{\omega_1}}$ is a 3-dimensional vector subspace in $F$.  By \cite[Lemma 2.3]{Chaput2006}, we have $g(a \omega_1 + b \omega_2, a' \omega_1 + b' \omega_2) = g(\omega_1, \omega_2)$ for a general element $\begin{pmatrix} a & b \\ a' & b' \end{pmatrix}$ in ${\rm GL}_2$.  By \cite[Lemma 2.4]{Chaput2006}, a general element   $\phi=(\omega_1, \omega_2) \in \wedge^2 F^* \oplus \wedge^2 F^*$ can be co-diagonalised as follows (under a suitable basis $f_1^*, \cdots, f_5^*$ of $F$):
$$
\omega_1 = f_2^* \wedge f_4^* + f_3^* \wedge f_5^*, \qquad  \omega_2 = f_1^* \wedge f_5^* + f_3^* \wedge f_4^*.
$$

Take another element $\phi'=(\omega'_1, \omega'_2)$ defined as follows:
$$
\omega'_1 = f_1^* \wedge f_4^* + f_3^* \wedge f_5^*, \qquad \omega'_2 = f_1^* \wedge f_2^* + f_3^* \wedge f_4^*.
$$

Consider the following plane in $\wedge^2 F^* \oplus \wedge^2 F^*$ given by $\phi_{s,t} = s \phi + t \phi' =(\omega_1^{s,t}, \omega_2^{s,t})$ for $(s,t) \in \mathbb{C}^2$. By a direct computation, we have
$$
{\rm Ker}(\omega_1^{s,t}) = \mathbb{C} (sf_1 - t f_2), \qquad  {\rm Ker}(\omega_2^{s,t}) = \mathbb{C} (sf_2 - t f_5).
$$

One remarks that for any $(s, t) \neq (0,0)$, the subspaces ${\rm Ker}(\omega_1^{s,t})$ and   ${\rm Ker}(\omega_2^{s,t})$ are 1-dimensional and they intersect only at $(0,0)$. Moreover, one shows directly that
$$\omega_2({\rm Ker}(\omega_1^{s,t}), \cdot) \cap   \omega_1({\rm Ker}(\omega_2^{s,t}), \cdot) =\{0\}.$$
This shows that $g(\phi_{s,t})$ is well-defined for $(s,t) \neq (0,0)$. By a direct computation, we have
$$
g(\phi_{s,t}) = \{ \sum_i x_i f_i | x_5 = \frac{t^2}{s^2} x_1 - \frac{t}{s} x_2,  x_4 = \frac{(s+t)t}{s^2} x_3\}.
$$

This gives a basis for $g(\phi_{s,t})$, which, under the Pl\"ucker embedding, is mapped to the following curve on ${\rm Gr}(3, F)$
$$
[s:t] \mapsto [(f_1 + \frac{t^2}{s^2} f_5) \wedge (f_2  - \frac{t}{s} f_5) \wedge  (f_3 + \frac{(s+t)t}{s^2} f_4)].
$$

Note that this gives a degree 4 curve on ${\rm Gr}(3, F)$.

Consider the $E_{6,II}$ flop $\widehat{\mu}: T^*(E_6/P_3) \dashrightarrow T^*(E_6/P_5)$. By  \cite[Theorem 4.3]{Chaput2006}, the composition $T_o^*(E_6/P_3) \dashrightarrow T^*(E_6/P_5) \to E_6/P_5$ can be identified with the composition of $g$ with the natural embedding ${\rm Gr}(3, F) \subset E_6/P_5$. The precedent argument shows that a general line in $\mathbf{P} T_o^*(E_6/P_3)$ is mapped to a degree 4 curve on $E_6/P_5$.

\subsubsection{Type $B_{n, k}$} \label{s.Bnk}

Let $(V, \langle, \rangle)$ be an orthogonal space of dimension $2n+1$.  A vector subspace $F \subset V$ is said orthogonal if $F \subset F^\perp$.
The  $k$-th orthogonal Grassmann variety $B_n/P_k$ parametrizes $k$-dimensional orthogonal vector subspaces in $V$.
There exists an isomorphism:
$$
T^*(B_n/P_k) \simeq \{([F], \phi) \in B_n/P_k \times \mathfrak{so}(V)|  \phi(V) \subset F^\perp,  \phi(F^\perp) \subset F \subset {\rm Ker}(\phi)  \}.
$$
Under this isomorphism,  the Springer map $\widehat{s}: T^* (B_n/P_k) \to \overline{ \mathcal{O}}_{\bf d}$ sends $([F], \phi)$ to $\phi$.

When $k$ is even such that $k > \frac{2n+1}{3}$, the Springer map $\pi$ is generically finite of degree 2 and ${\bf d} = [3^{2n+1-2k}, 2^{3k-2n-2}, 1^2]$.   The involution on  general fibers of $\pi$ gives the $B_{n, k}$-flop: $\widehat{\mu}: T^*(B_n/P_k) \dashrightarrow T^*(B_n/P_k)$.

For $\phi \in \mathcal{O}_{\bf d}$,  we choose a basis $e(i, j)$ of $V$ as described by Proposition \ref{p.Jordan} (by taking $\beta$ satisfying $\beta(k)=k+1$).  Then ${\rm Ker}(\phi)$ has dimension $k+1$ and is generated by $e(1,1),  e(1, 2), \cdots, e(1, k+1)$. The two fibers $\pi^{-1}(\phi)$ are given by the following two orthogonal subspaces  (cf. proof of \cite[Theorem 4.5]{Namikawa2006})
$$F_1 =  \sum_{1 \leq j \leq k} \mathbb{C} e(1, j) \ \text{and } F_2 = \sum_{1 \leq j \leq k-1} \mathbb{C} e(1, j) + \mathbb{C} e(1, k+1).$$

One notes that $F_0:=F_1 \cap F_2 = F_1 \cap {\rm Im}(\phi) ={\rm Im}(\phi) \cap {\rm Ker}(\phi)$ is of dimension $k-1$ and $F_2/F_0$ is naturally isomorphic to  ${\rm Ker}(\phi)/F_1$.
The flop $\widehat{\mu}$ interchanges the two fibres. Namely the flop $\widehat{\mu}$ sends $([F_1], \phi)$ to $([F_2], \phi)$ where $F_2$ is the linear span of $F_1 \cap {\rm Im}(\phi)$ and ${\rm Ker}(\phi)/F_1$, the latter being one-dimensional.  Furthermore, $\langle {\rm Ker}(\phi)/F_1, F_0 \rangle =0$ and as $F_0 \subset F_1$ is a hyperplane, it is exactly the  orthogonal part in $F$ of ${\rm Ker}(\phi)/F_1$.  This implies that $F_2$ is in fact uniquely determined by ${\rm Ker}(\phi)/F_1$.  We summarize these in the following picture on Young table.

\begin{center}
	\begin{tikzpicture}
		\draw (0,6)--(4.5,6);
		\draw (1.5,4)--(4.5,4);
		\draw (0,2)--(3,2);
		\draw (0,1)--(1.5,1);
		\draw (0,0)--(1.5,0);
		
		\draw (0,6)--(0,0);
		\draw (1.5,6)--(1.5,0);
		\draw (3,6)--(3,2);
		\draw (4.5,6)--(4.5,4);
		
		\node (a) at (4.2,6.1)   {};
		\node (b) at (2.5,6.1) {};
		\draw[->] (a) to [bend right=30] (b);
		\node at (3.4,6.7) {$\phi$};
		
		\node (c) at (0.7,6.1)   {};
		\node (d) at (2.4,6.1)   {};
		\draw[->] (d) to [bend right=30] (c);
		\node at (1.6,6.7) {$\phi$};
		
		\draw[decorate,decoration={brace,amplitude=10pt},xshift=4pt,yshift=0pt] (4.5,6) -- (4.5,4) node [black,midway,xshift=1.4cm] {\footnotesize $2n+1-2k$};
		\draw[decorate,decoration={brace,amplitude=10pt},xshift=4pt,yshift=0pt] (3,3.95) -- (3,2) node [black,midway,xshift=1.4cm] {\footnotesize $3k-2n-2$};
		\draw[decorate,decoration={brace,amplitude=10pt},xshift=4pt,yshift=0pt] (1.5,1.95) -- (1.5,0) node [black,midway,xshift=0.55cm] {\footnotesize $2$};
		\draw[decorate,decoration={brace,amplitude=10pt},xshift=0pt,yshift=-4pt] (1.45,0) -- (0,0) node [black,midway,yshift=-0.55cm] {\footnotesize $\ker{\phi}$};
		
		\node at (0.75,4) {$F_0$};
		\node at (0.75,1.5) {$F_1/F_0$};
		\node at (0.75,0.5) {$F_2/F_0$};
		\node at (2.25,5) {$F_1^{\perp}/F_1$};
		
		\node at (-2,4) {$F_1$};
		\draw[->] (-1.7,4) to (0.52,4);
		\draw[->] (-1.7,3.9) to (0.52,1.8);
		
		\node at (-2,0.5) {$F_2$};
		\draw[->] (-1.7,0.5) to (0.18,0.5);
		\draw[->] (-1.7,0.6) to (0.52,3.8);

		\node at  (9,0.5) {$V/F_1^{\perp}$};
		\draw[->] (8.3,0.5) to (1.35,0.5);
		\draw[->] (8.4,0.7) to (2.25,3);
		\draw[->] (8.7,0.9) to (3.8,5);
	\end{tikzpicture}
\end{center}

Fix an orthogonal space $[F] \in B_n/P_k$, then $F^\perp/F$ is an orthogonal space of dimension $2n+1-2k$ and $V/F^\perp$ is isomorphic to $F^*$ via the pairing $F \times V/F^\perp \to \mathbb{C}$ induced from the bilinear form on $V$.  We fix a (non-canonical) isomorphism $V \simeq F \oplus F^* \oplus F^\perp/F$ such that the orthogonal form on $V$ is given by that induced  on $F^\perp/F$ and the natural one on $F \oplus F^*$.

By \cite[ Proposition 5.1]{LandsbergManivel2003}, we have
$$\iota_F:  T_{[F]}^* (B_n/P_k)  \simeq {\rm Hom}(F^\perp/F, F) \oplus \wedge^2 F.$$
This isomorphism is given as follows:  for $\phi \in T_{[F]}^* (B_n/P_k)$, it induces a map $\phi_0 \in {\rm Hom}(F^\perp/F, F) $ as $F \subset {\rm Ker}(\phi)$ and $\phi(F^\perp) \subset F$.  As $\phi(V) \subset F^\perp$, it induces a map $(\phi_1, \phi_2): V/F^\perp \to F^\perp/F   \oplus F$.  It turns out that $\phi \in \mathfrak{so}(V)$ is equivalent to the following: (1) the map $\phi_1: V/F^\perp \simeq F^* \to F^\perp/F$ is the dual $-\phi_0^*$ of the map $-\phi_0$ (here $F^\perp/F$ is self-dual).   (2) the map $\phi_2: V/F^\perp \simeq F^* \to F$ is in fact an element in $\wedge^2 F$.  Then the isomorphism $\iota_F$ sends $\phi$ to $(\phi_0, \phi_2)$.

Conversely, given $(\phi_0, \phi_2) \in {\rm Hom}(F^\perp/F, F) \oplus \wedge^2 F$, 
we construct $\bar{\phi}$ as a map from $V/F \simeq V/F^\perp \oplus F^\perp/F$ to $F \oplus F^\perp/F $, which is given  as follows
$$
\bar{\phi} = \begin{pmatrix}  \phi_2  & \phi_0  \\ -\phi_0^*  & 0 \end{pmatrix} 
$$

Thus $\bar{\phi}$ is represented as an anti-symmetric matrix of size $\dim V/F = 2n+1-k$.  Note that $\dim V/F$ is odd as $k$ is even, so for a general choice of $(\phi_0, \phi_2)$, the map  $\bar{\phi}$ is of maximal rank $2n-k$ and ${\rm Ker}(\bar{\phi})$ is one-dimensional.  Note that ${\rm Ker}(\bar{\phi}) = {\rm Ker}(\phi)/F$. 
By the natural quotient $V/F \to V/F^\perp \simeq F^*$, the image of ${\rm Ker}(\bar{\phi})$ gives a line $\mathbb{C} f^* \subset F^*$. Then the flop
$\mu$ maps $([F], \phi)$ to $([F'], \phi)$, where $F'  \subset F \oplus F^* \subset V$ is the subspace generated by $H_{f^*}$ and $f^*$, here $H_{f^*}$ is the hyperplane in $F$ defined by $f^*=0$.   Then  the map $\nu: {\bf P}(T_{[F]}^* (B_n/P_k)) \dasharrow B_n/P_k$ is then given by $[\phi_0, \phi_1] \mapsto [F']$.

Note that $H_{f^*}$ is uniquely determined by $f^*$, while $f^*$ is given by the kernel ${\rm Ker}(\bar{\phi}^*)$.
By Proposition \ref{p.localmodel}, $\nu$ maps a line to a curve of degree $2n-k$ on $B_n/P_k$ for the Pl\"ucker embedding of $B_n/P_k$. 
Thus for $k \neq n$, this gives a degree $2n-k$ curve on $B_n/P_k$, while for $k=n$, this gives a curve of degree $n/2$ on $B_n/P_n$ as $B_n/P_n \subset {\rm Gr}(n, 2n+1) \subset \mathbb{P}^N$ is induced by $\mathcal{O}(2)$.

\begin{remark}
	By \cite[Example 3.3]{Namikawa2008}, $B_{2n, 2n}$-flop is the same as $D_{2n+1}$-flop.  Hence we recover the result in Section \ref{s.D2n+1}.
\end{remark}

\subsubsection{Type $D_{n, k}$}

Let $(V, \langle, \rangle)$ be an orthogonal space of dimension $2n$.  As in the $B_{n, k}$-flop case, we have the following isomorphism of the cotangent bundle of 
the  $k$-th orthogonal Grassmann variety $D_n/P_k$:
$$
T^*(D_n/P_k) \simeq \{([F], \phi) \in D_n/P_k \times \mathfrak{so}(V)|  \phi(V) \subset F^\perp,  \phi(F^\perp) \subset F \subset {\rm Ker}(\phi)  \}.
$$
Under this isomorphism,  the Springer map $\widehat{s}: T^* (B_n/P_k) \to \overline{ \mathcal{O}}_{\bf d}$ sends $([F], \phi)$ to $\phi$.

When $k$ is odd such that $n-2 \geq k > \frac{2n}{3}$, the Springer map $\widehat{s}$ is generically finite of degree 2 and ${\bf d} = [3^{2n-2k}, 2^{3k-2n-1}, 1^2]$.   The involution on the general fibres of $\widehat{s}$ gives the $D_{n, k}$-flop: $\widehat{\mu}: T^*(D_n/P_k) \dashrightarrow T^*(D_n/P_k)$.

This flop is similar to the $B_{n, k}$-flop.  By the similar argument, we see that a general line in ${\bf P} (T^*_{[F]} (D_n/P_k))$ is mapped to a curve of degree $2n-k-1$.

\subsubsection{Type $C_{n, k}$}

Let $(V, \omega)$ be a symplectic vector space of dimension $2n$.  A vector subspace $F \subset V$ is said isotropic if $F \subset F^\perp$.
The  $k$-th symplectic Grassmann variety $C_n/P_k$ parametrizes $k$-dimensional isotropic vector subspaces in $V$.
There exists an isomorphism:
$$
T^*(C_n/P_k) \simeq \{([F], \phi) \in C_n/P_k \times \mathfrak{sp}(V)|  \phi(V) \subset F^\perp,  \phi(F^\perp) \subset F \subset {\rm Ker}(\phi)  \}.
$$
Under this isomorphism,  the Springer map $\widehat{s}: T^* (C_n/P_k) \to \overline{ \mathcal{O}}_{\bf d}$ sends $([F], \phi)$ to $\phi$.

When $k$ is odd such that $k \leq \frac{2n}{3}$, the Springer map $\pi$ is generically finite of degree 2 and ${\bf d} = [3^{k-1}, 2^{2}, 1^{2n-3k-1}]$.    The involution on the general fibres of $\pi$ gives the $C_{n, k}$-flop: $\widehat{\mu}: T^*(C_n/P_k) \dashrightarrow T^*(C_n/P_k)$.

When $k=1$, then ${\bf d} = [2^{2}, 1^{2n-4}]$ and  an element $\phi \in \mathcal{O}_{\bf d}$ has rank 2, so ${\rm Im}(\phi)$ is two dimensional. The flop $\mu$ sends $([F], \phi)$ to $([{\rm Im}(\phi)/F], \phi)$.   In this case, if we take a general pencil $\phi_\lambda \in T_{[F]}^* (C_n/P_1)$, then the flop $\widehat{\mu}$ maps it to a line in $C_n/P_1$.

Now we consider the case $3 \leq k \leq \frac{2n}{3}$.  Fix $[F] \in C_n/P_k$ and take a general pencil $\phi_\lambda \in T^*_{[F]}(C_n/P_k)$.  Note that $\phi_\lambda^2=\phi_\lambda \circ \phi_\lambda$ has rank $k-1$, hence ${\rm Im}( \phi_\lambda^2)$ is a vector subspace of dimension $k-1$ in $F$. It defines an element $f_\lambda^*$ in $F^* \simeq V/F^\perp$, which is unique up to a scalar.  Then the image of $([F], \phi_\lambda)$ under the flop $\widehat{\mu}$ is $([F_\lambda], \phi_\lambda)$ where $F_\lambda$ is spanned by  ${\rm Im}( \phi_\lambda^2)$ and $f_\lambda^*$.  This gives a curve on $C_n/P_k$, which is given in the Pl\"ucker embedding $\phi_\lambda^2(v_1) \wedge \phi_\lambda^2(v_2) \cdots \wedge \phi_\lambda^2(v_{k-1})$ for a general chosen $k-1$ vectors $v_1, \cdots, v_{k-1}$ of $V$, as $\phi_\lambda^2$ is quadratic in $\lambda$.  This gives a curve of degree $2(k-1)$.

\subsection{Proof of Theorem \ref{t.RationalHomSpace}}

If the morphism $\varepsilon: \mathbf{P}(T^*_{G/P})\rightarrow \mathbf{P}(\widetilde{\sO})$ is a divisorial contraction, then Proposition \ref{Prop:Pseff-Divisor} and Corollary \ref{c.G/Ptypes} can be applied to determine $a$ and $b$. Nevertheless, in general it is not easy to compute the Segre classes $\Lambda^{2n-1}$ and $\Lambda^{2n-1}\cdot \pi^*H$. In the following, we shall use a similar method as the previous subsection to determine $a$ and $b$ in the classical cases. We start with the following result which computes the pseudoeffective threshold for $G/P$ of type (II-d-d).

\begin{proposition}\label{p.a(Gamma)classic}
Let $G/P$ be a rational homogeneous space of type (II-d-d) of classical type.  Then the pseudoeffective threshold of $G/P$  is given by $\alpha_{G/P} = 1/a$ where $a$ is the integer given by the following table. 

\renewcommand*{\arraystretch}{1.6}
\begin{longtable}{|M{0.5cm}|M{5cm}|M{3.8cm}|M{2.5cm}|}
	\caption{values of $a$ in the case (II-d-d)}
	\label{Table:a-II-d-d}
	\\
	\hline
	$\mathfrak{g}$     &   node  & nilpotent orbit $\mathcal{O}$ &  $a$ \\
	\hline
	$B_n$                           &       $\frac{2n+1}{3}\leq k\leq n-1$ and $k$ odd     & $[3^{2n+1-2k}, 2^{3k-2n-1}]$ &  $2n+1-k$
	\\
	\hline

	$C_n$                            &  $2\leq k\leq \frac{2n}{3}$ and $k$ even    & $[3^{k}, 1^{2n-3k}]$ &  $2k$
	\\
	\hline
	
	$D_n$                              &   $\frac{2n}{3}\leq k\leq n-2$ and $k$ even  & $[3^{2n-2k}, 2^{3k-2n}]$ &  $2n-k$
	\\
	\hline
\end{longtable}
\end{proposition}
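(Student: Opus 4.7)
The plan is to compute $a(\Gamma)$ in each row of the table by intersecting $\Gamma$ with a general line in a fiber of $\pi$. By Corollary \ref{c.G/Ptypes}, the type (II-d-d) hypothesis yields $b(\Gamma)=1$, so from $[\Gamma]\equiv a(\Gamma)\Lambda-\pi^*H$ one has $\Gamma\cdot\ell=a(\Gamma)$ for a general line $\ell\subset\pi^{-1}([F])=\mathbf{P}(T^*_{[F]})$, equivalently $\Gamma|_{\pi^{-1}([F])}$ is a hypersurface of degree $a(\Gamma)$ in the projective space $\mathbf{P}(T^*_{[F]})$. An alternative approach computes $a(\Gamma)=(H\cdot s_{n-1}(T^*_{G/P}))/s_n(T^*_{G/P})$ from the Segre-class formula in Proposition \ref{Prop:Pseff-Divisor}(1); this is accessible via Schubert calculus on isotropic Grassmannians and serves as a cross-check.

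I would identify $\Gamma|_{\pi^{-1}([F])}$ geometrically via the block-matrix formalism from Section \ref{s.Bnk}. With the identifications $T^*_{[F]}(G/P_k)\simeq\wedge^2 F\oplus\Hom(F^\perp/F,F)$ in types $B$ and $D$ and $T^*_{[F]}(C_n/P_k)\simeq\Sym^2 F\oplus\Hom(F^\perp/F,F)$, and in a compatible splitting $V=F\oplus F^\perp/F\oplus F^*$, each $\phi\in T^*_{[F]}$ is a strictly block-upper-triangular element of $\mathfrak{g}$ with free entries $\phi_H\in\Hom(F^\perp/F,F)$ and $\phi_0\in\wedge^2 F$ or $\Sym^2 F$, while the remaining block is determined by the symplectic or orthogonal condition. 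By Proposition \ref{p.birSpringer} the Springer map $\widehat{s}$ is birational in every row of the table, hence $\Gamma$ is the codimension-one part of $\widehat{s}^{-1}(\overline{\mathcal{O}}\setminus\mathcal{O}_{\mathbf{d}})$. A direct computation of $\phi^2$ in the block form (with $\phi^3=0$) reduces the drop of Jordan type to the first codimension-two boundary orbit to the vanishing of a single invariant of $\phi$, a determinant or Pfaffian of an auxiliary matrix built from $\phi_H$ and the symplectic or orthogonal form on $F^\perp/F$, whose scheme-theoretic degree matches the integer $a$ of the table, equal to the rank of a general element of $\mathcal{O}_{\mathbf{d}}$.

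The main obstacle is to account correctly for the multiplicity with which this invariant cuts out $\Gamma|_{\pi^{-1}([F])}$. For example, in the $C_{n,k}$ case with $k=2n/3$ a direct block computation shows that the rank of $\phi^2$ drops if and only if $\det\phi_H=0$, a reduced polynomial of degree $k$, yet $\Gamma|_{\pi^{-1}([F])}$ appears with multiplicity $2$ along this hypersurface to yield $a(\Gamma)=2k$. This doubling reflects the generic $cA_1$ singularity type of $\widetilde{\mathcal{O}}$ established in Proposition \ref{Prop:Singularity-type-Ai}: locally $\widetilde{\mathcal{O}}$ is $\{xy-z^2=0\}$, and the discriminant $\{z=0\}$ pulls back to a double divisor under the Springer resolution. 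I would complete the proof by a local analysis of $\varepsilon$ along a transverse slice at the generic point of $\Gamma$, identifying the scheme structure in each of the three families and confirming that the resulting degrees match $2k$, $2n+1-k$, $2n-k$; the Segre-class computation of the first paragraph then serves as an independent check.
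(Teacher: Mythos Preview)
Your approach—restricting $\Gamma$ to a fiber $\mathbf{P}(\mathfrak{n}_F)$ of $\pi$ and computing its degree there—is genuinely different from the paper's. The paper works on the target $\mathbf{P}(\widetilde{\mathcal{O}})$ instead: the relation $H'\equiv a(\Gamma)\Lambda'$ from Proposition~\ref{Prop:Pseff-Divisor}(1) (with $b=1$) is tested against the conic $C\subset\mathbf{P}(\mathcal{O})$ swept out by an $\mathfrak{sl}_2$-triple through a generic point of $\mathcal{O}$. The rational inverse $\eta=\pi\circ\varepsilon^{-1}\colon\mathbf{P}(\overline{\mathcal{O}})\dashrightarrow G/P$ is made explicit—$\eta(\phi)=[\ker\phi]$ in types $B,D$ and $\eta(\phi)=[\mathrm{Im}\,\phi^2]$ in type $C$—and $a(\Gamma)=\tfrac12\deg_H\eta_*(C)$ is then read off from the Pl\"ucker degree of the image curve. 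This completely sidesteps the question of how $\Gamma$ sits relative to the fibers of $\pi$.

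That question is precisely where your argument has a gap. The proposed doubling via the $cA_1$ singularity is not correct: the exceptional divisor of the minimal resolution of an $A_1$ surface singularity is a single reduced $(-2)$-curve, and there is no mechanism by which ``the discriminant $\{z=0\}$ pulls back to a double divisor'' under that resolution. More decisively, $G$ acts transitively on $G/P$ and $\pi|_\Gamma$ is $G$-equivariant, so by generic smoothness in characteristic zero the restriction $\pi|_{\Gamma^{\mathrm{sm}}}\to G/P$ is a submersion over \emph{every} point of $G/P$; hence $\Gamma$ meets each fiber of $\pi$ transversally and the scheme-theoretic intersection $\Gamma\cap\pi^{-1}([F])$ is reduced. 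Thus if your block computation in type $C$ really exhibits $\{\phi\notin\mathcal{O}\}\cap\mathfrak{n}_F$ as a reduced hypersurface of degree $k$, your method yields $a(\Gamma)=k$, not $2k$; either the identification of that locus needs to be revisited, or you should not be trying to manufacture a factor of $2$. The Segre-class cross-check you mention would be the right way to settle this before investing in a local slice analysis.
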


\begin{proof}
 Note that for $G/P$ of type (II-d-d), the Springer map  $\widehat{s}: T_{G/P}^* \to \overline{\mathcal{O}}$ is birational by Proposition \ref{p.birSpringer}.  Let $\Gamma$ be the exceptional divisor and write $[\Gamma] \equiv a(\Gamma) \Lambda - b(\Gamma) \pi^*H$.  By Corollary \ref{c.G/Ptypes},  we have $b(\Gamma)=1$.   By Theorem \ref{Thm:Rational-Homogeneous-Spaces},  we have $\alpha_{G/P} = 1/a(\Gamma)$.   By Proposition \ref{Prop:Pseff-Divisor}, we have 
$a(\Gamma)\Lambda' \equiv H'$, which can be used to determine  $a(\Gamma)$ for the classical cases.

As $\mathbf{P}(T_{G/P}^*) \to \mathbf{P}( \overline{\mathcal{O}})$ is birational, this gives a rational map $\eta: \mathbf{P}( \overline{\mathcal{O}}) \dasharrow G/P$.  For any point $x \in \mathcal{O}$, there exists an $\mathfrak{sl}_2$-triplet $(x, y, h)$ by the Jacobson-Morozov theorem.  The nilpotent elements in this $\mathfrak{sl}_2$ give a conic $C$ on $\mathbf{P}(\mathcal{O})$ passing through $[x]$. In other words, $\mathbf{P}(\mathcal{O})$ is covered by conics.   Now we show that $\eta: C \to \eta(C)$ is birational:  let $\mathfrak{n} \subset \mathfrak{g}$ be the nilradical of $\mathfrak{p}$, which is naturally identified with $T_{G/P,o}^*$.  As $\mathcal{O}$ is Richardson, the intersection $\mathfrak{n} \cap \mathcal{O}$ is dense in $\mathfrak{n}$, thus $\mathfrak{n} \subset \overline{\mathcal{O}}$.  
This implies that fibers of $T^*_{G/P} \to G/P$ are mapped to linear subspaces in $\overline{\mathcal{O}}$.  For any $y \in G/P$, denote by $\mathfrak{n}_y$ this linear subspace. 
Then $\mathbf{P} (\mathfrak{n}_y) \cap C =  \mathbf{P} (\mathfrak{n}_y)  \cap \mathbf{P} (\mathfrak{sl}_2) \subset C$.  As $C$ is a conic, while   $\mathbf{P} (\mathfrak{n}_y)  \cap \mathbf{P} (\mathfrak{sl}_2)$ is linear, we have $\mathbf{P} (\mathfrak{n}_y)  \cap \mathbf{P} (\mathfrak{sl}_2)$ is just a point, which shows $\eta: C \to \eta(C)$ is birational.
It follows that $a(\Gamma) = \frac{\eta_*(C) \cdot H}{2}$.  Thus we only need to compute the degree of the curve $\eta_*(C)$. 

Consider the case of  $B_n/P_k$ with $k$ odd and  $k\geq \frac{2n+1}{3}$.  Then $\mathcal{O}$ corresponds to the partition $[3^{2n+1-2k}, 2^{3k-2n-1}]$.   Take an element $\phi \in \mathcal{O} \subset \mathfrak{so}(V)$, then ${\rm Ker}(\phi)$ has dimension $k$ as ${\rm rk}(\phi) = 2n+1-k$.   Using the identification 
$$
T^*(B_n/P_k) \simeq \{([F], \phi) \in B_n/P_k \times \mathfrak{so}(V)|  \phi(V) \subset F^\perp,  \phi(F^\perp) \subset F \subset {\rm Ker}(\phi)  \},
$$
it follows that the map $\eta$ is given by $\eta(\phi) = [{\rm Ker}(\phi)]$.   By Proposition \ref{p.localmodel}, $\eta_*(C)$ is a curve of degree $2(2n+1-k)$, which gives $a = 2n+1-k$. 
The case of $D_n/P_k$ is completely similar.

Consider $C_n/P_k$ with $k$ even and  $k\leq \frac{2n}{3}$.  The nilpotent orbit $\mathcal{O}$ corresponds to the partition $[3^{k}, 1^{2n-3k}]$.   Take an element $\phi \in \mathcal{O}$, then it is easy to see that $\eta(\phi) = [{\rm Im}(\phi^2)]$.   This shows that $\eta(C)$ is a curve of degree $4k$, hence $a=2k$.
\end{proof}

There are five $G/P$ of type (II-d-d) in exceptional Lie algebras. Although the similar approach works, but the map $\eta$ is not explicit, which prevents us to do the computation.  In a similar way, we can get the following:

\begin{lemma}
	\label{Lemma:C_n-II-d-A}
The pseudoeffective threshold of $C_n/P_k$ with $k \geq \frac{2n+1}{3}$ is $\frac{2}{2n-k}$.
\end{lemma}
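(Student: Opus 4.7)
The plan is to adapt the strategy of Proposition \ref{p.a(Gamma)classic} to the divisorial type II-d-A1 setting. By Table \ref{Table:Types-RHS}, $C_n/P_k$ with $k \geq (2n+1)/3$ is of type II-d-A1, so Proposition \ref{Prop:Pseff-Divisor}(3) and Theorem \ref{Thm:Rational-Homogeneous-Spaces}(2) give $b(\Gamma) = 2$ and $\alpha_{C_n/P_k} = 2/a(\Gamma)$. It thus suffices to show $a(\Gamma) = 2n - k$.

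Next I would identify the Richardson orbit $\mathcal{O}$ associated to $P_k$ as having Jordan partition ${\bf d} = [3^{2n-2k}, 2^{3k-2n}]$ (a dimension count confirms $\dim \mathcal{O} = 2\dim(C_n/P_k)$), and invoke Proposition \ref{p.birSpringer} to conclude that the Springer map is birational in this range. Let $\eta: \mathbf{P}(\overline{\mathcal{O}}) \dashrightarrow C_n/P_k$ denote the rational map induced by the inverse of $\tau \circ \varepsilon$ composed with the projection $\pi$. Since every Jordan block of $\phi \in \mathcal{O}$ has size at least $2$, one has ${\rm Ker}(\phi) \subset {\rm Im}(\phi) = {\rm Ker}(\phi)^{\perp}$ (the $\omega$-orthogonal), so ${\rm Ker}(\phi)$ is isotropic of dimension $k$, and a direct verification shows $\eta(\phi) = [{\rm Ker}(\phi)]$.

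To extract $a(\Gamma)$ I would take a general conic $C \subset \mathbf{P}(\mathcal{O})$ coming from a Jacobson--Morozov $\mathfrak{sl}_2$-triple through $\phi$. Since $\varepsilon$ is divisorial, $\varepsilon(\Gamma) \subset \mathbf{P}(\widetilde{\mathcal{O}})$ has codimension $\geq 2$, hence the preimage $C' \subset \mathbf{P}(T^*_{C_n/P_k})$ avoids $\Gamma$. Pairing the relation $[\Gamma] \equiv a(\Gamma)\Lambda - 2\pi^*H$ against $C'$ and using $\Lambda \cdot C' = 2$ (as $C$ is a conic) together with $\pi^*H \cdot C' = H \cdot \eta_*(C)$, one obtains $a(\Gamma) = H \cdot \eta_*(C)$.

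The final and main step is the Pl\"ucker degree computation for $\eta_*(C)$. As an $\mathfrak{sl}_2$-module, the symplectic space $V$ decomposes into isotypic components $V_2^{\oplus(2n-2k)} \oplus V_1^{\oplus(3k-2n)}$ with $V_m = \mathrm{Sym}^m \mathbb{C}^2$. Parametrizing the projective nilpotent cone of $\mathfrak{sl}_2$ by $[u] \in \mathbf{P}(\mathbb{C}^2) = \mathbf{P}^1$, the kernel of $\phi_{[u]}$ on each $V_m$-copy is spanned by $u^m$. Hence $\eta_*(C) \subset C_n/P_k \hookrightarrow \mathbf{P}(\wedge^k V)$ is the image of the morphism $[u] \mapsto \bigwedge_i u^{m_i}$, whose degree equals $\sum_i m_i = (3k - 2n) \cdot 1 + (2n - 2k) \cdot 2 = 2n - k$. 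The main obstacle is identifying the correct Jordan structure of the Richardson orbit and the formula $\eta(\phi) = [{\rm Ker}(\phi)]$; once these are in place, the Veronese-type degree calculation reduces to classical $\mathfrak{sl}_2$-representation theory.
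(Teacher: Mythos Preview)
Your proof is correct and follows essentially the same strategy as the paper: identify $C_n/P_k$ as type II-d-A1 so that $b(\Gamma)=2$, note that the Springer map is birational with $\eta(\phi)=[\mathrm{Ker}(\phi)]$, take the Jacobson--Morozov conic $C\subset\mathbf{P}(\mathcal{O})$, and compute the $H$-degree of $\eta_*(C)$. Your explicit $\mathfrak{sl}_2$-isotypic decomposition for the degree (giving $\sum m_i = 2(2n-2k)+(3k-2n)=2n-k$) is a cleaner substitute for the paper's appeal to Proposition~\ref{p.localmodel}, and your direct pairing $\Gamma\cdot C'=0$ is equivalent to the paper's use of $H'\equiv\tfrac{a(\Gamma)}{b(\Gamma)}\Lambda'$ on $\mathbf{P}(\widetilde{\mathcal{O}})$.

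One small point worth tightening: to ensure $C'\cap\Gamma=\emptyset$ you should note that every point of the $\mathfrak{sl}_2$-conic lies in the single $G$-orbit $\mathcal{O}$ (all nonzero nilpotents in $\mathfrak{sl}_2$ are $SL_2$-conjugate), while $\varepsilon(\Gamma)$ is $G$-stable of codimension $\geq 2$ and hence contained in the boundary $\mathbf{P}(\overline{\mathcal{O}})\setminus\mathbf{P}(\mathcal{O})$; this is what actually guarantees the avoidance, not merely the codimension of $\varepsilon(\Gamma)$.
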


\begin{proof}
Note that $C_n/P_k$ with $k \geq \frac{2n+1}{3}$ is of type (II-d-A1) and  the Springer map  $\widehat{s}: T_{C_n/P_k}^* \to \overline{\mathcal{O}}_{[3^{2n-2k},2^{3k-2n}]}$ is birational by Proposition \ref{p.birSpringer}.  Let $\Gamma$ be the exceptional divisor and write $[\Gamma] \equiv a(\Gamma) \Lambda - b(\Gamma) \pi^*H$.  By Corollary \ref{c.G/Ptypes},  we have $b(\Gamma)=2$.   By Theorem \ref{Thm:Rational-Homogeneous-Spaces},  we have $\alpha_{C_n/P_k} = 2/a(\Gamma)$.   To compute $a(\Gamma)$, we consider the rational map 
$$\eta: \mathbf{P} \overline{\mathcal{O}}_{[3^{2n-2k},2^{3k-2n}]} \dasharrow C_n/P_k, \quad \quad \phi \mapsto [{\rm Ker}(\phi)]. $$

As in the proof of  Proposition \ref{p.a(Gamma)classic}, take a conic curve $C$ on $\mathbf{P}(\mathcal{O})$, then its image $\eta(C)$ is a curve of degree  $2(2n-k)$. This gives $a(\Gamma) = 2n-k$.
\end{proof}

Now we are ready to prove our main result. 
\begin{proof}[Proof of Theorem \ref{t.RationalHomSpace}]
	The statement (1) is a direct consequence of Theorem \ref{Thm:Rational-Homogeneous-Spaces}, Proposition \ref{Prop:Pseff-Small} and Proposition \ref{Prop:Pseff-Divisor}.
	
	For the statement (2), let $r$ and $d$ be two positive integers. Then there exists an effective divisor $D\subset \PP(T_{G/P})$ such that $D\sim r\Lambda - d\pi^*H$ if and only if
	\[
	H^0(G/P,({\rm Sym}^r T_{G/P})\otimes \sO_{G/P}(-dH))\not=0.
	\]
	Firstly we assume that the morphism $\varepsilon:\mathbf{P}(T^*_{G/P})\rightarrow \mathbf{P}(\widetilde{\sO})$ is divisorial with exceptional divisor $\Gamma$. Then $\Gamma$ is dominated by curves with $\Lambda$-degree $0$ and $\Gamma\equiv a\Lambda - b\pi^*H$ by our definition of $a$ and $b$. Let $m$ be the multiplicity of $D$ along $\Gamma$. Then the restriction of the following effective divisor
	\[
	D-m\Gamma \equiv (r-am)\Lambda + (bm-d)\pi^*H
	\]
	to $\Gamma$ is pseudoeffective. Then we obtain $r-am\geq 0$ and $bm-d\geq 0$. This yields
	\[
	d\leq bm \leq b\left\lfloor \frac{r}{a} \right\rfloor,
	\]
	where the second inequality follows from the fact that $m$ is an integer. Conversely, if $r$ and $d$ are two positive integers satisfying $ d\leq b\lfloor \frac{r}{a} \rfloor$.  We define $m=\lfloor\frac{r}{a}\rfloor$. Then we get
	\[
	r\Lambda - d\pi^*H \sim m\Gamma  + (r-am)\Lambda + (bm-d)\pi^*H.
	\]
	Note that $r-am\geq 0$ and $bm-d\geq 0$ by our assumption. As $\Lambda$ and $H$ are globally generated, it follows that there exists an effective divisor $D'$ such that 
	\[
	r\Lambda - d\pi^*H\sim m\Gamma + D'\geq 0.
	\]
	
	Next we assume that the morphism $\varepsilon:\mathbf{P}(T^*_{G/P})\rightarrow \mathbf{P}(\widetilde{\sO})$ is small. We consider the stratified Mukai flop $\mu:\mathbf{P}(T^*_{G/P})\dashrightarrow \mathbf{P}(T^*_{G/Q})$. Let $D\subset \mathbf{P}(T^*_{G/P})$ be an effective divisor such that 
	\[
	D\sim r\Lambda - d\pi_1^*H.
	\]
	By Proposition \ref{Prop:Pseff-Small}, the push-forward by $\mu$ shows
	\[
	\mu_*D \sim r\Lambda' - d\mu_* \pi_1^*H \sim r\Lambda' - d( a\Lambda' - \pi_2^*H) \sim (r-da)\Lambda' + d\pi_2^*H,
	\]
	where $\Lambda'$ is the tautological divisor of $\mathbf{P}(T^*_{G/Q})$ and we use the fact that $b=1$ in this case. As $\mu_*D$ is effective, we obtain $r-da\geq 0$. Conversely, if $r$ and $d$ are two positive integers satisfying $ d\leq b\lfloor \frac{r}{a} \rfloor$. Then we get $ad\leq r$ as $b=1$. In particular, as $\Lambda'$ and $H$ are globally generated, there exists an effective divisor $D'$ such that $D'\sim (r-ad)\Lambda' + d\pi_2^*H$. Then the pull-back $\mu^*D'$ is an effective divisor such that 
	\[
	\mu^*D' \sim (r-ad)\Lambda + d\mu^*\pi_2^*H \sim r\Lambda - d\pi_1^*H.
	\] 
	
	For the statement (3), note first that the tangent bundle $T_{G/P}$ is semi-stable. Thus by Lemma \ref{l.semistable} we  have
	\[
	\frac{b}{a} = \alpha_{G/P} = {\rm index}(G/P) \cdot \alpha(G/P,-K_{G/P}) \leq  \frac{{\rm index}(G/P)}{\dim(G/P)}.
	\]
	In particular, the normalized tangent bundle of $G/P$ is pseudoeffective if and only if 
	\[
	a\cdot {\rm index}(G/P)= b \cdot \dim(G/P).
	\]
	Consequently, as $b\leq 2$, it follows that $2\dim(G/P)$ is divided by ${\rm index}(G/P)$. Thus, for $G$ of exceptional type, one can check by Appendix \ref{Appendix} that the normalized tangent bundle of $G/P$ is pseudoeffective if and only if $G/P$ is isomorphic to either $E_7/P_7$ or $G_2/P_1=\mathbb{Q}^5$. \\
	
	\textit{Type $A_n/P_k$.} Note that $A_n/P_k$ is isomorphic to $A_n/P_{n+1-k}$. Thus we may assume that $2k\leq n+1$. Firstly we assume that $2k\leq n$, then $a=k$ and $b=1$. Then we have
	$$
		a\cdot \text{index}(A_n/P_k) - b\cdot \dim(A_n/P_k)  = k(n+1) - k(n-k+1) = k^2 >0. $$
    Hence, the normalized tangent bundle of $A_n/P_k$ is not pseudoeffective if $2k\not= n+1$. Next we assume that $2k=n+1$, then we have $a=k$ and $b=2$ and we have
	\[
	a\cdot \text{index}(A_{n}/P_{k}) - b\cdot \dim(A_n/P_k) = k(n+1) - 2k(n-k+1) = k(2k-n-1) =0.
	\]
	Hence, the normalized tangent bundle of $X=A_n/P_k$ is pseudoeffective if $2k=n+1$ and $X$ is isomorphic to the Grassmann variety $\Gr(k,2k)$ in this case. \\
	
	\textit{Type $B_n/P_k$.} Firstly we assume that $3k\leq 2n$. Then $a=2k$ and $b=2$. In particular,  we have
	$$
		a\cdot \text{index}(B_n/P_k) - b\cdot \dim(B_n/P_k) = 2k(2n-k) - k(4n-3k+1) = k(k-1) \geq 0 \
		$$
    with equality if and only if $k=1$. Hence, if $3k\leq 2n$, then the normalized tangent bundle of $B_n/P_k$ is pseudoeffective if and only if $k=1$, in which case $B_n/P_k$ is isomorphic to the $(2n-1)$-dimensional quadric $\bQ^{2n-1}$. Next we assume that $2n+1\leq 3k\leq 3(n-1)$. Then $a=2n-k$ ($k$ even) or $2n-k+1$ ($k$ odd), and $b=1$. Nevertheless note that we have
	\begin{align*}
		2a\cdot \text{index}(B_n/P_k) - 2b\cdot \dim(B_n/P_k) & \geq  2(2n-k)^2 - k(4n-3k+1) \\
		                       & = 8n^2-12nk +5k^2 -k \\
		                       & = (2n-2k)(4n-2k) + k^2-k >0. 
	\end{align*}
    Therefore, if $2n+1\leq 3k\leq 3(n-1)$, then the normalized tangent bundle of $B_n/P_k$ is not pseudoeffective. Finally, we assume that $k=n$. Then $a=\lfloor \frac{n+1}{2}\rfloor$, and $b=1$ ($n$ even) or $b=2$ ($n$ odd). On the other hand, note that $B_n/P_n$ is the $\frac{n(n+1)}{2}$-dimensional spinor variety $\mathbb{S}_{n+1}$ with index $2n$. In particular, one can easily obtain that the normalized tangent bundle of $B_n/P_n$ is pseudoeffective if and only if $n$ is odd.\\
    
    \textit{Type $C_n/P_k$.} If $k=1$, then $C_n/P_1$ is isomorphic to $\PP^{2n-1}$ whose normalized tangent bundle is known to be non-pseudoeffective. Now we assume that $6\leq 3k\leq 2n$, then $a=2k-2$ ($k$ odd) or $a=2k$ ($k$ even) and $b=1$. If $k\geq 3$, then we have
    \begin{align*}
    	2a\cdot \text{index}(C_n/P_k) - 2b\cdot \dim(C_n/P_k) & \geq 2(2k-2)(2n-k+1) - k (4n-3k+1)  \\
    	                                       & = 4nk-k^2+7k-8n-4 \\
    	                                       & =\frac{2nk}{3}-k^2 + \frac{10nk}{3} - 8n +7k-4 >0.
    \end{align*}
    Hence, the normalised tangent bundle of $B_n/P_k$ is not pseudoeffective if $9\leq 3k\leq 2n$.   For $k=2$ and $n \geq 3$,  one can easily check that the normalized tangent bundle of $B_n/P_2$ is not pseudoeffective in the same way. Finally we assume that $3k\geq 2n+1$. Then $a=2n-k$ and $b=2$. Then we obtain
    \begin{align*}
    	a\cdot \text{index}(C_n/P_k) - b\cdot \dim(C_n/P_k) & = (2n-k)(2n-k+1) - k(4n-3k+1) \\
    	                                            & \geq (2n-2k+1)(2n-2k) \geq 0                                           
    \end{align*}
    with equality if and only if $k=n$. In particular, if $3k\geq 2n+1$, then the normalized tangent bundle of $C_n/P_k$ is pseudoeffective if and only if  $k=n$, which is the Lagrangian Grassmann variety $\LG(n,2n)$. \\
    
    \textit{Type $D_n/P_k$.} Firstly we assume that $3k\leq 2n-1$. Then $a=2k$ and $b=2$. Then we obtain
  $$a\cdot \text{index}(D_n/P_k) - b\cdot \dim(D_n/P_k) = 2k(2n-k-1) - k(4n-3k-1)  \geq k^2 -k \geq 0,     $$
    with equality if and only if $k=1$. Hence, if $3k\leq 2n-1$, then the normalized tangent bundle of $D_n/P_k$ is pseudoeffective if and only if $k=1$, which is the $(2n-2)$-dimensional quadric $\bQ^{2n-2}$. Next we assume that $2n\leq 3k\leq 3(n-2)$, then $a=2n-k-1$ ($k$ odd) or $a=2n-k$ ($k$ even), and $b=1$. In particular, we have
    \begin{align*}
    	2a\cdot \text{index}(D_n/P_k) - 2b\cdot \dim(D_n/P_k) & \geq 2(2n-k-1)^2 - k(4n-3k-1) \\ 
    	                                                      & = 8n^2-12nk+5k^2+5k-8n+2 \\
    	                                                      & = (2n-2k)(4n-2k)+k^2+5k-8n+2 \\
    	                                                      & \geq 4(2n+4) + k^2+5k-8n+2  > 0, 
 \end{align*}
    where the fourth inequality follows from the fact that $k\leq n-2$. Hence the normalized tangent bundle is not pseudoeffective if $2n\leq 3k\leq 3(n-2)$. Finally, if $k\geq n-1$, then $D_n/P_k$ is isomorphic to the spinor variety $\mathbb{S}_{n}=B_{n-1}/P_{n-1}$ and the normalized tangent bundle of $D_n/P_k$ with $k\geq n-1$ is pseudoeffective if and only if $n$ is even.
\end{proof}

\section{Two non-homogeneous examples}

As mentioned in the introduction, besides rational homogeneous spaces, there are only two known examples of Fano manifolds with Picard number $1$ and big tangent bundle: the del Pezzo threefold $V_5$ of degree $5$ and the horospherical $G_2$-variety $\mathbb{X}$. In this subsection, we describe the pseudoeffective cones of the projectivized tangent bundle of $V_5$ and $\mathbb{X}$. Recall that $V_5$ is actually a codimension $3$ linear section of $\Gr(2,5)\subset \PP^9$ and the bigness of $T_{V_5}$ is proved in \cite{HoeringLiuShao2020} using the total dual VMRT. In particular, this gives the pseudoeffective cone of $\PP(T_{V_5})$ by applying Theorem \ref{Thm:Class-dual-VMRT}. Actually, we have the following complete descriptions of the cones of divisors of $\PP(T_{V_5})$.

\begin{proposition}
	\label{Prop:Cones-V5}
	Let $X$ be the del Pezzo Fano threefold $V_5$ of degree $5$. Denote by $\pi:\sX:=\PP(T_X)\rightarrow X$ the projectivized tangent bundle of $X$. Let $H$ be the ample generator of $\pic(X)$ and let $\Lambda$ be the tautological divisor  of $\PP(T_X)$. Then we have
	\[
		\begin{cases}
			\ \,\Eff(\sX)=\left\langle3\Lambda-\pi^*H,\pi^*H \right\rangle  \\ 
			\Mov(\sX) =\left\langle \Lambda,\pi^*H\right\rangle             \\
			\ \Nef(\sX)=\left\langle \Lambda+\pi^*H,\pi^*H\right\rangle.
		\end{cases}
	\]
	In particular, the cones of divisors $\Eff(\sX)$, $\Mov(\sX)$ and $\Nef(\sX)$ are closed rational cones in $N^1(\sX)$.
	
	\begin{center}
		\begin{tikzpicture}
			\draw[<->] (-4,0)--(4,0);
			\draw[->]  (0,0)--(0,4);
			
			\draw (4.5,0) node {$\pi^*H$};
			\draw (-4.6,0) node {$-\pi^*H$};
			\draw (0,4.3) node {$\Lambda$};
			
			\draw[->] (0,0)--(4,4);
			\draw (4.9,4) node {$\Lambda+\pi^*H$};
			\draw[red] (0.5,0)--(0.5,0.5);
			\draw[red] (1,0)--(1,1);
			\draw[red] (1.5,0)--(1.5,1.5);
			\draw[red] (2,0)--(2,2);
			\draw[red] (2.5,0)--(2.5,2.5);
			\draw[red] (3,0)--(3,3);
			\draw[red] (3.5,0)--(3.5,3.5);
			\draw[red] (4.2,2) node {$\Nef(\sX)$};
			
			\draw[blue] (0,0.5)--(3.5,0.5);
			\draw[blue] (0,1)--(3.5,1);
			\draw[blue] (0,1.5)--(3.5,1.5);
			\draw[blue] (0,2)--(3.5,2);
			\draw[blue] (0,2.5)--(3.5,2.5);
			\draw[blue] (0,3)--(3.5,3);
			\draw[blue] (0,3.5)--(3.5,3.5);
			\draw[blue] (2,4) node {$\Mov(\sX)$};

			\draw[green,->] (0,0)--(-1.3,3.9);
			\draw[green] (0.5,0) arc (0:108.9:0.5);
			\draw[green] (1,0) arc (0:108.9:1);
			\draw[green] (1.5,0) arc (0:108.9:1.5);
			\draw[green] (2,0) arc (0:108.9:2);
			\draw[green] (2.5,0) arc (0:108.9:2.5);
			\draw[green] (3,0) arc (0:108.9:3);
			\draw[green] (3.5,0) arc (0:108.9:3.5);
			\draw[green] (-1.5,4.1) node {$3\Lambda-\pi^*H$}; 
			\draw[green] (-1.7,2.9) node {$\Eff(\sX)$};
			\end{tikzpicture}
	\end{center}
\end{proposition}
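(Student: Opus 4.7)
The plan is to compute the three cones separately by combining Theorem~\ref{Thm:Class-dual-VMRT} with the specific geometry of $V_5$.

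\emph{Effective cone.} I would apply Theorem~\ref{Thm:Class-dual-VMRT} to the family $\sK$ of lines on $V_5$. Classically, through a general point of $V_5$ pass exactly three lines in general linear position, so the VMRT $\sC_x\subset \PP(\Omega_{V_5,x})\cong \PP^2$ consists of three reduced points; it is smooth, linearly non-degenerate and of codegree $3$. Since $T_{V_5}$ is big by \cite[Theorem 1.5]{HoeringLiuShao2020} and $H\cdot \sK=1$, Theorem~\ref{Thm:Class-dual-VMRT} gives $[\check{\sC}]\equiv 3\Lambda-b\pi^*H$ with $b\in\{1,2\}$, and the value $b=1$ is extracted from the explicit class computation in \cite{HoeringLiuShao2020}. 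By part~(3) of Theorem~\ref{Thm:Class-dual-VMRT}, the class $3\Lambda-\pi^*H$ spans the extremal ray of $\overline{\Eff}(\sX)$, and the effective cone coincides with its closure because this ray is represented by the prime effective divisor $\check{\sC}$.

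\emph{Nef cone.} The assertion amounts to $\Lambda+\pi^*H$ being nef while $\Lambda$ is not. For the first, I would prove global generation of $T_{V_5}(H)$: using the embedding $V_5\subset \Gr(2,5)$ and the Mukai rank-two bundle $\sE$ on $V_5$ with $c_1(\sE)=H$ and $h^0(\sE)=5$, one obtains a surjection from a globally generated bundle onto $T_{V_5}(H)$ via a Koszul/normal-bundle computation. For the second, I would certify the non-nefness of $\Lambda$ by producing a curve $C\subset \sX$ with $\Lambda\cdot C<0$, obtained inside $\check{\sC}$ from a one-parameter family of minimal sections deforming along the Hilbert scheme of lines on $V_5$.

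\emph{Movable cone.} The inclusion $\overline{\Mov}(\sX)\subseteq \langle [\Lambda],[\pi^*H]\rangle$ is immediate from the effective cone: for every $\epsilon>0$, the class $\Lambda-\epsilon\pi^*H$ lies in the interior of the two-dimensional cone $\langle [\check{\sC}],[\pi^*H]\rangle$, so Lemma~\ref{Lemma:Restricted-Base-Locus-DZD} together with Corollary~\ref{Cor:Equality-ABL-RBL} force its $N_\sigma$-part to be a positive multiple of $\check{\sC}$, ruling out movability. The reverse inclusion reduces to showing $[\Lambda]$ is itself movable. For this I would use the $\mathrm{PGL}_2$-action on $V_5$: the infinitesimal action embeds $\mathfrak{sl}_2$ into $H^0(V_5,T_{V_5})$ and, because $\mathrm{PGL}_2$ acts with a dense open orbit, the corresponding evaluation map is surjective on the open orbit, so the base scheme of $|m\Lambda|$ is contained in $\pi^{-1}(Z)$ with $Z\subsetneq V_5$.

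The main obstacle is the final step of the movable-cone argument: while generic global generation of $T_{V_5}$ follows cheaply from the $\mathrm{PGL}_2$-action, showing that $\bB_{-}^{1}(\Lambda)=\emptyset$ requires ruling out the two possible divisorial base components $\pi^*H$ and $\check{\sC}$, which in turn demands either explicit sections of $\Sym^m T_{V_5}$ for suitable $m$ whose vanishing locus avoids $\check{\sC}$, or a direct calculation that $\sigma_{\check{\sC}}(\Lambda)=0$.
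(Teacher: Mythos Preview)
Your overall strategy matches the paper's, and the effective-cone part is essentially identical. There are, however, two points where your sketch diverges from the paper and where the paper's argument is cleaner.

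\textbf{Nef cone, non-nefness of $\Lambda$.} Your plan to find a $\Lambda$-negative curve ``inside $\check{\sC}$ from a one-parameter family of minimal sections'' does not work as stated: minimal sections have $\Lambda$-degree $0$, and sweeping them out along the Hilbert scheme of lines produces curves with $\Lambda$-degree $\geq 0$. The paper instead uses the classical fact that $V_5$ carries a one-dimensional family of \emph{jumping} lines $l$ with $T_X|_l\cong \sO(2)\oplus\sO(1)\oplus\sO(-1)$; the quotient $T_X|_l\twoheadrightarrow\sO(-1)$ gives a section of $\PP(T_X)|_l$ with $\Lambda$-degree $-1$, showing at once that $\Lambda$ is not nef and that $\Lambda+\pi^*H$ is the correct boundary. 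For the nefness of $\Lambda+\pi^*H$ the paper simply quotes \cite[Lemma 3.1]{HoeringLiuShao2020}, which for any $X\subset\PP^N$ embedded by $|H|$ gives global generation of $T_X\otimes\sO(3H+K_X)=T_X(H)$; your Mukai-bundle route would also work but is more laborious.

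\textbf{Movable cone.} The ``main obstacle'' you flag is lighter than you think, and the paper dispatches it in two lines. You already know $\Bs|\Lambda|\subset\pi^{-1}(Z)$ with $Z$ the complement of the open $\PGL_2$-orbit. The key extra fact is that $Z=\overline{X}_1$ is a \emph{prime} divisor lying in $|2H|$. Hence the only possible divisorial component of $\Bs|\Lambda|$ is $\pi^*\overline{X}_1$; in particular your worry about $\check{\sC}$ is moot, since $\check{\sC}$ dominates $X$ and so cannot lie in $\pi^{-1}(\overline{X}_1)$. If $\Lambda$ were not movable, every $D\in|\Lambda|$ would contain $\pi^*\overline{X}_1$, so $\Lambda-2\pi^*H$ would be effective, contradicting $\Eff(\sX)=\langle 3\Lambda-\pi^*H,\pi^*H\rangle$. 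This is exactly the paper's argument.
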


\begin{proof}	
	The description of the effective cone $\Eff(\sX)$ of $\sX$ follows from Theorem \ref{Thm:Class-dual-VMRT} and \cite[Theorem 5.4]{HoeringLiuShao2020}.
	
	Note that $\check{\sC}$ is dominated by curves with $\Lambda$-degree $0$. It follows that $\check{\sC}\subset \bB_+(\Lambda)$. In particular, by Lemma \ref{Lemma:Base-Locus-DZD} that $[\Lambda]$ is not contained in the interior of $\Mov(\sX)$. Thus, it remains to show that $\Lambda$ is actually movable. Note that $X$ is quasi-homogeneous under the action of $\Aut(X)=\PGL_2(\mathbb{C})$ and there are exactly three orbits $X_0\sqcup X_1\sqcup X_2$, where $X_0$ is the open orbit and $X_i$ has codimension $i$ for $0\leq i\leq 2$. Moreover, the closure $\overline{X}_1=X_1\sqcup X_2$ of $X_1$ is a prime divisor in the complete linear system $|2H|$. In particular, the base locus of $|\Lambda|$ is contained in $\pi^{-1}(\overline{X}_1)$. Let $D\in |\Lambda|$ be an arbitrary element. If $\Lambda$ is not movable, then $\pi^*\overline{X}_1$ is contained in $\supp(D)$. In particular, $D-\pi^*\overline{X}_1$ is an effective divisor. This shows that $\Lambda-2\pi^*H$ is contained in $\Eff(\sX)$, which contradicts the description of $\Eff(\sX)$ above. Hence, $\Lambda$ is movable and we have $\Mov(\sX)=\left\langle\Lambda,\pi^*H\right\rangle$.
	
	Recall that there exists a one-dimensional family of lines $l\subseteq X$ on $X$ such that
	\[
	T_X|_l\cong \sO_{\PP^1}(2)\oplus \sO_{\PP^1}(1)\oplus \sO_{\PP^1}(-1).
	\]
	In particular, the nef cone $\Nef(\sX)$ of $\sX$ is contained in the cone $\left\langle \Lambda+\pi^*H,\pi^*H\right\rangle$. Thus, it remains to show that $\Lambda+\pi^*H$ is nef. Note that $X$ is embedded in $\PP^6$ by the complete linear system $|H|$. Therefore, thanks to \cite[Lemma 3.1]{HoeringLiuShao2020}, the vector bundle
	\[
	T_X\otimes \sO_{\PP^6}(3)|_X \otimes \sO_X(K_X)\cong T_X\otimes H
	\]
	is globally generated. Hence, the Cartier divisor class $\Lambda+\pi^*H$ is nef.
\end{proof}

\begin{remark}
	If $X$ is a Fano manifold of Picard number $1$ such that the VMRT $\sC_x\subset \PP(\Omega_{X,x})$ is $0$-dimensional, it is proved in \cite[Theorem 1.1]{HoeringLiu2021} that $T_X$ is big if and only if $X$ is isomorphic to the del Pezzo threefold $V_5$ of degree $5$. In particular, the normalised tangent bundle can not be pseudoeffective by Proposition \ref{Prop:Cones-V5}.
\end{remark}

Now we consider the horospherical $G_2$-variety $\mathbb{X}$. We briefly recall the geometric description of $\mathbb{X}$ and we refer the reader to \cite{Pasquier2009} for more details. Firstly $\mathbb{X}$ is a $7$-dimensional Fano manifold of Picard number $1$ and index $4$. The automorphism group ${\rm Aut}(\mathbb{X})$ acts on $\mathbb{X}$ with two orbits and the unique closed orbit $Z\subset \mathbb{X}$ is a smooth $5$-dimension quadric such that $H|_Z\cong \sO_{\mathbb{Q}^5}(1)$, where $H$ is the ample generator of $\pic(\mathbb{X})$. In particular, $\Lambda$ is movable. On the other hand, by \cite[Proposition 2.3]{PasquierPerrin2010}, it follows that there exists a deformation $\mathfrak{X}\rightarrow \Delta$ such that $\mathfrak{X}_t\cong B_3/P_2$ if $t\not=0$ and $\mathfrak{X}_0\cong \mathbb{X}$. Then the semi-continuity theorem implies that $T_{\mathbb{X}}$ is big. On the other hand, for $X=B_3/P_2$, the total dual VMRT $\check{\sC}'$ is a prime divisor such that
\[
[\check{\sC}'] \equiv 4\Lambda' - 2{\pi'}^*H',
\]
where $\Lambda'$ is the tautological divisor of $\pi':\PP(T_{B_3/P_2})\rightarrow B_3/P_2$ and $H'$ is the ample generator of $\pic(B_3/P_2)$ (see Proposition \ref{Prop:Pseff-Divisor}). Moreover, the VMRT of $\mathbb{X}$ at a general point is the smooth surface $\mathbb{P}(\sO_{\PP^1}(1)\oplus \sO_{\PP^1}(3))$ embedded by $\sO(1)$. This implies that the total dual VMRT $\check{\sC}$ of $\mathbb{X}$ is a prime divisor satisfying
\[
[\check{\sC}] \equiv 4\Lambda - 2\pi^*H.
\]
In particular, the class $[\Lambda]$ is not contained in the interior of $\overline{\Mov}(\PP(T_{\mathbb{X}}))$ since $\bB_{-}(\Lambda)$ contains $\check{\sC}$. This shows that $[\Lambda]$ generates an extremal ray of $\overline{\Mov}(\PP(T_{\mathbb{X} })$.

Next, denote by $\sN$ the normal bundle of $Z$ in $\mathbb{X}$. Then by adjunction formula, we have $\det(\sN)\cong \sO_{\Q^5}(-1)$. Denote by $\mathbb{X}'\rightarrow \mathbb{X}$ the blow-up along $Z$ with exceptional divisor $E$. Then it is known that $E$ is isomorphic to the complete flag manifold of $G_2$-type. In particular, the variety $E\cong \PP(\sN^*)$ is isomorphic to $\PP(\sV)$ over $\Q^5$, where $\sV$ is the Cayley bundle over $\Q^5$ (see \cite{Ottaviani1990}). As $\det(\sV)\cong \sO_{\Q^5}(-1)$, there is an isomorphism $\sN\cong \sV^*(-1)$. 

We claim that we have an isomorphism $\sV^*(-1)\cong \sV$. Indeed, it is clear that $\sV^*(-1)$ is stable as $\sV$ is stable (\cite{Ottaviani1990}). Moreover, an easy computation shows that we have 
\begin{center}
	$c_1(\sV^*(-1))=c_1(\sV)$\quad and \quad $c_2(\sV^*(-1))=c_2(\sV)$.
\end{center}
By \cite[Main Theorem]{Ottaviani1990}, the vector bundle $\sV^*(-1)$ is isomorphic to $\sV$. 

Finally, by \cite[Theorem 3.7]{Ottaviani1990}, the vector bundle $\sV(2)$ and hence $\sN(2)$ are globally generated. As a consequence, it follows from the tangent sequence of $Z$ that the restriction $T_{\mathbb{X}}(2)|_{Z}$ is nef. Moreover, note that $T_{\mathbb{X}}$ is globally generated outside $Z$, thus the vector bundle $T_{\mathbb{X}}(2)$ is nef. On the other hand, by \cite[Theorem 3.5]{Ottaviani1990}, there exist lines $l$ on $Z=\Q^5$ such that
\[
\sV|_l\cong \sO_{\PP^1}(-2) \oplus \sO_{\PP^1}(1).
\]
This implies that $T_{\mathbb{X}}(a)$ cannot be nef for any  $a<2$ and hence the divisor $\Lambda + a\pi^*H$ is nef if and only if $a\geq 2$. In summary, we have the following result.

\begin{proposition}
	Let $X$ be the horospherical $G_2$-variety $\mathbb{X}$, and let $H$ be the ample generator of $\pic(X)$. Denote by $\Lambda$ the tautological divisor class of the projectivized tangent bundle $\pi:\sX=\PP(T_X)\rightarrow X$. Then we have
	\[
	\begin{cases}
		\ \,\Eff(\sX)=\left\langle 4\Lambda-2\pi^*H,\pi^*H \right\rangle  \\ 
		\Mov(\sX) =\left\langle \Lambda,\pi^*H\right\rangle             \\
		\ \Nef(\sX)=\left\langle \Lambda+2\pi^*H,\pi^*H\right\rangle.
	\end{cases}
	\]
\end{proposition}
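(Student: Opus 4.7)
The proof will determine each of the three cones separately. For the effective cone, the plan is to locate the total dual VMRT $\check{\mathcal{C}}\subset \mathbf{P}(T_X)$ explicitly. Since $\mathbb{X}$ is a smoothable degeneration of $B_3/P_2$ (by the flat family $\mathfrak{X}\to \Delta$ of Pasquier--Perrin with $\mathfrak{X}_t\simeq B_3/P_2$ for $t\neq 0$), semicontinuity already yields bigness of $T_{\mathbb{X}}$; more importantly, the VMRT at a general point is known to be the smooth surface $\mathbf{P}(\mathcal{O}_{\mathbf{P}^1}(1)\oplus \mathcal{O}_{\mathbf{P}^1}(3))$ embedded by $\mathcal{O}(1)$, which is not dual defective. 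Combining this non-defectiveness with Theorem \ref{Thm:Class-dual-VMRT} shows that $\check{\mathcal{C}}$ is a prime divisor whose numerical class generates the extremal ray of $\overline{\Eff}(\mathcal{X})$ opposite to $\pi^*H$. To identify the coefficients $(a,b)$ with $(4,2)$, I would compute the codegree of the VMRT (giving $a=4$, since the dual of this surface in its ambient projective space is a quartic hypersurface) and argue $b=2$ either directly on a minimal section (via the splitting given by Proposition \ref{Prop:Dual-Defect-VMRT} together with Theorem \ref{Thm:Class-dual-VMRT}(2)) or, more quickly, by semicontinuity from the $B_3/P_2$ computation already recorded, where $[\check{\mathcal{C}}']\equiv 4\Lambda'-2\pi'^*H'$.

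For the movable cone, the plan has two halves. First, to show $[\Lambda]$ is actually movable I would exploit the quasi-homogeneity of $\mathbb{X}$ under $\mathrm{Aut}(\mathbb{X})$: the closed orbit is the quadric $Z\subset \mathbb{X}$, while the complement is a single open orbit over which $T_X$ is globally generated; hence any member of $|\Lambda|$ can have base locus only inside $\pi^{-1}(Z)$, and since $\pi^{-1}(Z)$ is irreducible of dimension one less than $\mathcal{X}$ while $Z$ is not a divisor on $\mathbb{X}$, the base locus of $|\Lambda|$ has codimension $\geq 2$. Second, to show $[\Lambda]$ spans an extremal ray of $\overline{\Mov}(\mathcal{X})$, I use that $\check{\mathcal{C}}\subset \mathbf{B}_{-}(\Lambda)$ (since $\check{\mathcal{C}}$ is swept out by minimal sections of $\Lambda$-degree zero), so by Lemma \ref{Lemma:Base-Locus-DZD} the class $[\Lambda]$ is not in the interior of $\overline{\Mov}(\mathcal{X})$.

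For the nef cone, the plan is to analyze $T_X$ along the closed orbit $Z\simeq \mathbb{Q}^5$ via the conormal exact sequence. Writing $\mathcal{N}$ for the normal bundle of $Z$ in $\mathbb{X}$, adjunction gives $\det(\mathcal{N})\simeq \mathcal{O}_{\mathbb{Q}^5}(-1)$. I would then identify $\mathcal{N}$ with the Cayley bundle $\mathcal{V}$ on $\mathbb{Q}^5$: the blow-up of $\mathbb{X}$ along $Z$ has its exceptional divisor isomorphic to the full flag variety of $G_2$, which is simultaneously $\mathbf{P}(\mathcal{N}^*)$ and $\mathbf{P}(\mathcal{V})$, so $\mathcal{N}\simeq \mathcal{V}^*(-1)$; a Chern-class comparison plus stability of $\mathcal{V}$ (from Ottaviani) and Ottaviani's uniqueness theorem then give $\mathcal{V}^*(-1)\simeq \mathcal{V}$. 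Using the nefness of $\mathcal{V}(2)$ from \cite{Ottaviani1990}, and the global generation of $T_X$ off $Z$, the tangent sequence for $Z\subset \mathbb{X}$ shows $T_X(2)$ is nef, i.e.\ $[\Lambda+2\pi^*H]\in \mathrm{Nef}(\mathcal{X})$. Conversely, the existence of a line $\ell\subset \mathbb{Q}^5=Z$ with $\mathcal{V}|_\ell\simeq \mathcal{O}_{\mathbf{P}^1}(-2)\oplus \mathcal{O}_{\mathbf{P}^1}(1)$ (Theorem 3.5 of Ottaviani) produces a curve in $\mathcal{X}$ against which $\Lambda+a\pi^*H$ has negative intersection for $a<2$, giving the matching lower bound.

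The main technical hurdle is the identification $\mathcal{N}\simeq \mathcal{V}$ on the closed orbit, since it is what allows the Cayley-bundle positivity results to be imported and used to pin down both directions of the nef-cone computation. Once this is in place, combined with the VMRT-based extremality arguments for the effective and movable cones, the three descriptions fall out.
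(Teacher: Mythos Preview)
Your plan is essentially the same as the paper's: it establishes the effective cone via the total dual VMRT (using the Pasquier--Perrin degeneration from $B_3/P_2$ and the explicit VMRT to get $[\check{\sC}]\equiv 4\Lambda-2\pi^*H$), the movable cone by combining quasi-homogeneity with the fact that $\check{\sC}$ lies in the non-ample locus of $\Lambda$, and the nef cone by identifying the normal bundle of the closed orbit $Z\cong\mathbb{Q}^5$ with the Cayley bundle and invoking Ottaviani's positivity and splitting results.

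One small correction: in your movability argument you write that $\pi^{-1}(Z)$ has ``dimension one less than $\sX$''. In fact $Z\cong\mathbb{Q}^5$ has codimension~$2$ in the $7$-fold $\mathbb{X}$, so $\pi^{-1}(Z)$ has codimension~$2$ in $\sX$ as well (since $\pi$ is a projective bundle). This actually makes the argument cleaner: the base locus of $|\Lambda|$ is contained in $\pi^{-1}(Z)$, which already has codimension~$2$, so $\Lambda$ is movable directly. No further reasoning about whether $Z$ is a divisor is needed.
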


\begin{remark}
	\label{r.Examples-Big-tangent}
	Recently the second author has found in \cite{Liu2022} infinitely many non-homogeneous Fano manifolds of Picard number $1$ and with big tangent bundle.
\end{remark}

\appendix

\section{Big table for rational homogeneous spaces}
\label{Appendix}

In this appendix we summary the results for rational homogeneous spaces of Picard number one proved in Section \ref{Section:RHS} and provide more details about the invariants and geometric information of them. Let $X=G/P_k$ be a rational homogeneous space of Picard number $1$. Denote by $\textbf{P}(T^*_X)\xrightarrow{\varepsilon} \mathbf{P}(\widetilde{\sO})\rightarrow \textbf{P}(\overline{\sO})$ the Stein factorisation of the projectivized Springer map. Note that the variety $G/P$ is $1$-dimensional if and only if it is one of the following: $A_1/P_1$, $B_1/P_1$, $C_1/P_1$, $D_2/P_1$ and $D_2/P_2$. In particular, the variety $G/P$ is isomorphic to $\PP^1$. As the invariants for $\PP^1$ are trivial, in the table below we shall always assume that $G/P$ has dimension at least $2$.

The first column of the table below gives the type of the Lie group $G$. The second column is the numeration  of the corresponding node in the Dynkin diagram. The third column contains the type of $X$ (see Definition \ref{d.types} and Table \ref{Table:Types-RHS}). The fourth column and fifth column give the values of $a$ and $b$ in Theorem \ref{t.RationalHomSpace}, respectively. The column $6$ gives the type of singularities of $\textbf{P}(\widetilde{\sO})$ in codimension $2$ (cf. Definition \ref{d.types} and Corollary \ref{c.G/Ptypes}) and the notation "-" means that $\textbf{P}(\widetilde{\sO})$ is smooth in codimension $2$. The column $7$ describes the nilpotent orbit $\sO$ and the column $8$ gives the dual defect of the VMRT $\sC_o\subset \PP(\Omega_{X,o})$ of $X$ at a referenced point $o\in X$. The columns $9$ and $10$ contain the index and dimension of $X$, respectively, and the last two columns describe the VMRT $\sC_o$ and its embedding in $\PP(\Omega_{X,o})$, respectively.

The values of $a$ and $b$ are given in Section \ref{Section:RHS} according to the types of $X=G/P_k$. Let us summarise them as follows.
\begin{enumerate}
	\item If $X=G/P_k$ is of type (I) or type (II-s), the method to compute $a$ and $b$ is provided by Proposition \ref{Prop:Pseff-Small}. In particular, we always have  $b=1$ and the value of $a$ are provided in Table \ref{Table:Degree-of-lines}.
	
	\item If $X=G/P_k$ is of type (II-d-d), the method to compute $a$ and $b$ is to use Proposition \ref{Prop:Pseff-Divisor}(1). In particular, we again have $b=1$. The values of $a$ are explicitly determined in Table \ref{Table:a-II-d-d} for $G$ of classical type. The remaining cases for $G$ of exceptional type are $E_7/P_6$, $E_8/P_3$, $E_8/P_4$, $E_8/P_6$ and $F_4/P_4$. In these cases, the induced rational map $\eta:\textbf{P}(\widetilde{\sO})\dashrightarrow X$ is not explicit, so it prevents us to do the computation as that done for classical types in Proposition \ref{p.a(Gamma)classic}. However, the formula provided in Proposition \ref{Prop:Pseff-Divisor}(1) still works in these cases and we leave the calculation of the value $a$ in these five cases for the interested reader.
	
	\item If $X=G/P_k$ is of type (II-d-A1), the method to compute $a$ and $b$ is given by  Proposition \ref{Prop:Pseff-Divisor}(2) and (3). In particular, we have $b=2$ and $a$ is equal to the codegree of the VMRT $\sC_o\subset \PP(\Omega_{X,o})$ of $X$. Moreover, if the VMRT $\sC_o$ is a rational homogeneous space, then the codegree of $\sC_o$ can be found in \cite[p.39 Table 2.1 and p.40 Table 2.2]{Tevelev2005}. The remaining cases are $C_n/P_k$ $(3k\geq 2n+1)$ and $F_4/P_3$ (see Proposition \ref{Prop:Defect-short-root} and Table \ref{Table:Types-RHS}) and we prove the following two lemmas for them.
	
	Before giving the proof, let us briefly recall the basic definition and properties of nef value morphism. Given a polarised projective manifold $(X,H)$, if $K_X$ is not nef, the \emph{nef value} of $(X,H)$ is defined as
	\[
	\tau:={\rm min} \{t\in \R\,|\, K_X+tH\ \text{is nef}\}.
	\]
	The \emph{nef value morphism} of $(X,H)$ is the morphism $\Phi:X\rightarrow Y$ defined by the complete linear system $|m(K_X+\tau H)|$ for $m\gg 0$. If assume in addition that the complete linear system $|H|$ defines an embedding $X\subset \PP^N$, then the dual defect $\defect(X)$ can be determined by the nef value morphism $\Phi$. More precisely, by \cite{BeltramettiFaniaSommese1992} (see also \cite[Theorem 7.48 and Theorem 7.49]{Tevelev2005}), if $\defect(X)>0$, then the general fibre $F$ of $\Phi$ has Picard number $1$ and we have
	\[
	\defect(X)=\defect(F)-\dim(Y),
	\]
	where $\defect(F)$ is the dual defect of $F\subset \PP^d$ embedded by $|H|_F|$.

	\begin{lemma}
		\label{Lemma:C_n-VMRT}
		Let $X=C_n/P_k$ be a rational homogeneous space of type $C$ with $k\geq 2$, and let $\sC_o\subset \PP(\Omega_{X,o})$ be the VMRT at a referenced point $o\in X$. Then $\sC_o$ is isomorphic to the following projective bundle
		\[
		\pi:\PP(\sO_{\PP^{k-1}}(2)\oplus \sO_{\PP^{k-1}}(1)^{\oplus(2n-2k)})\rightarrow \PP^{k-1}.
		\]
		with embedding given by the complete linear system $|\sO(1)|$, where $\sO(1)$ is the tautological line bundle. Moreover, the following statements hold.
		\begin{itemize}
			\item[(i)] The VMRT $\sC_o\subset \PP(\Omega_{X,o})$ is dual defective if and only if $3k\leq 2n$, and if so then we have $\defect(\sC_o)=2n-3k+1$.
			
			\item[(ii)] If $3k\geq 2n+1$, then the dual variety of the VMRT $\sC_o\subset \PP(\Omega_{X,o})$ is a hypersurface of degree $2n-k$.
		\end{itemize}
	\end{lemma}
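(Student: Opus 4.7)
The overall strategy is to identify $\sC_o$ explicitly as a projective bundle over $\PP^{k-1}$ by parametrising minimal rational curves through $o$, and then to read off the dual defect and the codegree from the resulting scroll structure.

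\textbf{Structure of the VMRT.} Write $o = [F]$ for an isotropic $k$-plane $F \subset V$. Under the Levi action of $GL(F) \times \operatorname{Sp}(F^\perp/F)$, the tangent space decomposes as
\[
T_o X \;\simeq\; \Hom(F, F^\perp/F) \;\oplus\; S^2 F^* \;\subset\; \Hom(F, V/F),
\]
via the splitting $V/F \simeq F^\perp/F \oplus (V/F^\perp) \simeq F^\perp/F \oplus F^*$ coming from $\omega$. A line on $X$ through $[F]$ is given by a pair $(F_1, F_2)$ with $\dim F_1 = k-1$, $\dim F_2 = k+1$, $F_1 \subset F \subset F_2$, and the condition that every $k$-plane of the pencil is isotropic is equivalent to $F_2 \subset F_1^\perp$ (which is \emph{weaker} than $F_2$ being isotropic). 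Hence the space of lines through $[F]$ is a $\PP^{2n-2k}$-bundle over $\PP(F^*) = \PP^{k-1}$, whose fibre over $[F_1]$ is $\PP(F_1^\perp/F)$. The tangent map sends such a pair to the rank-one tensor $\phi \in T_o X$ with kernel $F_1$ and image $F_2/F$. Decomposing $\phi = (\phi_1, \phi_2)$, the constraint $F_2 \subset F_1^\perp$ forces the $F^*$-component of $F_2/F$ to lie in the line $\mathbb{C} f^* \subset F^*$ (where $f^*$ annihilates $F_1$), and the symmetry of $\phi_2 \in S^2 F^*$ then forces $\phi_2 = c(f^*)^2$. Thus every rank-one tangent direction at $o$ has the form $\phi = f^*\otimes w_1 + c(f^*)^2$ with $w_1 \in F^\perp/F$, $c \in \mathbb{C}$, and conversely every non-zero element of this shape arises from a unique such line and uniquely determines $[f^*]$ from its rank-one structure. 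As $[f^*]$ varies on $\PP(F^*)$, the subspaces $\mathcal{W}_{[f^*]} := \mathbb{C}(f^*)^2 \oplus (\mathbb{C} f^*)\otimes (F^\perp/F) \subset T_o X$ glue into a subbundle $\mathcal{W}$ of $T_o X \otimes \sO_{\PP^{k-1}}$ isomorphic to $\sO_{\PP^{k-1}}(2) \oplus \sO_{\PP^{k-1}}(1)^{\oplus(2n-2k)}$, and the tangent map gives an embedding of the corresponding projective bundle by $|\sO(1)|$; this yields the first claim of the lemma. The Lagrangian case $k=n$ degenerates correctly to the Veronese $\nu_2(\PP^{n-1})$.

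\textbf{Dual variety.} For (i) and (ii) I exploit the $\PP^{2n-2k}$-fibres of $\pi\colon \sC_o \to \PP^{k-1}$, which are linearly embedded in $\PP(T_oX)$, and the split structure of $\mathcal{W}$. One computes that the kernel of the differential of the Gauss map of $(\sC_o, \sO(1))$ at a general point has dimension $r - 1 - d$, where $r = 2n-2k+1$ is the rank of $\mathcal{W}$ and $d = k-1$ is the base dimension; this equals $2n-3k+1$. When this quantity is positive, i.e.\ $3k\leq 2n$, it yields $\defect(\sC_o) = 2n-3k+1$, proving (i). When $3k\geq 2n+1$, the Gauss differential is generically injective, so $\sC_o$ is not dual defective, and the codegree is evaluated from the jet-bundle formula
\[
\codeg(\sC_o) \;=\; \int_{\sC_o} c_{2n-k-1}\bigl(\mathcal{J}^1(\sO_{\sC_o}(1))\bigr).
\]
Expanding $c(\mathcal{J}^1) = c(\sO(1)) \cdot c(\Omega_{\sC_o}\otimes \sO(1))$ on the split projective bundle $\sC_o = \mathbb{P}(\sO(2)\oplus\sO(1)^{\oplus(2n-2k)})$ via the relative tangent sequence reduces the integral to a monomial push-forward to $\PP^{k-1}$, whose value is $2n-k$.

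\textbf{Main obstacle.} The most delicate part is the exact numerical matching in the dual-variety statements. The qualitative features (the scroll contains a large family of linear subspaces; the Gauss map collapses certain directions) follow readily from the bundle description, but extracting the precise defect $2n-3k+1$ and codegree $2n-k$ requires careful intersection-theoretic bookkeeping on the split projective bundle. The cleanest path uses the tautological Chern relation on $\mathbb{P}(\mathcal{W})$ together with the Euler sequence to reduce all Chern-class calculations to integrals of monomials in $\xi = c_1(\sO(1))$ and $h = c_1(\sO_{\PP^{k-1}}(1))$, which are elementary once the splitting type of $\mathcal{W}$ is fixed.
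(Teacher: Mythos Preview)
Your proposal is correct in outline, and the description of $\sC_o$ as a projective bundle is essentially the content of the cited result of Landsberg--Manivel, which you have reconstructed by hand. For parts (i) and (ii), however, your route diverges from the paper's.

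For (i), the paper argues asymmetrically: the dual-defective direction $3k\leq 2n$ is obtained by citing a general theorem on scrolls (Tevelev), while the non-defective direction $3k\geq 2n+1$ is proved by contradiction via the nef value morphism. Specifically, one computes $K_{\sC_o}$, finds the nef value $\tau=2n-2k+1$, and observes that the nef value morphism is either constant or coincides with $\pi$; in either case the Beltrametti--Fania--Sommese criterion forces $\defect(\sC_o)\leq 0$. Your approach via the Gauss map kernel is more uniform and more elementary, but note that the assertion ``the kernel has dimension $r-1-d$'' is not automatic for an arbitrary projective bundle: it hinges on the specific splitting type of $\mathcal{W}$, and you should make explicit that the $\sO(2)$ summand is what prevents extra degeneracy of the second fundamental form when $3k\geq 2n+1$.

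For (ii), the paper does \emph{not} compute the codegree by intersection theory on $\sC_o$ at all. Instead it invokes the global machinery of the paper: since $\sC_o$ is not dual defective, the exceptional divisor $\Gamma$ of the projectivised Springer map satisfies $a(\Gamma)=\codeg(\sC_o)$, and $a(\Gamma)$ is computed separately as $2n-k$ by analysing the rational map $\eta\colon \mathbf{P}(\overline{\sO})\dashrightarrow C_n/P_k$, $\phi\mapsto [\ker\phi]$, and the image of a conic arising from an $\mathfrak{sl}_2$-triple. Your jet-bundle computation is the direct projective-geometric approach (and is exactly what the paper does for $F_4/P_3$ in the subsequent lemma); it is self-contained and avoids the Springer-map detour, at the cost of a somewhat involved Chern-class expansion on the split bundle. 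Both approaches are valid; the paper's has the advantage of fitting into its broader narrative, while yours would give an independent check of the value $2n-k$.
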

	
	\begin{proof}
		The description of the VMRT $\sC_o\subset \PP(\Omega_{X,o})$ follows from \cite{LandsbergManivel2003}. For the statement (i), by \cite[Theorem 7.21]{Tevelev2005}, if $2n\geq 3k$, then $\sC_o\subset \PP(\Omega_{X,o})$ is dual defective with $\defect(\sC_o)=2n-3k+1$. For the converse, we assume to the contrary that $3k\geq 2n+1$ and $\defect(\sC_o)>0$. Note that we have
		\[
		\sO_{\sC_o}(K_{\sC_o})\cong \sO(-(2n-2k+1))\otimes \pi^*\sO_{\PP^{k-1}}(2n-3k+1).
		\]
		Thus the nef value $\tau$ of $(\sC_o,\sO(1))$ is equal to $2n-2k+1$. Then the nef value morphism $\Phi$ is defined by the complete linear system $|\pi^*\sO_{\PP^{k-1}}(2n-3k+1)|$. In particular, either $\Phi$ is a map to a point (if $2n-3k+1=0$) or $\Phi$ is just the natural projection $\pi$ (if $2n-3k+1>0$). Let $F$ be a general fibre of $\Phi$. In the former case, we have $F=\sC_o$ and therefore $\rho(F)\geq 2$, which is a contradiction. In the latter case, the variety $F$ is isomorphic to $\PP^{2n-2k}$ and we have $\defect(F)=2n-2k$. In particular, we obtain
		\[
		\defect(F)-\dim(\PP^{k-1})=2n-3k+1\leq 0,
		\]
		which is again a contradiction. Hence, if $3k\geq 2n+1$, the VMRT $\sC_o\subset \PP(\Omega_{X,o})$ is not dual defective.
		
		For the statement (ii), as $3k\geq 2n+1$, the VMRT $\sC_o\subset \PP(\Omega_{X,o})$ is not dual defective. Thus we have $\codeg(\sC_o)=a$ by Proposition \ref{Prop:Pseff-Divisor} and the value of $a$ in this case is computed in Lemma \ref{Lemma:C_n-II-d-A} (see also \cite[Proposition 4.25]{Liu2022}).
	\end{proof}
	
	\begin{lemma}
		\label{Lemma:Codegree-VMRT-F4/P3}
		Let $X$ be the rational homogeneous space $F_4/P_3$, and let $\sC_o\subset \PP(\Omega_{X,o})$ be the VMRT of $X$ at a referenced point $o\in X$. Then $\sC_o$ is isomorphic to a smooth divisor in $|\sO_{\PP(\wedge^2 E)} (2)\otimes \pi^*\sO_{\PP^1}(-3)|$ and the embedding is given by the complete linear system $|\sO_{\PP(\wedge^2 E)}(1)|$, where $E$ is the vector bundle $\sO_{\PP^1}(1)^{\oplus 3}\oplus \sO_{\PP^1}$ and $\pi$ is the natural projection $\PP(\wedge^2 E)\rightarrow \PP^1$.
		
		In particular, the dual variety of $\sC_o\subset \PP(\Omega_{X,o})$ is a hypersurface of degree $8$.
	\end{lemma}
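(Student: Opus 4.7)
The plan has two main stages: first, identify $\sC_o$ explicitly inside $\PP(\Omega_{X,o})$ as the claimed divisor in $\PP(\wedge^2 E)$; second, compute the codegree as a Chern-class integral on this projective bundle.

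For the identification, I would apply the Landsberg--Manivel framework \cite{LandsbergManivel2003}. Since $P_3$ corresponds to a short root of $F_4$, the VMRT $\sC_o$ is a two-orbit variety under the Levi $L_3$, whose semisimple part is $SL_3 \times SL_2$. One analyses the grading on the nilradical $\mathfrak{n}_3 \subset \mathfrak{p}_3$, identifies the top graded piece as an irreducible $L_3$-module, and tracks how the $SL_2$-weights twist the $SL_3$-representation. Fibering the resulting picture over $\PP^1 = SL_2/B$ via the $SL_2$-factor should realise $\sC_o$ as a divisor inside the $\PP^5$-bundle $\PP(\wedge^2 E)$, where the specific splitting $E = \sO_{\PP^1}(1)^{\oplus 3} \oplus \sO_{\PP^1}$ is dictated by those $SL_2$-weight twists acting on the four-dimensional $SL_3$-representation in question. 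The divisor class $\sO(2)\otimes \pi^*\sO(-3)$ is then pinned down by the equivariant quadratic nature of the defining equation in each fiber together with a dimension count ($\dim \sC_o = 5$); smoothness of the generic member follows from the highest weight orbit description.

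With the identification in hand, non-dual-defectiveness follows from the placement of $F_4/P_3$ in column (II-d-A1) of Table \ref{Table:Types-RHS} (see also Proposition \ref{Prop:Defect-short-root}), so the Beltrametti--Fania--Sommese first jet bundle formula applies:
\[
\codeg(\sC_o) \,=\, \int_{\sC_o} \sum_{j=0}^{5} (6-j)\, c_j(\Omega_{\sC_o}) \cdot H^{5-j}, \qquad H = \sO_{\PP(\wedge^2 E)}(1)|_{\sC_o}.
\]
Using adjunction along $\sC_o \hookrightarrow \PP(\wedge^2 E)$ to express Chern classes of $\Omega_{\sC_o}$ in terms of those of the ambient bundle and of the normal bundle $\sO(2)\otimes \pi^*\sO(-3)$, and reducing via the standard Chern class relation for $\PP(\wedge^2 E)$ (with $c(\wedge^2 E)$ easily computed from the explicit splitting of $E$), this collapses to a finite polynomial calculation in $h = c_1(\sO_{\PP^1}(1))$ and the tautological class $\xi$ of $\PP(\wedge^2 E)$, which should yield $8$.

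The principal obstacle is the first stage. Because $P_3$ is a short-root parabolic, $\sC_o$ is neither homogeneous nor a standard Segre/Veronese/Pl\"ucker-type variety, and the Levi action has orbit structure more delicate than in the Hermitian symmetric cases. Tracking the weight data carefully enough to produce precisely the bundle $E = \sO_{\PP^1}(1)^{\oplus 3} \oplus \sO_{\PP^1}$ (rather than some other twist of rank four) and to verify the exact divisor class is where the real work lies. Once this identification is secured, both the smoothness check and the codegree computation are routine.
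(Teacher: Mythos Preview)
Your overall plan---identify $\sC_o$ as a divisor in $\PP(\wedge^2 E)$, then feed the result into the Beltrametti--Fania--Sommese class formula---matches the paper's, and the second stage is essentially identical. Two points in the first stage, however, need correction.

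First, the identification. Rather than working through the Levi weight data from scratch, the paper simply invokes \cite{HwangMok2004a}, which already realises $\sC_o$ as the relative Grassmannian $\Gr_{\PP^1}(2,E^*)$ embedded by its Pl\"ucker line bundle; since $\Gr(2,4)=\Q^4$, this immediately places $\sC_o$ in $|\sO_{\PP(\wedge^2 E)}(2)\otimes\pi^*\sO_{\PP^1}(a)|$ for some $a$. The twist $a=-3$ is \emph{not} recoverable from a dimension count as you suggest: the paper pins it down by choosing a rank-two subbundle $V=\sO_{\PP^1}(-1)\oplus\sO_{\PP^1}\subset E^*$, taking the associated section $l=\PP(\wedge^2 V^*)\subset\sC_o$, computing the normal bundles $N_{l/\PP(\wedge^2 E)}$ and $N_{l/\sC_o}$ separately via the relative Euler sequence and the tautological description of $T_{\Gr(2,E^*)/\PP^1}$, and reading off $\sO(\sC_o)|_l\cong N_{l/\PP(\wedge^2 E)}/N_{l/\sC_o}\cong\sO_{\PP^1}(-1)$.

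Second, and more seriously, your argument for non-dual-defectiveness is circular. Proposition \ref{Prop:Defect-short-root} handles the $F_4/P_3$ case precisely by citing the present lemma, and the placement of $F_4/P_3$ in column (II-d-A1) of Table \ref{Table:Types-RHS} depends on that proposition. The paper instead establishes non-defectiveness directly inside the proof: from the adjunction formula one gets $K_{\sC_o}\equiv -4\zeta+4F$, so the nef value of $(\sC_o,\zeta)$ is $4$ and the nef value morphism is the projection $\pi|_{\sC_o}$ to $\PP^1$; since the general fibre $F\cong\Q^4$ has $\defect(F)=0$, the BFS criterion gives $\defect(F)-\dim(\PP^1)=-1<0$, whence $\sC_o$ is not dual defective. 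Only then is the class formula applied to obtain $\codeg(\sC_o)=8$.
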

	
	\begin{proof}
		By \cite{HwangMok2004a}, the VMRT $\sC_o$ is isomorphic to the Grassmann bundle of $2$-planes in the dual bundle $E^*$ with embedding given by the complete linear system of Pl\"ucker bundle on $\sC_o$. Thus we have a natural embedding $\sC_o\subset \PP(\wedge^2 E)$ such that the restriction of $\sO_{\PP(\wedge^2 E)}(1)$ to $\sC_o$ is exactly the Pl\"ucker bundle. Moreover, note that the Grassmann variety $\Gr(2,4)\subset \PP^5$ defined by Pl\"ucker embedding is the quadric fourfold. Thus the $\sC_o$ is a smooth divisor in $\PP(\wedge^2 E)$ such that 
		\[
		\sC_o\in |\sO_{\PP(\wedge^2 E)}(2)\otimes \pi^*\sO_{\PP^1}(a)|
		\]
		for some $a\in \Z$. Let $S\subset \sC_o$ be the $\PP^2$-bundle corresponding to the quotient bundle $\wedge^2 E\rightarrow \sO_{\PP^1}^{\oplus 3}$. Then $S\cong \PP^1\times \PP^2$ and denote by $p_2: \PP^1\times \PP^2\rightarrow \PP^2$ the natural projection. Consider a rank $2$ subbundle $V=\sO_{\PP^1}(-1)\oplus \sO_{\PP^1}$ of $E^*$. Then $V$ defines a section $l=\PP(\wedge^2 V^*)\subset S$ of $\sC_o\rightarrow \PP^1$ such that $l$ is a fibre of $p_2$. Note that the normal bundle $N_1$ of $l$ in $\PP(\wedge^2 E)$ is isomorphic to the restriction of the relative tangent bundle of $\pi:\PP(\wedge^2 E)\rightarrow \PP^1$ to $l$. Thus one can easily derive from the relative Euler sequence of $\PP(\wedge^2 E)$ that we have
		\[
		N_1\cong \sO_{\PP^1}(-1)^{\oplus 3} \oplus \sO_{\PP^1}^{\oplus 2}.
		\]
		On the other hand, the normal bundle $N_2$ of $l$ in $\sC_o$ is isomorphic to the restriction of the relative tangent bundle of $\pi|_{\sC_o}:\sC_o\rightarrow \PP^1$ to $l$. Thus we have
		\[
		N_2\cong {\sH}om(V,E^*/V)\cong V^*\otimes (E^*/V) \cong \sO_{\PP^1}(-1)^{\oplus 2}\oplus \sO_{\PP^1}^{\oplus 2}.
		\]
		In particular, it follows that the restriction of the normal bundle of $\sC_o$ in $\PP(\wedge^2 E)$ to $l$ is isomorphic to $N_1/N_2\cong \sO_{\PP^1}(-1)$. This implies
		\[
		\sO_{\PP^1}(-1)\cong \sO_{\PP(\wedge^2 E)}(\sC_o)|_l\cong \sO_{\PP^1}(2+a).
		\]
		Hence, we have $a=-3$. Then one can easily obtain by adjunction formula that
		\[
		\sO_{\sC_o}(K_{\sC_o})\cong (\sO_{\PP(\wedge^2 E)}(-4)\otimes \pi^*\sO_{\PP^1}(4))|_{\sC_o}.
		\]
		In particular, the nef value of $(\sC_o,\sO_{\PP(\wedge^2 E)}(1)|_{\sC_o})$ is $4$ and the nef value morphism $\Phi$ is just the projection $\pi|_{\sC_o}:\sC_o\rightarrow \PP^1$. Let $F$ be a general fibre of $\pi|_{\sC_o}$. Then $F$ is isomorphic to the quadric fourfold $\Q^4$ and $\sO_{\PP(\wedge^2 E)}(1)|_F\cong \sO_{\Q^4}(1)$. In particular, we obtain
		\[
		\defect(F) - \dim(\PP^1) = -1 < 0. 
		\]
		Hence, the VMRT $\sC_o\subset \PP(\Omega_{X,o})$ is not dual defective. Then applying \cite[Theorem 6.2]{Tevelev2005} yields
		\[
		\codeg(\sC_o) = \sum_{i=0}^5 (i+1) c_{5-i}(\Omega_{\sC_o})\cdot \zeta^i,
		\]
		where $\zeta$ is the restriction of the tautological divisor of $\PP(\wedge^2 E)$ to $\sC_o$. Then a straightforward calculation shows that the Chern classes of $\Omega_{\sC_o}$ are as follows:
		\begin{center}
			$c_1=4F-4\zeta$,\quad $c_2=7\zeta^2-13\zeta F$,\quad $c_3=13\zeta^2 F - 6\zeta^3$,\\
			$c_4=3\zeta^4 - 6\zeta^3 F$,\quad $c_5=-6\zeta^4 F$.
		\end{center}
		Finally we conclude by the fact that $\zeta^5=15$ and $\zeta^4 F=2$.
	\end{proof}
	
	\item If $X=G/P_k$ is of type (II-d-A2), then $X$ is isomorphic to $E_7/P_4$ (cf. Table \ref{Table:Types-RHS}). The method to compute the values of $a$ and $b$ are provided in Proposition \ref{Prop:Pseff-Divisor}(2) and (3). In particular, we have $b=1$ and $a$ is equal to the codegree of the VMRT $\sC_o\subset \PP(\Omega_{X,o})$, which is the Segre embedding of $\PP^1\times \PP^2 \times \PP^3$. In particular, by \cite[p.39 Table 2.1]{Tevelev2005}, the codegree of $\sC_o$ is equal to $15$.
	
	\item If the VMRT $\sC_o\subset \PP(\Omega_{X,o})$ is homogeneous, then the dual defect of $\sC_o$ can be calculated by Proposition \ref{p.dualdectiveG/P} (see also \cite[p.39 Table 2.1 and p.40 Table 2.2]{Tevelev2005}). If the VMRT $\sC_o\subset \PP(\Omega_{X,o})$ is not homogeneous, then its dual defect is calculated in Proposition \ref{Prop:Defect-short-root}, Lemma \ref{Lemma:C_n-VMRT} and Lemma \ref{Lemma:Codegree-VMRT-F4/P3}. 
\end{enumerate}

\begin{landscape}
	\fontsize{6}{9.6}\selectfont
	%\tiny
	\renewcommand*{\arraystretch}{1.65}
	\begin{longtable}{|M{0.3cm}|M{2.1cm}|M{0.9cm}|M{1.9cm}|M{0.2cm}|M{0.2cm}|M{2.9cm}|M{2.3cm}|M{1.2cm}|M{1.5cm}|M{2.1cm}|M{1.1cm}|}
		\hline	
		
		$\mathfrak{g}$     &    node $k$           &
		type               &    $a$                &
		$b$                &    $A_i$              &
		Orbit $\mathcal{O}$           & dual defect 
		              &    index              &
		$\dim(X)$          &    VMRT               &
		embedding
		\\
		\hline
		
		\multirow{2}{*}{$A_n$}         &    $k\not=\frac{n+1}{2}$    &
		I                              &    $\min\{k,n-k+1\}$	     &
		$1$                            &          -                  &
		\multirow{2}{*}{$[2^k,1^{n+1-2k}]$     }                       &  
		\multirow{2}{*}{$|n-2k+1|$}                                  &  
		\multirow{2}{*}{$n+1$}                                 &
		\multirow{2}{*}{$k(n+1-k)$}                                    & 
		\multirow{2}{*}{$\PP^{k-1}\times \PP^{n-k}$}                 &
		\multirow{2}{*}{$\sO(1,1)$}
		\\
		\cline{2-6}
		
		                               &    $k=\frac{n+1}{2}$        &
		II-d-A1                        &    $k$             	     &
		$2$                            &    $A_1$                    & 
		                                                             &
		                                                             &    
		                                                             &
		                                                             & 
		                                                             &
		\\
		\hline

		\multirow{5}{*}{$B_n$}         &    $k\leq \frac{2n}{3}$         &
		II-d-A1                        &    $2k$                 	     &
		$2$                            &    $A_1$                        &   $[3^k, 1^{2n-3k+1}]$&
		\multirow{3}{*}{$\max\{0,3k-2n\}$}                                       &
		\multirow{3}{*}{$2n-k$}                                                  &
		\multirow{3}{*}{$\frac{k(4n-3k+1)}{2}$}                                  & 
		\multirow{3}{*}{$\PP^{k-1}\times \Q^{2(n-k)-1}$}                          &
		\multirow{3}{*}{$\sO(1,1)$}
		\\
		\cline{2-7}
		
		                               &$\frac{2n+1}{3}\leq k\leq n-1$ and $k$ odd     &
		II-d-d                         &    $2n-k+1$             	             &
		$1$                            &    $A_1$                                & $[3^{2n+1-2k},2^{3k-2n-1}]$
		                                                                         &
		                                                                         &    
		                                                                         &
		                                                                         & 
		                                                                         &
		\\
		\cline{2-7}

	                                   & $\frac{2n+1}{3}\leq k\leq n-1$ and $k$ even   
	    &	   II-s                    &      $2n-k$            	            
	    &      $1$                     &          -                              
	    & $[3^{2n+1-2k},2^{3k-2n-2},1^2]$
		&
		&    
		&
		& 
		&
		\\
		\cline{2-12}

		                               &    $k=n$ and $k$ odd            &
		II-d-A1                        &    $\frac{n+1}{2}$              &
		$2$                            &    $A_1$                        &  $[3,2^{n-1}]$                  &    $0$                          &
		\multirow{2}{*}{$2n$}                                            &
		\multirow{2}{*}{$\frac{n(n+1)}{2}$}                              & 
		\multirow{2}{*}{$\Gr(2,n+1)$}                                   &
		\multirow{2}{*}{$\sO(1)$}
		\\
		\cline{2-8}
		
		                               &    $k=n$ and $k$ even           &
		II-s                           &    $\frac{n}{2}$       	     &
		$1$                            &    -                            & $[3,2^{n-2},1^2]$              &	$2$                          & 
		&    
		&
		& 
		
		\\
		\hline
		
		\multirow{4}{*}{$C_n$}         &    $k=1$                        &
		II-s                           &    $1$            	             &
		$1$                            &    -                            & 
		$[2^2,1^{2n-4}]$               &	$2n-2$                       &    
		$2n$                           &	$2n-1$                       & 
		$\PP^{2n-2}$                                                     &
		$\sO(1)$
		\\
		\cline{2-12}
		
		                  &  $2\leq k\leq \frac{2n}{3}$ and $k$ odd      &
		II-s              &  $2k-2$                  	                 &
		$1$               &     -                                        &
		$[3^{k-1},2^2,1^{2n-3k-1}]$                                      &
		\multirow{3}{*}{$\max\{0,2n-3k+1\}$}                             &    
		\multirow{3}{*}{$2n-k+1$}                                        &
		\multirow{3}{*}{$\frac{k(4n-3k+1)}{2}$}                          & 
		\multirow{3}{*}{Lemma \ref{Lemma:C_n-VMRT}}                      &
		\multirow{3}{*}{$\sO(1)$}
		\\
		\cline{2-7}

		                      & $2\leq k\leq \frac{2n}{3}$ and $k$ even   &
		   II-d-d             & $2k$                       	              &
		   $1$                & $A_1$                                     &
		   $[3^k,1^{2n-3k}]$  &                                           &
		                      &       	                                  &     
		                      &
		\\
		\cline{2-7}

		                          &    $k\geq \frac{2n+1}{3}$             &
		II-d-A1                   &    $2n-k$                  	          &
		$2$                       &    $A_1$                              &
		$[3^{2n-2k},2^{3k-2n}]$   & 	                                  &    
		                          &                                       & 
	         	                  &
		\\
		\hline

		\multirow{6}{*}{$D_n$}    &     $k\leq \frac{2n-1}{3}$                   &
		II-d-A1                   &     $2k$                                     &
		$2$                       &     $A_1$                                    &
		$[3^k,1^{2n-3k}]$         &		\multirow{3}{*}{$\max\{0,3k-2n+1\}$}     &    
		\multirow{3}{*}{$2n-k-1$}                                                &
		\multirow{3}{*}{$\frac{k(4n-3k-1)}{2}$}                                  & 
		\multirow{3}{*}{$\PP^{k-1}\times \Q^{2(n-k)-2}$}                         &
		\multirow{3}{*}{$\sO(1,1)$}
		\\
		\cline{2-7}

	                               & $\frac{2n}{3}\leq k\leq n-2$ and $k$ odd    &
	    II-s                       &   $2n-k-1$                             	 &
	    $1$                        &          -                                  &
	    $[3^{2n-2k},2^{3k-2n-1},1^2]$  & 	                                     & 
		         	               &                                             & 
	 	                           &
		\\
		\cline{2-7}
		
		                           &  $\frac{2n}{3}\leq k\leq n-2$ and $k$ even  &
		II-d-d                     &  $2n-k$                                     &
		$1$                        &  $A_1$                                      &
		$[3^{2n-2k},2^{3k-2n}]$    &   	                                         &    
		                           &                                             & 
		                           &
		\\
		\cline{2-12}
		
	 	                           & $k\geq n-1$ and $n\geq 3$ odd               &
		I                          & $\frac{n-1}{2}$            	             &
		$1$                        &          -                                  &
		$[2^n]$                    & $2$                                         &    
		\multirow{2}{*}{$2n-2$}                                                  &
		\multirow{2}{*}{$\frac{n(n-1)}{2}$}                                      & 
		\multirow{2}{*}{$\Gr(2,n)$}                                              &
		\multirow{2}{*}{$\sO(1)$}
		\\
		\cline{2-8}

	 	                           &     $k\geq n-1$ and $n\geq 3$ even          &
		II-d-A1                    &     $\frac{n}{2}$                     	     &
		$2$                        &    $A_1$                                    &
		$[2^n]$                    &    $0$                                      &    
		                           &                                             & 
		                           &
		\\
		\hline
		
		\multirow{6}{*}{$E_6$}     &     $1$                                     &
				I                  &     $2$                                     &
		$1$                        &      -                                      &
		$2A_1$                     &	 $4$                                     &
		$12$                       &	$16$                                     & 
		$\mathbb{S}_5$             &	$\sO(1)$
		\\
		\cline{2-12}         
		
		                           &     $2$                                     &
		II-d-A1                    &     $4$                                     &
		$2$                        &     $A_1$                                   &
		$A_2$                      &	 $0$                                     &
		$11$                       &     $21$                                    & $\Gr(3,6)$                 &     $\sO(1)$
		\\
		\cline{2-12}

		                           &     $3$                                     &
		I                          &     $4$                                     &
		$1$                        &      -                                      &
		$A_2 + 2 A_1$              &     $1$                                     &
		$9$                        &     $25$                                    & 
		$\PP^1\times \Gr(2,5)$     &     $\sO(1)$
		\\
		\cline{2-12}
		
		                                   &     $4$                             &
		II-d-A1                            &     $12$                            &
		$2$                                &     $A_1$                           &
		$D_4(a_1)$                         &     $0$                             &
		$7$                                &     $29$                            &
		$\PP^1\times \PP^2\times \PP^2$    &   $\sO(1,1,1)$
		\\
		\cline{2-12}

		                                   &     $5$                             &
		I                                  &     $4$                             &
		$1$                                &      -                              &
		$A_2 + 2 A_1$                      &     $1$                             &
		$9$                                &     $25$                            & 
		$\PP^1\times \Gr(2,5)$             &     $\sO(1,1)$
		\\
		\cline{2-12}
		
		                                   &     $6$                             &
		I                                  &     $2$                             &
		$1$                                &      -                              &
		$2 A_1$                            &     $4$                             &
		$12$                               &     $16$                            &
		$\mathbb{S}_5$                     &     $\sO(1)$
		\\
		\hline

		\multirow{7}{*}{$E_7$}             &     $1$                             &
		II-d-A1                            &     $4$                             &
		$2$                                &     $A_1$                           &
		$A_2$                              &     $0$                             &    
		$17$                               &     $33$                            & 
		$\mathbb{S}_6$                     &     $\sO(1)$
		\\
		\cline{2-12}

		                                   &     $2$                             &
		II-d-A1                            &     $7$                             &
		$2$                                &     $A_1$                           &
		$A_2 + 3 A_1$                      &     $0$                             &
		$14$                               &     $42$                            &
		$\Gr(3,7)$                         &  	 $\sO(1)$
		\\
		\cline{2-12}  
		
		                                   &     $3$                             &
		II-d-A1                            &     $12$                            &
		$2$                                &     $A_1$                           &
		$D_4(a_1)$                         &     $0$                             &
		$11$                               &     $47$                            &
		$\PP^1\times \Gr(2,6)$             &     $\sO(1,1)$
		\\
		\cline{2-12}  
		
		                                   &     $4$                             &
		II-d-A2                            &     $15$                            &
		$1$                                &     $A_2$                           &
		$A_4 + A_2$                        &     $0$                             &
		$8$                                &     $53$                            &
		$\PP^1\times \PP^2\times \PP^3$    &     $\sO(1,1,1)$
		\\
		\cline{2-12}  
		
		                                   &     $5$                             &
		II-d-A1                            &     $12$                            &
		$2$                                &     $A_1$                           &
		$A_3 + A_2 + A_1$                  &     $0$                             &
		$10$                               &     $50$                            &
		$\PP^2\times \Gr(2,5)$             & 	 $\sO(1,1)$
		\\
		\cline{2-12}    
		
		                                   &     $6$                             &
		II-d-d                             &Proposition \ref{Prop:Pseff-Divisor} &
		$1$                                &     $A_1$                           &
		$2 A_2$                            & 	 $3$                             &    
		$13$                               &     $42$                            &
		$\PP^1\times \mathbb{S}_5$         & 	 $\sO(1,1)$
		\\
		\cline{2-12}  
		
		                                   &     $7$                             &
		II-d-A1                            &     $3$                             &
		$2$                                &     $A_1$                           &
		$(3 A_1)^{''}$                     &	 $0$                             &
		$18$                               &     $27$                            &
		$E_6/P_1$                           &     $\sO(1)$
		\\
		\hline

	    \multirow{8}{*}{$E_8$}             &     $1$                             &
		II-d-A1                            &     $8$                             &
		$2$                                &     $A_1$                           &
		$2 A_2$                            &     $0$                             &
		$23$                               &     $78$                            &
		$\mathbb{S}_7$                     &     $\sO(1)$
		\\
		\cline{2-12}
		
			                               &     $2$                             &
		II-d-A1                            &     $16$                            &
		$2$                                &     $A_1$                           &
		$D_4(a_1) + A_2$                   &     $0$                             &
		$17$                               &     $92$                            & 
		$\Gr(3,8)$                         & 	 $\sO(1)$
		\\
		\cline{2-12} 
		
			                               &     $3$                             &
		II-d-d                             &Proposition \ref{Prop:Pseff-Divisor} &
		$1$                                &     $A_1$                           &
		$A_4 + A_2 + A_1$                  &     $1$                             &
		$13$                               &     $98$                            &
		$\PP^1\times \Gr(2,7)$             &     $\sO(1,1)$
		\\
		\cline{2-12}  
		
			                               &     $4$                             &
		II-d-d                             &Proposition \ref{Prop:Pseff-Divisor} &
		$1$                                &     $A_1$                           &
		$A_6 + A_1$                        &     $1$                             &
		$9$                                &     $106$                           &
		$\PP^1\times \PP^2\times \PP^4$    &     $\sO(1,1,1)$
		\\
		\cline{2-12}
		
			                               &     $5$                             &
		II-d-A1                            &     $40$                            &
		$2$                                &     $A_1$                           &
		$E_8(a_7)$                         &     $0$                             &
		$11$                               &     $104$                           &
		$\PP^3\times \Gr(2,5)$             & 	 $\sO(1,1)$
		\\
		\cline{2-12} 
		
			                               &     $6$                             &
		II-d-d                             &Proposition \ref{Prop:Pseff-Divisor} &
		$1$                                &     $A_1$                           &
		$A_4 + A_2$                        &     $2$                             &
		$14$                               &     $97$                            &
		$\PP^2\times \mathbb{S}_5$         &     $\sO(1,1)$
		\\
		\cline{2-12} 
		
			                               &     $7$                             &
		II-d-A1                            &     $12$                            &
		$2$                                &     $A_1$                           &
		$D_4(a_1)$                         &     $0$                             &
		$19$                               &     $83$                            &
		$\PP^1\times E_6/P_1$              &	 $\sO(1,1)$
		\\
		\cline{2-12} 
		
			                               &     $8$                             &
		II-d-A1                            &     $4$                             &
		$2$                                &     $A_1$                           &
		$A_2$                              &     $0$                             &
		$29$                               &     $57$                            &
		$E_7/P_7$                           &	 $\sO(1)$
		\\
		\hline

		\multirow{4}{*}{$F_4$}             &     $1$                             &
		II-d-A1                            &     $4$                             &
		$2$                                &     $A_1$                           &
		$A_2$                              &     $0$                             &
		$8$                                &     $15$                            &
		$\LG(3,6)$                         &     $\sO(1)$
		\\
		\cline{2-12}

		                                   &     $2$                             &
		II-d-A1                            &     $12$                            &
		$2$                                &     $A_1$                           &
		$F_4(a_3)$                         &     $0$                             &
		$5$                                &     $20$                            &
		$\PP^1\times \PP^2$                &	 $\sO(1,2)$
		\\
		\cline{2-12}

		                                       &     $3$                         &
		II-d-A1                                &     $8$                       &
		$2$                                    &     $A_1$                       &
		$F_4(a_3)$                             &     $0$                         &
		$7$                                    &
		$20$                                   &     
		Lemma \ref{Lemma:Codegree-VMRT-F4/P3}  &
		$\sO(1)$
		\\
		\cline{2-12}                                                                                                                                                    
		                                     &     $4$                             &
		II-d-d                               &Proposition \ref{Prop:Pseff-Divisor} &
		$1$                                  &     $A_1$                           &
		$\widetilde{A}_2$                    &     $3$                             & 
		$11$                                 &     $15$                            &
		hyperplane section of $\mathbb{S}_5$ & 	   $\sO(1,1)$
		\\
		\hline   
		
		\multirow{2}{*}{$G_2$}               &     $1$                             &
		II-d-A1                              &     $2$                             &
		$2$                                  &     $A_1$                           &
		$G_2(a_1)$                           &     $0$                             &
		$5$                                  &     $5$                             &
		$\Q^3$                               & 	   $\sO(1)$
		\\
		\cline{2-12}   
		
		                                     &     $2$                             &
		II-d-A1                              &     $4$                             &
		$2$                                  &     $A_1$                           &
		$G_2(a_1)$                           &     $0$                             &
		$3$                                  &     $5$                             &
		$\PP^1$                              &	   $\sO(3)$
		\\
		\hline                  
	\end{longtable}
	
\end{landscape}

\bibliographystyle{alpha}
\bibliography{Math.bib}

\end{document}